\numberwithin{equation}{section}
\newtheorem*{theorem*}{Theorem}
\newtheorem{theorem}{Theorem}[section]
\newtheorem{lemma}[theorem]{Lemma}
\newtheorem{fact}[theorem]{Fact}
\newtheorem{proposition}[theorem]{Proposition}
\newtheorem{corollary}[theorem]{Corollary}
\newtheorem{question}[theorem]{Question}
\theoremstyle{definition}
\newtheorem{example}[theorem]{Example}
\newtheorem*{prev}{Previously known bounds}
\newtheorem{remark}[theorem]{Remark}
\newtheorem{convention}[theorem]{Convention}
\newtheorem{definition}[theorem]{Definition}
\newtheorem{assumption}[theorem]{Assumption}
\newcommand{\cC}{\mathcal{C}}
\newcommand{\cO}{\mathcal{O}}
\newcommand{\cU}{\mathcal{U}}
\newcommand{\cV}{\mathcal{V}}
\newcommand{\bC}{\mathbb{C}}
\newcommand{\bF}{\mathbb{F}}
\newcommand{\bN}{\mathbb{N}}
\newcommand{\bQ}{\mathbb{Q}}
\newcommand{\bR}{\mathbb{R}}
\newcommand{\bZ}{\mathbb{Z}}
\newcommand{\Gm}{\mathbb{G}_m}
\newcommand{\id}{\operatorname{id}}
\newcommand{\ad}{\mathrm{ad}}
\newcommand{\et}{\text{\'etale}}
\newcommand{\Ru}{\mathcal{R}_{\mathrm{u}}}
\newcommand{\an}{\operatorname{an}}
\newcommand{\Ad}{\operatorname{Ad}}
\newcommand{\im}{\operatorname{im}}
\newcommand{\Gal}{\operatorname{Gal}}
\newcommand{\alg}{\operatorname{alg}}
\newcommand{\Aut}{\operatorname{Aut}}
\newcommand{\PGL}{\operatorname{PGL}}
\newcommand{\PGO}{\operatorname{PGO}}
\newcommand{\Hom}{\operatorname{Hom}}
\newcommand{\GL}{\operatorname{GL}}
\newcommand{\SL}{\operatorname{SL}}
\newcommand{\Spec}{\operatorname{Spec}}
\newcommand{\trdeg}{\operatorname{trdeg}}
\newcommand{\Sym}{\operatorname{Sym}}
\newcommand{\ed}{\operatorname{ed}}
\newcommand{\SymRank}{\operatorname{Rank}}
\newcommand{\ol}{\overline}
\newcommand{\rank}{\operatorname{rank}}
\newcommand{\sep}{\operatorname{sep}}
\newcommand{\colim}{\operatorname{colim}}
\newcommand{\Char}{\operatorname{char}}
\newcommand{\Stab}{\operatorname{Stab}}
\newcommand{\eps}{\varepsilon}
\newcommand{\stab}{\operatorname{stab}}
\newcommand{\tr}{\operatorname{tr}}
\newcommand{\res}{\operatorname{res}}
\newcommand{\Mat}{\operatorname{M}}
\newcommand{\fm}{{\mathfrak{m}}}
\newcommand{\Inf}{\operatorname{Res}}
\newcommand{\Ext}{\operatorname{Ext}}
\newcommand{\Otr}{\cO_{\tr}}
\newcommand{\ksep}{k_{\sep}}
\newcommand{\Ftr}{F_{\tr}}
\newcommand{\Ltr}{L_{\tr}}
\newcommand{\Falg}{F_{\alg}}
\newcommand{\oin}{\operatorname{in}}
\newcommand{\Oin}{\cO_{\oin}}
\newcommand{\Fin}{F_{\oin}}
\newcommand{\Fsep}{F_{\sep}}
\newcommand{\Lin}{L_{\oin}}
\newcommand{\vphi}{\varphi}
\newcommand{\Galtr}{\Gal_{\tr}}
\newcommand{\Tran}{\operatorname{Tran}}
\newcommand{\inn}{\operatorname{inn}}
\newcommand{\len}{\operatorname{len}}
\newcommand{\Frac}{\operatorname{Frac}}
\newcommand{\HSpin}{\operatorname{HSpin}}
\newcommand{\I}[1]{\Gamma_{#1}^{\vee}}
\newcommand{\lop}{\operatorname{loop}}
\begin{document}
\author{Danny Ofek}   
\thanks{Danny Ofek was partially supported by a graduate fellowship from 
the University of British Columbia.}


%
%
%

\title{Lower bounds on the essential dimension of reductive groups}

\selectlanguage{english}
\begin{abstract}
    \begin{otherlanguage*}{english}
    We introduce a technique for proving lower bounds on the essential dimension of split reductive groups. As an application, we strengthen the best previously known lower bounds for various split simple algebraic groups, most notably for the exceptional group $E_8$. In the case of the projective linear group $\PGL_n$, we recover A.\kern-.1em\ Merkurjev's celebrated lower bound with a simplified proof. Our technique relies on decompositions of loop torsors over valued fields due to P.\kern-.1em\ Gille and A.\kern-.1em\ Pianzola.
\end{otherlanguage*}

\bigskip
\begin{otherlanguage*}{french}
\noindent\textsc{Résumé (Bornes inférieures sur la dimension essentielle des groupes réductifs).}
Nous introduisons une méthode pour établir des bornes inférieures sur la dimension essentielle d’un groupe réductif déployé. À titre d’application, nous renforçons les meilleures bornes inférieures connues jusqu’à présent pour divers groupes algébriques simples déployés, notamment pour le groupe exceptionnel $E_8$.
Dans le cas du groupe projectif linéaire $\PGL_n$, nous redémontrons, par une preuve simplifiée, la célèbre borne inférieure due à A.\kern-.1em\ Merkurjev.
Notre approche repose sur les décompositions de torseurs de lacets sur des corps valués, dues à P.\kern-.1em\ Gille et A.\kern-.1em\ Pianzola.
\end{otherlanguage*}
\end{abstract} 

\selectlanguage{english}
\maketitle

\setcounter{tocdepth}{1}
\tableofcontents

\section{Introduction}

Let $G$ be a smooth linear algebraic group over a field $k_0$ and $\gamma\in H^1(L,G)$ a $G$-torsor over a field $k_0\subset L$. A \emph{field of definition} for $\gamma$ is a subfield $k_0\subset F \subset L$ such that $\gamma$ lies in the image of the natural map:
$$H^1(F,G) \to H^1(L,G).$$
The \emph{essential dimension} of $\gamma$ is the minimal number of parameters needed to define $\gamma$. It is given by the formula:
$$\ed(\gamma) = \min \Big\{ \trdeg_{k_0}(F) \mid F \   \text{is a field of definition of }\gamma \Big\}.$$
The \emph{essential dimension} of $G$ is  defined as the supremum $ \ed(G) = \sup\{\ed(\gamma)\}$ taken over all overfields $k_0\subset L$ and torsors $\gamma\in H^1(L,G)$. It should be thought of as the minimal number of algebraically independent parameters needed to define an arbitrary $G$-torsor. Often $G$-torsors correspond bijectively to a class of algebraic objects, in which case $\ed(G)$ is the minimal number of independent parameters needed to define a member of that class. For example:
\begin{itemize}
    \item The essential dimension of the symmetric group $S_d$ is the number of independent variables required to define an arbitrary separable field extension of degree $d$. Equivalently, it is the number of parameters needed to define a generic polynomial of degree $d$ up to Tschirnhaus transformations. In this form, mathematicians have tried to compute it as early as the 17th century. For an overview, see \cite{garver1927tschirnhaus,buhler1999,buhler1997essential}.
    \item The essential dimension of $\PGL_d$ is the number of independent variables required to define an arbitrary central division algebra of degree $d$. This quantity has been studied since generic division algebras were first defined by C.\kern-.1em\ Procesi \cite[Section 2]{procesi}.
\end{itemize} 
The problem of computing $\ed(G)$ for a general algebraic group $G$ has been studied by many authors since it was first posed by J.\kern-.1em\ Buhler-Z.\kern-.1em\ Reichstein \cite{buhler1997essential} and Reichstein \cite{reichstein2000notion}. For a comprehensive survey of the developments in the field, we refer the reader to \cite{merkurjev-survey}. 

Let $p$ be a prime integer. The \emph{essential dimension at $p$} of $G$, denoted $\ed(G;p)$, measures how many parameters are required to construct an arbitrary $G$-torsor {up to prime-to-$p$} extensions. See Section~\ref{sect.notation} for a precise definition. The inequality $\ed(G) \geq \ed(G;p)$ always holds, and almost all existing techniques to prove lower bounds on $\ed(G)$ apply to $\ed(G;p)$ as well; see \cite[Section 5]{reichstein2010essential}. The same is true of the technique introduced in this paper.

\subsection{Overview of previous techniques}
Z.\kern-.1em\ Reichstein-B.\kern-.1em\ Youssin gave the first systematic way to prove lower bounds on the essential dimension of algebraic groups. We recall their main theorem, commonly referred to as  ``the fixed-point method'' because it was proven by an analysis of fixed points on generically free $G$-varieties. See \cite{gille2009lower,chernousov2006lower} for generalizations to positive characteristic. 

\begin{theorem}{\cite[Theorem 7.7]{reichstein-youssin}}\label{thm.fixed_pt}
    Assume $G$ is defined over an algebraically closed field of characteristic zero $k_0$ and $G^{\circ}$ is semisimple. Let $A\subset G(k_0)$ be a finite abelian group and $p$ a prime number.
    \begin{enumerate}
        \item If $C_G(A\cap G^{\circ})$ is finite, then $\ed(G) \geq \rank(A)$. 
        \item If $A$ is a $p$-group and $C_G(A\cap G^{\circ})$ is finite, then $\ed(G;p)\geq \rank(A)$.
    \end{enumerate}
\end{theorem}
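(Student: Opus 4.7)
The plan is to combine the geometric interpretation of essential dimension with a fixed-point analysis on a smooth complete $G$-model. Fix a generically free linear representation $V$ of $G$; such a $V$ exists because $G^\circ$ is semisimple. The associated generic $G$-torsor $\Spec k_0(V) \to \Spec k_0(V)^G$ is versal, so
$$\ed(G) = \min\{\dim X - \dim G\},$$
where the minimum runs over all dominant $G$-equivariant rational maps $V \dashrightarrow X$ with $G$ acting generically freely on $X$. It therefore suffices to prove $\dim X - \dim G \geq \rank(A)$ for every such compression $X$. Using $G$-equivariant resolution of singularities and equivariant completion, both available in characteristic zero, I may replace $X$ by a smooth, complete $G$-variety.

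The central input is the going-down theorem of Reichstein-Youssin for finite abelian subgroups. At the origin $0\in V$ the action of $A$ on $T_0V = V$ is faithful, because $V$ is a generically free, and hence faithful, linear representation of $G$. Going down then produces a point $y \in X^A$ at which the $A$-weights on the transverse slice $T_yX/T_y(Gy)$ generate the character group $A^\vee = \Hom(A,\Gm)$. The hypothesis that $C_G(A\cap G^\circ)$ is finite enters twice here: it constrains the possible components of $X^A$ so that going down indeed delivers a fixed point with the desired weight data, and it forces $\Stab_G(y)^\circ$ to be trivial (any such connected subgroup would lie in $C_G(A\cap G^\circ)$), so that $\dim T_y(Gy) = \dim G$. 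Consequently
$$\dim X - \dim G \;=\; \dim\bigl(T_yX/T_y(Gy)\bigr) \;\geq\; \rank A^\vee \;=\; \rank A,$$
proving (i). For (ii), the same argument is carried out after passing to the prime-to-$p$ closure of the base field: the rank of a finite abelian $p$-group is preserved under finite prime-to-$p$ extensions, and every geometric step is insensitive to them.

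The hard part is the going-down step itself. Propagating the full set of $A$-weights from $T_0V$ to a transverse slice at a fixed point of $X$ is delicate because equivariant blowups and desingularizations used to realize $V \dashrightarrow X$ can create new components of the fixed locus with unpredictable tangent characters. Controlling this is exactly where the finiteness of $C_G(A\cap G^\circ)$ is indispensable: without it, positive-dimensional components of $X^A$ could absorb the relevant weights and produce a fixed point $y$ at which $\Stab_G(y)$ is positive-dimensional or at which the normal weights fail to span $A^\vee$, invalidating the numerical bound above.
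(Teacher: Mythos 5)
Your sketch reproduces the original Reichstein--Youssin fixed-point argument: take a generically free representation $V$, compress to a smooth complete $G$-model $X$ via equivariant resolution and completion, and invoke their going-down theorem for finite abelian subgroups to obtain a point $y\in X^{A}$ at which $\Stab_G(y)$ is finite and the $A$-weights on the slice transverse to $Gy$ span $A^{\vee}$, giving $\dim X - \dim G \geq \rank A$. That is indeed the shape of the proof in the cited reference. This paper, however, does not re-run that geometric argument. In the remark immediately following the theorem, the bound is re-derived as a special case of Theorem~\ref{thm.main}: finiteness of $C_G(A\cap G^{\circ})$ forces $C_G(A)$ to be finite; combining \cite[Lemma 4.1, Theorem 6]{buhler1997essential} gives $\ed(C_G(A))\geq \ed(A)=\rank(A)$; finiteness of $C_G(A)$ makes every $C_G(A)$-torsor anisotropic; and Theorem~\ref{thm.main} then yields $\ed(G)\geq \ed(C_G(A))\geq \rank(A)$. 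The paper's route replaces resolution of singularities and fixed-point geometry with loop-torsor decompositions over Henselian valued fields, which is what buys the strictly weaker hypothesis $|C_G(A)|<\infty$ and the extension to base fields that are not algebraically closed and to positive characteristic (under suitable tameness conditions).

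Two points in your sketch deserve tightening. First, generically free faithful linear representations exist for any smooth linear algebraic group, not because $G^{\circ}$ is semisimple; semisimplicity is used elsewhere. Second, and more seriously, the assertion that any positive-dimensional connected subgroup of $\Stab_G(y)$ ``would lie in $C_G(A\cap G^{\circ})$'' is stated as if obvious, but passing from ``$A$ normalizes $\Stab_G(y)^{\circ}$'' to ``$A\cap G^{\circ}$ centralizes a nontrivial torus inside it'' is precisely the rigidity statement that Reichstein--Youssin have to establish, and it is where the bulk of the original proof resides; a complete argument cannot treat it as a one-line observation.
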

 While Theorem~\ref{thm.fixed_pt} applies to many groups, it usually gives bounds which are far from tight. This is partially explained by the fact that the $G$-torsors witnessing the lower bound can be constructed over an iterated Laurent series field $k_0(\!(t_1)\!)\dots(\!(t_r)\!)$ which is a relatively simple field when $k_0$ is algebraically closed (here $r = \rank(A)$). 

P.\kern-.1em\ Brosnan-Reichstein-A.\kern-.1em\ Vistoli \cite{genericity0,brosnan2007essential} and later N.\kern-.1em\ Karpenko-A.\kern-.1em\ Merkurjev \cite{karpenko2008essential} introduced stack-theoretic techniques to construct $G$-torsors of high essential dimension over function fields of Severi-Brauer varieties. Stack-theoretic techniques give much stronger lower bounds than is possible using Theorem~\ref{thm.fixed_pt} for some groups, like finite $p$-groups and algebraic tori \cite{karpenko2008essential,lotscher2013essential}. However, they give trivial lower bounds for most semisimple groups, including all adjoint simple groups.  In \cite{merkurjev-PGLn}, Merkurjev overcame this limitation of the stack-theoretic methods for adjoint groups of type $A_n$ by proving $\ed(\PGL_n;p) \geq \ed(T;p)$ for a certain torus $T$. He then computed $\ed(T;p)$ using \cite{lotscher2013essential} to obtain the lower bounds
\begin{equation}\label{e.lower_PGL_n}
    \ed(\PGL_{p^{r}};p) \geq (r-1)p^r+1
\end{equation}
which are orders of magnitude stronger than the lower bounds previously obtained by Theorem~\ref{thm.fixed_pt}. Merkurjev's arguments are specific to groups of type $A_{n}$ because they rely on explicit computations in the Brauer group (see \cite{chernousov2013essential} for generalizations to $\SL_n/\mu_d$). We note that a proof of \eqref{e.lower_PGL_n} for $p=r =2$ was first announced by M. Rost \cite{rost2000computation}. Merkurjev proved \eqref{e.lower_PGL_n} for $r= 2$ and $p$ arbitrary by different means in \cite{merkurjev-PGLp^2}.

\subsection{Main results}
In this paper, we introduce a technique to prove lower bounds on $\ed(G),\ed(G;p)$, which applies whenever $G^{\circ}$ is split reductive.  We proceed in two steps:
\begin{enumerate}
    \item We first prove $\ed(G)\geq \ed(C_G(A))$ for any finite diagonalizable subgroup $A\subset G$ satisfying certain conditions.
    \item We choose $A$ in a systematic way, so that $C_G(A)$ is an extension of a torus by a finite group. This allows us to apply the results of \cite{lotscher2013essential2} to give a strong lower bound on $\ed(C_G(A))$.
\end{enumerate}
In this way we obtain new lower bounds on the essential dimension of some simple groups as well as recover \eqref{e.lower_PGL_n}, see Theorem~\ref{thm.concrete_bounds} below and Section~\ref{subsect.PGL_n}. The next theorem gives sufficient conditions for the inequality $\ed(G)\geq \ed(C_G(A))$ to hold. Note that we do not assume $G^{\circ}$ is split. Thus the first step in our technique works more generally and the assumption that $G^{\circ}$ is split is only needed for the second step. Recall that a $G$-torsor $[c_\sigma]\in H^1(F,G)$ is called \emph{anisotropic} if the twisted group ${}_c G$ contains no copy of $\mathbb{G}_{m}$.  A finite algebraic group $A$ over $k_0$ is called \emph{diagonalizable}, if it is isomorphic to $\mu_{n_1}\times \dots \times \mu_{n_r}$ for some $n_1,\dots,n_r$ coprime to $\Char k_0$.

\begin{theorem}\label{thm.main}
    Let $G$ be a smooth linear algebraic group over a field $k_0$. Assume either $k_0$ is perfect or $G^{\circ}$ is reductive. Let $A \subset G$ be a finite diagonalizable subgroup.
    \begin{enumerate}
        \item Let $ p\neq \Char k_0$ be a prime. If $A$ is a $p$-group and $C_G(A)$ admits an anisotropic torsor over some $p$-closed field $k_0\subset k$, then we have:
        $$\ed(G;p) \geq \ed(C_G(A);p).$$
        
        \item Assume $\Char k_0$ is good for $G$ (see Definition~\ref{def.char}). If $C_G(A)$ admits an anisotropic torsor  over some field $k_0\subset k$, then we have:
        $$\ed(G) \geq \ed(C_G(A)).$$

    \end{enumerate}
\end{theorem}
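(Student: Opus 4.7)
The plan is to lift the essential dimension of $C_G(A)$ to that of $G$ by combining an anisotropic $C_G(A)$-torsor with Kummer classes attached to $A$ over an iterated Laurent series field, and then to recover the $C_G(A)$-torsor from any descent of the resulting $G$-torsor via the Gille--Pianzola loop decomposition.

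Fix an isomorphism $A\simeq\prod_{i=1}^r\mu_{n_i}$. First I would produce, for each $\varepsilon>0$, an anisotropic $C_G(A)$-torsor $\eta\in H^1(E,C_G(A))$ with $\ed(\eta)\geq\ed(C_G(A))-\varepsilon$. Starting from the anisotropic torsor $\eta_0$ supplied by hypothesis and from a $C_G(A)$-torsor $\tau$ of essential dimension close to $\ed(C_G(A))$, twisting $\tau$ by $\eta_0$ over the compositum of their bases yields such an $\eta$; the anisotropy input is genuinely needed, as it will pin down the loop decomposition later. Set $L=E(\!(t_1)\!)\cdots(\!(t_r)\!)$ and let $\alpha\in H^1(L,A)$ be the Kummer cocycle of $(t_1,\dots,t_r)$ under the chosen splitting of $A$. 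Since $A$ and $C_G(A)$ commute inside $G$, multiplication $A\times C_G(A)\to G$ is a group homomorphism; the induced pairing $H^1(L,A)\times H^1(L,C_G(A))\to H^1(L,G)$ lets me define $\gamma:=\alpha\cdot\eta_L\in H^1(L,G)$, the test $G$-torsor.

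The crux is showing $\ed(\gamma)\geq\ed(\eta)$. Let $\gamma_0\in H^1(F,G)$ be a descent of $\gamma$ with $d=\trdeg_{k_0}F=\ed(\gamma)$. The rank-$r$ valuation of $L$ defined by $t_1,\dots,t_r$ restricts to a valuation on $F$ whose residue field $\overline F$ satisfies $\trdeg_{k_0}\overline F\leq d$. Over the Henselization of $L$, the Gille--Pianzola theorem decomposes $\gamma$ uniquely into a tame loop class and an anisotropic residual class; by construction this decomposition is exactly $\alpha\cdot\eta_L$. Applied to $\gamma_0$ over the Henselization of $F$, one obtains a loop/residual factorization which, after base change to the Henselization of $L$, must by uniqueness of Gille--Pianzola coincide with $\alpha\cdot\eta_L$. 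Consequently the residual factor represents $\eta$ already over $\overline F$, giving $\ed(\eta)\leq d$ and hence $\ed(G)\geq\ed(C_G(A))-\varepsilon$ for every $\varepsilon$. The $p$-local version (1) proceeds identically after passing to $p$-closures and using that $A$ is a $p$-group so that its Kummer theory is $p$-primary.

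The principal obstacle is the compatibility of the Gille--Pianzola decomposition with descent from $L$ down to $F$: the subfield $F$ inherits only the restricted valuation, and one must verify that the loop/anisotropic splitting of $\gamma_0$ over the Henselization of $F$ exists and base-changes correctly to the splitting of $\gamma$ over the Henselization of $L$. The anisotropy of $\eta$ is what furnishes the rigidity needed for the uniqueness step to bite, while the good-characteristic hypothesis (resp. $p$-closed plus $p$-group hypotheses) ensures that the Gille--Pianzola machinery applies cleanly to both $G$ and its reductive centralizer $C_G(A)$.
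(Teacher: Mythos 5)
Your overall strategy is the same as the paper's: attach a loop torsor over an iterated Laurent series field whose arithmetic part is an anisotropic $C_G(A)$-torsor $\eta$ and whose geometric part is a surjection onto $A$, then invoke the Gille--Pianzola uniqueness theorem for \emph{anisotropic} loop torsors (Theorem~\ref{thm.uniqueness}) to force any field of definition of the resulting $G$-torsor to carry a field of definition of $\eta$. However, your opening step -- producing an $\eta$ that is simultaneously anisotropic and of essential dimension close to $\ed(C_G(A))$ by ``twisting $\tau$ by $\eta_0$'' -- does not work. There is no group operation on $H^1(E,C_G(A))$ for nonabelian $C_G(A)$, and the natural twisting bijection $H^1(E,C_G(A))\to H^1(E,{}_{\eta_0}C_G(A))$ changes the group; moreover, anisotropy of $\eta_0$ is not preserved under the base change to the compositum $E$, so there is no reason the output is anisotropic. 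The paper instead takes $\eta$ to be a \emph{versal} $C_G(A)$-torsor, which automatically has $\ed(\eta)=\ed(C_G(A))$, and proves in Proposition~\ref{prop.versal_aniso} that a versal torsor is anisotropic (after passing to a $p$-closure, in case (1)) whenever \emph{some} torsor is, by a specialization argument over a dense open of the base of a classifying map. You need a result of this kind, and it is not obtainable by twisting.

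A second, smaller gap is your treatment of descent from $L$ to a subfield $F$ of definition. You assert that the Gille--Pianzola splitting of $\gamma_0$ over the Henselization of $F$ ``base-changes correctly'' to that of $\gamma$ over $L$, and that its residual part already lives over the residue field $\overline F$ -- but you flag, correctly, that you have not verified this. The trouble is real: the restricted valuation on $F$ need not have value group $\Gamma_L$, and the loop decomposition is not literally functorial in that situation (Proposition~\ref{prop.basic_func_decomp} shows the arithmetic part acquires a $\vphi(\chi_\sigma)$ correction when uniformizers change). The paper circumvents this (Corollary~\ref{cor.lower_bound_corollary}) by replacing $F$ with $F_2 = F(\pi^{\gamma};\gamma\in\Gamma_F)$, which restores $\Gamma_{F_2}=\Gamma_F$ and raises the transcendence degree by at most $r$ -- a loss exactly absorbed because the residue field of a rank-$r$ valuation on $F_2$ drops the transcendence degree by $r$. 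You also need to know that the descent $\gamma_0$ over the Henselization of $F$ is in fact a loop torsor; this is not automatic and is supplied by Proposition~\ref{lem.loop_char_good} in case (2) (good characteristic) and by Proposition~\ref{prop.loop_prime-to-p} after a prime-to-$p$ extension in case (1). Finally, your argument does not address the reduction from perfect $k_0$ to reductive $G^{\circ}$, which the paper carries out separately in Section~\ref{sect.reduction_to_reductive}.
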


Let $F$ be a Henselian valued field with value group of finite rank. Our proof of Theorem~\ref{thm.main} relies on the decompositions of \emph{loop torsors} over $F$. Loop torsors and their decompositions were introduced by P.\kern-.1em\ Gille-A.\kern-.1em\ Pianzola for iterated Laurent series over a characteristic zero field in the context of the classification of loop algebras \cite{gille2013torsors}. We will use both \cite{gille2013torsors} and the recent generalizations to valuation rings of positive characteristic obtained by Gille \cite{gille2024newloop}. 

\begin{remark}
    Theorem~\ref{thm.fixed_pt} is a special case of Theorem~\ref{thm.main}. Indeed,
    if $C_G(A\cap G^{\circ})$ is finite, then so is $C_G(A)$.  
    Combining \cite[Lemma 4.1]{buhler1997essential} and \cite[Theorem 6]{buhler1997essential} gives:
$$\ed(C_G(A))\geq \ed(A) = \rank(A).$$ 
Since $C_G(A)$ is finite, all $C_G(A)$-torsors are anisotropic. Therefore Theorem~\ref{thm.main} implies:
    $$\ed(G) \geq \ed(C_G(A))\geq \rank(A).$$
Note that the above argument works under the assumption $|C_G(A)| <\infty$, which is strictly weaker than $|C_G(A\cap G^{\circ})|<\infty$.
\end{remark} 

As noted in the previous remark, any $C_G(A)$-torsor is anisotropic when $C_G(A)$ is finite. However, in general, there is no simple criterion to determine whether $C_G(A)$ admits anisotropic torsors. J. Tits classified the simple simply connected split groups that do not admit anisotropic torsors \cite{tits1990strongly}. For a generalization of his work to arbitrary simple groups and for semisimple groups of certain types, see \cite{ofek2022reduction}.
We have no examples where the inequalities of Theorem~\ref{thm.main} fail, so it is natural to ask if one may improve the theorem as follows:
\begin{question}\label{question1}
    Is the conclusion of Theorem~\ref{thm.main} true without the assumption that $C_G(A)$ admits anisotropic torsors?
\end{question}
After the proof of Theorem~\ref{thm.main} in Section~\ref{sect.proof_of_main_thm}, we give an example where all $C_G(A)$-torsors are isotropic and our proof breaks down. However, we have no reason to expect a negative answer to Question~\ref{question1}.

In Section~\ref{sect.gradings_on_char}, we give a streamlined root-theoretic approach to choosing diagonalizable subgroups of split groups that satisfy the conditions of Theorem~\ref{thm.main}. This leads to the following new lower bounds:

\begin{theorem}\label{thm.concrete_bounds}
    Assume  $\Char k_0 \neq 2,3$. 
    \begin{enumerate}
        \item $\ed(E_8;2) = \ed(\HSpin_{16};2) \geq 56$
        \item $ \ed(E_8;3) \geq 13$
        \item $\ed(E_6^{\ad};3) \geq 6$
        \item $ \ed(E_7^{\ad};2) \geq 19$
        \item If $n  =  2^r m\geq 4$ for $r\geq 1$ and $m$ is odd, then $$\ed(\PGO^+_{2n};2) \geq   (r -1)2^{r+1} +n.$$
        \item If $n \geq 3$ is odd, then  $$\ed(\PGO^+_{2n};2) \geq   3n-4.$$
    \end{enumerate}
\end{theorem}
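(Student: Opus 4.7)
The strategy is to apply Theorem~\ref{thm.main}(1) individually to each of the six inequalities. For each pair $(G,p)$ in the statement, the plan is to exhibit a finite diagonalizable $p$-subgroup $A \subset G$ such that (a) the centralizer $C_G(A)$ is an extension of an algebraic torus by a finite abelian group; (b) $C_G(A)$ admits an anisotropic torsor over some $p$-closed overfield of $k_0$; and (c) $\ed(C_G(A);p)$ meets the claimed numerical bound. Granting (a)--(c), Theorem~\ref{thm.main}(1) immediately yields $\ed(G;p) \geq \ed(C_G(A);p)$, which is the desired inequality.

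To produce the subgroups $A$, I would use the root-theoretic machinery of Section~\ref{sect.gradings_on_char}. Concretely, $A$ is chosen inside a fixed split maximal torus so that its induced $\mu_p$-grading on the root system of $G$ retains only a prescribed sub-root-system; suitable choices force $C_G(A)$ into the structural shape demanded by (a). Natural candidates can be read off the extended Dynkin diagram: the $D_8 \subset E_8$ subsystem for $E_8$ at $p=2$ (with the same $A$ simultaneously handling $\HSpin_{16}$, which explains the claimed equality $\ed(E_8;2) = \ed(\HSpin_{16};2)$); the $E_6 \times A_2 \subset E_8$ (or $A_2^{\times 4}$) subsystem for $E_8$ at $p=3$; the $A_2^{\times 3} \subset E_6$ subsystem for $E_6^{\ad}$ at $p=3$; an $A_1^{\times 7} \subset E_7$ or $D_6 \times A_1$ subsystem for $E_7^{\ad}$ at $p=2$; and, for $\PGO^+_{2n}$ at $p=2$, a subgroup arising from a block decomposition of the quadratic space that matches the $2$-adic valuation of $n$ (with the cases of even and odd $n$ treated separately).

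Condition (c) is then a computation: since $C_G(A)$ is an extension of a torus by a finite abelian group, $\ed(C_G(A);p)$ can be evaluated by the L\"otscher-MacDonald-Meyer-Reichstein formulae \cite{lotscher2013essential2}, which reduce the question to an explicit lattice-theoretic invariant on the character module. The stated numerical bounds would follow case by case from this calculation (for $\PGO^+_{2n}$, the answer interpolates between the formula $(r-1)2^{r+1} + n$ for $n = 2^r m$ with $m$ odd and $r \geq 1$, and $3n-4$ for odd $n$, precisely because the $2$-adic structure dictates the shape of the centralizer torus).

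The hard part, I expect, is condition (b): showing that $C_G(A)$ admits an anisotropic torsor over some $p$-closed field. Unlike in the classical fixed-point method of Reichstein-Youssin, where $C_G(A)$ is finite and anisotropy is automatic, here $C_G(A)^\circ$ contains a positive-dimensional torus and anisotropy is a genuine condition. I would verify it by constructing an explicit $C_G(A)$-torsor over an iterated Laurent series field via the loop-torsor decompositions of Gille-Pianzola \cite{gille2013torsors, gille2024newloop}, and then proving non-triviality of the associated cocycle invariants. This verification is necessarily case-specific and is likely the most technically demanding step of the argument.
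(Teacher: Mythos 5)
Your overall plan—apply Theorem~\ref{thm.main}(1) with a diagonalizable $p$-subgroup $A$ chosen via a root-lattice grading, then estimate $\ed(C_G(A);p)$ by the L\"otscher--MacDonald--Meyer--Reichstein machinery—does match what the paper does for the $\HSpin_{16}$, $E_6^{\ad}$ and $\PGO^{+}_{2n}$ cases in Sections~\ref{sect.gradings_on_char} onwards. However, you have the relationship between grading and centralizer backwards: the construction requires $\eps(\alpha)\neq 0$ for \emph{every} root $\alpha$, so that $C_G(A_\eps)^\circ = T$ by Lemma~\ref{lem.cent_is_T}. A grading that ``retains a sub-root-system'' would leave $C_G(A)^\circ$ strictly larger than $T$, at which point $\ed(C_G(A);p)$ is no longer controlled by the torus-normalizer formula of Proposition~\ref{prop.inequa_ed_C(eps)}. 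The subsystems you name (such as $A_2^{\times 3}\subset E_6$ or $D_8\subset E_8$) do appear in the paper, but only to parametrize the grading or to coordinatize the lattice, never as $C_G(A)^\circ$.

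More significantly, parts (2), (4), and the equality in part (1) are not proved by gradings at all. The paper instead derives Corollary~\ref{cor.ofmainthm} by combining Theorem~\ref{thm.main} with Borel--de Siebenthal: for a reductive maximal-rank subgroup $H\subset G$ with finite diagonalizable center, $\ed(G;p)\geq\ed(H;p)$. It then takes $H=\SL_9/\mu_3\subset E_8$ and $H=\SL_8/\mu_4\subset E_7^{\ad}$ and imports the values $\ed(\SL_9/\mu_3;3)=13$ and $\ed(\SL_8/\mu_4;2)=19$ from Chernousov--Merkurjev. The equality $\ed(E_8;2)=\ed(\HSpin_{16};2)$ also needs a separate ingredient (reduction of structure of $E_8$-torsors to $\HSpin_{16}$ after odd-degree extensions) which your outline omits. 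You give no reason to expect that a $\SymRank - \dim T$ computation for $E_8$ at $3$ or for $E_7^{\ad}$ at $2$ would recover $13$ or $19$, and the subsystems you suggest there ($E_6\times A_2$, $A_2^{\times 4}$, $A_1^{\times 7}$, $D_6\times A_1$) correspond to groups whose essential dimension at $p$ is not known; that is a genuine gap. Finally, the paper verifies the anisotropy hypothesis by the clean combinatorial criterion of Lemma~\ref{lem.C(Aeps)_admits_aniso}, namely $X(T)^{W(\eps)_p}=\{0\}$ for a Sylow $p$-subgroup $W(\eps)_p\subset W(\eps)$, rather than by constructing explicit loop torsors over Laurent series fields as you propose; loop torsors are the engine inside the proof of Theorem~\ref{thm.main} but are not needed again once that theorem is available.
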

 The inequalities in Theorem~\ref{thm.concrete_bounds} improve on long-standing lower bounds. To put them into context, we include a list indicating the best lower and upper bounds for these groups that were known prior to this paper.
 \begin{prev}\itshape
 Assume  $\Char k_0 \neq 2,3$. 
\begin{enumerate}
        \item $120 \geq \ed(E_8;2)\geq 9$
        \item $73 \geq \ed(E_8;3) \geq 5$
        \item $21\geq \ed(E_6^{\ad};3) \geq 4$
        \item $57\geq \ed(E_7^{\ad};2) \geq 8$
        \item If $n  =  2^r m$ for some $r\geq 0$ and $m\geq 3$ is odd, then 
        $$(m - 1)2^{2(r+1)} - n \geq\ed(\PGO^+_{2n};2) \geq  2n-2.$$
        \item If $n = 2^r$  for some $r\geq 2$, then  $$n^2 - n \geq \ed(\PGO^+_{2n};2) \geq   r 2^r.$$
    \end{enumerate}
 \end{prev}
All of the lower bounds in (1)-(4) follow from the characteristic-free versions of Theorem~\ref{thm.fixed_pt}; see \cite{chernousov2006lower,gille2009lower}. For references for the upper bounds in (1)-(4), see \cite[3h]{merkurjev-survey}. 
The upper bounds in (5) and (6) are due to M. Macdonald \cite[Section 0.2]{macdonald}. For the lower bounds in (5),(6), see \cite[Theorem 13]{chernousov2006lower} and \cite[Lemma 2.6]{baek2015essential} respectively. The inequality $\ed(\HSpin_{16};2) \geq \ed(E_8;2)$ was known to experts, but the inequality $\ed(\HSpin_{16};2) \leq \ed(E_8;2)$ is new; see the proof of Proposition~\ref{prop.partofconcreteboundsthm1}. We note that (5) implies $$3n-4 \geq \ed(\PGO^+_{2n} ;2)$$
for odd $n$; see also \cite[Corollary 3.2]{baek2015essential}. Therefore the inequality in Theorem~\ref{thm.concrete_bounds}(6) is tight. 

Recall that the \emph{reductive rank} of $G$ is the dimension of a maximal torus $T\subset G$. Clearly if $H\subset G$ is a subgroup, then $\rank(H)\leq \rank(G)$. A subgroup $H$ is said to be of \emph{maximal rank} if $\rank(H) = \rank(G)$. In Section~\ref{sect.pf_of_cor_ofmainthm}, we use Theorem~\ref{thm.main} to prove 
$$\ed(G)\geq \ed(H), \ \ \ed(G;p)\geq \ed(H;p)$$
for reductive subgroups $H\subset G$ of maximal rank under certain conditions; see Corollary~\ref{cor.ofmainthm}. We use this corollary to establish Parts (1),(2) and (4) of Theorem~\ref{thm.concrete_bounds}.  The proof relies on Borel–de Siebenthal's theorem on reductive subgroups of maximal rank \cite{borel1949sous}.

\subsection{Outline of the paper}
The rest of the paper is structured as follows.
Sections~\ref{sect.notation}-\ref{sect.pre_isotrop} deal with notation and preliminaries. 
In Section~\ref{sect.loop_torsors} we introduce the definition of loop torsors and their decompositions. In Section~\ref{sect.a_theorem_of_GP} we adapt a theorem of Gille-Pianzola about the uniqueness of decompositions of  anisotropic loop torsors to our needs. Section~\ref{sect.funct_of_decomp} deals with the functoriality of decompositions of loop torsors with respect to extension of scalars. In Section~\ref{sect.all_torsors_are_loops} we give criteria to determine whether a $G$-torsor is a loop torsor. The proof of Theorem~\ref{thm.main} is completed in Section~\ref{sect.proof_of_main_thm}. Section~\ref{sect.gradings_on_char} contains a method of choosing subgroups $A\subset G$ that satisfy the conditions of Theorem~\ref{thm.main} and such  that $C_G(A)^{\circ}$ is a split torus. The last four sections contain the computations needed to prove Theorem~\ref{thm.concrete_bounds}.

\subsection{Acknowledgments} I am grateful to A.\kern-.1em\ Soofiani for many fruitful discussions, and to V.\kern-.1em\ Chernousov for pointing out the connection between my ideas and his work with P.\kern-.1em\ Gille and A.\kern-.1em\ Pianzola. This connection was further explained to me by Gille and it simplified this paper greatly.  I would like to thank my supervisor Z.\kern-.1em\ Reichstein for many useful meetings, suggestions and advice throughout the work on this project.  Thanks are also due to the anonymous referees whose
suggestions helped improve the exposition and correct multiple typos. 


\section{Notation}\label{sect.notation}

Throughout this paper, $G$ will denote a smooth linear algebraic group over a base field $k_0$. All fields and rings we consider will contain $k_0$. We will sometimes assume $G$ and $k_0$ satisfy the following additional assumptions.
\begin{definition}\label{def.char}
    Let $\Ru(G)\subset G^{\circ}$ denote the unipotent radical of $G$ and set $\overline{G} = G/\Ru(G)$; see \cite[Definition 6.44]{milne2017algebraic} for the definition of $\Ru(G)$.
    We say that the characteristic of $k_0$ \emph{is good for} $G$ if the following hold:
    \begin{enumerate}
        \item The group $\overline{G}^{\circ}$ is reductive.
        \item If $\Char k_0> 0$, then there exists a maximal torus $T\subset \overline{G}$ split by a prime-to-$\Char k_0$ extension of $k_0$ and such that $|N_{\overline{G}}(T)/T|$ is coprime to $\Char k_0$ (note that $|N_{\overline{G}}(T)/T|$ is finite because $\overline{G}^{\circ}$ is reductive).
    \end{enumerate}
    Note that condition (1) is automatic if $G^{\circ}$ is reductive or $k_0$ is perfect \cite[Proposition 19.11]{milne2017algebraic}. If $k_0$ is imperfect, then there exists a purely inseparable extension $k_1/k_0$ such that (1) holds for $G_{k_1}$; see \cite[Proposition 1.1.9(2)]{conrad-gabber-prasad}.
\end{definition}

We fix a system of compatible primitive roots of unity $\zeta_n \in  k_{0,\sep}$ for all $n$ not divisible by $\Char k_0$. Let $k_0\subset k$ be a field. We will denote the cyclotomic character of $k$ by:
$$\theta: \Gal(k) \to (\hat{\bZ}')^*.$$
Here $ \hat{\bZ}' = \prod_{p\neq \Char k_0} \hat{\bZ}_p$ is the prime-to-$\Char k_0$ part of the profinite integers $\hat{\bZ}$.
The cyclotomic character is the unique homomorphism satisfying:
$$ {}^\sigma \zeta_n = \zeta_n^{\theta(\sigma)}$$
for all $n$ not divisible by $\Char k_0$ and $\sigma\in \Gal(k)$. Recall that a \emph{finite diagonalizable} group $A$ over $k$ is an \'etale group scheme isomorphic to $\mu_{n_1}\times \dots \times \mu_{n_r}$ for some $n_1,\dots,n_r$ coprime to $\Char k_0$. The following characterization of diagonalizable groups will be used implicitly throughout the paper. We leave the proof as an exercise to the reader.
\begin{fact}\label{fact.split_diag}
    Let $A$ be an abelian \'etale group over $k$ such that $|A|$ is finite and coprime to $\Char k$. Then $A$ is diagonalizable if and only if for any $a\in A(\ksep)$ and $\sigma\in \Gal(k)$:
    \begin{equation}\label{def.split_diag}
    {}^{\sigma} a = a^{\theta(\sigma)}.
    \end{equation}
\end{fact}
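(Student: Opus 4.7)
The proof is a standard exercise in Cartier duality, which I would organize as follows.

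\textbf{Forward direction.} Suppose $A \cong \mu_{n_1}\times \cdots \times \mu_{n_r}$ over $k$. Under this identification any $a \in A(\ksep)$ has coordinates of the form $(\zeta_{n_1}^{a_1},\ldots,\zeta_{n_r}^{a_r})$. Because each $\zeta_{n_i}$ satisfies ${}^\sigma\zeta_{n_i}=\zeta_{n_i}^{\theta(\sigma)}$ by the very definition of $\theta$, and because the Galois action commutes with the componentwise group law, one reads off immediately that ${}^\sigma a = a^{\theta(\sigma)}$. This direction requires no more than unpacking definitions.

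\textbf{Reverse direction.} Here I would appeal to Cartier duality in the following form: the functor $A\mapsto X(A) := \Hom_{\ksep\text{-gp}}(A_{\ksep},\Gm)$ is an antiequivalence between finite multiplicative-type $k$-groups of order coprime to $\Char k$ and finite abelian groups of the same order equipped with a continuous $\Gal(k)$-action, and $A$ is diagonalizable if and only if this Galois action on $X(A)$ is trivial. So the task reduces to showing the hypothesis \eqref{def.split_diag} forces trivial Galois action on $X(A)$.

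\textbf{The computation on characters.} For $\chi\in X(A)$, $a\in A(\ksep)$ and $\sigma\in\Gal(k)$, the Galois action is $({}^\sigma\chi)(a) = \sigma\bigl(\chi(\sigma^{-1}a)\bigr)$. Since the values of $\chi$ lie in $\mu_m(\ksep)$ for $m$ the exponent of $A$, we have $\sigma(\chi(b)) = \chi(b)^{\theta(\sigma)}$ for any $b$. Applying this with $b=\sigma^{-1}a$ and then using the hypothesis $\sigma^{-1}a = a^{\theta(\sigma)^{-1}}$ to simplify $\chi(\sigma^{-1}a) = \chi(a)^{\theta(\sigma)^{-1}}$, one gets
\[
({}^\sigma\chi)(a) \;=\; \chi(a)^{\theta(\sigma)^{-1}\cdot \theta(\sigma)} \;=\; \chi(a).
\]
Hence ${}^\sigma\chi = \chi$ for every $\sigma$, so the $\Gal(k)$-action on $X(A)$ is trivial and $A$ is diagonalizable.

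\textbf{Expected obstacle.} There is no real difficulty; the only subtle point is being careful that $\chi$ lands in roots of unity of order coprime to $\Char k$ (which uses the hypothesis on $|A|$) so that $\theta$ controls how Galois acts on the values of $\chi$. Everything else is bookkeeping with the definition of $\theta$ and the natural Galois action on $\Hom$-sets.
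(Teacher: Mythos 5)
The paper states this fact without proof, explicitly leaving it ``as an exercise to the reader,'' so there is no author-supplied argument to compare against. Your proof is correct. The forward direction is indeed immediate from the definition of $\theta$. For the reverse direction, your use of Cartier duality (or, more precisely, the anti-equivalence between finite $k$-groups of multiplicative type and finite $\Gal(k)$-modules) is the right framework: the hypothesis that $A$ is finite abelian \'etale of order coprime to $\Char k$ guarantees $A$ is of multiplicative type, so diagonalizability over $k$ is equivalent to triviality of the Galois action on $X(A)$. Your computation $({}^\sigma\chi)(a) = \sigma\bigl(\chi(\sigma^{-1}a)\bigr) = \chi(a)^{\theta(\sigma)^{-1}\theta(\sigma)} = \chi(a)$ uses both that $\chi$ lands in $\mu_m(\ksep)$ with $m$ coprime to $\Char k$ (so $\sigma$ acts on the output by $\theta(\sigma)$) and the hypothesis applied to $\sigma^{-1}$; both steps are correctly identified and justified, and the conclusion follows. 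This is exactly the argument the authors presumably had in mind.
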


The letter $\cO$ will denote a Henselian valuation ring containing $k_0$. We set $F =\Frac(\cO)$ and let $\nu: F^*\to \Gamma_F$ be the corresponding valuation. The residue field of $\nu$  will be denoted $k$. Note there exists a natural embedding $k_0\subset k$ induced from the inclusion $k_0\subset \cO$. Since $\nu$ is Henselian, it admits a unique extension to any algebraic extension $F\subset L$. We will denote the extension of $\nu$ to $L$ by $\nu$ again by abuse of notation. Unless explicitly stated we always assume:
\begin{assumption}\label{ass.finite_rank}
    The value group $\Gamma_F$ is a finitely generated (free) abelian group.
\end{assumption}
We will say $(F,\nu)$ is an \emph{iterated Laurent series field}, if it is isomorphic to $k(\!(t_1)\!)\dots(\!(t_r)\!)$ equipped with the $(t_1,\dots,t_r)$-adic valuation $\nu_r$ for some $r\geq 0$. Recall that $\nu_r$ is given by:
\begin{equation}\label{e.def_of_t-adic_val}
    \nu_r\big(\sum_{i_1,\dots,i_r}a_{i_1,\dots,i_r}t_1^{i_1}t_2^{i_2}\dots t_{r}^{i_r} \big) = \min\big\{ (i_1,\dots,i_r) \mid a_{i_1,\dots,i_r} \neq 0 \big\},
\end{equation}
where the minimum is taken with respect to the lexicographic ordering on $\bZ^r$.
Let  $F_{\oin} \subset \Ftr$ be the maximal inertial (or unramified) and maximal tamely ramified extensions of $F$ inside a fixed separable closure $\Fsep$.    For example, if $F = k(\!(t)\!)$ and $\nu$ is the $t$-adic valuation, then  $\Fin$ is the compositum of $F$ and $\ksep$ and   $\Ftr = \underset{(n,\Char k)=1}{\bigcup} \Fin(t^{1/n})$. The corresponding Henselian valuation rings are denoted $\Oin\subset \Otr$. We will use the following notation for the tamely ramified part of the absolute Galois group of $F$:
$$\Galtr(F) = \Gal(\Ftr/F).$$
We will denote the homomorphism $G(\Otr)\to G(\ksep)$ induced from the residue homomorphism $\Otr\to \ksep$  by $g\mapsto \overline{g}$. We refer the reader to  \cite{efrat2006valuations}, \cite[Appendix A]{tignol2015value} for general results on valuation theory.

\begin{remark}\label{rem.equivariance}
    All of $\cO,\Oin,\Otr$ are stabilized by $\Galtr(F)$ because the extension of $\nu$ to $\Ftr$ is preserved by $\Galtr(F)$.  Moreover, $\Galtr(F)$ stabilizes the maximal ideal of $\Otr$ and so it acts on its residue field $\ksep$. Therefore $\Galtr(F)$ acts on $G(\Oin),G(\Otr),G(\ksep)$ and the induced map $G(\Otr)\to G(\ksep)$ is $\Galtr(F)$-equivariant.
\end{remark}

The symbol $H^1(F,G)$ will stand for the Galois cohomology set $H^1(\Gal(F),G(\Fsep))$. We will denote cohomology classes in $H^1(F,G)$ by $[c_\sigma]$ where $c_\sigma\in Z^1(\Gal(F),G(\Fsep))$ is a $\Gal(F)$-cocycle.  Let $F\subset L$ be a field extension and assume $\Fsep \subset L_{\sep}$. The restriction map of absolute Galois groups
$$\Gal(L) \to \Gal(F), \ \sigma\mapsto \sigma_{|\Fsep}$$
gives rise to restriction maps $$H^1(F,G) \to H^1(L,G), \  \gamma \mapsto \gamma_L.$$
Here $\gamma_L$ is the cohomology class of the restriction of $c_\sigma$ to $L$ given by $$\Inf_{L/F}(c)_\sigma = c_{\sigma_{|\Fsep}}.$$
The set of \emph{tamely ramified torsors} will be denoted:
$$H^1_{\tr}(F,G) = H^1(\Galtr(F),G(\Ftr)).$$ 
There is always a natural inclusion $H^1_{\tr}(F,G) \subset H^1(F,G)$ which is a bijection if $G^{\circ}$ is reductive and $\Char k_0$ is good for $G$ in the sense of Definition~\ref{def.char} (see Proposition~\ref{lem.loop_char_good}).

The morphism $\Spec \Oin \to \Spec \cO$ is the universal pro-\'etale cover of $\Spec \cO$ and therefore there is a natural identification $\pi_1(\Spec \cO) = \Gal(\Fin/F)$ \cite[Section 1, Example 5.2.(d)]{milne1980etale}. 
For any cocycle $a_\sigma\in Z^1(\Gal(\Fin/F),G(\Oin))$, we will write ${}_a G$ for the twist of $G_{\cO}$ by the $G$-torsor defined by $a_\sigma$ over $\cO$; see \cite[Section 2.2]{gille2016classification}.

A variety $X$ over $k_0$ is a reduced (but possibly reducible) quasi-projective scheme of finite type over $k_0$.
Finally, we recall the definition of the essential dimension of $G$ at a prime.
\begin{definition}\label{def.ess_dim_p}
     Let $\gamma\in H^1(F,G)$ be a $G$-torsor over a field $k_0\subset F$ and $p$ a prime integer. An algebraic extension $F\subset L$ is called a \emph{prime-to-$p$} extension if any finite subextension $F\subset L'\subset L$ is of degree coprime to $p$. The \emph{essential dimension at $p$} of $\gamma$ is the minimal number of parameters required to define $\gamma$ if one ignores prime-to-$p$ extensions: $$\ed(\gamma;p) = \min\Big\{\ed(\gamma_L) \mid F\subset L \text{ is a prime-to-}p \ \text{field extension}\Big\}.$$
We define the \emph{essential dimension at $p$} of $G$ by  $\ed(G;p) = \sup\{\ed(\gamma;p)\}$, the supremum taken over all $G$-torsors as before.
\end{definition}

\section{Preliminaries about Henselian valued fields}\label{sect.pre_fields}

\subsection{A fundamental exact sequence} In this section and in all remaining sections we keep the notation from Section~\ref{sect.notation}.
The Galois extensions $F\subset \Fin\subset \Ftr$ induce an exact sequence
\begin{equation}\label{e.seq_inertia_Foin}
    1\to \Galtr(F_{\oin}) \to \Galtr(F) \to \Gal(\Fin/F)\to 1.
\end{equation}
The groups $\Galtr(\Fin)$ and $\Gal(\Fin/F)$ can be understood in terms of the value group $\Gamma_F$ and the residue field $k$ respectively. 
There is a natural isomorphism $\Gal(\Fin/F) \cong \Gal(k)$ given by taking an automorphism $\sigma\in \Gal(\Fin/F)$ to the unique automorphism $\overline{\sigma}\in \Gal(k)$ satisfying  for all  $x\in \Oin$ :
\begin{equation}\label{e.residue_Gal_iso}
\overline{\sigma}(\overline{x}) = \overline{\sigma(x)},
\end{equation}
where $\overline{x}\in \ksep$ is the residue class of $x$ \cite[Theorems A.23]{tignol2015value}. 
\begin{convention}\label{rem.Fin}
    We will often identify $\Gal(\Fin/F)$ and $\Gal(k)$ using this isomorphism. For example, we might act on $G(\Fin)$ using $\Gal(k)$ or act on $G(\ksep)$ using $\Gal(\Fin/F)$.
\end{convention}
The group $\Galtr(F_{\oin})$ is the \emph{tame inertia group} of $F$. Let $ \hat{\bZ}' = \prod_{p\neq \Char k} \hat{\bZ}_p$ be the prime-to-$\Char k$ part of $\hat{\bZ}$ and set $\I{F}:= \Hom(\Gamma_F,\hat{\bZ}')$. There is an  isomorphism
\begin{equation}\label{e.inertia_iso}
    \Phi: \Galtr(F_{\oin}) \tilde{\to} \I{F}.
\end{equation}
The isomorphism $\Phi$ is determined by a choice of roots of unity (or equivalently by an isomorphism between $\hat{\bZ}'$ and the Tate module $\hat{\bZ}'(1) =\underset{(n,\Char k)= 1}{\lim} \mu_n$) and     by the following equation, which holds for all $n$ coprime to $\Char k$, $x\in F^*$ and $\sigma\in \Galtr(\Fin)$:
\begin{equation}\label{e.10}
    {\frac{\sigma(x^{1/n})}{x^{1/n}}} = {\zeta_n}^{\Phi(\sigma)(\nu(x))}.
\end{equation} 
See \cite[Theorem A.24]{tignol2015value} for example. Note that the above expression does not depend on a choice of an $n$-th root of $x$ because $\Fin$ contains all $n$-th roots of unity.
\begin{remark}\label{rem.quotients_tame_inertia}
    Since we are assuming $\Gamma_{F}\cong \bZ^r$ for some $r$, we have $\I{F} \cong \Hom(\bZ^r,\hat{\bZ}') \cong \hat{\bZ}'^r$. In particular, any finite quotient of $\I{F}$ is an abelian group of order coprime to $\Char k$.
\end{remark}
 Next we introduce uniformizers. They will help us describe splittings of \eqref{e.seq_inertia_Foin} in a systematic way.

\begin{definition}
    A left inverse $\pi : \Gamma_F \to F^*$ to $\nu: F^* \to 
    \Gamma_F$ will be called a \emph{uniformizing parameter}.
    Since the group operation in $\Gamma_F$ is written additively while $F^*$ is written multiplicatively, it will be convenient for us to use the exponential notation $\pi^{\gamma}$ in place of $\pi(\gamma)$, for any
    $\gamma\in  \Gamma_F$.
\end{definition}

The next proposition gives a convenient splitting of \eqref{e.seq_inertia_Foin}. It is originally due to J.\kern-.1em\ Neukirch \cite[Satz 2]{neukirch1968verzweigungstheorie}.

\begin{proposition}\label{prop.decomp_of_Gal}
    Let $\pi: \Gamma_{\Ftr} \to \Ftr^*$  be a uniformizer such that $\pi^{\gamma} \in F$ for all $\gamma\in \Gamma_{F}$. 
    \begin{enumerate}
        \item The field $F_{\pi} := F(\pi^{\gamma}; \gamma\in \Gamma_{\Ftr})$ is a complement of $\Fin$ in $\Ftr$. That is, it satisfies $F_{\pi} F_{\oin} = \Ftr$ and $F_{\pi}\cap \Fin = F$.
        \item There exists a unique section $s_{\pi} : \Gal(\Fin/F)\to \Gal_{\tr}(F)$ of the homomorphism $\Galtr(F)\to \Gal(\Fin/F)$ in \eqref{e.seq_inertia_Foin} whose image fixes $F_{\pi}$.
        \item Let $\Phi: \Galtr(\Fin) \to \I{F}$ be the isomorphism  \eqref{e.inertia_iso} and denote by $$\overline{s_\pi}: \Gal(k) \to \Galtr(F)$$ the composition of $s_\pi$ with the isomorphism $\Gal(k)\tilde{\to} \Gal(\Fin/F)$. There exists an isomorphism: $$\Psi_{\pi} : \Gal(k)\ltimes \I{F} \tilde{\to} \Galtr(F), \  (\sigma,f)\mapsto \overline{s}_\pi(\sigma) \Phi^{-1}(f)$$ Here the $\Gal(k)$-action on $\I{F}$ is given by pointwise multiplication by the cyclotomic character $\theta: \Gal(k) \to (\hat{\bZ}')^*$.
        \item For any $(\sigma,f)\in \Gal(k)\ltimes \I{F}$, $u\in \Fin$, $m$ coprime to $\Char(k)$ and $\gamma\in \Gamma_F$, we have:
        \begin{equation}\label{e.phi(1,1)}
        \Psi_\pi(\sigma,f)(u\pi^{\gamma/m}) = \sigma(u)\zeta_{m}^{\theta(\sigma) f(\gamma)} \pi^{\gamma/m}.
        \end{equation}
        Here we let $\Gal(k)$ act on $\Fin$ as in Remark~\ref{rem.Fin}.
        
    \end{enumerate}
\end{proposition}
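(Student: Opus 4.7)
My strategy has three movements corresponding to Parts (1), (2), and (3)--(4); everything rests on carefully exploiting the complementarity of $F_\pi$ and $\Fin$ inside $\Ftr$.

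For Part (1), I would first show $\Fin \cdot F_\pi = \Ftr$ via the structure theory of tame extensions. Every finite tame extension of $F$ sits inside $\Fin(x^{1/n})$ for some $x \in F^*$ and $n$ coprime to $\Char k$; writing $x = u\pi^{\nu(x)}$ with $u \in \cO^*$, Hensel's lemma produces $u^{1/n} \in \Oin$, so $x^{1/n} = u^{1/n} \pi^{\nu(x)/n} \in \Fin \cdot F_\pi$. For $F_\pi \cap \Fin = F$, I would observe that $F_\pi/F$ is totally ramified (the residue field is not enlarged by adjoining roots of uniformizers) while $\Fin/F$ is unramified, forcing their intersection to be both and hence equal to $F$.

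Part (2) is then a formal consequence. The hypotheses $F_\pi \Fin = \Ftr$ and $F_\pi \cap \Fin = F$ force the restriction $\Gal(\Ftr/F_\pi) \to \Gal(\Fin/F)$ to be an isomorphism of topological groups; its inverse, viewed as a map into $\Galtr(F)$, is the desired section $s_\pi$. Uniqueness is immediate: any section whose image fixes $F_\pi$ lies in $\Gal(\Ftr/F_\pi)$ and, by surjectivity onto $\Gal(\Fin/F)$, must agree with $s_\pi$.

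For Parts (3) and (4), the bijectivity of $\Psi_\pi$ as a set map is automatic from the splitting of \eqref{e.seq_inertia_Foin}. The substantive content is identifying the group law as the claimed semidirect product, and this reduces to a single computation: conjugation by $\overline{s}_\pi(\sigma)$ on $\Galtr(\Fin)$ corresponds, via $\Phi$, to pointwise multiplication by $\theta(\sigma)$. I would verify this by evaluating $\overline{s}_\pi(\sigma) \Phi^{-1}(f) \overline{s}_\pi(\sigma)^{-1}$ on a typical generator $\pi^{\gamma/n}$: the outer $\overline{s}_\pi(\sigma)^{-1}$ fixes $\pi^{\gamma/n}$, the middle factor introduces $\zeta_n^{f(\gamma)}$ by \eqref{e.10}, and then $\overline{s}_\pi(\sigma)$ sends $\zeta_n \mapsto \zeta_n^{\theta(\sigma)}$ (since it acts on $\Fin$ by $\sigma$), yielding $\pi^{\gamma/n} \mapsto \zeta_n^{\theta(\sigma) f(\gamma)} \pi^{\gamma/n}$, which by \eqref{e.10} equals $\Phi^{-1}(\theta(\sigma) \cdot f)(\pi^{\gamma/n})$. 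Part (4) is a direct unfolding of $\Psi_\pi(\sigma,f)(u\pi^{\gamma/m})$, using that $\overline{s}_\pi(\sigma)$ acts on $u \in \Fin$ by $\sigma$ and fixes $\pi^{\gamma/m}$, while $\Phi^{-1}(f)$ fixes $\Fin$.

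The main obstacle I expect is bookkeeping around the $\Gal(k)$-action convention: one must be careful whether the twist by $\theta(\sigma)$ or by $\theta(\sigma)^{-1}$ appears, which depends on how $\overline{s}_\pi(\sigma)$ is placed relative to $\Phi^{-1}(f)$ in the definition of $\Psi_\pi$ and on the sign convention in \eqref{e.10}. Once these normalizations are pinned down, the remainder is routine.
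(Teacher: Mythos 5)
Your proposal follows essentially the same route as the paper: for Part (1) you adjoin roots of uniformizers and absorb unit roots into $\Oin$ via Hensel's lemma to show $F_\pi \Fin = \Ftr$, then argue by ramification/residue considerations that the intersection is $F$; for Part (2) you invert the restriction isomorphism $\Gal(\Ftr/F_\pi) \to \Gal(\Fin/F)$; and Parts (3)--(4) are the same conjugation computation (the paper defers Part (3) to \cite[Corollary 22.1.2]{efrat2006valuations} while you spell it out, which is perfectly fine). One small correction: the claim that every finite tame extension lies in $\Fin(x^{1/n})$ for a \emph{single} $x\in F^*$ is only true when $\Gamma_F$ has rank one --- for rank $r\geq 2$ with, say, both ramification indices equal, the ramified part of the extension is not cyclic --- but the argument goes through verbatim if you allow finitely many $x_i^{1/n_i}$ (equivalently, work with a $\bZ$-basis of $\Gamma_F$ as the paper does), since the Hensel step applies to each $x_i$ separately.
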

\begin{proof}
    Let $\gamma_1,\dots,\gamma_r$ be a $\bZ$-basis for $\Gamma_F$. The field $F_n = F(\pi_1^{\gamma_1/n},\dots,\pi_r^{\gamma_r/n})$
    is totally tamely ramified with value group $\frac{1}{n}\Gamma_F$ for all $n$ coprime to $\Char k$. Since $\Gamma_{\Ftr} = \bigcup_{n \text{ coprime to }p} \frac{1}{n} \Gamma_{F}$ \cite[Theorem A.24]{tignol2015value}, we have:
    $$F_\pi = \bigcup_{n \text{ coprime to }p} F_n.$$
    Therefore the residue field of $F_\pi$ is $k$ and $\Gamma_{F_{\pi}} = \Gamma_{\Ftr}$. 
    This implies Parts (1) and (2) by \cite[Satz 2]{neukirch1968verzweigungstheorie} and the discussion following it; see also the proof of \cite[Theorem 22.1.1]{efrat2006valuations}. We note that the restriction map $\res^{\Ftr}_{\Fin}:\Gal(\Ftr/F_\pi)\to \Gal(\Fin/F)$ is an isomorphism and $s_\pi$ is the composition of $(\res^{\Ftr}_{\Fin})^{-1}$ with the canonical inclusion $\Gal(\Ftr/F_\pi)\subset \Galtr(F)$. Part (3) follows from the fact that $s_\pi$ splits the exact sequence \eqref{e.seq_inertia_Foin}; see \cite[Corollary 22.1.2]{efrat2006valuations}. To prove Part (4), we compute using Part (3):
\begin{align*}
\Psi_\pi(\sigma,f)(u\pi^{\gamma/m}) &= \overline{s}_\pi(\sigma) \Phi^{-1}(u \pi^{\gamma/m}) \\
                    &= \overline{s}_\pi(\sigma)(u \zeta_m^{f(\gamma)} \pi^{\gamma/m}) \\
                    &=\sigma(u \zeta_m^{f(\gamma)}) \overline{s}_\pi(\sigma)(\pi^{\gamma/m})  \\
                    &= \sigma(u) \zeta_m^{\theta(\sigma)f(\gamma)} \pi^{\gamma/m}.
\end{align*}
    Here the last three inequalities follow from \eqref{e.10}, \eqref{e.residue_Gal_iso} and the fact that $\overline{s}_\pi(\sigma)$ fixes $F_\pi$.
 \end{proof}
The decomposition  $\Galtr(F)\cong \Gal(k)\ltimes \I{F}$ of Proposition~\ref{prop.decomp_of_Gal} is illustrated by the following diagram of Galois extensions of fields:
\begin{equation}
\begin{tikzcd}[row sep = huge, column sep = huge, arrow style=tikz,arrows=-]
 & \Ftr = F_{\pi} \otimes_F \Fin
   \arrow[dl, "\Gal(k)"']
   \arrow[dr, "\I{F}"] & \\
F_\pi
   \arrow[dr, "\I{F}"']
 & {\scriptstyle \Galtr(F) \  \cong\  \Gal(k)\ltimes\I{F}} \arrow[u] & \Fin \\
 & F
   \arrow[u]
   \arrow[ur, "\Gal(k)"'] &
\end{tikzcd}
\end{equation}
For example, if $F = \bR(\!(t)\!)$ is equipped with the $t$-adic valuation, then $F_\pi = \bigcup_{n\in \bN}\bR(\!(t^{1/n})\!)$ is the field of Puiseux series over $\bR$, $\Fin = \bC(\!(t)\!)$ and $\Ftr = F_{\alg} = \bigcup_{n\in \bN} \bC(\!(t^{1/n})\!)$ is the field of Puiseux series over $\bC$. An element $\sigma \in\Gal(\bR)$ acts on $f = \sum_{q\in \bQ} a_q t^q \in F_{\alg}$ by the formula: $$\sigma.f = \sigma.(\sum_{q\in \bQ} a_{q} t^q) = \sum_{q\in \bQ} \sigma(a_q) t^q.$$ The group $\hat{\bZ}$ acts by $k.f=k.(\sum_{q\in \bQ} a_{q} t^q) = \sum_{q\in \bQ} a_q (e^{2\pi  iq })^{k}t^q$.  By Proposition~\ref{prop.decomp_of_Gal}, these two types of automorphisms generate $\Gal(\bR(\!(t)\!))$ and induce a decomposition $$\Gal(\bR(\!(t)\!)) \cong \Gal(\bR)\ltimes\I{F} \cong \bZ/2\bZ \ltimes \hat{\bZ}.$$

\begin{remark}\label{rem.uniformizers}
    From now on, whenever we use a uniformizer $\pi$ of $\Ftr$, we will always assume $\pi^{\gamma}\in F$ for all $\gamma\in \Gamma_F$.  For any $\pi_1,\dots,\pi_r\in F^*$ such that $\nu(\pi_1),\dots,\nu(\pi_r)$ is a basis for $\Gamma_F$ there exists a uniformizer $\pi$ for $F$ such that for all $1\leq i\leq r$:
    $$\pi^{\nu(\pi_i)} = \pi_i.$$
    Moreover, $\pi$ may be extended to a uniformizer of $\Ftr$. To see this pick a compatible system of roots $\pi^{1/n}$ for all $n$ coprime to $q = \Char k_0$ as in \cite[Lemma 3.2]{reichstein-scavia-specialization}. By \cite[Section 16.2]{efrat2006valuations}, $\Gamma_{\Ftr} = \bZ_{(q)}\otimes_{\bZ} \Gamma_F$, where:
    $$\bZ_{(q)} = \{ \frac{m}{n} \in \bQ \mid n \text{ coprime to } q\}.$$
    Therefore $\nu(\pi_1),\dots,\nu(\pi_r) \in \Gamma_{\Ftr}$ is a $\bZ_{(q)}$ basis and the homomorphism
    $$\bZ_{(q)}\otimes_{\bZ} \Gamma_F \to \Ftr, \ \ \sum_{i=1,\dots,r} \frac{m_i}{n_i} \nu(\pi_i) \mapsto \prod_{i=1,\dots,r} \pi_i^{m_i/n_i},$$
    is a uniformizer for $\Ftr$ extending $\pi$.
\end{remark}

\subsection{Extensions of Henselian fields}
Let $L/F$ be an extension of Henselian fields and denote the corresponding extension of residue fields by $l/k$ . Pick separable closures such that $\Fsep \subset L_{\sep}$. Then we have $\Ftr \subset \Ltr$ and $\Fin \subset L_{\oin}$. Therefore there are well-defined restriction maps:
$$\Galtr(L) \to \Galtr(F), \ \ \Gal(L_{\oin}/L) \to \Gal(\Fin/F), \ \Galtr(L_{\oin}) \to \Galtr(\Fin).$$
There is also a restriction map $\I{L}\to \I{F}, f\mapsto f_{|F}$, given by restricting a homomorphism $f\in \I{L}$ to $\Gamma_F\subset \Gamma_L$.
Next we state a functoriality lemma for the decomposition of $\Galtr(F)$.

\begin{lemma}\label{lem.func_Galois_dec}
    Let $F\subset L$ be an extension of Henselian valued fields and denote by $\nu$ the valuation on $L$. Denote the residue field of $F, L$ by $k$ and $l$ respectively. Let $\pi,\tau$ be uniformizers for $\Ftr$ and $\Ltr$ respectively. Denote the induced section $\overline{s}_\tau : \Gal(l) \to \Galtr(L)$ and isomorphisms as in Proposition~\ref{prop.decomp_of_Gal} by $$\Psi^F_\pi: \Gal(k)\ltimes \I{F} \to \Galtr(F),\  \Psi^L_\tau: \Gal(l)\ltimes \I{L} \to \Galtr(L).$$
    For any $\gamma\in \Gamma_{\Ftr}$ set $u^\gamma = \pi^{\gamma} \tau^{-\gamma}$. For any $\sigma\in \Gal(l)$, there exists a unique homomorphism $\chi_\sigma\in \I{F}$ satisfying the equation:
    \begin{equation}\label{e.25}
    \frac{\overline{s}_\tau(\sigma)(u^{\gamma/n})}{u^{\gamma/n}}= \zeta^{\chi_\sigma(\gamma)}_n
    \end{equation}
    for all $\gamma\in \Gamma_F$ and $n$ coprime to $\Char k$. 
    We have for all $\sigma \in \Gal(l), f\in \I{L}$:
        $$\Psi^L_\tau(\sigma,f)_{|\Ftr} = \Psi^F_\pi(\sigma_{|\ksep},f_{|F} + \theta(\sigma)^{-1} \chi_\sigma).$$
    
\end{lemma}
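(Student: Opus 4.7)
The plan is to compute both sides of the claimed identity on a generating set of $\Ftr$, namely on elements of the form $u\pi^{\gamma/m}$ with $u\in \Fin$, $\gamma\in \Gamma_F$ and $m$ coprime to $\Char k$, using Proposition~\ref{prop.decomp_of_Gal}(4) applied to both pairs $(F,\pi)$ and $(L,\tau)$. The bridge between the two computations is provided by the character $\chi_\sigma$, whose construction constitutes the first step.

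First, I would construct $\chi_\sigma$ and check it defines an element of $\I{F}$. The element $u^\gamma = \pi^\gamma\tau^{-\gamma}$ lies in $L^\times$ and has valuation zero, since $\pi^\gamma \in F \subset L$ and $\tau^\gamma \in L$ both have valuation $\gamma\in \Gamma_F$. Because $\overline{s}_\tau(\sigma)$ belongs to $\Galtr(L)$, it fixes $u^\gamma$ pointwise; thus $\overline{s}_\tau(\sigma)(u^{\gamma/n})$ is another $n$th root of $u^\gamma$, forcing $\overline{s}_\tau(\sigma)(u^{\gamma/n})/u^{\gamma/n} = \zeta_n^{a_n(\gamma)}$ for a unique $a_n(\gamma)\in \bZ/n\bZ$. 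The relation $(u^{\gamma/n})^{n/m}=u^{\gamma/m}$ forces $a_n(\gamma)\equiv a_m(\gamma)\pmod{m}$ whenever $m\mid n$, so these residues assemble into an element $\chi_\sigma(\gamma)\in \hat{\bZ}'$. Additivity in $\gamma$ is inherited from multiplicativity of the uniformizers, namely $u^{(\gamma_1+\gamma_2)/n}=u^{\gamma_1/n}u^{\gamma_2/n}$.

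Second, I would verify that $u^{\gamma/m}\in L_{\oin}$, so that Proposition~\ref{prop.decomp_of_Gal}(4) applies to the product $u\cdot u^{\gamma/m}\in L_{\oin}$. Since $u^\gamma\in L^\times$ is a unit and $m$ is coprime to the residue characteristic, the polynomial $X^m-u^\gamma$ has $m$ distinct roots in $l_{\sep}$; by Hensel's lemma, each of its roots in $L_{\mathrm{sep}}$, and in particular $u^{\gamma/m}=\pi^{\gamma/m}\tau^{-\gamma/m}$, lies in $L_{\oin}$. This also identifies the action of $\overline{s}_\tau(\sigma)$ on $u^{\gamma/m}$ with that of $\sigma$ viewed in $\Gal(L_{\oin}/L)\cong \Gal(l)$.

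Third, I would compute both sides on $u\pi^{\gamma/m}$. Rewriting $\pi^{\gamma/m}=u^{\gamma/m}\tau^{\gamma/m}$, Proposition~\ref{prop.decomp_of_Gal}(4) for $(L,\tau)$ together with the defining property of $\chi_\sigma$ give
$$\Psi^L_\tau(\sigma,f)(u\pi^{\gamma/m})=\sigma(u)\sigma(u^{\gamma/m})\zeta_m^{\theta(\sigma)f(\gamma)}\tau^{\gamma/m}=\sigma(u)\zeta_m^{\chi_\sigma(\gamma)+\theta(\sigma)f(\gamma)}\pi^{\gamma/m}.$$
On the other hand, applying Proposition~\ref{prop.decomp_of_Gal}(4) directly to $(F,\pi)$, and using $\theta(\sigma_{|\ksep})=\theta(\sigma)$ (since $\zeta_n\in \ksep$) together with $\sigma_{|\ksep}(u)=\sigma(u)$ (since $u\in \Fin\subset L_{\oin}$), the right hand side $\Psi^F_\pi(\sigma_{|\ksep},f_{|F}+\theta(\sigma)^{-1}\chi_\sigma)(u\pi^{\gamma/m})$ evaluates to $\sigma(u)\zeta_m^{\theta(\sigma)f(\gamma)+\chi_\sigma(\gamma)}\pi^{\gamma/m}$, matching the previous expression.

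The \emph{main obstacle}, really a bookkeeping issue, is accounting for the twist by $\theta(\sigma)^{-1}$ in the second coordinate of $\Psi^F_\pi$: this factor is forced by the convention in Proposition~\ref{prop.decomp_of_Gal}(4) which absorbs the cyclotomic character into the exponent of $\zeta_m$, whereas $\chi_\sigma$ is defined directly through the action of $\overline{s}_\tau(\sigma)$ with no such correction. Once this asymmetry is tracked correctly, the identity reduces to a straightforward substitution.
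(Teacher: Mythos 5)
Your proof is correct and follows essentially the same approach as the paper: show via Hensel's lemma that $u^{\gamma/n}\in L_{\oin}$ (so that $\overline{s}_\tau(\sigma)$ acts on it as $\sigma$ does), then evaluate both sides on the generators $v\pi^{\gamma/n}$ of $\Ftr$ over $F$ using Proposition~\ref{prop.decomp_of_Gal}(4) twice. The only difference is that you spell out the well-definedness and additivity of $\chi_\sigma$, which the paper leaves implicit in the statement.
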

\begin{proof} For any  $\gamma\in \Gamma_F$ and $n$ coprime to $\Char k$, $u^{\gamma/n}$ is an $n$-th root of $u^\gamma\in \Lin$. Since $u^\gamma$ is a unit and the residue field of $\Lin$ is separably closed, Hensel's lemma implies $u^{\gamma/n} \in \Lin$. In particular, we have:
$$\overline{s}_\tau(\sigma)(u^{\gamma/n}) = \sigma(u^{\gamma/n}).$$
Therefore for any $v\in \Fin, \gamma\in \Gamma_F$  applying \eqref{e.phi(1,1)} twice gives: 
\begin{align*}
 \Psi^L_{\tau}(\sigma,f)(v\pi^{\gamma/n}) &= \Psi^L_{\tau}(\sigma,f)(vu^{\gamma/n}\tau^{\gamma/n}) \\
 &=^{\eqref{e.phi(1,1)}} \sigma(v) \sigma(u^{\gamma/n}) \zeta_n^{\theta(\sigma)f(\gamma)} \tau^{\gamma/n} \\
 &= \sigma(v) \frac{\sigma(u^{\gamma/n})}{u^{\gamma/n}} \zeta_n^{\theta(\sigma)f(\gamma)} u^{\gamma/n}\tau^{\gamma/n}\\
 &= \sigma(v) \zeta_n^{\chi_\sigma(\gamma)} \zeta_n^{\theta(\sigma)f(\gamma)}  \pi^{\gamma/n} \\
&= \sigma(v) \zeta_n^{\theta(\sigma)f(\gamma) + \chi_\sigma(\gamma)} \pi^{\gamma/n} \\
&=^{\eqref{e.phi(1,1)}}\Psi_\pi^F(\sigma_{\ksep},f_{|F}+\theta(\sigma)^{-1} \chi_\sigma)(v\pi^{\gamma/n})
\end{align*}
The claim follows because elements of the form $v\pi^{\gamma/n}$ generate $\Ftr$ over $F$ by Proposition~\ref{prop.decomp_of_Gal}(1).
\end{proof}

Next, we recall A.\kern-.1em\ Ostrowski's foundational theorem in valuation theory. It will be used often and sometimes implicitly; see \cite[Theorem 17.2.1]{efrat2006valuations} for a modern proof.
\begin{theorem}{\cite{ostrowski1935untersuchungen}}\label{thm.ostrowski}
    Let $F\subset L$ be a finite extension of Henselian valued fields with value groups $\Gamma_F\subset \Gamma_L$ and residue fields $k\subset l$. There exists an integer $\delta_{L/F}$ such that:
    \begin{equation}\label{e.ostrowski}
        [L:F] = \delta_{L/F} [\Gamma_L:\Gamma_F][l:k] . 
    \end{equation}
    If $\Char k > 0$, then $\delta_{L/F}$ is a power of $\Char k$ and $\delta_{L/F} = 1$ otherwise. In particular, both $[\Gamma_L:\Gamma_F]$ and $[l:k]$ divide $[L:F]$.
\end{theorem}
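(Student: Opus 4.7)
The plan is to establish the inequality $[L:F] \geq [\Gamma_L:\Gamma_F][l:k]$ (which in particular makes both indices finite), define the defect
$$\delta_{L/F} := \frac{[L:F]}{[\Gamma_L:\Gamma_F][l:k]} \in \bZ_{\geq 1},$$
and then use Henselian ramification theory to show $\delta_{L/F}$ is a power of $p = \Char k$ (trivial in residue characteristic zero). The divisibility assertion then follows automatically from the factorization.

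For the fundamental inequality, I would pick lifts $e_1,\dots,e_f \in \cO_L$ of a $k$-linearly independent family $\overline{e}_1,\dots,\overline{e}_f \in l$, and elements $\pi_1,\dots,\pi_e \in L^*$ whose valuations represent distinct cosets of $\Gamma_F$ in $\Gamma_L$, and show that the $ef$ products $e_i\pi_j$ are $F$-linearly independent. Given a putative dependence $\sum_{i,j} a_{ij} e_i \pi_j = 0$ with $a_{ij}\in F$, I would group the sum as $\sum_j b_j \pi_j$ with $b_j = \sum_i a_{ij} e_i$. The key observation is that whenever the $b_j$ are not all zero, the valuations $\nu(b_j \pi_j)$ of the nonzero summands lie in pairwise distinct cosets of $\Gamma_F$, hence are all distinct, which contradicts $\sum_j b_j \pi_j = 0$. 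Thus every $b_j$ vanishes, and dividing $\sum_i a_{ij} e_i = 0$ by the $a_{ij}$ of minimal valuation and reducing modulo the maximal ideal contradicts the $k$-independence of the $\overline{e}_i$. This both bounds the two indices by $[L:F]$ and defines $\delta_{L/F}$ as a positive integer.

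For the structure of $\delta_{L/F}$, I would reduce to the Galois case by enlarging $L$ to a finite normal closure $\tilde L /F$ — crucially, since $F$ is Henselian, $\nu$ extends uniquely to $\tilde L$, so this is again an extension of Henselian valued fields; together with multiplicativity of the defect in towers (immediate from the multiplicativity of degrees, of residue degrees, and of ramification indices in any tower), it suffices to show $\delta_{\tilde L/F}$ is a $p$-power. For Galois $\tilde L/F$ with group $G$, I would invoke the standard filtration $1 \to I_w \to I \to I_t \to 1$ of the inertia subgroup by wild inertia and tame inertia, and identify $\tilde L^I$ as the maximal unramified subextension (with $[\tilde L^I : F] = [\tilde l : k]$) and $\tilde L^{I_w}$ as the maximal tamely ramified subextension (with $[\tilde L^{I_w}:\tilde L^I] = |I_t| = [\Gamma_{\tilde L}:\Gamma_F]$, via the description of tame inertia as $\Hom(\Gamma_{\tilde L}/\Gamma_F, \hat{\bZ}'(1))$ from Proposition~\ref{prop.decomp_of_Gal}). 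Then $[\tilde L : \tilde L^{I_w}] = |I_w| = \delta_{\tilde L/F}$. Because $I_w$ is by construction the kernel of the action of $I$ on the separable residue field and on roots of uniformizers of order coprime to $p$, it is a pro-$p$ group when $p = \Char k > 0$ and trivial when $\Char k = 0$. The main obstacle is the Galois step — specifically, verifying that $\tilde L^I$ and $\tilde L^{I_w}$ realize exactly the expected residue and value-group data — which rests on Hensel's lemma to lift residues and on the computation of the tame quotient already used in the excerpt.
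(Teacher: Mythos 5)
The paper does not prove this theorem; it cites \cite{ostrowski1935untersuchungen} and refers to \cite[Theorem~17.2.1]{efrat2006valuations} for a modern account, so there is no in-paper proof to compare against. Your first step, the fundamental inequality $[L:F]\geq[\Gamma_L:\Gamma_F][l:k]$ via linear independence of the products $e_i\pi_j$, is the standard argument and is correct. (One small point: the inequality shows only that $\delta_{L/F}\geq 1$ as a rational number; its integrality follows only from the later $p$-power claim, so asserting $\delta_{L/F}\in\bZ_{\geq 1}$ at that stage is premature, though harmless.)

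The Galois step, however, has a genuine gap, and it sits exactly where the content of Ostrowski's theorem lies. You claim $[\tilde L^I:F]=[\tilde l:k]$, $[\tilde L^{I_w}:\tilde L^I]=[\Gamma_{\tilde L}:\Gamma_F]$, and hence $\delta_{\tilde L/F}=|I_w|$. All three fail in general. The maximal unramified subextension $\tilde L^I$ has residue field equal to the \emph{separable} closure of $k$ in $\tilde l$, so $[\tilde L^I:F]=[\tilde l:k]_{\mathrm{sep}}$; the tame quotient $I_t$ has order only the prime-to-$p$ part of $[\Gamma_{\tilde L}:\Gamma_F]$; and consequently $|I_w|$ is the product of $\delta_{\tilde L/F}$ with the inseparable residue degree and the $p$-part of the ramification index --- not $\delta_{\tilde L/F}$ alone. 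Concretely, take $k=\bF_p(s)$, $F=k(\!(t)\!)$, and $L=F(y)$ with $y^p-ty-s=0$: this is a separable degree-$p$ extension with $l=k(s^{1/p})$, $\Gamma_L=\Gamma_F$, hence $\delta_{L/F}=1$. Its Galois closure is $\tilde L=L(t^{1/(p-1)})$ with $\tilde l/k$ purely inseparable, so $I=\Gal(\tilde L/F)$, $|I_w|=p$, yet $\delta_{\tilde L/F}=1$. (Even simpler: $\bQ_p(p^{1/p})/\bQ_p$ has $|I_w|=p$ but $\delta=1$.) Separately, if $L/F$ itself is inseparable its normal closure is normal but not Galois, so the inertia filtration is not available before first splitting off a purely inseparable tower. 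The known proofs therefore do not read $\delta$ off the inertia filtration; they reduce to degree-$p$ subquotients and analyze Kummer/Artin--Schreier steps together with the purely inseparable case to see that any residual defect is a $p$-power.
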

We finish this section with a technical lemma that will be used to avoid working with infinite algebraic extensions. Recall that a finite extension $L/F$ is tamely ramified if and only if $[\Gamma_L:\Gamma_F]$ is coprime to $\Char k$, $l/k$ is separable and $\delta_{L/F} = 1$ in \eqref{e.ostrowski}.
\begin{lemma}\label{lem.limit_of_ext}
    Let $(F,\nu)$ be an ascending union of a countable chain of valued field $F_1\subset F_2 \subset \dots $  and let $L/F$ be a finite tamely ramified Galois extension. For all large enough $i$ there exists a tamely ramified Galois extension $L_i/F_i$ such that $L = L_i \otimes_{F_i} F$  and the restriction map $\Gal(L/F)\to \Gal(L_i/F_i)$ is an isomorphism. 
\end{lemma}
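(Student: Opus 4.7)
My plan is to descend $L/F$ along the tower $F_1 \subset F_2 \subset \cdots$ by choosing a primitive element and showing that its minimal polynomial, together with the Galois action, descends for $i$ sufficiently large. Writing $L = F(\alpha)$ with minimal polynomial $p(x) \in F[x]$ of degree $n = [L:F]$, the Galois assumption lets me express each root as $\sigma_j(\alpha) = q_j(\alpha)$ for polynomials $q_j \in F[x]$ of degree less than $n$. I pick $i_0$ so that $p$ and all $q_j$ have coefficients in $F_{i_0}$, and set $L_i := F_i[x]/(p(x))$ for $i \geq i_0$; by construction $L_i \otimes_{F_i} F = L$.

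Because any factorization of $p$ over $F_i$ would lift to one in $F[x]$, the polynomial $p$ remains irreducible over every $F_i$ containing its coefficients, so each $L_i$ is a field. The identities $p(q_j(x)) \equiv 0$ and $q_j(q_k(x)) \equiv q_{\sigma_j\sigma_k}(x) \pmod{p(x)}$ hold in $F[x]$ and therefore in $F_i[x]$; they show that the rule $\sigma_j \mapsto (\beta_i \mapsto q_j(\beta_i))$, where $\beta_i$ is the image of $x$ in $L_i$, defines an injective group homomorphism $\Gal(L/F) \hookrightarrow \Aut_{F_i}(L_i)$. Both groups have order $n$, so this is an isomorphism, and under the natural inclusion $L_i \subset L$ it coincides with the restriction map. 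Thus $L_i/F_i$ is Galois with $\Gal(L_i/F_i) \cong \Gal(L/F)$.

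The main obstacle I anticipate is verifying tameness of $L_i/F_i$, which I will do by controlling the ramification and residue-degree data in the limit. By Assumption~\ref{ass.finite_rank}, $\Gamma_L$ is a finitely generated free abelian group, hence Noetherian, so the ascending chains of subgroups $\Gamma_{F_i} \subseteq \Gamma_F$ and $\Gamma_{L_i} \subseteq \Gamma_L$—whose unions are the full groups, since $F = \bigcup F_i$ and $L = \bigcup L_i$—must stabilize; this gives $[\Gamma_{L_i}:\Gamma_{F_i}] = e := [\Gamma_L:\Gamma_F]$ for $i$ large. Applying the primitive element theorem to the finite separable extension $l/k$ yields $l = k(\bar\theta)$ with separable minimal polynomial $m_k \in k[x]$ of degree $f := [l:k]$; once $\bar\theta \in l_i$ we have $[l_i:k_i] \geq [k_i(\bar\theta):k_i] \geq [k(\bar\theta):k] = f$. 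Ostrowski's theorem (Theorem~\ref{thm.ostrowski}), applied after replacing $F_i$ by its Henselization inside $F$ if necessary (which preserves $\Gamma_{F_i}$ and $k_i$), then gives $n = \delta_i \cdot e_i \cdot f_i \geq \delta_i \cdot ef = \delta_i n$, forcing $\delta_i = 1$ and $f_i = f$. Consequently $l_i = k_i(\bar\theta)$ and its minimal polynomial over $k_i$ must coincide with $m_k$, so $l_i/k_i$ is separable; combined with $e$ coprime to $\Char k_i = \Char k$, this establishes tameness of $L_i/F_i$.
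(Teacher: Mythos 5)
Your argument is correct and follows essentially the same approach as the paper's: descend a primitive element together with its minimal polynomial to $F_i$, then verify tameness via Ostrowski's theorem. You are somewhat more explicit about the details — building the restriction isomorphism directly from the polynomials $q_j$, invoking Noetherianness of $\Gamma_F$ for value-group stabilization, Henselizing $F_i$ before applying Ostrowski, and deducing $f_i=f$ and defectlessness simultaneously from a single degree count — whereas the paper obtains $[l_i:k_i]=[l:k]$ by a separate primitive-element argument and leaves the value-group step to the reader, but the underlying strategy is identical.
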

\begin{proof}
    Let $x\in L$ be an element such that $L = F(x)$ and let $P(t)\in F[t]$ be the minimal polynomial of $x$. For large enough $i$, $P(t)\in F_i[t]$ and so $P(t)$ is also the minimal polynomial of $x$ over $F_i$. Set $L_i := F_i(x)$ and note $$[L_i:F_i] = \deg P =[L:F].$$ By enlarging $i$ further we can make $P(t)$ factor fully over $L_i$ so that $L_i/F_i$ is Galois. Let $l_i/k_i$ denote the residue field extension of $L_i/F_i$. A similar argument shows that $l_i/k_i$ is Galois of degree $[l:k]$ for large enough $i$. One also easily checks that for large enough $i$ we have $[\Gamma_{L_i}:\Gamma_{F_i}] = [\Gamma_L : \Gamma_F]$. Combining all of this together, we find that for large $i$:
    $$[L_i: F_i] = [L:F] = [\Gamma_L:\Gamma_F][l:k] = [\Gamma_{L_i}:\Gamma_{F_i}][l_i:k_i].$$
    Therefore $L_i/F_i$ is defectless (i.e. $\delta_{L_i /F_i} = 1$) and tamely ramified.   The restriction map $\Gal(L/F)\to \Gal(L_i/F_i)$ is an isomorphism because it is injective and $$|\Gal(L/F)|=[L:F] =[L_i :F_i] = |\Gal(L_i/F_i)|.$$
    Indeed, any two different elements $\sigma, \tau\in \Gal(L/F)$ differ on $L_i$ for large enough $i$.
\end{proof}

\section{Preliminaries about anisotropic torsors}\label{sect.pre_isotrop}

In this final section of preliminaries, we record a couple of facts about versal torsors and anisotropic torsors. We first recall the definition of a versal torsor.
\begin{definition}\label{def.versal}
    A versal torsor $\gamma \in H^1(l,G)$ is the generic fiber of a $G$-torsor for the \'etale topology $T\to B$ such that:
    \begin{enumerate}
        \item $B$ is a geometrically irreducible smooth variety over $k_0$.
        \item For any field extension $E/k_0$ with $|E|=\infty$ and any open dense subset $U\subset B$, there exists $u\in U(E)$ such that:
        $$\gamma \cong T_u := T \times_u \Spec E.$$
    \end{enumerate}
\end{definition}
Note that there exists a versal $G$-torsor \cite[Section 5.3]{serre-ci}.
\begin{definition}
    A $G$-torsor $T$ over a $k_0$-scheme $B$ is called isotropic if there exists an embedding $\Gm \to {}_T G$ over $B$, where ${}_T G = \Aut_G(T)$ is the group of $G$-equivariant automorphisms of $T$; see \cite[Page 6]{gille2016classification}.
\end{definition} 
The following spreading out lemma shows that the notion of isotropy plays well with inductive limits of base rings. 

\begin{lemma}\label{lem.isotrop_induct_lim}
    Let $T\to B$ be a $G$-torsor over an irreducible variety $B$ over $k_0$.
    \begin{enumerate}
        \item If $T$ is isotropic, then so is $T_u$ for any  $k_0$-algebra $R$ and $u\in B(R)$.
        \item Let $(\Lambda ,\leq)$ be a partially ordered filtered set with minimal element $0\in \Lambda$. Let $(R_\lambda)_{\lambda\in \Lambda}$ be an inductive system of $k_0$-algebras over $\Lambda$ with transition maps $\sigma_{\lambda \mu} :R_\lambda \to R_{\mu}$ for all $\lambda \leq \mu$. Let $u_0: \Spec R_0 \to B$ be a point and set $u_\lambda = u_0\circ \sigma_{0 \lambda}$ for any $\lambda\in \Lambda$. Assume $(R_\lambda)_{\lambda\in\Lambda}$ has an inductive limit $R$ and let $u = u_0 \circ \iota$, where $\iota: \Spec R \to \Spec R_0$ is the canonical morphism.  If $T_u$ is isotropic, then $T_{u_\lambda}$ is isotropic for some $\lambda\in \Lambda$.
        \item Let $u \in B(L)$ be the generic point of $B$. If $T_u$ is isotropic, then there exists an open $U\subset B$ such that $T_U$ is isotropic.
        \item Let $\gamma \in H^1(k,G)$ be a torsor over a field extension $k/k_0$. If $\gamma_l$ is isotropic for some $l/k$, then $\gamma_{l'}$ is isotropic for some finitely generated subextension $k\subset l'\subset l$.
    \end{enumerate}
\end{lemma}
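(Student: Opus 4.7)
The plan is to handle (1) by direct base change, to prove (2) as the main technical step via spreading-out, and to deduce (3) and (4) by expressing the relevant ring or field as a suitable filtered colimit and invoking (2).

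For (1), formation of the inner form ${}_T G$ commutes with base change, and closed immersions of group schemes are stable under pullback. Hence the given embedding $\Gm \hookrightarrow {}_T G$ over $B$ pulls back along $u : \Spec R \to B$ to an embedding $\Gm_R \hookrightarrow {}_{T_u} G$.

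For (2), the key point is that ${}_{T_u} G$ is an affine group scheme of finite presentation over $R$ (since $G$ is smooth, affine and of finite presentation over $k_0$, and $T$ is an \'etale $G$-torsor, so ${}_T G$ is \'etale-locally isomorphic to $G \times B$), and the same holds for $\Gm_R$. Because $R = \varinjlim R_\lambda$ is a filtered colimit, the standard limit theorems for morphisms of finitely presented schemes (EGA IV, \S 8) imply that the closed immersion $\Gm_R \hookrightarrow {}_{T_u} G$ of $R$-group schemes descends to a morphism $\Gm_{R_\lambda} \to {}_{T_{u_\lambda}} G$ over some $\lambda$, and that after further enlarging $\lambda$ both the group-scheme homomorphism axioms (finitely many commutative diagrams) and the closed-immersion property descend as well. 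This produces the required embedding $\Gm_{R_\lambda} \hookrightarrow {}_{T_{u_\lambda}} G$, so $T_{u_\lambda}$ is isotropic. I expect this spreading-out step to be the main obstacle; the remaining work is bookkeeping about which finite-presentation lemma to cite.

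Parts (3) and (4) then follow formally by choosing the right inductive system in (2). For (3), the function field $L$ of the irreducible variety $B$ satisfies $L = \varinjlim_U \cO_B(U)$ as $U$ runs over the cofiltered system of affine open subsets of $B$; the generic point $u \in B(L)$ factors through each $\Spec \cO_B(U) \to B$, and applying (2) to this system yields an affine open $U \subset B$ over which $T_U$ is already isotropic. For (4), regard $\gamma \in H^1(k,G)$ as a $G$-torsor over the one-point variety $B = \Spec k$, write $l$ as the filtered colimit of its finitely generated subextensions $l'/k$, and apply (2) with $R_{l'} = l'$ to produce a finitely generated $l'\subset l$ over which $\gamma_{l'}$ is isotropic.
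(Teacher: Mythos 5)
Your proof is correct and follows essentially the same route as the paper: prove (1) by base change, prove (2) by spreading out a finitely presented closed immersion over a filtered colimit of rings (the paper cites G\"ortz--Wedhorn in place of EGA IV \S 8, but it is the same lemma), and deduce (3) and (4) by choosing the appropriate inductive system and invoking (2). The only cosmetic wrinkle is in (3): the system of all affine opens of $B$ has no minimal element unless $B$ is affine, so to fit the hypotheses of (2) you should either reduce to $B$ affine (as the paper does) or fix a dense affine open $U_0$ and run the colimit over affine opens contained in $U_0$; this is the same argument once stated carefully.
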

\begin{proof}
Let $G' = {}_T G$ be the twisted group over $B$ defined by $T$. For any $u\in B(R)$, let $G'_u = {}_{T_u} G$ denote the group $G'\times_{u} \Spec R$ over $\Spec R$. 

(1) \ If $T$ is isotropic, then there exists an embedding $\mathbb{G}_{m,B} \subset G'$. This embedding specializes to an embedding $\mathbb{G}_{m,R} \subset G'_u$ and so $T_u$ is  isotropic. 

\smallskip
(2) \  Since $T_u$ is isotropic, there exists an embedding $\mathbb{G}_{m,R} \subset G'_u$. This embedding is induced from an embedding $\mathbb{G}_{m,R_\lambda}\subset G'_{u_\lambda}$ for some $\lambda\in \Lambda$ by \cite[Lemma 10.62]{gortz2010algebraic}. Therefore $T_{u_\lambda}$ is isotropic. 

\smallskip
(3) \ Assume without loss of generality that $B = \Spec R$ for some integral domain $R$ of finite type over $k_0$. Let $L$ be the fraction field of $R$. The generic point $u$ corresponds to the inclusion $R\subset L$ . Since $L$ is the inductive limit of all localization $R[f^{-1}]$ for $f\in R\setminus\{0\}$, the result follows from Part (2).

(4) \ The result follows immediately from Part (2) because $l$ is the inductive limit of all finitely generated subextensions $k\subset l'\subset l$.
\end{proof}

Recall that a field $k$ is called \emph{$p$-closed} (or \emph{$p$-special}) for some prime $p$ if the degree of any finite extension of $k$ is a power of $p$. An algebraic extension $k^{(p)}/k$ is called a \emph{$p$-closure of }$k$ if $k^{(p)}$ is $p$-closed and any finite subextension $k\subset k'\subset k^{(p)}$ is of prime-to-$p$ degree.  For any prime $p\neq \Char k$, the fixed field of any $p$-Sylow subgroup of $\Gal(k_{\mathrm{perf}})$ is a  $p$-closure of $k$; see \cite[Proposition 101.16]{ekm}.

\begin{proposition}\label{prop.versal_aniso}
    Let $\gamma \in H^1(k,G)$ be a versal torsor over some field $k/k_0$.
    \begin{enumerate}
        \item If $G$ admits an anisotropic torsor $\eta$ over some other field $k_0\subset l$, then $\gamma$ is anisotropic.
        
        \item Let $p\neq \Char k_0$ be a prime. If $G$ admits an anisotropic torsor $\eta$ over some $p$-closed field $k_0\subset l$, then $\gamma_{k^{(p)}}$ is anisotropic for any $p$-closure $k^{(p)}/k$.
    \end{enumerate}
    
\end{proposition}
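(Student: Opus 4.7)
The plan is to prove both parts by contradiction. Writing $\gamma$ as the generic fiber of a $G$-torsor $T \to B$ as in Definition~\ref{def.versal}, I would assume the relevant torsor ($\gamma$ in (1), $\gamma_{k^{(p)}}$ in (2)) is isotropic and use Lemma~\ref{lem.isotrop_induct_lim} to spread isotropy to a dense open of $B$ (or of a suitable étale cover of $B$). The versal property then realizes $\eta$ as a specialization in that open, and Lemma~\ref{lem.isotrop_induct_lim}(1) forces $\eta$ to be isotropic, contradicting the hypothesis.

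For Part (1), suppose $\gamma$ is isotropic. Lemma~\ref{lem.isotrop_induct_lim}(3) supplies a dense open $U \subset B$ on which $T_U$ is isotropic. The versal property applied to $\eta$ over $l$ (replacing $l$ by an infinite extension if $l$ is finite) produces $u \in U(l)$ with $T_u \cong \eta$, and Lemma~\ref{lem.isotrop_induct_lim}(1) forces $T_u$ to be isotropic, giving the desired contradiction.

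For Part (2), suppose $\gamma_{k^{(p)}}$ is isotropic. Lemma~\ref{lem.isotrop_induct_lim}(4) yields a finitely generated subextension $k \subset l' \subset k^{(p)}$ over which $\gamma_{l'}$ is isotropic; since $k^{(p)}/k$ is algebraic with only prime-to-$p$ finite subextensions, $l'/k$ is finite of degree coprime to $p$. I would realize $l'$ as the function field of a dominant generically finite $k_0$-morphism $f: B' \to B$ of prime-to-$p$ degree, finite étale over some dense open of $B$, and set $T' := T \times_B B'$, which has generic fiber $\gamma_{l'}$. Lemma~\ref{lem.isotrop_induct_lim}(3) yields a dense open $V' \subset B'$ on which $T'$ is isotropic; after shrinking $B$ to a dense open $U$ so that $f^{-1}(U)$ is finite étale over $U$ and contained in $V'$, the versal property of $\gamma$ applied to $\eta$ over the (infinite, $p$-closed) field $l$ produces $u \in U(l)$ with $T_u \cong \eta$. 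The fiber $f^{-1}(u)$ is then a finite étale $l$-scheme of total degree coprime to $p$; since $l$ is $p$-closed it splits completely, so $u$ lifts to some $u' \in f^{-1}(U)(l) \subset V'(l)$. Then $T'_{u'} = T_u \cong \eta$ is isotropic by Lemma~\ref{lem.isotrop_induct_lim}(1), contradicting the anisotropy of $\eta$.

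The main obstacle is this lifting step in Part~(2): $p$-closedness of $l$ is used precisely to guarantee $l$-points of finite étale covers of degree coprime to $p$, upgrading the versal specialization of $\gamma$ at $u$ to a specialization of the pulled-back, provably isotropic, torsor $T'$ at a point above $u$. Arranging that $f^{-1}(U) \subset V'$ after shrinking $B$ is a minor but necessary technical maneuver, made possible by the finiteness of $f$ over a dense open and the density of $V'$ in $B'$.
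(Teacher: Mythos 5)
Your Part~(1) is correct and matches the paper's argument. Your Part~(2) follows the same strategy as the paper (spread isotropy to a dense open of a cover, use $p$-closedness to lift the versal specialization), but there is a genuine gap in the key technical step.

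You assert that the morphism $f\colon B'\to B$ realizing the finite extension $l'/k$ can be taken to be \emph{finite \'etale} over a dense open of $B$, and you use this to conclude that the fiber $f^{-1}(u)$ is a finite \'etale $l$-scheme, which then has an $l$-point by $p$-closedness. This is only correct when $l'/k$ is separable. The hypothesis $p\neq\Char k_0$ does \emph{not} rule out $\Char k_0 = q > 0$ with $q\neq p$, and in that case the $p$-closure $k^{(p)}$ contains the whole perfect closure of $k$ (each finite subextension of $k_{\mathrm{perf}}/k$ has degree a power of $q$, hence prime to $p$). Consequently the finitely generated subextension $l'\subset k^{(p)}$ produced by Lemma~\ref{lem.isotrop_induct_lim}(4) may well be inseparable over $k$. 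Then the generic fiber of $f$ is $\Spec l'$, which is non-reduced over $\Spec k$, so \emph{no} dense open of $B$ carries $f$ as an \'etale cover, and $f^{-1}(u)$ need not be \'etale over $l$. Your lifting step collapses at exactly this point.

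The paper sidesteps this by only using generic \emph{flatness} (so $f$ is finite flat over a dense open, with no separability needed) and then invokes Lemma~\ref{lem.finite_lift}, whose proof works with the general structure theory of finite $l$-algebras: decompose the fiber into local Artin rings, compare $\dim_l R_i = d\cdot\len(R_i)$ to extract a residue field of degree prime to $p$, and conclude it equals $l$ because a $p$-closed $l$ (which is automatically perfect) has no nontrivial prime-to-$p$ extensions. To repair your argument you would need to replace ``finite \'etale'' by ``finite flat'' and replace the ``splits completely'' claim by the Artin-ring argument of Lemma~\ref{lem.finite_lift} (or reduce separately to the separable and purely inseparable parts of $l'/k$). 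As a minor point, even in the \'etale/separable case ``splits completely'' is stronger than what $p$-closedness gives you; what you actually get is that \emph{some} residue field equals $l$, which suffices.
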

\begin{proof}
    Assume $\gamma$ is the generic fiber of a $G$-torsor $T\to B$ as in Definition~\ref{def.versal}. Let $\eta$ be an arbitrary $G$-torsor over a field extension $k_0\subset l$. 

    (1) \   Let $u\in B(k)$ be a generic point of $B$ and assume that $\gamma = T_u$ is isotropic.  By Lemma~\ref{lem.isotrop_induct_lim}(3), there exists a dense open $U\subset B$ such that $T_U$ is isotropic. We may assume $|l|=\infty$ because passing to $l(\!(t)\!)$ does not affect whether $\eta$ is anisotropic or not; see e.g. \cite[Proposition 4.9]{gille2024newloop}. By versality, there exists a point $v\in U(l)$ such that $T_{v} = \eta$. Therefore $\eta$ is isotropic by Lemma~\ref{lem.isotrop_induct_lim}(1). This proves Part (1). 

\smallskip
    (2) \ Assume $l$ is $p$-closed. If $\gamma_{k^{(p)}}$ is isotropic, then $\gamma_{k'}$ is isotropic for some prime-to-$p$ extension $k\subset k'\subset k^{(p)}$ by Lemma~\ref{lem.isotrop_induct_lim}(4). The extension $k\subset k'$ is induced from a morphism $f: V \to B$ of varieties for some irreducible variety $V$ over $k_0$ with function field $k_0(V)=k'$. By Lemma~\ref{lem.isotrop_induct_lim}(3), we may replace $V$ with an open subset to assume $T_{V} = T\times_f V$ is isotropic.  Since $f$ is generically finite, there exist dense opens $U\subset B$, $W\subset V$ such that $f(W)\subset U$ and the restriction $f:W\to U$ is finite and flat \cite[Tag 02NX]{stacks-project}. Replace $B$ by $U$ and $V$ by $W$ to assume $f$ is finite and flat.
    Now let $u\in B(l)$ be a point such that $T_u = \eta$ (note that $l$ is infinite because it contains all roots of unity of order coprime to $p$ and $\Char k_0$). Since $f$ is flat, finite of degree prime-to-$p$ and $l$ is $p$-closed, we can lift $u$ to a point $v\in V(l)$ such that $$f(v) = u.$$ 
    See Lemma~\ref{lem.finite_lift} below. Associativity of fiber products gives a canonical $G$-equivariant isomorphism:
    $$T_u =T \times_{u} \Spec l \cong T_{V} \times_v \Spec l.$$
    Therefore $$\eta = T_u = T_{V}\times_{v} \Spec l$$ is isotropic by Lemma~\ref{lem.isotrop_induct_lim}(1).
\end{proof}

\begin{lemma}\label{lem.finite_lift}
    Let $k_0\subset l$ be a $p$-closed field for some prime $p\neq \Char k_0$. Let $f: V\to U$ be a finite flat map of varieties over $k_0$.  If the degree of $f$ is prime-to-$p$, then the induced map $V(l) \to U(l)$ is surjective.
\end{lemma}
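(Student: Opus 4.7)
The plan is to reduce the lemma to a local algebra computation on the fibre over the given point $u \in U(l)$. Let $u\in U(l)$ be arbitrary. Since $f$ is finite, it is in particular affine, so the scheme-theoretic fibre $V_u := V \times_U \Spec l$ is an affine scheme $\Spec A$, where $A$ is a finite $l$-algebra with $\dim_l A = \deg(f)$, a number coprime to $p$. Finding $v \in V(l)$ with $f(v)=u$ amounts to producing an $l$-algebra homomorphism $A \to l$, i.e.\ showing that some residue field of $A$ equals $l$.

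The main step is then a short structure argument. Since $A$ is Artinian, we have a decomposition $A \cong \prod_i A_i$ into local Artinian $l$-algebras with residue fields $k_i = A_i/\fm_i$. Filtering $A_i$ by powers of $\fm_i$ gives a composition series whose successive quotients are $k_i$-vector spaces, so $[k_i:l]$ divides $\dim_l(A_i)$. Summing over $i$,
\[
\sum_i \dim_l(A_i) = \dim_l A = \deg(f),
\]
which is coprime to $p$, so some index $i_0$ has $\dim_l(A_{i_0})$ coprime to $p$, and a fortiori $[k_{i_0}:l]$ is coprime to $p$.

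Finally, the hypothesis that $l$ is $p$-closed forces every finite extension of $l$ to have degree a power of $p$. Combined with the previous step, this gives $[k_{i_0}:l]=1$, so $k_{i_0}=l$. The composition $A \twoheadrightarrow A_{i_0} \twoheadrightarrow k_{i_0}=l$ then yields the desired $l$-point $v$ of $V_u$, which maps to $u$ under $f$.

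There is no serious obstacle here; the only thing to keep track of is the elementary divisibility argument relating $\dim_l A_i$ to $[k_i:l]$ via the $\fm_i$-adic filtration. The flatness hypothesis is used only to guarantee that the fibre dimension $\dim_l A$ matches $\deg(f)$ at every point of $U$, ensuring the coprimality input is available.
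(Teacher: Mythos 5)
Your proof is correct and follows essentially the same route as the paper's: reduce to the fibre $\Spec A$ over $u$, decompose $A$ into local Artinian factors, extract a factor $A_{i_0}$ with $\dim_l A_{i_0}$ coprime to $p$, conclude its residue field is $l$, and dualize to get the lift. The only cosmetic difference is that you prove the divisibility $[k_{i_0}:l]\mid \dim_l A_{i_0}$ directly via the $\fm_{i_0}$-adic filtration, whereas the paper cites the formula $\dim_l(R_i)=[R_i/\fm:l]\cdot\len(R_i)$ from Fulton's appendix; your inline argument amounts to a proof of that formula.
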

\begin{proof}
    Let $u: \Spec l\to U$ be a point. The scheme $\Spec l \times_U V$ is finite of degree coprime to $p$ over $\Spec l$ by our assumption on $f$. Therefore $\Spec l \times_U V = \Spec R$ for some finite $l$-algebra $R$ of dimension coprime to $p$. By the structure theorem for Artin rings there exist local Artin rings $R_1,\dots,R_n$ such that:
    $$R\cong R_1\times \dots \times R_n$$
    as $l$-algebras. Comparing dimensions, we see that $\dim_l R_i$ is coprime to $p$ for some $1\leq i\leq n$. Let $\fm\subset R_i$ be its maximal ideal and $d = [R_i/\fm : l]$ the residue field degree. By \cite[Lemma A.1.3]{fulton2013intersection}, we have:
    $$\dim_l(R_i) = d \len(R_i),$$
    where $\len(R_i)$ denotes the length of $R_i$ as a module over itself. In particular, $d$ is coprime to $p$. Since $l$ has no prime-to-$p$ extensions, it follows that $R_i/\fm = l$. The composition:
    $$R \tilde{\to} R_1 \times \dots \times R_n \twoheadrightarrow R_i \to R_i/\fm = l,$$
    gives a section of the inclusion $l\subset R$. Dualizing, we obtain a section of the left column of the following Cartesian square:
    \[
\begin{tikzcd}
\Spec R = \Spec l \times_U V \arrow[r, "f^*(u)"] \arrow[d] & V \arrow[d, "f"] \\
\Spec l \arrow[u, "s", bend left] \arrow[r, "u"] & U
\end{tikzcd}
\]
    The composition $f^*(u) \circ s : \Spec l\to V$ is a lift of $u$ to $V(l)$. 
\end{proof}

 The next lemma will be used in Section~\ref{sect.reduction_to_reductive} to reduce the proof of Theorem~\ref{thm.main} to the case where $G^{\circ}$ is reductive.
\begin{lemma}\label{lem.aniso_image}
    Let $f: G\to H$ be a homomorphism of smooth linear algebraic groups over $k_0$. Let $k_0\subset k$ be a field and denote the induced pushforward map by $f_*: H^1(k,G)\to H^1(k,H)$. If $\gamma\in H^1(k,G)$ is anisotropic and one of the following conditions hold, then $f_*(\gamma)$ is anisotropic.
    \begin{enumerate}
        \item The restriction of $f$ to $G^{\circ}$ is an isomorphism onto $H^{\circ}$
        \item The base field $k_0$ is perfect and $f$ is a quotient map with a unipotent kernel.
    \end{enumerate}
\end{lemma}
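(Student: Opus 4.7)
For Part (1), the plan is direct. Represent $\gamma = [c_\sigma]$ by a cocycle $c \in Z^1(\Gal(k), G(k_{\sep}))$. Since $G^{\circ}$ is characteristic in $G$, the twisting commutes with taking identity components: $({}_cG)^{\circ} = {}_c(G^{\circ})$, and similarly $({}_{f(c)}H)^{\circ} = {}_{f(c)}(H^{\circ})$. The restriction of $f$ to $G^{\circ}$ is an isomorphism onto $H^{\circ}$ by hypothesis, and is equivariant for the conjugation actions induced by $c$ and $f(c)$; it therefore descends to an isomorphism $({}_cG)^{\circ} \xrightarrow{\sim} ({}_{f(c)}H)^{\circ}$ of $k$-groups. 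If $f_*(\gamma)$ were isotropic, any embedding $\mathbb{G}_m \hookrightarrow {}_{f(c)}H$ would factor through the identity component by connectedness, and would then pull back through this isomorphism to yield $\mathbb{G}_m \hookrightarrow ({}_cG)^{\circ} \subseteq {}_cG$, contradicting the anisotropy of $\gamma$.

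For Part (2), the plan is to lift a hypothetical $\mathbb{G}_m$ through the unipotent kernel. Twisting the exact sequence $1 \to U \to G \to H \to 1$ by $c$ yields an exact sequence
$$1 \to {}_cU \to {}_cG \to {}_{f(c)}H \to 1$$
of smooth $k$-groups, where ${}_cU$ is a smooth unipotent $k$-group; this is where the perfectness of $k_0$ enters, ensuring that the unipotent kernel $U$ is itself smooth so that the twisted sequence is an honest short exact sequence of smooth algebraic groups. Suppose for contradiction that $f_*(\gamma)$ is isotropic, and fix $j: \mathbb{G}_m \hookrightarrow {}_{f(c)}H$. Let $P \subseteq {}_cG$ be the scheme-theoretic preimage of $j(\mathbb{G}_m)$, so that $P$ is a smooth connected linear algebraic $k$-group fitting into
$$1 \to {}_cU \to P \to \mathbb{G}_m \to 1.$$

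To produce a section of $P \to \mathbb{G}_m$, I would invoke Grothendieck's existence theorem for maximal tori (SGA3, Expos\'e~XIV), according to which any smooth connected affine group over an arbitrary field contains a maximal torus defined over that field. Let $T \subseteq P$ be such a maximal $k$-torus. Over $\bar{k}$ the reductive quotient of $P$ is $\mathbb{G}_m$, so $\dim T = 1$. The intersection $T \cap {}_cU$ is simultaneously a subgroup of a torus (hence of multiplicative type) and of a unipotent group, hence trivial. Therefore the composition $T \hookrightarrow P \twoheadrightarrow \mathbb{G}_m$ is an injective homomorphism of one-dimensional $k$-tori, hence an isomorphism. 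This gives the desired embedding $\mathbb{G}_m \cong T \hookrightarrow {}_cG$, contradicting the anisotropy of $\gamma$. The main obstacle in this plan is Part (2): producing a $k$-rational copy of $\mathbb{G}_m$ in the extension $P$ when the residue field $k$ may itself be imperfect, a situation in which a Levi-style splitting of $P$ is not available and one really needs Grothendieck's theorem on maximal tori over arbitrary base fields.
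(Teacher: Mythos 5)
Your Part (1) is essentially the paper's proof: twist, identify connected components, use that a split torus in ${}_{f(c)}H$ lives in the identity component, and transport back along the isomorphism. No substantive difference there.

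Your Part (2) takes a genuinely different route. The paper twists, takes the scheme-theoretic preimage $S$ of a split torus $T\subset {}_{f_*(c)}H$, and then invokes the splitting theorem for extensions of a connected multiplicative group by a unipotent group over a perfect field (SGA3, Exp.\ XVII, Th.\ 5.1.1) to split $1\to U\to S\to T\to 1$ and get $T\hookrightarrow {}_cG$. You instead produce a $k$-rational maximal torus in the preimage $P$ via Grothendieck's theorem (SGA3, Exp.\ XIV), show it is $1$-dimensional and meets the unipotent kernel trivially, and conclude it maps isomorphically onto $\mathbb{G}_m$. That is a perfectly reasonable alternative scheme, and it has the potential upside of not requiring perfectness at all, provided the kernel is smooth.

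However, there is a genuine gap in your justification. You write that perfectness of $k_0$ ``ensur[es] that the unipotent kernel $U$ is itself smooth.'' That is false: $\alpha_p\subset\mathbb{G}_a$ over $\mathbb{F}_p$ is a unipotent subgroup scheme over a perfect field that is not smooth, and more to the point, kernels of quotient maps between smooth groups (e.g.\ the relative Frobenius of $\mathbb{G}_a$) can be infinitesimal even over perfect fields. Your argument needs $P$ smooth so that Grothendieck's theorem applies, and smoothness of $P$ does require smoothness of ${}_cU$; the lemma's hypotheses do not supply that, and perfectness does not rescue it. (There is also a minor point that ${}_cU$, hence $P$, need not be connected, so one should pass to $P^{\circ}$; but this is cosmetic.) The paper's splitting-theorem route avoids the issue entirely, since SGA3 XVII 5.1.1 over a perfect field requires only that the quotient $T$ be connected, with no smoothness demanded of the unipotent kernel. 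If you wish to keep your maximal-torus argument, you should either add smoothness of the kernel as a hypothesis (it does hold in all of the paper's applications, where the kernel is $\mathcal{R}_u(G)^A$, and would then give the conclusion over an arbitrary base field — a genuine strengthening), or fall back to the SGA3 splitting theorem as the paper does.
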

\begin{proof}
Assume $\gamma = [c_\sigma]$ for some cocycle $c_\sigma$ and denote its pushforward to $H$ by $f_*(c)$. Consider the homomorphism of twisted groups defined by $f$:
$${}_c f : {}_{c} G \to {}_{f_*(c)} H.$$
(1) By assumption ${}_c f$ restricts to an isomorphism ${}_c G^{\circ} \cong {}_{f_*(c)}H^{\circ}$. A split torus $T\subset {}_{f_*(c)}H$ is contained in ${}_{f_*(c)}H^{\circ}$ because it is connected. Since ${}_c G^{\circ} \cong {}_{f_*(c)}H^{\circ}$ is anisotropic, this implies  $T=\{e\}$. Therefore $[f_*(c)]=f_*(\gamma)$ is anisotropic.

\smallskip
(2) Assume $f$ is a quotient map with unipotent kernel $U\subset G$. Let $T\subset {}_{f_*(c)} H$ be a split torus and denote by $S$ the (scheme-theoretic) preimage of $T$ under ${}_c f$. Restricting ${}_c f$ to $S$ gives an exact sequence \cite[Theorem 5.55]{milne2017algebraic}:
\begin{equation}\label{e.preli_aniso_1}
1 \to U \to S \to T\to 1.
\end{equation}
Since $k_0$ is perfect and $T$ is connected the exact sequence \eqref{e.preli_aniso_1} splits by \cite[Expose XVII, Theorem 5.1.1]{SGA3}. We obtain an embedding $T\to S \subset {}_c G$. Since ${}_c G$ is anisotropic, we conclude $T$ must be trivial. This shows $[f_*(c)]=f_*(\gamma)$ is anisotropic.

\end{proof}

\section{Loop torsors}\label{sect.loop_torsors}

Loop torsors were introduced by Gille and Pianzola in the monograph \cite{gille2013torsors}. They can be defined in a few equivalent ways. The useful point of view for us will be:

\begin{definition}
    A torsor $\gamma\in H^1_{\tr}(F,G)$ is called a \emph{loop torsor} if it can be represented by a $\Galtr(F)$-cocycle taking values in $G(\Oin)$. We denote the subset of all loop torsors by $H^1_{\lop}(F,G) \subset H^1_{\tr}(F,G)$.
\end{definition}

We use loop torsors for two reasons. They can be decomposed in tandem with the decomposition of $\Galtr(F)$, and they are integral, so one can apply the homomorphism $G(\Oin)\to G(\ksep)$ to obtain $G(\ksep)$-valued cocycles from them. We start by describing the decomposition of loop torsors introduced in \cite[Section 3.3]{gille2013torsors}. 

Let $\Psi^F_\pi : \Gal(k)\ltimes \I{F} \to \Galtr(F)$ be the isomorphism induced by a uniformizer $\pi$ as in Proposition~\ref{prop.decomp_of_Gal}.
Any loop cocycle $c_\tau \in Z^1(\Galtr(F),G(\Oin))$ defines a $\Gal(k)$-cocycle $a_\sigma\in Z^1(\Gal(k),G(\Oin))$ and a homomorphism $\vphi: \I{F} \to G(\Oin)$ by the formulas:
$$a_\sigma = c_{\Psi_\pi(\sigma,0)}, \vphi(x) = c_{\Psi_\pi(1,x)}.$$
Note that $\vphi$ is a homomorphism because the tame inertia group of $F$ acts fixes $G(\Fin)$. The cocycle $a_\sigma$ is called \emph{the arithmetic part} of $c_\tau$ and $\vphi$ is called its \emph{geometric part}. Clearly $c_\tau$ is uniquely determined by $a_\sigma$ and $\vphi$. We will use the notation:
$$c_\tau = \langle a_\sigma,\vphi\rangle_\pi,$$
to denote a loop cocycle $c_\tau$ with arithmetic part $a_\sigma$ and geometric part $\vphi$. The corresponding loop torsor will be denoted:
$$[a_\sigma, \vphi]_\pi := [\langle a_\sigma,\vphi\rangle_\pi] \in H^1_{\lop}(F,G).$$
Any torsor $\gamma\in H^1_{\lop}(F,G)$ is by definition of the form $\gamma=[a_\sigma,\vphi]_\pi$ for some $a_\sigma,\vphi$, but the symbol $\langle a_\sigma,\vphi\rangle_\pi$ is not defined for an arbitrary pair $a_\sigma,\vphi$.
\begin{definition}\label{def.compatible}
We will call a cocycle $a_\sigma \in Z^1(\Gal(k),G(\Oin))$ and homomorphism $\vphi: \I{F}\to G(\Oin)$ \emph{compatible} if there exists a cocycle $c_\tau\in Z^1(\Galtr(F),G(\Oin))$ such that $c_\tau = \langle a_\sigma, \vphi\rangle_\pi$.
\end{definition}
Let $\theta: \Gal(k)\to (\hat{\bZ}')^*$ be the cyclotomic character. One can check that a cocycle $a_\sigma$ and a homomorphism $\vphi$ as above are compatible if and only if they satisfy:
\begin{equation}\label{e.compatibility}
    a_\sigma {}^\sigma \vphi(f) a_{\sigma}^{-1} = \vphi(f)^{\theta(\sigma)},
\end{equation}
for all $f\in \I{F},\sigma\in \Gal(k)$; see \cite[Lemma 3.7]{gille2013torsors} for a similar computation. Using this, we see that centralizers of finite abelian subgroups are a natural source for loop torsors.
\begin{example}\label{ex.main}
    Let $\vphi: \I{F} \to G(\Oin)$ be a continuous homomorphism onto $A(\Oin)$ for some finite diagonalizable $\cO$-subgroup $A\subset G_{\cO}$ and let $a_\sigma \in Z^1(\Gal(k),C_G(A)(\Oin))$ a cocycle. The formula:
    $$c_{\Psi_\pi(\sigma,f)} = a_\sigma {}^{\sigma} \vphi(f)$$
    defines a loop cocycle $c_\tau = \langle a_\sigma,\vphi\rangle_\pi$ because $${}^{\sigma}\vphi(f) = \vphi(f)^{\theta(\sigma)}$$ for all $\sigma\in \Gal(k), f\in \I{F}$ by \eqref{def.split_diag}. That is, $a_\sigma$ and $\vphi$ are compatible in the sense of Definition~\ref{def.compatible}.
\end{example}


One can often assume a loop torsor is of the form given in Example~\ref{ex.main} using the next lemma.
\begin{lemma}\label{lem.centralize_over_residue}
    Let $c_\tau = \langle a_\sigma ,\vphi \rangle_\pi$ be a loop cocycle. If the group
    $$\overline{A}(\ksep):= \{ \overline{\vphi(f)} \in G(\ksep) \mid f\in\I{F}\}$$
    is the $\ksep$-points of a diagonalizable $k$-subgroup $\ol{A}\subset G_k$, then $\overline{a_\sigma}\in C_G(\ol{A})(\ksep)$ for all $\sigma\in \Gal(k)$.
\end{lemma}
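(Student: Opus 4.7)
The plan is to apply the compatibility relation \eqref{e.compatibility} and pass to residues, using two facts: first that the residue map $G(\Oin)\to G(\ksep)$ is $\Galtr(F)$-equivariant (Remark~\ref{rem.equivariance}), and second that $\ol{A}$ is diagonalizable over $k$, so the Galois action on $\ol{A}(\ksep)$ is completely controlled by the cyclotomic character via Fact~\ref{fact.split_diag}.

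First I would record what compatibility gives at the integral level: since $c_\tau = \langle a_\sigma,\vphi\rangle_\pi$ is a loop cocycle, \eqref{e.compatibility} gives
\begin{equation*}
a_\sigma \cdot {}^{\sigma}\vphi(f) \cdot a_\sigma^{-1} = \vphi(f)^{\theta(\sigma)} \qquad \text{for all } \sigma\in\Gal(k),\ f\in\I{F}.
\end{equation*}
Next I would apply the residue homomorphism $G(\Oin)\to G(\ksep)$. By Remark~\ref{rem.equivariance} this map is $\Galtr(F)$-equivariant, and under the identification $\Gal(\Fin/F)\cong \Gal(k)$ (Convention~\ref{rem.Fin}), the induced action of $\Gal(k)$ on $G(\ksep)$ is the natural one. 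Hence $\overline{{}^\sigma \vphi(f)} = {}^\sigma\overline{\vphi(f)}$, and taking residues yields
\begin{equation*}
\overline{a_\sigma}\cdot {}^\sigma\overline{\vphi(f)}\cdot \overline{a_\sigma}^{-1} = \overline{\vphi(f)}^{\theta(\sigma)}.
\end{equation*}

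Now I would invoke the diagonalizability of $\ol{A}$. By hypothesis $\overline{\vphi(f)}\in \ol{A}(\ksep)$, and Fact~\ref{fact.split_diag} gives ${}^\sigma\overline{\vphi(f)} = \overline{\vphi(f)}^{\theta(\sigma)}$. Substituting, the previous identity collapses to
\begin{equation*}
\overline{a_\sigma}\cdot \overline{\vphi(f)}^{\theta(\sigma)} \cdot \overline{a_\sigma}^{-1} = \overline{\vphi(f)}^{\theta(\sigma)},
\end{equation*}
so $\overline{a_\sigma}$ commutes with every element of the form $\overline{\vphi(f)}^{\theta(\sigma)}$.

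Finally I would close the argument by observing that as $f$ runs over $\I{F}$, the elements $\overline{\vphi(f)}$ exhaust $\ol{A}(\ksep)$ by hypothesis, hence the elements $\overline{\vphi(f)}^{\theta(\sigma)}$ exhaust the image of the $\theta(\sigma)$-power map on the finite abelian group $\ol{A}(\ksep)$. Since $|\ol{A}(\ksep)|$ is coprime to $\Char k_0$ and $\theta(\sigma)\in(\hat{\bZ}')^{\ast}$, the image of $\theta(\sigma)$ in $(\bZ/|\ol{A}(\ksep)|\bZ)^{\ast}$ is a unit, so this power map is a bijection of $\ol{A}(\ksep)$. Therefore $\overline{a_\sigma}$ centralizes all of $\ol{A}(\ksep)$, and since $\ol{A}$ is smooth (being diagonalizable of order prime to $\Char k_0$) this is equivalent to $\overline{a_\sigma}\in C_G(\ol{A})(\ksep)$, as desired. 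There is no real obstacle here — the only subtlety is keeping straight that the $\Gal(k)$-action appearing on both sides of \eqref{e.compatibility} is compatible with the residue map via Remark~\ref{rem.equivariance}.
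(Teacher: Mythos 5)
Your proof is correct and follows essentially the same route as the paper: apply the compatibility relation \eqref{e.compatibility}, reduce modulo the maximal ideal via the equivariance of Remark~\ref{rem.equivariance}, and substitute the diagonalizability identity from Fact~\ref{fact.split_diag}. Your closing observation that the $\theta(\sigma)$-power map is a bijection on $\ol{A}(\ksep)$ is a slightly more explicit way of finishing than the paper's terser ``since $f\in\I{F}$ was arbitrary,'' but the content is the same.
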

\begin{proof}
Let $\theta: \Gal(k)\to (\hat{\bZ}')^*$ denote the cyclotomic character. Since $c_\tau$ is a cocycle, \eqref{e.compatibility} gives:
$$a_\sigma {}^\sigma \vphi(f) a_\sigma^{-1} = \vphi(f)^{\theta(\sigma)}.$$
Reducing modulo the maximal ideal of $\Oin$ we get:
$$\overline{a_\sigma} {}^{\sigma} \overline{\vphi(f)} \overline{a_\sigma}^{-1} = \overline{\vphi(f)}^{\theta(\sigma)}.$$
We have $${}^{\sigma} \overline{\vphi(f)} = \overline{\vphi(f)}^{\theta(\sigma)}$$ because $\overline{A}$ is diagonalizable by Fact \ref{fact.split_diag}. Therefore $\overline{a_\sigma}$ centralizes ${}^{\sigma} \overline{\vphi(f)}$. Since $f\in \I{F}$ was arbitrary, we conclude that $\overline{a_\sigma} \in C_G(\ol{A})(\ksep)$.
\end{proof}

We finish this section by examining when two loop cocycles $\langle a_\sigma,\vphi\rangle_{\pi},\langle a_\sigma,\vphi\rangle_{\pi}$ give rise to the same class in $H^1_{\tr}(F,G(\Oin))$.
\begin{lemma}\label{lem.cohomolog_criterion}
    Two loop cocycles $\langle a_\sigma,\vphi\rangle_\pi, \langle b_\sigma,\psi\rangle_\pi$ are cohomologous in $Z^1_{\tr}(F,G(\Oin))$ if and only if there exists $s\in G(\Oin)$ such that:
    \begin{equation}\label{e.27}
    s^{-1} a_\sigma {}^\sigma s = b_\sigma , \ \ s^{-1} \vphi  s =\psi.
    \end{equation}
\end{lemma}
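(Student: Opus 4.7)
The plan is to unpack the definition of loop cocycles and of being cohomologous, and then to use the decomposition $\Galtr(F)\cong \Gal(k)\ltimes\I{F}$ of \Cref{prop.decomp_of_Gal}. Recall from \Cref{ex.main} that a loop cocycle $\langle a_\sigma,\vphi\rangle_\pi$ is completely specified by the formula
$$\langle a_\sigma,\vphi\rangle_\pi\bigl(\Psi_\pi(\sigma,f)\bigr) \;=\; a_\sigma\cdot {}^{\sigma}\!\vphi(f),$$
and that two cocycles $c_\tau,d_\tau\in Z^1_{\tr}(F,G(\Oin))$ are cohomologous exactly when there exists $s\in G(\Oin)$ with $d_\tau = s^{-1} c_\tau\,{}^\tau s$ for every $\tau\in\Galtr(F)$. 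The key observation used throughout is that the $\Galtr(F)$-action on $G(\Oin)$ factors through the quotient $\Galtr(F)\twoheadrightarrow \Gal(\Fin/F)\cong\Gal(k)$, since $\I{F}=\Galtr(\Fin)$ fixes $\Oin$; equivalently, under the isomorphism $\Psi_\pi$, the element $\Psi_\pi(\sigma,f)$ acts on $G(\Oin)$ by $\sigma$.

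For the ``if'' direction, I would simply substitute: given $s\in G(\Oin)$ satisfying \eqref{e.27}, for any $(\sigma,f)\in\Gal(k)\ltimes\I{F}$ one computes
$$s^{-1}\,\langle a_\sigma,\vphi\rangle_\pi\bigl(\Psi_\pi(\sigma,f)\bigr)\,{}^{\Psi_\pi(\sigma,f)}\!s \;=\; s^{-1}a_\sigma\,{}^{\sigma}\!\vphi(f)\,{}^{\sigma}\!s \;=\; (s^{-1}a_\sigma\,{}^{\sigma}\!s)\cdot {}^{\sigma}\!\bigl(s^{-1}\vphi(f)s\bigr) \;=\; b_\sigma\cdot {}^{\sigma}\!\psi(f),$$
which equals $\langle b_\sigma,\psi\rangle_\pi(\Psi_\pi(\sigma,f))$. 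Hence $s$ witnesses that the two loop cocycles are cohomologous.

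For the ``only if'' direction, I would specialize the cohomology relation to the two natural kinds of elements generating $\Galtr(F)$. Suppose $s\in G(\Oin)$ satisfies $\langle b_\sigma,\psi\rangle_\pi(\tau) = s^{-1}\langle a_\sigma,\vphi\rangle_\pi(\tau)\,{}^{\tau}\!s$ for all $\tau$. Taking $\tau=\Psi_\pi(\sigma,0)$ yields $b_\sigma = s^{-1}a_\sigma\,{}^{\sigma}\!s$, the first equation in \eqref{e.27}. Taking $\tau=\Psi_\pi(1,f)\in\I{F}$ and using that $\I{F}$ acts trivially on $\Oin$, so ${}^{\tau}\!s = s$, gives $\psi(f) = s^{-1}\vphi(f)s$ for every $f\in\I{F}$, which is the second equation.

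There is essentially no obstacle here; the only point requiring care is bookkeeping of how $\Galtr(F)$ acts on $\Oin$ under the decomposition of \Cref{prop.decomp_of_Gal}, specifically that $\overline{s}_\pi(\sigma)$ restricts to $\sigma$ on $\ksep\subset\Oin/\mathfrak{m}$ via the identification of \Cref{rem.Fin}, while $\Phi^{-1}(f)$ fixes $\Oin$ pointwise. Once this is in hand, the proof amounts to reading off the cohomology equation on the two subsets $\{\Psi_\pi(\sigma,0)\mid\sigma\in\Gal(k)\}$ and $\{\Psi_\pi(1,f)\mid f\in\I{F}\}$ that together generate $\Galtr(F)$.
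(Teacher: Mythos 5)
Your proof is correct and takes essentially the same approach as the paper. The paper establishes the single identity $s^{-1}\langle a_\sigma,\vphi\rangle_\pi(\tau)\,{}^\tau s = \langle s^{-1}a_\sigma{}^\sigma s,\,s^{-1}\vphi s\rangle_\pi(\tau)$ and reads both directions from it (using the uniqueness of the arithmetic/geometric decomposition implicitly), whereas you prove the ``if'' direction by the same computation and the ``only if'' direction by restricting to $\Psi_\pi(\sigma,0)$ and $\Psi_\pi(1,f)$ — which is just making that uniqueness explicit; the substance is identical.
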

\begin{proof}
    For any $s\in G(\Oin)$, $\sigma\in \Gal(k),f\in \I{F}, \tau:= \Psi_\pi(\sigma,f)$ we have:
    \begin{align*}
        s^{-1} \langle a_\sigma , \vphi\rangle_\pi (\tau) {}^\tau s &=s^{-1} a_\sigma {}^\sigma \vphi(f) {}^{\sigma} s \\
        &=s^{-1} a_\sigma {}^\sigma s {}^{\sigma} (s^{-1} \vphi(f) s) = \langle s^{-1} a_\sigma {}^\sigma s , s^{-1}\vphi s\rangle_\pi(\tau).  
    \end{align*}
    This proves the claim.
\end{proof}

\section{A theorem of Gille and Pianzola}\label{sect.a_theorem_of_GP}
Recall that a valued field $(F,\nu)$ is an iterated Laurent series field, if it is isomorphic to $k(\!(t_1)\!)\dots(\!(t_r)\!)$ equipped with the usual $(t_1,\dots,t_r)$-adic valuation as in \eqref{e.def_of_t-adic_val}.
In this section we prove the following adaptation of a theorem of Gille and Pianzola.

\begin{theorem}\label{thm.uniqueness}
    Let $(F,\nu)$ be an iterated Laurent series field and assume $G^{\circ}$ is reductive. Two anisotropic loop torsors $[a_\sigma,\vphi]_\pi,[b_\sigma,\psi]_\pi \in H^1_{\lop}(F,G)_{\an}$ are equal if and only if there exists $s\in G(\ksep)$ such that:
    \begin{equation}\label{e.28}
    s^{-1} \overline{a_\sigma} {}^\sigma s = \overline{b_\sigma}, \ \ s^{-1} \overline{\vphi(f)} s = \overline{\psi(f)} 
    \end{equation}
    for all $f\in \I{F}, \sigma\in \Gal(k)$.
\end{theorem}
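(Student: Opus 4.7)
My plan for proving Theorem~\ref{thm.uniqueness} is to handle the two implications separately; the substantive content lies in the forward ($\Rightarrow$) direction, which is an adaptation of the anisotropic uniqueness theorem of Gille-Pianzola, while the converse is a lifting argument.

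For ($\Leftarrow$): suppose $s\in G(\ksep)$ satisfies \eqref{e.28}. Since $G^\circ$ is smooth and $\Oin$ is strictly Henselian, Hensel's lemma produces a lift $\tilde{s}\in G(\Oin)$ with $\overline{\tilde{s}}=s$. Conjugating $\langle a_\sigma,\vphi\rangle_\pi$ by $\tilde{s}$ (using the computation in Lemma~\ref{lem.cohomolog_criterion}) yields a cohomologous loop cocycle $\langle a'_\sigma,\vphi'\rangle_\pi$ whose residues coincide with $(\overline{b_\sigma},\overline{\psi})$. One then shows that two anisotropic loop cocycles with identical residue data are cohomologous, by analyzing the difference as a cocycle with values in the kernel $K:=\ker(G(\Oin)\to G(\ksep))$ and exploiting the anisotropy hypothesis together with the filtration of $K$ by congruence subgroups (an iterated extension of vector groups over $\ksep$), whose Galois cohomology is tractable in this setting.

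For ($\Rightarrow$): assume $[a_\sigma,\vphi]_\pi=[b_\sigma,\psi]_\pi$ in $H^1_{\tr}(F,G)$, so there exists $u\in G(\Ftr)$ with $u^{-1}\langle a_\sigma,\vphi\rangle_\pi(\tau)\,{}^\tau u=\langle b_\sigma,\psi\rangle_\pi(\tau)$ for every $\tau\in\Galtr(F)$. The key step is to prove that $u$ may be chosen in $G(\Oin)$; once this is done, setting $s:=\overline{u}$ and reducing the intertwining equation modulo the maximal ideal gives \eqref{e.28} by evaluating on the two pieces of the decomposition $\Galtr(F)=\Gal(k)\ltimes\I{F}$ from Proposition~\ref{prop.decomp_of_Gal}. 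The integrality of $u$ is precisely where anisotropy intervenes: it rules out intertwiners having nontrivial valuation, which would correspond to a split $\Gm$-factor inside the twisted group ${}_aG^\circ$. At this step I would appeal to the anisotropic uniqueness theorem in \cite{gille2013torsors} (together with its positive-characteristic extension in \cite{gille2024newloop}), applied to the reductive $\Oin$-group scheme ${}_aG^\circ$.

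The main obstacle is the integrality of the intertwiner $u$. The hypothesis that $F$ is an iterated Laurent series field is used here to trivialize the extension \eqref{e.seq_inertia_Foin} via a compatible uniformizer, and to apply the decomposition $\Galtr(F)\cong\Gal(k)\ltimes\I{F}$ in a Galois-equivariant way; the hypothesis that $G^\circ$ is reductive guarantees that ${}_aG^\circ$ is a well-behaved reductive group scheme over $\Oin$, so that the Bruhat--Tits-type rigidity machinery underlying \cite{gille2013torsors,gille2024newloop} is available. All other steps — the Hensel lift in sufficiency, the reduction modulo the maximal ideal in necessity, and the bookkeeping with the arithmetic and geometric parts — are essentially formal once the integrality is in hand.
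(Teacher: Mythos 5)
Your two-directional plan captures the right intuition, but the forward direction has a genuine gap, and you are not following the paper's route. The paper does not attempt to show directly that an intertwiner $u\in G(\Ftr)$ can be taken in $G(\Oin)$. Instead, it exploits the fact that an iterated Laurent series field admits a coefficient field $k\subset\cO$, whence an embedding $G(\ksep)\subset G(\Oin)$ splitting the reduction $G(\Oin)\to G(\ksep)$. The cited result \cite[Corollary 4.11]{gille2024newloop} is precisely Theorem~\ref{thm.uniqueness} in the special case that $a_\sigma,\vphi,b_\sigma,\psi$ already take values in $G(\ksep)$ via this embedding. The entire content of the proof is then to show (Proposition~\ref{prop.hensel_coeffic_field2}) that the induced map $H^1_{\tr}(F,G(\ksep))\to H^1_{\tr}(F,G(\Oin))$ is a bijection, which amounts to showing every loop cocycle $\langle a_\sigma,\vphi\rangle_\pi$ is cohomologous in $G(\Oin)$-valued cohomology to its residue cocycle $\langle\overline{a}_\sigma,\overline\vphi\rangle_\pi$ (Lemma~\ref{lem.Hensel_coefficient_field}); this uses a Hensel-type conjugation for the geometric part (Lemma~\ref{lem.conj_of_morphism}) and a deep smooth-base-change fact from SGA3 (Lemma~\ref{lem.bijectivity_passing_special_fiber}) for the arithmetic part. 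Once this bijection is in hand, Theorem~\ref{thm.uniqueness} is a formal consequence of the $\ksep$-valued statement, with no anisotropy needed for the $\Leftarrow$ direction.

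The gap in your $\Rightarrow$ argument is the phrase ``apply the anisotropic uniqueness theorem to the reductive $\Oin$-group scheme ${}_aG^\circ$.'' The theorem you want to cite from \cite{gille2013torsors,gille2024newloop} is stated for cocycles with values in $G(\ksep)$ (constant coefficients), not for $\Oin$-valued cocycles, so it cannot be invoked directly to constrain a general $u\in G(\Ftr)$; and applying it to the twisted group ${}_aG^\circ$ over $\Oin$ is a type mismatch — the conclusion you need is about $G$-cocycles over $F$, not cohomology of ${}_aG^\circ$. To close this gap you precisely need the coefficient-field reduction (or some equivalent of Lemma~\ref{lem.Hensel_coefficient_field}), which your outline neither states nor proves. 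Your $\Leftarrow$ sketch via the congruence filtration of $\ker(G(\Oin)\to G(\ksep))$ is a plausible alternative to the SGA3 citation — it would re-derive that Galois cohomology of this pro-unipotent kernel vanishes — but it is unnecessary once you have Lemma~\ref{lem.Hensel_coefficient_field}, and invoking anisotropy there is a red herring since that part of the argument holds for all loop torsors.
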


Since $F$ is an iterated Laurent series field, it admits a coefficient field. That is, there exists an inclusion $k\subset \cO$ splitting the map onto the residue field $\cO \to k$. This inclusion extends to an embedding $\ksep \subset \Oin$ splitting the homomorphism $\Oin\to \ksep$, which in turn gives a section $G(\ksep)\subset G(\Oin)$ of the homomorphism $G(\Oin) \to G(\ksep)$.  Gille proved Theorem~\ref{thm.uniqueness} in case $a_\sigma,\vphi,b_\sigma,\psi$ all take values in $G(\ksep)\subset G(\Oin)$ \cite[Corollary 4.11]{gille2024newloop} (before that Gille-Pianzola proved it under the additional assumption $\Char k =0$ in \cite{gille2013torsors}). 
Thus in order to prove Theorem~\ref{thm.uniqueness} it suffices to establish the following proposition:

\begin{proposition}\label{prop.hensel_coeffic_field2}
    Let $G(\ksep)\subset G(\Oin)$ denote the inclusion induced from the inclusion of a coefficient field $\ksep\subset \Oin$. The corresponding map of cohomology sets 
    \begin{equation}\label{e.reduction_map_coh}
        H^1_{\tr}(F,G(\ksep))\to H^1_{\tr}(F,G(\Oin))
    \end{equation} is a bijection. 
\end{proposition}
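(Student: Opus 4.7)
\emph{Plan.} The injectivity of $i_*$ is immediate from a retraction argument. Since the coefficient field embedding $\ksep \subset \Oin$ is $\Galtr(F)$-equivariant (the coefficient field is stable under the $\Galtr(F)$-action on $\Oin$), the residue map $r : \Oin \to \ksep$ provides a $\Galtr(F)$-equivariant retraction. Applying $G$ and passing to tame cohomology yields $r_* \circ i_* = \mathrm{id}$, so $i_*$ is injective.

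For surjectivity, given a cocycle $c \in Z^1_{\tr}(F, G(\Oin))$ I set $\overline{c} := r \circ c \in Z^1_{\tr}(F, G(\ksep))$ and aim to show $[c] = [i_*(\overline{c})]$ in $H^1_{\tr}(F, G(\Oin))$. Define $c'_\tau := c_\tau \cdot i(\overline{c}_\tau)^{-1}$, which takes values in $K := \ker(r) \subset G(\Oin)$ since $r$ is a ring homomorphism. A direct calculation verifies that $c'$ is a cocycle with respect to the twisted $\Galtr(F)$-action
$$\tau \cdot_{\overline{c}} g := i(\overline{c}_\tau) \cdot {}^\tau g \cdot i(\overline{c}_\tau)^{-1}$$
on $K$, and that $[c] = [i_*(\overline{c})]$ in $H^1_{\tr}(F, G(\Oin))$ if and only if $[c'] = \{1\}$ in $H^1_{\tr}(F, {}_{i(\overline{c})} K)$. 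The problem thus reduces to the vanishing
$$H^1_{\tr}(F, {}_{i(\overline{c})} K) = \{1\} \qquad \text{for every } \overline{c} \in Z^1_{\tr}(F, G(\ksep)).$$

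To establish this vanishing I would exploit that $K$ is pro-unipotent: since $G$ is smooth, the reduction $G(\Oin) \twoheadrightarrow G(\ksep)$ is a smooth fibration whose kernel admits a filtration by congruence subgroups $K^{(n)}$ whose graded quotients $K^{(n)}/K^{(n+1)}$ are additive $\ksep$-vector spaces (isomorphic as groups to $\mathrm{Lie}(G) \otimes_{\ksep} \fm^n/\fm^{n+1}$ for the maximal ideal $\fm$ of $\Oin$). The inertia subgroup $\I{F}$ acts trivially on $\Oin$ (because $\Oin \subset \Fin$), so in the twisted action, inertia acts on each $K^{(n)}/K^{(n+1)}$ through inner conjugation by $i(\overline{c})$, which preserves each $K^{(n)}$. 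Using the Hochschild--Serre decomposition $\Galtr(F) \cong \Gal(k) \ltimes \I{F}$ from Proposition~\ref{prop.decomp_of_Gal}, the vanishing of the abelian cohomology $H^1_{\tr}(F, {}_{i(\overline{c})} K^{(n)}/K^{(n+1)})$ on each graded piece follows from additive Hilbert~90 applied to the $\Gal(k)$-action on a $\ksep$-vector space, together with an elementary computation for the pro-$\hat{\bZ}'$ inertia. A standard (non-abelian) Mittag--Leffler/inverse-limit argument then promotes the graded-piece vanishing to the full statement for $K$.

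The main obstacle I anticipate is constructing and controlling this filtration when the rank $r$ of the valuation on $F$ is greater than one, since for multi-variable iterated Laurent series the maximal ideal of $\Oin$ is not finitely generated and the $\fm$-adic filtration may fail to be separating in the naive sense. I expect this can be handled either by an induction on $r$, writing $F = F'(\!(t_r)\!)$ and first trivializing the cocycle in the $t_r$-adic direction (using the rank-one case, which is essentially the complete-Henselian case) before applying the inductive hypothesis to the rank-$(r-1)$ field $F'$, or by invoking Greenberg-style structure theorems adapted to the multi-variable Henselian setting. In either case the iterated Laurent series hypothesis is essential, as it is what ensures the existence of a coefficient field $\ksep \subset \Oin$ and a tractable filtration on $K$.
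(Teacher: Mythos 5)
Your injectivity argument coincides with the paper's (the reduction $G(\Oin)\to G(\ksep)$ furnishes a $\Galtr(F)$-equivariant retraction). For surjectivity you take a genuinely different route. The paper splits a loop cocycle into its geometric part $\varphi$ and arithmetic part $a_\sigma$ and handles them separately: $\varphi$ is conjugated onto its residue $\overline{\varphi}$ using Hensel's lemma on the smooth transporter scheme (Lemma~\ref{lem.conj_of_morphism}; smoothness comes from $\operatorname{im}\varphi$ being of multiplicative type), and $a_\sigma$ is trivialized against $\overline{a}_\sigma$ via the SGA3 bijectivity of $H^1(\Gal(k),C(\Oin))\to H^1(\Gal(k),C(\ksep))$ for smooth $C$ (Lemma~\ref{lem.bijectivity_passing_special_fiber}). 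Both inputs apply to an arbitrary Henselian local ring and are completely insensitive to the rank of the valuation. Your reduction of the problem to the vanishing of $H^1_{\tr}(F,{}_{i(\overline{c})}K)$ for the congruence kernel $K=\ker\bigl(G(\Oin)\to G(\ksep)\bigr)$ is a reasonable alternative framing, but the attack via a pro-unipotent $\fm$-adic filtration is where the two approaches truly diverge.

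That filtration is where the gap is, and you have located it yourself without resolving it. For $\Gamma_F$ of rank $r>1$ the $\fm$-adic filtration on $\Oin$ is \emph{not separated}: in $k(\!(t_1)\!)(\!(t_2)\!)$ with the lexicographic valuation one has $t_1=t_2^{\,n}\cdot(t_1t_2^{-n})$ with $t_1t_2^{-n}\in\cO$, so $t_1\in\bigcap_n\fm^n$. Hence $K\ne\varprojlim K/K^{(n)}$, and vanishing of $H^1$ on the graded pieces does not propagate to $K$ by any Mittag--Leffler argument. Neither proposed remedy (induction on $r$, or ``Greenberg-style'' structure theorems) is worked out, and the rank-one complete case you would iterate is already the setting of the SGA3 theorem. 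A smaller slip: the biconditional ``$[c]=[i_*(\overline{c})]$ iff $[c']=1$'' is really only an implication from right to left; the precise fibre of $H^1_{\tr}(F,G(\Oin))\to H^1_{\tr}(F,G(\ksep))$ over $[\overline{c}]$ is a quotient of $H^1_{\tr}(F,{}_{i(\overline{c})}K)$ by an $H^0$-action (Serre, \emph{Galois cohomology}, I.5.5, Cor.~2), though this does not affect the sufficiency you need. The clean way to salvage your framing is to replace the filtration by the same Hensel-theoretic inputs the paper uses: twist Lemma~\ref{lem.bijectivity_passing_special_fiber} to dispose of the $\Gal(k)$-part of $H^1_{\tr}(F,{}_{i(\overline{c})}K)$, and dispose of the inertial part by the prime-to-$\Char k$ divisibility of $K$ (every finite quotient of $\I{F}$ has order coprime to $\Char k$ by Remark~\ref{rem.quotients_tame_inertia}).
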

For the proof of this proposition we will need the following consequences of Hensel's lemma.

\begin{lemma}\label{lem.conj_of_morphism}
    Let $\vphi,\psi: \I{F}\to G(\Oin)$ be two continuous homomorphisms and let $\overline{\vphi},\overline{\psi}: \I{F}\to G(\ksep)$ be the composition of $\vphi,\psi$ with the reduction homomorphism $G(\Oin) \to G(\ksep), g\mapsto \overline{g}$. If $s \overline{\vphi} s^{-1}= \overline{\psi}$ for some $s\in G(\ksep)$, then $\tilde{s} \vphi \tilde{s}^{-1} = \psi$ for some $\tilde{s}\in G(\Oin)$.
\end{lemma}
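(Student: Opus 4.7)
The plan is to realize $\tilde s$ as an $\Oin$-point of a transporter subscheme of $G_{\Oin}$, and to conclude by Hensel's lemma after establishing smoothness via SGA 3's theorem on centralizers of subgroups of multiplicative type.

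First I would pick topological generators $f_1, \dots, f_r$ of $\I{F} \cong (\hat{\bZ}')^r$ and set $a_i := \vphi(f_i)$, $b_i := \psi(f_i)$. Both $\vphi$ and $\psi$ are continuous into $G(\Oin)$ equipped with the discrete topology, as is standard in the setup of $Z^1_{\tr}(F,G(\Oin))$, so each factors through a finite quotient of $\I{F}$. By \cref{rem.quotients_tame_inertia} the images have order coprime to $\Char k$. Since $a_1, \dots, a_r$ are pairwise commuting elements of finite order prime to $\Char \Oin$, and since $\Oin$ is a Henselian valuation ring whose fraction field $\Fin$ contains all roots of unity of order prime to $\Char k$ (these units then lie in $\Oin$ by integrality), they generate a finite diagonalizable subgroup scheme $\mathbf{A} \subset G_{\Oin}$ whose $\Oin$-points form the abstract subgroup $A = \vphi(\I{F})$.

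Next, consider the closed subscheme
\[
Y := \{g \in G_{\Oin} : g a_i g^{-1} = b_i \text{ for } i = 1, \dots, r\} \subset G_{\Oin}.
\]
Whenever nonempty, $Y$ is a right $C_G(\mathbf{A})$-torsor: for any two $g, g' \in Y$ one has $(g')^{-1} g \in C_G(\mathbf{A})$, and conversely $Y$ is stable under right multiplication by $C_G(\mathbf{A})$. By \cite[Exposé XI, Corollaire 5.3]{SGA3}, the centralizer $C_G(\mathbf{A})$ is smooth over $\Oin$ because $\mathbf{A}$ is of multiplicative type and $G_{\Oin}$ is smooth. The hypothesis $s\overline{\vphi} s^{-1} = \overline{\psi}$ gives $s \in Y(\ksep)$, so $Y$ is nonempty, hence a torsor under the smooth group $C_G(\mathbf{A})$, and therefore smooth over $\Oin$. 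Since $\Oin$ is a Henselian local ring with residue field $\ksep$, Hensel's lemma for smooth schemes yields a lift $\tilde s \in Y(\Oin)$ of $s$. By construction $\tilde s \vphi(f_i) \tilde s^{-1} = \psi(f_i)$ for each $i$, and since the $f_i$ topologically generate $\I{F}$ while both $\vphi, \psi$ are continuous with finite image, this identity extends to all of $\I{F}$, giving $\tilde s \vphi \tilde s^{-1} = \psi$.

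The main obstacle is the correct packaging of the abstract finite subgroup generated by the $a_i$ into a bona fide diagonalizable subgroup scheme $\mathbf{A} \subset G_{\Oin}$, together with the appeal to SGA 3 for smoothness of its centralizer. Once these ingredients are in place, the rest is formal: the transporter $Y$ becomes smooth, Hensel lifts the given $\ksep$-point $s$ to $\Oin$, and continuity propagates the conjugation relation from generators to all of $\I{F}$.
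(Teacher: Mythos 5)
Your overall plan — form the transporter scheme, get smoothness from SGA~3 Exposé~XI, and lift the $\ksep$-point by Hensel — is the same as the paper's. The differences are (i) the paper works directly with the finite quotient $A$ of $\I{F}$ through which $\vphi,\psi$ factor, treating $\vphi',\psi'$ as homomorphisms of group schemes from a multiplicative-type source into $G_{\Oin}$; it does not need (and does not try) to realize the image as a diagonalizable \emph{subgroup} scheme of $G_{\Oin}$, which sidesteps the ``main obstacle'' you flag; and (ii) the paper cites \cite[Exposé~XI, Corollaire~5.2]{SGA3}, which gives \emph{directly} that the transporter $\Tran_G(\vphi',\psi')$ is smooth over $\Oin$, rather than deducing smoothness of the transporter from smoothness of the centralizer via a torsor argument.

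Point (ii) is where your write-up has a real gap. You correctly observe that $Y$ is a \emph{pseudo-}torsor under $C_G(\mathbf{A})$, and that $Y(\ksep)\neq\emptyset$. But ``pseudo-torsor plus a point over the closed fiber'' does not make $Y$ a torsor over $\Spec\Oin$, and in particular does not give flatness or smoothness of $Y$ over $\Oin$. From what you have established, only the special fiber $Y_{\ksep}$ is a torsor (hence smooth over $\ksep$); for all you know at this stage, $Y$ could fail to be flat over $\Oin$ (e.g.\ be supported on the closed fiber), in which case Hensel's lemma would not apply. To make $Y$ a torsor you would need to exhibit a point fppf-locally over $\Spec\Oin$, which in particular requires a point over the generic fiber — but that is essentially what you are trying to prove. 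The fix is the paper's: cite Corollaire~5.2 for smoothness of the transporter itself, at which point the centralizer and the torsor structure are not needed.
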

\begin{proof}
    Let $\I{F}\to A$ be a large enough finite quotient of $\I{F}$ so that both $\vphi$ and $\psi$ factor through $A$. Let $\vphi',\psi' : A\to G_{\Oin}$ denote the induced homomorphisms and denote by $\Tran_G(\vphi',\psi')$ the {transporter} of $\vphi'$ and $\psi'$. This is a closed subscheme of $G_{\Oin}$ such that for any ring homomorphism $\Oin \to R$:
    $$\Tran_G(\vphi',\psi')(R) = \{ g\in G(R) \mid g \vphi'_{R} g^{-1} =\psi'_R\}.$$
    Note that $A$ is a finite abelian group of order coprime to $\Char k$ by Remark~\ref{rem.quotients_tame_inertia}. Therefore $\Tran_G(\vphi',\psi')$ is smooth because $A$ is of multiplicative type; see \cite[Expose XI, Corollaire 5.2]{SGA3}. By assumption $\Tran_G(\vphi',\psi')(\ksep)\neq \emptyset$. It follows from Hensel's lemma that $\Tran_G(\vphi',\psi')(\Oin)\neq \emptyset$ and the result follows. 
\end{proof}

Next we restate \cite[Expose XXIV,Proposition 8.1]{SGA3} in the language of Galois cohomology.

\begin{lemma}\label{lem.bijectivity_passing_special_fiber}
    For any smooth algebraic group $C$ over $\cO$, the reduction homomorphism $C(\Oin)\to C(\ksep)$ induces a bijection on Galois cohomology sets:
    $$H^1(\Gal(k),C(\Oin))\to H^1(\Gal(k),C(\ksep)).$$
\end{lemma}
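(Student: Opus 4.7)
The plan is to translate the statement into étale cohomology and then invoke the cited result of SGA 3. Since the lemma is labeled a ``restatement'' of \cite[Expose XXIV, Proposition 8.1]{SGA3}, the content is essentially a dictionary between Galois cohomology and étale cohomology of $\Spec\cO$, together with the standard identification of the reduction map with restriction to the special fiber.

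First, I would identify the source. Because $\nu$ is Henselian, $\cO$ is a Henselian local ring with residue field $k$, and $\Oin$ is the filtered colimit of the integral closures of $\cO$ in all finite unramified subextensions of $\Fin/F$; each such closure is a finite étale $\cO$-algebra. By Section~\ref{sect.notation}, $\Spec\Oin\to\Spec\cO$ is the universal pro-étale cover with Galois group $\Gal(\Fin/F)\cong\Gal(k)$. Standard Čech-theoretic identifications for non-abelian $H^1$ then give
\[
H^1_{\et}(\Spec\cO,C) \;\cong\; H^1(\Gal(k),C(\Oin)),
\]
where on the right we use that $C$, being of finite presentation over $\cO$, satisfies $C(\Oin)=\colim C(\cO')$ with $\cO'$ running over the finite unramified extensions of $\cO$.

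Second, I would identify the target. The equality $H^1_{\et}(\Spec k,C_k)\cong H^1(\Gal(k),C(\ksep))$ is the standard translation between étale and Galois cohomology over a field, applied to the smooth $k$-group $C_k=C\times_{\cO}k$. Under these two identifications, the reduction homomorphism $C(\Oin)\to C(\ksep)$ (induced by $\Oin\twoheadrightarrow\ksep$) corresponds precisely to the pullback $H^1_{\et}(\Spec\cO,C)\to H^1_{\et}(\Spec k,C_k)$ along the closed immersion $\Spec k\hookrightarrow\Spec\cO$, since the closed immersion factors compatibly through $\Spec\ksep\to\Spec\Oin$ and the action of $\Gal(k)$ matches via the isomorphism $\Gal(\Fin/F)\cong\Gal(k)$ of \eqref{e.residue_Gal_iso}.

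Third, the cited SGA~3 statement says exactly that for $C$ smooth over a Henselian local base $\cO$, the restriction to the special fiber induces a bijection on isomorphism classes of $C$-torsors for the étale topology, i.e.\ a bijection $H^1_{\et}(\Spec\cO,C)\tilde\to H^1_{\et}(\Spec k,C_k)$. Combining this with the two identifications above yields the claimed bijection.

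The only step that is not purely formal is the first identification: one needs that taking $C$-torsors commutes with the filtered colimit $\Oin=\colim\cO'$. This is where finite presentation of $C$ over $\cO$ is used, and is the main conceptual point to verify carefully; beyond that, everything reduces to unwinding definitions and quoting the smoothness input through SGA~3.
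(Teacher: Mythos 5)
Your proposal is correct and follows essentially the same route as the paper: identify $H^1(\Gal(k),C(\Oin))$ with $H^1_{\et}(\Spec\cO,C)$ and $H^1(\Gal(k),C(\ksep))$ with $H^1_{\et}(\Spec k,C)$ via the universal pro-\'etale cover $\Spec\Oin\to\Spec\cO$ and the identification $\pi_1(\Spec\cO)\cong\Gal(k)$, then quote \cite[Expose XXIV, Proposition 8.1]{SGA3} for the bijectivity of restriction to the special fiber. The compatibility and colimit points you flag are exactly the routine verifications implicit in the paper's citation of \cite[Section 2.2]{gille2016classification}.
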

\begin{proof}
    Note that $\Oin$ is the strict Henselization of $\cO$. Any $C$-torsor for the \'etale topology over $\Spec\cO$ is split by the universal cover $\Spec\Oin \to \Spec\cO$ because $\Spec \cO$ is local. The fundamental group of $\Spec \cO$ is $\Gal(k)$ by \cite[Section 5, Example 5.2.d]{milne1980etale}. Therefore \cite[Section 2.2]{gille2016classification} gives isomorphisms  $$H^1(\Gal(k),C(\Oin))\tilde{\to} H^1_{\et}(\Spec\cO,C),\  \ H^1(\Gal(k),C(\ksep)) \tilde{\to} H^1_{\et}(\Spec k,C).$$ 
    Now the  lemma follows from \cite[Expose XXIV,Proposition 8.1]{SGA3}, which states that the bottom row in the following commutative square is a bijection:
\[\begin{tikzcd}
	{H^1(\Gal(k),C(\Oin))} & {H^1(\Gal(k),C(\ksep))} \\
	{H^1_{\et}(\Spec \cO, C)} & {H^1_{\et}(\Spec k,C)}
	\arrow[from=2-1, to=2-2]
	\arrow[from=1-1, to=2-1]
	\arrow[from=1-2, to=2-2]
	\arrow[from=1-1, to=1-2]
\end{tikzcd}\]
\end{proof}

Next we prove that any loop torsor is represented by a cocycle taking values in $G(\ksep)$. Note that $\Galtr(F)$ acts on the coefficient field $\ksep\subset \Oin$ and on $G(\ksep)\subset G(\Oin)$ because $k\subset F$ is fixed by $\Galtr(F)$ and $\ksep/k$ is Galois. Let $[a_\sigma,\vphi]_\pi \in H^1_{\lop}(F,G)$ be a loop torsor. Denote the images of $a_\sigma$ under the reduction homomorphism $G(\Oin)\to G(\ksep)$ by $\overline{a}_\sigma$ and define $\overline{\vphi}:\I{F}\to G(\ksep)$ by the formula
$$\overline{\vphi}(f) = \overline{\vphi(f)}$$
for all $f\in \I{F}$. By Remark~\ref{rem.equivariance}, the map :
$$\langle \overline{a}_\sigma,\overline{\vphi}\rangle_\pi: \Gal(k)\ltimes \I{F} \to G(\Oin), \  (\sigma,f) \mapsto \overline{a}_\sigma {}^\sigma \overline{\vphi}(f)$$
is a $\Galtr(F)$-cocycle in $G(\Oin)$. Therefore $\overline{a}_\sigma,\overline{\vphi}$ are compatible in the sense of Definition~\ref{def.compatible} and the loop torsor $[\overline{a}_\sigma,\overline{\vphi}]_\pi\in H^1_{\lop}(F,G)$ is defined. 
\begin{lemma}\label{lem.Hensel_coefficient_field}
    Let $G(\ksep)\subset G(\Oin)$ denote the inclusion induced from the inclusion of a coefficient field $\ksep\subset \Oin$. Any loop cocycle $\langle a_\sigma, \vphi\rangle_\pi$ is cohomologous in $H^1_{\tr}(F,G(\Oin))$ to $\langle \overline{a}_\sigma,\overline{\vphi}\rangle_\pi$. In particular, we have:
    $$[a_\sigma,\vphi]_\pi = [\overline{a}_\sigma,\overline{\vphi}]_\pi.$$
\end{lemma}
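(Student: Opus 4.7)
By Lemma~\ref{lem.cohomolog_criterion}, the goal is to produce $s \in G(\Oin)$ with $s^{-1}\vphi s = \overline{\vphi}$ and $s^{-1} a_\sigma\,{}^\sigma s = \overline{a}_\sigma$ for every $\sigma \in \Gal(k)$. I would build $s$ in two independent stages: first align the geometric parts, then correct the arithmetic parts inside a twisted centralizer.

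In the first stage, view $\overline{\vphi}$ in $G(\Oin)$ via the coefficient field. Then $\vphi$ and $\overline{\vphi}$ have the common reduction $\overline{\vphi}$, so $e$ is a $\ksep$-point of the smooth transporter scheme $\Tran_G(\vphi,\overline{\vphi})$. Hensel's lemma --- as used in the proof of Lemma~\ref{lem.conj_of_morphism} --- lifts $e$ to some $s_1 \in G(\Oin)$ satisfying $s_1^{-1}\vphi s_1 = \overline{\vphi}$ and $\overline{s_1} = e$. Setting $\tilde a_\sigma := s_1^{-1} a_\sigma\,{}^\sigma s_1$, we get $\overline{\tilde a_\sigma} = \overline a_\sigma$, and Lemma~\ref{lem.cohomolog_criterion} reduces the problem to the equivalence $\langle \tilde a_\sigma,\overline{\vphi}\rangle_\pi \sim \langle \overline a_\sigma,\overline{\vphi}\rangle_\pi$.

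In the second stage, set $b_\sigma := \tilde a_\sigma\,\overline a_\sigma^{-1}$. Applying the compatibility relation \eqref{e.compatibility} twice --- to $(\tilde a,\overline{\vphi})$ and to $(\overline a,\overline{\vphi})$ --- a short computation yields $b_\sigma\overline{\vphi}(f)b_\sigma^{-1} = \overline{\vphi}(f)$ for every $f \in \I{F}$; combining the cocycle identities for $\tilde a$ and $\overline a$ gives $b_{\sigma\tau} = b_\sigma\cdot {}^{\overline a}\sigma\, b_\tau$, where ${}^{\overline a}\sigma g := \overline a_\sigma\,{}^\sigma g\,\overline a_\sigma^{-1}$ denotes the $\overline a$-twisted Galois action. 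Now form the twisted $\cO$-group ${}_{\overline a}G$: the reduced compatibility $\overline a_\sigma\,{}^\sigma\overline{\vphi}(f)\,\overline a_\sigma^{-1} = \overline{\vphi}(f)^{\theta(\sigma)} \in \overline{\vphi}(\I{F})$ shows that $\overline{\vphi}(\I{F})$ is stable under the twisted action, hence by \'etale descent corresponds to a finite \'etale $\cO$-subgroup $\tilde A \subset {}_{\overline a}G$ of order coprime to $\Char k$. Its centralizer $C^{\mathrm{tw}} := C_{{}_{\overline a}G}(\tilde A)$ is a smooth $\cO$-subgroup scheme by \cite[Expose XI, Corollaire 5.2]{SGA3}, and the preceding computation places $b_\sigma$ in $Z^1(\Gal(k), C^{\mathrm{tw}}(\Oin))$.

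To finish, I would apply Lemma~\ref{lem.bijectivity_passing_special_fiber} to $C^{\mathrm{tw}}$: since $\overline{b_\sigma} = e$, the class $[b_\sigma]$ is trivial in $H^1(\Gal(k), C^{\mathrm{tw}}(\ksep))$, hence in $H^1(\Gal(k), C^{\mathrm{tw}}(\Oin))$. Unwinding the resulting coboundary relation gives $v \in C^{\mathrm{tw}}(\Oin)$ with $\tilde a_\sigma = v^{-1}\overline a_\sigma\,{}^\sigma v$; the element $s_2 := v^{-1} \in C^{\mathrm{tw}}(\Oin)$ centralizes $\overline{\vphi}$ and satisfies $s_2^{-1}\tilde a_\sigma\,{}^\sigma s_2 = \overline a_\sigma$, so Lemma~\ref{lem.cohomolog_criterion} completes the second stage. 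The main obstacle is identifying the correct smooth $\cO$-group scheme $C^{\mathrm{tw}}$: the subgroup $\overline{\vphi}(\I{F}) \subset G(\ksep)$ need not be $\Gal(k)$-stable inside $G$, so its centralizer in $G_\cO$ is not defined a priori; only after twisting $G$ by $\overline a$ does the compatibility relation force stability, enabling the decisive invocation of Lemma~\ref{lem.bijectivity_passing_special_fiber}.
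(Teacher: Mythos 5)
Your proof is correct and rests on the same two pillars as the paper's: Hensel's lemma applied to the smooth transporter scheme (the paper's Lemma~\ref{lem.conj_of_morphism}) to move the geometric part into $G(\ksep)$, and Lemma~\ref{lem.bijectivity_passing_special_fiber} applied to the twisted centralizer to kill the ``difference'' cocycle, which reduces to the identity. The paper runs these steps in the opposite order — it first handles the special case $\vphi = \overline{\vphi}$, then reduces the general case to it via $t = s\overline{s}^{-1}$ — whereas your Stage~1 gets $\overline{s_1} = e$ directly by choosing the lift of the identity section of the transporter, so $\overline{\tilde a_\sigma} = \overline a_\sigma$ comes for free and the intermediate normalization $t = s\overline{s}^{-1}$ is unnecessary; this is a mild streamlining. (One small citation slip: smoothness of the centralizer $C^{\mathrm{tw}}$ is SGA3, Expos\'e~XI, Corollaire~5.3, not 5.2, though one can also view a centralizer as a self-transporter and use 5.2.)
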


\begin{proof}
    Let $\langle a_\sigma,\vphi\rangle_\pi$ be a loop cocycle. We need to prove  $\langle a_\sigma,\vphi\rangle_\pi$ and  $\langle \overline{a}_\sigma,\overline{\vphi}\rangle_\pi$ are cohomologous in $H^1_{\tr}(F,G(\Oin))$. By Lemma~\ref{lem.cohomolog_criterion}, it suffices to find $s\in G(\Oin)$ such that:
    \begin{equation}\label{e.goal_lem_Hens_coef_field}
    s^{-1}  a_\sigma {}^{\sigma} s= \overline{a}_\sigma, \ \ s^{-1} \vphi(f) s = \overline{\vphi}(f)
    \end{equation}
     for all $\sigma\in \Gal(k), f\in \I{F}$.
    We start by tackling the special case where $\vphi(f)\in G(\ksep)$ for all $f$ and so $\overline{\vphi} = \vphi$. Define for all $\sigma\in \Gal(k)$: $$c_\sigma = \overline{a_\sigma} a_\sigma^{-1}.$$ It is simple to check that $c_\sigma$ is a cocycle in ${}_a G(\Oin)$ (here ${}_a G$ is the twisted group defined by $a$, see Section~\ref{sect.notation}). We check $c_\sigma$ centralizes $\im\vphi$. Since both $a_\sigma,\vphi$ and $\overline{a_\sigma},\vphi$ are compatible  \eqref{e.compatibility} gives for any $f\in \I{F}$:
    \begin{align*}
        a_{\sigma^{-1}} {}^{\sigma^{-1}}\vphi(f) a_{\sigma^{-1}}^{-1} =   \vphi(f)^{\theta(\sigma^{-1})} = \overline{a_{\sigma^{-1}}} {}^{\sigma^{-1}} \vphi(f) \overline{a_{\sigma^{-1}}}^{-1}.
    \end{align*}
    Therefore $\overline{a_{\sigma^{-1}}}^{-1} a_{\sigma^{-1}}$ centralizes ${}^{\sigma^{-1}} \vphi(f)$. Applying ${}^{\sigma}(\cdot)$ to both sides and using the cocycle identity we get:
    $$C_{{}_a G}(\vphi(f))(\Oin) \ni {}^{\sigma}(\overline{a_{\sigma^{-1}}}^{-1} a_{\sigma^{-1}}) = {}^{\sigma}\overline{a_{\sigma^{-1}}}^{-1}  {}^{\sigma}a_{\sigma^{-1}} = \overline{a_\sigma} a_\sigma^{-1} = c_\sigma.$$
    Since $f \in \I{F}$ was arbitrary this shows $c_\sigma\in C_{{}_a G}(\vphi)$.
    The class $[c_\sigma] \in H^1(\Gal(k),C_{{}_a G}(\vphi)(\Oin))$ clearly goes to zero under the map  $$H^1(\Gal(k),C_{{}_a G}(\vphi)(\Oin))\to H^1(\Gal(k),C_{{}_a G}(\vphi)(\ksep)).$$ 
    This implies $[c_\sigma]$ is split in $H^1(\Gal(k),C_{{}_a G}(\vphi)(\Oin))$ by Lemma~\ref{lem.bijectivity_passing_special_fiber} (note that $C_G(\vphi)$ is smooth by \cite[Expose XI, Corollaire 5.3]{SGA3}). Therefore there exists $s\in C_{{}_a G}(\vphi)(\Oin)$ such that for all $\sigma\in \Gal(k)$:
    $$\overline{a_\sigma} a_\sigma^{-1} =c_\sigma = s^{-1} a_\sigma {}^{\sigma} s a_\sigma^{-1}.$$
    Multiply by $a_\sigma$ on the right to obtain $$ s^{-1} a_\sigma {}^{\sigma} s =\overline{a_\sigma} .$$
    Since $s$ centralizes $\vphi = \overline{\vphi}$, we conclude that it satisfies \eqref{e.goal_lem_Hens_coef_field}.

    We now tackle the case where $\vphi \neq \overline{\vphi}$. By Lemma~\ref{lem.conj_of_morphism}, $\vphi$ and $\overline{\vphi}$ are conjugate by an element $s\in G(\Oin)$. That is, for any $f\in \I{F}$ we have: 
    $$s^{-1} \vphi(f)  s = \overline{\vphi}(f).$$
    Applying the reduction map $G(\Oin) \to G(\ksep), g\mapsto \overline{g}$ to the above equation we get:
    $$\overline{s}^{-1} \overline{\vphi}(f) \overline{s} = \overline{\vphi}(f).$$
    Therefore the element $t = s \overline{s}^{-1} \in G(\Oin)$ satisfies $\overline{t} =1$ and
    \begin{equation}\label{e.34}
        t^{-1} \vphi(f)  t = \overline{s} s^{-1} (f) s \overline{s}^{-1}= \overline{s} \overline{\vphi}(f) \overline{s}^{-1} = \overline{\vphi}(f).
    \end{equation}
    By Lemma~\ref{lem.cohomolog_criterion}, \eqref{e.34} implies $\langle a_\sigma,\vphi\rangle_\pi$ and $\langle t^{-1} a_\sigma {}^\sigma t, \overline{\vphi}\rangle_\pi$ represent the same class in $H^1_{\tr}(F,G(\Oin))$.  By the first case we tackled and because $\overline{t}=1$, we conclude:
     $$[\langle t^{-1} a_\sigma {}^\sigma t, \overline{\vphi}\rangle_\pi] = [\langle \overline{t^{-1}} \overline{a}_\sigma {}^\sigma \overline{t}, \overline{\vphi}\rangle_\pi] = [\langle \overline{a}_\sigma, \overline{\vphi}\rangle_\pi]  \ \text{in }H^1_{\tr}(F,G(\Oin)).$$
    Therefore $\langle  \overline{a}_\sigma, \overline{\vphi}\rangle_\pi$ and $\langle a_\sigma,\vphi\rangle_\pi$ represent the same class in $H^1_{\tr}(F,G(\Oin))$. This finishes the proof.
\end{proof}

We now prove Proposition~\ref{prop.hensel_coeffic_field2}, which completes the proof of Theorem~\ref{thm.uniqueness}. 
\begin{proof}
    The map \eqref{e.reduction_map_coh} is injective because it has a left inverse induced by the reduction homomorphism $G(\Oin)\to G(\ksep), g\mapsto \overline{g}$. To prove surjectivity, one needs to show any loop cocycle is cohomologous in $H^1_{\tr}(F,G(\Oin))$ to a cocycle taking values in $G(\ksep)$. This is immediate from Lemma~\ref{lem.Hensel_coefficient_field}.
\end{proof}

\section{Functoriality of decompositions of loop torsors}\label{sect.funct_of_decomp}

Let $L/F$ be an extension of Henselian valued fields with uniformizers $\pi$ and $\tau$. It is clear from the definition that $\gamma_L$ is a loop torsor for any $\gamma\in H^1_{\lop}(F,G)$. Therefore there is an induced map $H^1_{\lop}(F,G)\to H^1_{\lop}(L,G)$. Our goal in this section is to understand the functoriality of the decompositions $\gamma = [a_\sigma,\vphi]_\pi$. That is, we wish to write $\gamma_L = [b_\sigma,\psi]_\tau$ for some compatible $b_\sigma,\psi$ explicitly described in terms of $a_\sigma$ and $\vphi$. The following lemma shows we have to account for the choice of uniformizers $\pi,\tau$ (note that it cannot always be assumed that $\tau$ extends $\pi$). 

\begin{proposition}\label{prop.basic_func_decomp}
    For any $\sigma\in \Gal(l)$, let $\chi_\sigma \in \I{F}$ be as in \eqref{e.25} (note that $\chi_\sigma$ depends on $\pi$ and $\tau$). The following holds for any compatible pair $a_\sigma,\vphi$.
     $$([a_\sigma,\vphi]_{\pi})_L = [\vphi(\chi_\sigma)\Inf_{l/k}(a)_\sigma ,\vphi_{|L}]_\tau.$$ 
     Here $\Inf_{l/k}(a)$ is the restriction of $a_\sigma$ to $l$ and $\vphi_{|L}$ is the composition of $\vphi$ with the restriction map $\I{L}\to \I{F}$.
\end{proposition}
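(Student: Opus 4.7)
The plan is a direct unwinding of the definitions: both sides represent cocycles on $\Galtr(L)$, so it suffices to check that they agree after evaluation at every element $\rho = \Psi^L_\tau(\sigma,f) \in \Galtr(L)$ with $\sigma\in \Gal(l)$ and $f \in \I{L}$. The main computational input is the formula from Lemma~\ref{lem.func_Galois_dec} expressing the restriction $\rho_{|\Ftr}$ in terms of the decomposition with uniformizer $\pi$.

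First I would unpack the left-hand side. By definition, the cocycle $c_\rho := \langle a_\sigma,\vphi\rangle_\pi(\rho_{|\Ftr})$ representing $([a_\sigma,\vphi]_\pi)_L$ equals $a_{\sigma'} \cdot {}^{\sigma'}\vphi(f')$ where $(\sigma',f') = (\sigma_{|\ksep},\, f_{|F} + \theta(\sigma)^{-1}\chi_\sigma)$, by Lemma~\ref{lem.func_Galois_dec}. Identifying $a_{\sigma_{|\ksep}}$ with $\Inf_{l/k}(a)_\sigma$ and using that $\vphi$ is a continuous homomorphism from the (additively written, profinite) abelian group $\I{F}$ to $G(\Oin)$, I would split
\[
\vphi(f_{|F} + \theta(\sigma)^{-1}\chi_\sigma) = \vphi_{|L}(f)\cdot \vphi(\chi_\sigma)^{\theta(\sigma)^{-1}}.
\]
This reduces matters to the identity
\[
\Inf_{l/k}(a)_\sigma \cdot {}^\sigma\vphi_{|L}(f) \cdot {}^\sigma\!\bigl(\vphi(\chi_\sigma)^{\theta(\sigma)^{-1}}\bigr) \;=\; \vphi(\chi_\sigma) \,\Inf_{l/k}(a)_\sigma\cdot {}^\sigma \vphi_{|L}(f),
\]
which, after cancellation of $\Inf_{l/k}(a)_\sigma \cdot {}^\sigma\vphi_{|L}(f)$ (using conjugation by this element), amounts to showing
\[
{}^\sigma\!\bigl(\vphi(\chi_\sigma)^{\theta(\sigma)^{-1}}\bigr) \;=\; (\Inf_{l/k}(a)_\sigma)^{-1}\,\vphi(\chi_\sigma)\,\Inf_{l/k}(a)_\sigma.
\]

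This last identity is exactly the compatibility condition \eqref{e.compatibility} applied to the pair $(a,\vphi)$ at the element $\theta(\sigma)^{-1}\chi_\sigma \in \I{F}$. Indeed, \eqref{e.compatibility} gives
\[
a_{\sigma_{|\ksep}}\,{}^{\sigma_{|\ksep}}\vphi(\theta(\sigma)^{-1}\chi_\sigma)\,a_{\sigma_{|\ksep}}^{-1} \;=\; \vphi(\theta(\sigma)^{-1}\chi_\sigma)^{\theta(\sigma_{|\ksep})},
\]
and since $\zeta_n\in \ksep$ we have $\theta(\sigma) = \theta(\sigma_{|\ksep})$, so the right-hand side simplifies to $\vphi(\chi_\sigma)^{\theta(\sigma)^{-1}\cdot\theta(\sigma)} = \vphi(\chi_\sigma)$. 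Rearranging yields the required equation.

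Once the cocycle identity $\langle a_\sigma,\vphi\rangle_\pi \circ (\cdot)_{|\Ftr} = \langle \vphi(\chi_\sigma)\,\Inf_{l/k}(a)_\sigma,\,\vphi_{|L}\rangle_\tau$ is established, the compatibility of the new pair $(\vphi(\chi_\sigma)\Inf_{l/k}(a)_\sigma,\,\vphi_{|L})$ in the sense of Definition~\ref{def.compatible} and the cocycle condition are automatic, since the left-hand side is the restriction of a genuine cocycle. The main obstacle is simply careful bookkeeping of the actions — in particular verifying that continuity of $\vphi$ gives $\vphi(n\chi) = \vphi(\chi)^n$ for $n\in\hat{\bZ}'$ (which holds because $\vphi$ factors through a finite quotient whose order is coprime to $\Char k$, by Remark~\ref{rem.quotients_tame_inertia}) and identifying $\theta(\sigma)$ for $\sigma\in \Gal(l)$ with $\theta(\sigma_{|\ksep})$.
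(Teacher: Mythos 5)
Your proof is correct and follows essentially the same route as the paper: both apply Lemma~\ref{lem.func_Galois_dec} to identify $\Psi^L_\tau(\sigma,f)_{|\Ftr}$ with $\Psi^F_\pi(\sigma_{|\ksep},f_{|F}+\theta(\sigma)^{-1}\chi_\sigma)$, expand the cocycle, and then invoke the compatibility relation~\eqref{e.compatibility} at the element $\theta(\sigma)^{-1}\chi_\sigma$ to move $a_{\sigma_{|\ksep}}$ past $\vphi(\chi_\sigma)^{\theta(\sigma)^{-1}}$. One small bookkeeping point: cancelling $\Inf_{l/k}(a)_\sigma\cdot{}^\sigma\vphi_{|L}(f)$ from $ABC=DAB$ a priori yields $C=(AB)^{-1}D(AB)$, not directly $C=A^{-1}DA$; the two agree here because $B={}^{\sigma}\vphi(f_{|F})$ and $A^{-1}DA={}^{\sigma}\vphi(\theta(\sigma)^{-1}\chi_\sigma)$ both lie in the abelian group ${}^\sigma\vphi(\I{F})$ — or, more simply, you can split $\vphi(f_{|F}+\theta(\sigma)^{-1}\chi_\sigma)$ in the opposite order, with $\vphi(\chi_\sigma)^{\theta(\sigma)^{-1}}$ on the left as the paper does, so that the compatibility relation cancels $a_{\sigma_{|\ksep}}$ past it directly without any appeal to commutativity.
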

\begin{proof}
    Let $\Psi^F_\pi: \Gal(k)\ltimes \I{F} \to \Galtr(F)$, $\Psi^L_\tau: \Gal(l)\ltimes \I{L} \to \Galtr(L)$ be the isomorphisms of Proposition~\ref{prop.decomp_of_Gal}. 
    Lemma~\ref{lem.func_Galois_dec} gives for any $\sigma\in \Gal(l),f\in \I{L}$:
    \begin{align*}
        \langle a_\sigma, \vphi\rangle_{\pi}(\Psi^L_\tau(\sigma,f)_{|\Ftr}) &= \langle a_\sigma, \vphi\rangle_{\pi}(\Psi^F_\pi(\sigma_{|\ksep},f_{|F} + \theta(\sigma)^{-1}\chi_\sigma)) \\
         &= a_{\sigma_{|\ksep}} {}^{\sigma} ( \vphi(\chi_\sigma)^{\theta(\sigma)^{-1}} \vphi(f_{|F})) \\
        &= a_{\sigma_{|\ksep}} {}^{\sigma}  \vphi(\chi_\sigma)^{\theta(\sigma)^{-1}} {}^{\sigma} \vphi(f_{|F}) \\
        &=^{\eqref{e.compatibility}} \vphi(\chi_\sigma) a_{\sigma_{|\ksep}} {}^{\sigma} \vphi(f_{|F}) \\
        &=\langle \vphi(\chi_\sigma) \Inf_{l/k}(a)_\sigma ,\vphi_{|L}\rangle_{\tau}(\Psi^L_\tau(\sigma,f)).
    \end{align*}
    The result follows.
\end{proof}

\begin{corollary}\label{cor.field_ext_funct}
   If $\Gamma_{F} = \Gamma_L$, then the following holds for any compatible pair $a_\sigma,\vphi$:
     $$([a_\sigma,\vphi]_{\pi})_L = [\Inf_{l/k}(a)_\sigma,\vphi]_\pi.$$ 
    Note that $\pi$ is a uniformizer for $\Ltr$ because $\Gamma_{\Ftr} = \Gamma_{\Ltr}$.
\end{corollary}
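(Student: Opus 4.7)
The plan is to derive the corollary directly from Proposition~\ref{prop.basic_func_decomp} by choosing the uniformizer $\tau$ of $\Ltr$ to coincide with $\pi$. Once this is done the correction term $\vphi(\chi_\sigma)$ will vanish and the restriction map $\I{L}\to \I{F}$ will be an equality, so the statement falls out.

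First I would check that $\pi$ is a legitimate uniformizer for $L$ and for $\Ltr$. Since $\Gamma_L = \Gamma_F$, the section $\pi: \Gamma_F\to F^*$ composed with the inclusion $F^*\subset L^*$ gives a left inverse to $\nu: L^*\to \Gamma_L = \Gamma_F$, and $\pi^\gamma\in F\subset L$ for every $\gamma\in\Gamma_L$. By Remark~\ref{rem.uniformizers}, $\pi$ extends to a uniformizer of $\Ltr$; since $\Gamma_{\Ftr} = \bZ_{(q)}\otimes \Gamma_F = \bZ_{(q)}\otimes \Gamma_L = \Gamma_{\Ltr}$, this extension uniformizes $\Ltr$ on the same domain as it uniformizes $\Ftr$.

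Next, I would specialize Proposition~\ref{prop.basic_func_decomp} by taking $\tau = \pi$. With this choice, $u^\gamma = \pi^\gamma \tau^{-\gamma} = 1$ for every $\gamma\in\Gamma_F$, so the element $u^{\gamma/n}\in \Lin$ (obtained via Hensel's lemma as an $n$-th root of $u^\gamma = 1$) can be taken to be $1$. Defining equation \eqref{e.25} then reads
$$\zeta_n^{\chi_\sigma(\gamma)} = \frac{\overline{s}_\tau(\sigma)(1)}{1} = 1$$
for all $n$ coprime to $\Char k$ and all $\gamma\in\Gamma_F$, which forces $\chi_\sigma = 0$ in $\I{F}$ for every $\sigma\in\Gal(l)$. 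Consequently $\vphi(\chi_\sigma) = e$ for every $\sigma$.

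Finally I would observe that the restriction map $\I{L}\to\I{F}$ appearing in Proposition~\ref{prop.basic_func_decomp} is induced by the inclusion $\Gamma_F\subset\Gamma_L$ at the level of $\Hom(-,\hat{\bZ}')$; under the hypothesis $\Gamma_F = \Gamma_L$ this restriction is the identity, so $\vphi_{|L} = \vphi$. Plugging $\chi_\sigma = 0$ and $\vphi_{|L}=\vphi$ into the formula of Proposition~\ref{prop.basic_func_decomp} yields
$$\bigl([a_\sigma,\vphi]_{\pi}\bigr)_L = [\Inf_{l/k}(a)_\sigma,\vphi]_\pi,$$
which is the desired conclusion. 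There is no real obstacle here; the only thing to verify carefully is that one may consistently use $\pi$ as a uniformizer on both sides, which is precisely what the hypothesis $\Gamma_F = \Gamma_L$ delivers.
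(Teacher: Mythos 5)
Your proof is correct and follows essentially the same route as the paper's: the paper simply says that the result is immediate from Proposition~\ref{prop.basic_func_decomp} once one notes $\chi_\sigma$ is trivial by~\eqref{e.25} when $\tau=\pi$. You spell out the additional details (that $\pi$ indeed uniformizes $\Ltr$, that $u^{\gamma/n}=1$ forces $\chi_\sigma=0$, and that $\vphi_{|L}=\vphi$ because the restriction $\I{L}\to\I{F}$ is the identity when $\Gamma_F=\Gamma_L$), but the underlying argument is the same.
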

\begin{proof}
    The result is immediate from Proposition~\ref{prop.basic_func_decomp} once we note that $\chi_\sigma = 1$ for all $\sigma\in \Gal(l)$ by  \eqref{e.25}.
\end{proof}

To improve Proposition~\ref{prop.basic_func_decomp} we need to understand the cocycle $\vphi(\chi_\sigma) \in Z^1(\Gal(l),A(l))$ for arbitrary uniformizers $\tau$ of $L$ and finite diagonalizable groups $A$ over $k$.

\begin{lemma}\label{lem.undertanding_chisigma}
    Let  $A=\mu_{n_1} \times \dots \times \mu_{n_r}$ for some $n_1,\dots,n_r$ coprime to $\Char k$ and $\vphi: \I{F} \to A(\Oin)$ be a continuous homomorphism. 
    \begin{enumerate}
        \item There are elements $\gamma_1,\dots,\gamma_r\in \Gamma_F$ such that for all $f\in \I{F}$:
    \begin{equation}
        \vphi(f) = (\zeta_{n_1}^{f(\gamma_1)},\dots,\zeta_{n_r}^{f(\gamma_r)}).
    \end{equation}
    
        \item Let $u^\gamma = \pi^{\gamma} \tau^{-\gamma}$ for $\gamma\in \Gamma_{\Ftr}$ as in Lemma~\ref{lem.func_Galois_dec}. The class $[\vphi(\chi_\sigma)]\in H^1(l,A)$ represents $(\overline{u^{\gamma_1}},\dots,\overline{u^{\gamma_r}})$ under the Kummer isomorphism $ H^1(l,A)\cong l^*/l^{*n_1}\times \dots\times l^*/l^{*n_r}$.

    \end{enumerate}
    
\end{lemma}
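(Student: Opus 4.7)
For Part (1), I would first decompose $\vphi$ componentwise as $\vphi = (\vphi_1,\dots,\vphi_r)$ with each $\vphi_i : \I{F} \to \mu_{n_i}(\Oin)$. Since $n_i$ is coprime to $\Char k$, the residue field of $\Oin$ already contains all $n_i$-th roots of unity, so $\mu_{n_i}(\Oin) = \langle \zeta_{n_i}\rangle$ and I may identify this group with $\bZ/n_i\bZ$ via $\zeta_{n_i}\mapsto 1$. Now $\I{F} = \Hom(\Gamma_F,\hat{\bZ}') \cong (\hat{\bZ}')^r$ by Remark~\ref{rem.quotients_tame_inertia}, and any continuous homomorphism from $(\hat{\bZ}')^r$ to the finite discrete group $\bZ/n_i\bZ$ factors through $(\bZ/n_i\bZ)^r$ because $n_i$ is coprime to $\Char k$. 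It is then routine to realize such a homomorphism as $f\mapsto f(\gamma_i) \bmod n_i$ for some $\gamma_i\in \Gamma_F$: choose lifts in $\bZ$ of the $r$ images of the standard basis of $(\hat{\bZ}')^r$ and pair them with the dual basis of $\Gamma_F$.

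For Part (2), I would apply Part (1) to write $\vphi(\chi_\sigma) = \bigl(\zeta_{n_1}^{\chi_\sigma(\gamma_1)},\dots,\zeta_{n_r}^{\chi_\sigma(\gamma_r)}\bigr)$. The defining equation~\eqref{e.25} of $\chi_\sigma$ expresses the $i$-th component as $\overline{s}_\tau(\sigma)(u^{\gamma_i/n_i})/u^{\gamma_i/n_i}$. As shown at the beginning of the proof of Lemma~\ref{lem.func_Galois_dec}, $u^{\gamma_i/n_i}$ lies in $L_{\oin}$ (Hensel's lemma applied to the unit $u^{\gamma_i}$), so $\overline{s}_\tau(\sigma)$ acts on it only through its image in $\Gal(L_{\oin}/L) \cong \Gal(l)$, and under the identification of Convention~\ref{rem.Fin} the component simplifies to $\sigma(u^{\gamma_i/n_i})/u^{\gamma_i/n_i}$. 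To match this with the Kummer representative of $\overline{u^{\gamma_i}}\in l^*/l^{*n_i}$, I would use that $u^{\gamma_i}\in L^*$ is a unit of valuation zero, so that $\overline{u^{\gamma_i/n_i}}\in l_{\sep}^*$ is a well-defined $n_i$-th root of $\overline{u^{\gamma_i}}$. Choosing this as the distinguished $n_i$-th root and invoking the compatibility~\eqref{e.residue_Gal_iso} of the $\Gal(l)$-action with reductions, the Kummer cocycle representing $\overline{u^{\gamma_i}}$ in $H^1(l,\mu_{n_i})$ sends
$$\sigma \ \longmapsto\ \sigma\bigl(\overline{u^{\gamma_i/n_i}}\bigr)\big/\overline{u^{\gamma_i/n_i}} \ =\ \overline{\sigma(u^{\gamma_i/n_i})/u^{\gamma_i/n_i}} \ =\ \zeta_{n_i}^{\chi_\sigma(\gamma_i)},$$
which agrees with the $i$-th component of $\vphi(\chi_\sigma)$ computed above. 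Assembling the $r$ components yields Part (2).

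I do not expect a serious obstacle here: the heart of the argument is the definition~\eqref{e.25} of $\chi_\sigma$ together with Kummer theory. The main care-demanding step is the bookkeeping in Part (2), namely tracking the identifications among $\Galtr(L)\to\Gal(L_{\oin}/L)\cong\Gal(l)$, the actions on $L_{\oin}$ and on $l_{\sep}$, and the chosen $n_i$-th root in Kummer theory, so that the two cocycles are genuinely compared as functions $\Gal(l)\to \mu_{n_i}$.
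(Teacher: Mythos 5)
Your proof is correct and takes essentially the same approach as the paper's: Part~(1) is expanded from the paper's one-line "choose a basis for $\Gamma_F$" sketch, and Part~(2) follows the same route of reducing the defining equation~\eqref{e.25} to the residue field via~\eqref{e.residue_Gal_iso} and recognizing the resulting cocycle $\sigma\mapsto \sigma(\overline{u^{\gamma_i/n_i}})/\overline{u^{\gamma_i/n_i}}$ as the Kummer representative of $\overline{u^{\gamma_i}}$.
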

\begin{proof}
    One proves Part (1) by choosing a basis for $\Gamma_F$. The details are left to the reader. We prove Part (2). Recall that by the definition of $\chi_\sigma$ in \eqref{e.25}, we have:
    $$\zeta_{n_i}^{\chi_\sigma(\gamma_i)} = \frac{\overline{s}_\tau(\sigma)(u^{\gamma_i/n_i})}{u^{\gamma_i/n_i}},$$
    for any $1\leq i \leq r$.
    Note that $\nu(u^{\gamma_i/n_i})=0$, so the residue class $\overline{u^{\gamma_i/n_i}} \in l_{\sep}$ is defined. 
    Since $\overline{s}_\tau$ is a section of $\Galtr(L)\to \Gal(l)$, applying \eqref{e.residue_Gal_iso} gives:
    \begin{equation}\label{e.notimport_dependence_on_uni}
        \zeta_{n_i}^{\chi_\sigma(\gamma_i)} = \overline{\zeta_{n_i}^{\chi_\sigma(\gamma_i)}} = \frac{\sigma(\overline{u^{\gamma_i/n_i}})}{\overline{u^{\gamma_i/n_i}}}.
    \end{equation}
     Since $\overline{u^{\gamma_i/n_i}}$ is an $n_i$-th roof of $\overline{u^{\gamma_i}}$, the right hand side in \eqref{e.notimport_dependence_on_uni} is a cocycle representing $\overline{u^{\gamma_i}}$ under the classical kummer isomoprhism $H^1(l,\mu_{n_i})\cong l^*/l^{*n_i}$.
    Now Part (2) follows from Part (1) which states:
    \[\vphi(\chi_\sigma) = (\zeta_{n_1}^{\chi_\sigma(\gamma_1)},\dots,\zeta_{n_r}^{\chi_\sigma(\gamma_r)}).\qedhere\]
\end{proof}

\begin{corollary}\label{cor.choosing_uniformizers}
    Let $\pi$ be a uniformizer for $\Ftr$ and $\vphi: \I{F}\to A(\Oin)$ a continuous homomorphism onto a finite diagonalizable $\cO$-subgroup $A\subset G_{\cO}$. If $[L:F]$ is finite and coprime to $|A|$, then there exists a uniformizer $\tau: \Gamma_{\Ltr}\to \Ltr^*$ such that $$([a_\sigma,\vphi]_\pi)_L = [\Inf_{l/k}(a)_\sigma,\vphi_{|L}]_\tau$$ for any $\Gal(k)$-cocycle $a_\sigma\in G(\Oin)$ centralizing $A$.
\end{corollary}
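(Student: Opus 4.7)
The plan is to apply Proposition~\ref{prop.basic_func_decomp}, which for any uniformizer $\tau$ of $\Ltr$ already gives
$$([a_\sigma,\vphi]_\pi)_L = [\vphi(\chi_\sigma)\,\Inf_{l/k}(a)_\sigma,\ \vphi_{|L}]_\tau,$$
so the only task is to pick $\tau$ killing the arithmetic twist $\vphi(\chi_\sigma)$. By Lemma~\ref{lem.cohomolog_criterion} applied over $L$, the loop cocycles $\langle\vphi(\chi_\sigma)\Inf_{l/k}(a)_\sigma,\vphi_{|L}\rangle_\tau$ and $\langle\Inf_{l/k}(a)_\sigma,\vphi_{|L}\rangle_\tau$ will define the same torsor once I produce some $s$ with values in $A$ satisfying $s^{-1}\vphi_{|L}s = \vphi_{|L}$ and $s^{-1}\vphi(\chi_\sigma)\Inf_{l/k}(a)_\sigma\,{}^\sigma s = \Inf_{l/k}(a)_\sigma$. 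For $s\in A$ the first equation is automatic since $A$ is abelian, and $s$ commutes with both $\vphi(\chi_\sigma)\in A$ and $\Inf_{l/k}(a)_\sigma\in C_G(A)$, so the second collapses to ${}^\sigma s\cdot s^{-1} = \vphi(\chi_\sigma)^{-1}$, i.e.\ to the vanishing of the class $[\vphi(\chi_\sigma)]$ in $H^1(\Gal(l),A)$.

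By Lemma~\ref{lem.undertanding_chisigma}, writing $A\cong\mu_{n_1}\times\cdots\times\mu_{n_r}$ and representing $\vphi$ via $\gamma_1,\ldots,\gamma_r\in\Gamma_F$ as in part~(1), this cohomology class corresponds under Kummer theory to the tuple $(\overline{u^{\gamma_j}})_j\in\prod_j l^*/l^{*n_j}$, with $u^\gamma=\pi^\gamma\tau^{-\gamma}$. So the problem becomes: choose $\tau$ making $\overline{u^{\gamma_j}}$ an $n_j$-th power in $l^*$ for every $j$.

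To arrange this I would start from any uniformizer $\tau_0$ of $\Ltr$ coming via Remark~\ref{rem.uniformizers} from a basis $e_1,\ldots,e_r$ of $\Gamma_L$ and arbitrary lifts $\tau_0^{e_i}\in L^*$, then modify it by replacing $\tau_0^{e_i}$ with $\tau_0^{e_i}z_i$ for units $z_i\in\cO_L^*$ to be determined. Writing $\gamma_j=\sum_i a_{ji}e_i$, the conditions $\overline{u^{\gamma_j}}\in l^{*n_j}$ become the system of congruences
$$\prod_i \bar z_i^{\,a_{ji}}\equiv \overline{u_0^{\gamma_j}}\pmod{l^{*n_j}}\qquad (j=1,\ldots,r).$$
The hypothesis that $[L:F]$ is coprime to $|A|$ enters via Ostrowski's theorem (Theorem~\ref{thm.ostrowski}), which makes $[\Gamma_L:\Gamma_F]$ coprime to every $n_j$; an elementary argument using the isomorphism $\Gamma_F/p^k\Gamma_F\cong\Gamma_L/p^k\Gamma_L$ valid at primes $p\mid n_j$ then shows that the content of $\gamma_j$ in $\Gamma_F$ equals its content in $\Gamma_L$ at every such $p$. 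Denoting these contents by $d_j$ and $g_j=\gcd_i(a_{ji})$ respectively, this yields $\gcd(g_j,n_j)\mid d_j$. Since $\pi^{\gamma_j}\in F^{*d_j}$ (from $\gamma_j\in d_j\Gamma_F$) and $\tau^{\gamma_j}\in L^{*g_j}\subseteq L^{*d_j}$, one has $\overline{u_0^{\gamma_j}}\in l^{*d_j}\subseteq l^{*\gcd(g_j,n_j)}=l^{*g_j}\cdot l^{*n_j}$, which is exactly the image modulo $l^{*n_j}$ of the $j$-th congruence. In particular each individual congruence is solvable.

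The main obstacle, and the step requiring the most bookkeeping, is the simultaneous solvability of all $r$ congruences for one common tuple $(\bar z_i)$. I expect to handle this via a Smith-normal-form reduction of the integer matrix $(a_{ji})$: after an invertible change of basis on the $e_i$ and on the index set of the $\gamma_j$, the system decouples into independent one-variable congruences of the type already resolved in the previous paragraph.
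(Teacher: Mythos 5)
Your reduction is correct and matches the paper's strategy up to the last step: you correctly combine Proposition~\ref{prop.basic_func_decomp}, Lemma~\ref{lem.cohomolog_criterion} and Lemma~\ref{lem.undertanding_chisigma}(2) to reduce the corollary to choosing $\tau$ so that each residue class $\overline{u^{\gamma_j}}$ lies in $l^{*n_j}$, i.e.\ so that $[\vphi(\chi_\sigma)]$ vanishes in $H^1(l,A)$. The individual solvability argument via contents is also sound.

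The gap is the final paragraph. Row-reducing the matrix $(a_{ji})$ via Smith normal form is not clearly admissible here, because the congruences live in the \emph{different} groups $l^*/l^{*n_j}$: an elementary row operation that replaces the $j$-th congruence by a $\bZ$-linear combination of the $j$-th and $j'$-th forces you to decide whether the new modulus is $n_j$, $n_{j'}$, or their gcd, and changing the generators $\gamma_j$ (equivalently, changing the splitting $A\cong\prod_j\mu_{n_j}$) changes the $n_j$'s along with them. You acknowledge the issue but don't resolve it, and as stated the reduction does not obviously decouple. The paper sidesteps this entirely by running Smith normal form on the \emph{lattice inclusion} $\Gamma_F\subset\Gamma_L$ rather than on $(a_{ji})$: pick a basis $e_1,\dots,e_r$ of $\Gamma_L$ and integers $d_i$ with $d_ie_i$ a basis of $\Gamma_F$. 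Ostrowski makes each $d_i$ coprime to $|A|$, so one can choose $m_i$ with $m_id_i\equiv 1\pmod{|A|}$ and set
$$\tau^{e_i} := x_i\bigl(\pi^{d_ie_i}x_i^{-d_i}\bigr)^{m_i},\qquad \nu(x_i)=e_i,$$
which makes $\pi^{d_ie_i}\tau^{-d_ie_i}$ an $|A|$-th power in $L^*$ for every $i$, hence $u^\gamma\in L^{*|A|}$ for every $\gamma\in\Gamma_F$. This kills $[\vphi(\chi_\sigma)]$ uniformly, independent of the particular $\gamma_j$'s and $n_j$'s attached to $\vphi$, so no simultaneous system of unequal-modulus congruences ever arises. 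That is the missing idea: aim for the $|A|$-th-power condition on all of $\Gamma_F$ at once, rather than the $n_j$-th-power condition at the individual $\gamma_j$.
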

\begin{proof}
There exists a basis $e_1,\dots,e_r$ of ${\Gamma}_L$ and integers $d_1,\dots,d_r$ such that $d_1 e_1,\dots,d_r e_r$ is a basis for $\Gamma_F$ and $\prod_i d_i = [{\Gamma}_L:\Gamma_F]$ \cite[Theorem III.7.8]{lang2002algebra}. By Theorem~\ref{thm.ostrowski}, $[\Gamma_L:\Gamma_F]$ is coprime to $|A|$. Choose integers $m_1,\dots,m_r$ such that:
\begin{equation}\label{e.27b}
    m_i d_i \equiv 1 \mod |A|.
\end{equation}
Let $x_i \in L^*$ be such that $\nu(x_i) = e_i$. Set $\pi_i = \pi^{d_i e_i}$ and define $$\tau_i :=  x_i (\pi_i x_i^{-d_i})^{m_i}.$$
We have for any $1\leq i\leq r$ :
$$\nu(\tau_i) = \nu(x_i) + m_i (\nu(\pi_i) - d_i\nu(x_i)) = e_i + m_i(d_i e_i - d_i e_i ) = e_i.$$
Therefore there exists a uniformizer $\tau$ for $\Ltr$ such that $\tau^{e_i} =\tau_i$ for all $i$ by Remark~\ref{rem.uniformizers}. For this uniformizer we have:
$$\pi^{d_i e_i} \tau^{-d_i e_i} = \pi_i \tau_i^{-d_i} = \pi_i x_i^{-d_i} (\pi_i x_i^{-d_i})^{-d_i m_i} = (\pi_i x_i^{-d_i})^{1-d_i m_i}.$$
Therefore $\pi^{d_i e_i} \tau^{-d_i e_i}$ is an $|A|$-th power for all $i$ by \eqref{e.27b}. This gives $[\chi_\sigma] =1$ in $H^1(k,A)$ by  Lemma~\ref{lem.undertanding_chisigma}(2). Let $a\in A(\Oin)$ be such that for all $\sigma\in \Gal(k)$:
$$\chi_\sigma = a^{-1}\ {}^{\sigma} a.$$
We use Proposition~\ref{prop.basic_func_decomp} and the fact that  $a_\sigma$ centralizes $A$ to get:
$$([a_\sigma,\vphi]_\pi)_L= [ a^{-1} {}^{\sigma} a \Inf_{l/k}(a)_\sigma,\vphi_{|L}]_\tau= [a^{-1} \Inf_{l/k}(a)_\sigma {}^{\sigma} a,\vphi_{|L}]_\tau.$$
We finish the proof by noting that Lemma~\ref{lem.cohomolog_criterion} gives:
\[[a^{-1} \Inf_{l/k}(a)_\sigma {}^{\sigma} a,\vphi_{|L}]_\tau= [\Inf_{l/k}(a)_\sigma,a \vphi_{|L}a^{-1}]_\tau= [\Inf_{l/k}(a)_\sigma,\vphi_{|L}]_\tau.\qedhere\]
\end{proof}

Let $F$ be an iterated Laurent series field and $\vphi: \I{F} \to A(\ksep)$ be a continuous homomorphism onto a diagonalizable $p$-group $A\subset G_k$ defined over $k$. Let $a_\sigma \in Z^1(\Gal(k),C_G(A)(\ksep))$ be a cocycle and set
$$\gamma =[a_\sigma,\vphi]_\pi$$
as in Example~\ref{ex.main}. If $G^{\circ}$ is reductive, \cite[Proposition 4.9]{gille2024newloop} implies $\gamma$ is anisotropic if and only if $[a_\sigma]\in H^1(k,C_G(A))$ is anisotropic. We will use Corollary~\ref{cor.choosing_uniformizers} to upgrade this to a criterion deciding whether $\gamma$ is anisotropic over a $p$-closure of $F$.
\begin{lemma}\label{lem.aniso_prime-to-p}
Assume $G^{\circ}$ is reductive.
 If $k$ is $p$-closed, and $[a_\sigma]\in H^1(k,C_G(A))$ is anisotropic, then for any prime-to-$p$ extension $F\subset L$:
 \begin{enumerate}
     \item $\im\vphi_{|L} = A(\ksep)$
     \item $\gamma_L$ is anisotropic
 \end{enumerate}
\end{lemma}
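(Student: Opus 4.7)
The plan is to handle (1) first, then derive (2) for finite prime-to-$p$ extensions by combining (1) with Corollary~\ref{cor.choosing_uniformizers}, and finally to reduce an arbitrary prime-to-$p$ extension to the finite case via Lemma~\ref{lem.isotrop_induct_lim}(4).

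For (1): since $A$ is a $p$-group, both $\vphi$ and $\vphi_{|L}$ factor through the pro-$p$ parts of $\I{F}$ and $\I{L}$, namely $\Hom(\Gamma_F,\bZ_p)$ and $\Hom(\Gamma_L,\bZ_p)$. It thus suffices to show the restriction $\Hom(\Gamma_L,\bZ_p)\to \Hom(\Gamma_F,\bZ_p)$ is surjective. Ostrowski's theorem applied to each finite subextension of $L/F$ implies that $\Gamma_L/\Gamma_F$ is a prime-to-$p$ torsion abelian group, so for any $\gamma\in\Gamma_L$ there exists a positive integer $n$ prime to $p$ with $n\gamma\in\Gamma_F$. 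Given $f : \Gamma_F \to \bZ_p$, one then sets $\widetilde{f}(\gamma) = n^{-1} f(n\gamma)$; this is well defined because $n$ is a unit in $\bZ_p$, and a routine check shows $\widetilde{f}$ is a homomorphism extending $f$. This surjectivity implies $\im\vphi_{|L} = \im\vphi = A(\ksep)$.

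For (2) when $L/F$ is finite: since $|A|$ is a power of $p$ and $[L:F]$ is prime to $p$, Corollary~\ref{cor.choosing_uniformizers} yields a uniformizer $\tau$ of $\Ltr$ such that $\gamma_L = [\Inf_{l/k}(a)_\sigma,\vphi_{|L}]_\tau$. Ostrowski forces $[l:k]$ to divide $[L:F]$, so $l/k$ is prime-to-$p$; combined with $k$ being $p$-closed this gives $l=k$ and $\Inf_{l/k}(a)_\sigma = a_\sigma$. By (1) the geometric part $\vphi_{|L}$ still surjects onto $A(\ksep)$, so $\gamma_L$ is a loop torsor of the shape treated in Example~\ref{ex.main}, with arithmetic part $[a_\sigma] \in H^1(k,C_G(A))$ which is anisotropic by hypothesis; the loop-torsor anisotropy criterion \cite[Proposition 4.9]{gille2024newloop} then gives that $\gamma_L$ is anisotropic.

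For a general prime-to-$p$ extension: if $\gamma_L$ were isotropic, Lemma~\ref{lem.isotrop_induct_lim}(4) would produce a finitely generated, hence finite (since $L/F$ is algebraic), subextension $F\subset L'\subset L$ over which $\gamma_{L'}$ is already isotropic, contradicting the finite case just settled (the unique extension of $\nu$ makes $L'/F$ a finite prime-to-$p$ extension of Henselian valued fields). The main obstacle I anticipate is verifying that the criterion of \cite[Proposition 4.9]{gille2024newloop} applies over $L$: the field $L$ is Henselian of finite rank but need not literally be iterated Laurent series over $l$, so a mild extension of the criterion, or an embedding of $L$ into an appropriate iterated Laurent series overfield, may be needed to invoke the criterion cleanly.
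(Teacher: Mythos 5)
Your proof is correct and follows the paper's approach for the finite case, with two small but worthwhile points of divergence. For Part (1), the paper derives surjectivity of $\vphi_{|L}$ from the exact sequence $\I{L}\to\I{F}\to\Ext^1_\bZ(\Gamma_L/\Gamma_F,\hat\bZ')$ and the observation that the cokernel is killed by $[\Gamma_L:\Gamma_F]$; your explicit lift $\widetilde f(\gamma)=n^{-1}f(n\gamma)$ is the same fact made hands-on, and has the advantage of applying verbatim to infinite $L/F$ where $\Gamma_L/\Gamma_F$ is only prime-to-$p$ torsion rather than finite. For Part (2) in the finite case you reproduce the paper's argument exactly (Corollary~\ref{cor.choosing_uniformizers}, Ostrowski to get $l=k$, Example~\ref{ex.main}, and then \cite[Proposition 4.9]{gille2024newloop}), and your worry about that last citation is resolved precisely because you restricted to finite $L/F$: in the ambient setting of Section~\ref{sect.funct_of_decomp} the base $F$ is an iterated Laurent series field, so any finite extension $L$ again is one, and the Gille criterion applies without modification. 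Finally, your explicit limit step via Lemma~\ref{lem.isotrop_induct_lim}(4) to pass from finite to arbitrary prime-to-$p$ extensions is a genuine clarification: the paper invokes Corollary~\ref{cor.choosing_uniformizers} directly, even though that corollary as stated requires $[L:F]$ finite, so the reduction you spell out is implicitly needed there as well.
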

\begin{proof}
    The residue field of $L$ is a prime-to-$p$ extension of $k$  by Ostrowski's theorem and so it must be $k$ because $k$ is $p$-closed.  Since $A$ is a $p$-group, Corollary~\ref{cor.choosing_uniformizers} implies that there exists uniformizer $\tau: \Gamma_{L_{\tr}} \to L_{\tr}^*$ such that:
    $$([a_\sigma,\vphi]_\pi)_L = [a_\sigma,\vphi_{|L}]_\tau.$$
    The inclusion $\Gamma_F \to \Gamma_L$ induces an exact sequence:
    $$\I{L} \to \I{F} \to \Ext^1_{\bZ}(\Gamma_L/\Gamma_F, \hat{\bZ}').$$
    Since $\Ext^1_{\bZ}(\Gamma_L/\Gamma_F,\hat{\bZ}')$ is killed by $[\Gamma_L:\Gamma_F]$, so is the cokernel of $\I{L}\to \I{F}$. In particular, this cokernel is of order coprime to $p$ because $[\Gamma_L:\Gamma_F]$ divides $[L:F]$ by Ostrowski's Theorem.  Since $\im\vphi=A(\ksep)$ is a $p$-group, it follows that $$\vphi_{|L}: \I{L}\to A(\ksep)$$ is a surjection (recall that $\vphi_{|L}$ is the composition of $\vphi$ with the restriction map $\I{L}\to \I{F}$).
    The torsor $[a_\sigma]\in H^1(k,C_G(A))$ is anisotropic by assumption. Applying \cite[Proposition 4.9]{gille2024newloop} gives that $\gamma_L=[a_\sigma,\vphi_{|L}]_\tau$ is anisotropic because $C_{G}(\vphi_{|L}) = C_{G}(A)$.
\end{proof}

\begin{corollary}\label{cor.if_cent_admits_aniso_so_does_G}
    Let $A\subset G$ be a finite diagonalizable $p$-subgroup for some prime $p\neq \Char k_0$. Assume $G^{\circ}$ is reductive and $C_G(A)$ admits an anisotropic torsor over some $p$-closed field $k_0\subset k$. There exists a valued field $k_0\subset F$ with residue field $k$ such that $G$ admits anisotropic torsors over any $p$-closure $F^{(p)}$ of $F$. 
\end{corollary}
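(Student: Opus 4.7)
The plan is to build $F$ as an iterated Laurent series field over $k$ and realize a suitable anisotropic $G$-torsor as a loop torsor whose arithmetic part is the anisotropic $C_G(A)$-torsor provided by hypothesis, and then invoke the anisotropy criterion already proved in Lemma~\ref{lem.aniso_prime-to-p}.

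Concretely, write $A \cong \mu_{p^{a_1}} \times \dots \times \mu_{p^{a_s}}$ over $k_0$ (possible since $A$ is a finite diagonalizable $p$-subgroup), and set $F := k(\!(t_1)\!)\cdots(\!(t_s)\!)$ with the $(t_1,\dots,t_s)$-adic valuation. Then $F$ is Henselian with residue field $k$ and value group $\Gamma_F = \bZ^s$ with standard basis $e_1,\dots,e_s$; I extend the obvious choice $\pi^{e_i} = t_i$ to a uniformizer $\pi$ of $\Ftr$ using Remark~\ref{rem.uniformizers}. Next I define a continuous homomorphism $\vphi \colon \I{F} \to A(\Oin) = A(\ksep)$ via the formula of Lemma~\ref{lem.undertanding_chisigma}(1) with $\gamma_i = e_i$, namely
\[
\vphi(f) = \bigl(\zeta_{p^{a_1}}^{f(e_1)}, \dots, \zeta_{p^{a_s}}^{f(e_s)}\bigr).
\]
Since $p \neq \Char k_0$, each projection $\hat{\bZ}' \twoheadrightarrow \bZ/p^{a_i}\bZ$ is surjective, and the independence of the $e_i$ then forces $\vphi$ to map onto $A(\ksep)$.

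With the setup in place, let $a_\sigma \in Z^1(\Gal(k), C_G(A)(\ksep))$ be a cocycle representing the hypothesized anisotropic class in $H^1(k, C_G(A))$. By Example~\ref{ex.main} the pair $(a_\sigma, \vphi)$ is compatible in the sense of Definition~\ref{def.compatible}, so it determines a loop torsor
\[
\gamma := [a_\sigma, \vphi]_\pi \in H^1_{\lop}(F, G).
\]
Finally I apply Lemma~\ref{lem.aniso_prime-to-p}: its hypotheses are met because $G^{\circ}$ is reductive, the residue field $k$ of $F$ is $p$-closed, and $[a_\sigma]$ is anisotropic. Any $p$-closure $F^{(p)}/F$ is a prime-to-$p$ extension by definition, so the lemma delivers exactly that $\gamma_{F^{(p)}}$ is anisotropic in $H^1(F^{(p)}, G)$, completing the proof.

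There is no real obstacle here — all the work was done in the preceding sections. The argument is essentially an assembly step: one constructs $F$ and $\vphi$ so that the centralizer of $\im\vphi$ inside $G$ is precisely $C_G(A)$, packages the anisotropic centralizer class into the loop-torsor recipe of Example~\ref{ex.main}, and invokes the already-established anisotropy criterion over $p$-closures.
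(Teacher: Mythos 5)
Your argument follows the paper's proof almost verbatim: build $F$ as an iterated Laurent series field over $k$ with $\rank(A)$ variables, choose a continuous surjection $\vphi\colon\I{F}\twoheadrightarrow A(\ksep)$, package an anisotropic class $[a_\sigma]\in H^1(k,C_G(A))$ into the loop torsor $\gamma=[a_\sigma,\vphi]_\pi$ via Example~\ref{ex.main}, and invoke Lemma~\ref{lem.aniso_prime-to-p}. The one place where you deviate, and where you should be more careful, is the very last step. You apply Lemma~\ref{lem.aniso_prime-to-p} directly to $L=F^{(p)}$ on the grounds that $F^{(p)}/F$ is a prime-to-$p$ extension. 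But $F^{(p)}/F$ is typically an \emph{infinite} algebraic extension, so $\Gamma_{F^{(p)}}$ is not finitely generated (it contains $\frac{1}{n}\Gamma_F$ for all $n$ coprime to $p$ and $\Char k_0$); this conflicts with Assumption~\ref{ass.finite_rank}, and the proof of Lemma~\ref{lem.aniso_prime-to-p} invokes Corollary~\ref{cor.choosing_uniformizers} and Ostrowski's theorem, both of which are stated for finite extensions. The paper avoids this issue by applying Lemma~\ref{lem.aniso_prime-to-p} only to \emph{finite} prime-to-$p$ extensions $F\subset F'$ and then passing to the (possibly infinite) $p$-closure $F^{(p)}$ using the spreading-out Lemma~\ref{lem.isotrop_induct_lim}(4): if $\gamma_{F^{(p)}}$ were isotropic, it would already be isotropic over a finitely generated, hence finite (being algebraic), prime-to-$p$ subextension, contradicting the finite case. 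You should add that limit argument rather than citing the lemma for the infinite extension directly.
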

\begin{proof}
    Set $r=\rank(A)$. Let $F = k(\!(t_1)\!)\dots(\!(t_r)\!)$ be an iterated Laurent series field equipped with its $(t_1,\dots,t_r)$-adic valuation and let $\pi$ be a uniformizer for $\Ftr$.  Since $\Gamma_F = \bZ^r$, we have $$\I{F} = \Hom(\Gamma_F,\hat{\bZ}')= \hat{\bZ}'^r.$$ Therefore there exists a surjection $\vphi : \I{F} \to A(\ksep)$. If $[a_\sigma]\in H^1(k,C_G(A))$ is anisotropic, then
    $$\gamma= [a_\sigma,\vphi]_\pi \in H^1_{\lop}(F,G)$$
    is anisotropic over any prime-to-$p$ extension $F\subset F'$ by Lemma~\ref{lem.aniso_prime-to-p}. Therefore $\gamma_{F^{(p)}}$ is anisotropic for any $p$-closure $F\subset F^{(p)}$ by Lemma~\ref{lem.isotrop_induct_lim}.
\end{proof}

\section{Almost all torsors are loop torsors}~\label{sect.all_torsors_are_loops}

In the proof of Theorem~\ref{thm.main}, we will need to show certain $G$-torsors are in fact loop torsors. To achieve this, in this section we collect propositions that show all torsors under reductive groups are loop torsors ``up to wild ramification". In particular, when $G^{\circ}$ is reductive and the characteristic of $k_0$ is good for $G$, all $G$-torsors are loop torsors.

\begin{proposition}\label{lem.loop_char_good}
    Assume $G^{\circ}$ is reductive. If the characteristic of $k_0$ is good for $G$ (see Definition~\ref{def.char}), then $$H^1_{\lop}(F,G) = H^1_{\tr}(F,G) = H^1(F,G).$$
\end{proposition}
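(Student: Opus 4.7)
The proposition asserts two equalities, $H^1(F,G) = H^1_{\tr}(F,G)$ and $H^1_{\tr}(F,G) = H^1_{\lop}(F,G)$, which I would tackle separately.

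For $H^1(F,G) = H^1_{\tr}(F,G)$, the strategy is to eliminate wild ramification using the good-characteristic hypothesis. Set $p := \Char k_0$, and let $T \subset G$ be a maximal torus as in Definition~\ref{def.char}(2), splitting over a prime-to-$p$ (hence tame) extension of $F$ and with $|W|$ coprime to $p$, where $W := N_G(T)/T$. I would first reduce an arbitrary class $[\xi]\in H^1(F,G)$ to a class in $H^1(F,N')$, where $N' := N_{G'}(T')$ is the normalizer of an $F$-rational maximal torus of the inner twist $G' := {}_\xi G$. This is the classical reduction to the torus normalizer: the variety $G'/N_{G'}(T')$ of maximal tori of $G'$ is smooth, and it acquires an $F$-point by Henselianness once a $k$-point of its special fibre is produced, which exists by standard results on reductive groups over a field. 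Since $T'$ is an inner form of $T$ it still splits tamely, and $W' := N'/T'$ is a twist of $W$, still of order prime to $p$. Then the exact sequence $1\to T'\to N'\to W'\to 1$ and non-abelian cohomology reduce the claim to $H^1(F,T') = H^1_{\tr}(F,T')$ and $H^1(F,W') = H^1_{\tr}(F,W')$. For $T'$: the character module $X^*(T')$ is fixed by the wild inertia $I_w := \ker(\Gal(F)\to \Galtr(F))$, so Hochschild--Serre gives the equality. For $W'$: any $I_w$-orbit on the finite prime-to-$p$ set $W'(\Fsep)$ is forced to be a singleton by orbit counting (since $I_w$ is pro-$p$), so the $\Gal(F)$-action factors through $\Galtr(F)$.

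For $H^1_{\tr}(F,G) = H^1_{\lop}(F,G)$, given a tame cocycle $c : \Galtr(F) \to G(\Ftr)$, I would invoke the decomposition of Proposition~\ref{prop.decomp_of_Gal} to split $c$ into its arithmetic part $a_\sigma := c_{\Psi_\pi(\sigma,0)}$ and its inertial part $\psi(f) := c_{\Psi_\pi(1,f)}$, where $\psi : \I{F} \to G(\Ftr)$ is a continuous homomorphism of finite image, of order prime to $p$ by Remark~\ref{rem.quotients_tame_inertia}. The plan is first to conjugate by an element of $G(\Ftr)$ to move $\psi$ into $G(\Oin)\subset G(\Fin)$; this amounts to the statement that a finite prime-to-$p$ subgroup of $G(\Ftr)$ is conjugate into the integral model $G_{\cO}$, which follows from smoothness of a suitable transporter scheme together with Hensel's lemma (in the spirit of Lemma~\ref{lem.conj_of_morphism}). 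Once $\psi$ is integral, the compatibility relation \eqref{e.compatibility} forces $a_\sigma$ to lie in the smooth centralizer $C_{G_{\cO}}(\psi)$ modulo a coboundary; applying Lemma~\ref{lem.bijectivity_passing_special_fiber} to that centralizer then moves $a_\sigma$ into $G(\Oin)$ as well, so $c$ is cohomologous to a loop cocycle.

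The principal obstacle is the first step of the first equality: the existence of an $F$-rational maximal torus in every inner form $G'$, equivalently an $F$-rational point of the smooth variety $G'/N_{G'}(T')$. Over a Henselian field this should follow from producing a $k$-rational maximal torus in the special fibre (using good characteristic and smoothness of $G_{\cO}$) and then lifting by Hensel, but some care is needed because $G$ is only assumed smooth with $G^{\circ}$ reductive, and $k$ is an arbitrary residue field. Once this reduction is in place, the remaining ingredients are routine inflation--restriction arguments combined with Hensel's lemma applied to smooth integral models.
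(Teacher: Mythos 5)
The paper's proof goes a completely different route: it invokes \cite[Corollary 18]{lucchini2015groupe}, which under the good-characteristic hypothesis reduces every $G$-torsor to a torsor under a finite subgroup $S\subset G$ defined over $k_0$ with $|S|$ prime to $\Char k_0$, and then \cite[Lemma 3.12]{gille2013torsors}, which shows any such torsor over $F$ is automatically a loop torsor because its cocycle can be taken with values in $S(k_{0,\sep})\subset G(\Oin)$. This gives $H^1(F,G)=H^1_{\lop}(F,G)$ in one stroke, and the middle set agrees by the chain of inclusions $H^1_{\lop}\subset H^1_{\tr}\subset H^1$. There is no separate tame-versus-wild step in the paper's argument at all.

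Your second step has a genuine gap. Given a tame cocycle $c:\Galtr(F)\to G(\Ftr)$, you set $\psi(f):=c_{\Psi_\pi(1,f)}$ and assert that $\psi$ is a continuous homomorphism with finite image of order prime to $p$. The cocycle condition gives
$$\psi(f+f')=\psi(f)\cdot{}^{\Psi_\pi(1,f)}\psi(f'),$$
so $\psi$ is a homomorphism only when its values are fixed by the tame inertia $\Galtr(\Fin)$, i.e.\ lie in $G(\Fin)$. This is exactly why the paper introduces the arithmetic/geometric decomposition only for \emph{loop} cocycles, whose values already sit in $G(\Oin)\subset G(\Fin)$; as the paper observes, $\vphi$ is a homomorphism precisely because the tame inertia group fixes $G(\Fin)$. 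For a general element of $H^1_{\tr}(F,G)$ valued in $G(\Ftr)$ the splitting into $a_\sigma$ and $\psi$ does not exist at the outset, and the appeal to Remark~\ref{rem.quotients_tame_inertia} to bound $\im\psi$ likewise fails, since $\psi$ need not factor through any finite quotient of $\I{F}$. Your plan of conjugating $\psi$ into $G(\Oin)$ and then repairing $a_\sigma$ therefore never gets started: one would already need the cocycle to be cohomologous to something valued in $G(\Fin)$, which is most of what you are trying to prove. Incidentally, the obstacle you single out in the first step is not a real one — Grothendieck's theorem (SGA3, Expos\'e XIV) guarantees an $F$-rational maximal torus in any smooth affine group over a field, so no Hensel lifting from the special fibre is required. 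The delicate point in the first step is instead the tameness of the $\Gal(F)$-action on $X^*(T')$ and on $W'(\Fsep)$ for the \emph{twisted} torus $T'$ and Weyl group $W'$, which does not follow formally from the corresponding facts for $T$ and $W$ unless one already knows the twisting cocycle is tame; that circularity would need to be broken.
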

\begin{proof}
 A torsor induced from a finite subgroup $S\subset G$ of order coprime to $\Char k_0$ is a loop torsor; see \cite[Lemma 3.12]{gille2013torsors}. If the characteristic of $k_0$ is good for $G$, then all $G$-torsors are induced from some such subgroup $S\subset G$ defined over $k_0$ \cite[Corollary 18]{lucchini2015groupe}. The result follows.
\end{proof}

The next lemma is the only occasion on which we relax Assumption~\ref{ass.finite_rank}. The proof is inspired by \cite[6.2]{gille2013torsors}.

\begin{lemma}\label{lem.loop_normalizer_torus}
    Let $(F,\nu)$ be a Henselian valued field with residue field $k$ and value group $\Gamma_F$ not necessarily finitely generated. Assume $\Ftr = \Fsep= \Falg$. If $G^\circ$ is a torus, then $$H^1_{\lop}(F,G) = H^1_{\tr}(F,G) = H^1(F,G).$$
\end{lemma}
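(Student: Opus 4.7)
The equality $H^1_{\tr}(F,G) = H^1(F,G)$ is immediate, since $\Ftr = \Fsep$ yields $\Galtr(F) = \Gal(F)$. What remains is the equality $H^1(F,G) = H^1_{\lop}(F,G)$. Writing $T := G^\circ$ (a torus) and $S := G/T$ (finite \'etale), the plan is to modify an arbitrary representing cocycle until it takes integral values, by exploiting divisibility in the value group of a suitably enlarged tame extension.

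Given $\gamma \in H^1(F,G)$, represent it by $c \in Z^1(\Gal(L/F), G(L))$ for a finite Galois extension $L/F$ large enough that $T_L$ splits and $S_L$ becomes constant; such an $L/F$ is automatically tame, since $\Ftr = \Fsep$ forces every finite separable extension to be tame. Let $\cO_L \subset L$ denote the integral closure of $\cO$ and let $l$ be its residue field. First I would establish $G(\cO_L) \twoheadrightarrow S(\cO_L) = S(L)$: the equality holds because $S$ is finite \'etale and $\cO_L$ is integrally closed, and the surjection follows from Hensel's lemma for the smooth morphism $G \to S$ once one knows $G(l) \twoheadrightarrow S(l)$. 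The latter amounts to $H^1(l,T_l) = 0$, which is Hilbert~90 applied to the split torus $T_l$; splitness follows because $\Gal(l) = \Gal(L_{\oin}/L)$ is a quotient of $\Gal(L_{\sep}/L)$, and the latter already acts trivially on $X^*(T)$. Using this surjectivity, lift $\pi(c_\sigma) \in S(\cO_L)$ to $g_\sigma \in G(\cO_L)$ and decompose $c_\sigma = t_\sigma g_\sigma$ with $t_\sigma := c_\sigma g_\sigma^{-1} \in T(L)$. Setting $\phi_\sigma(x) := g_\sigma\, {}^\sigma x\, g_\sigma^{-1}$ and $\mu_{\sigma,\tau} := g_\sigma\, {}^\sigma g_\tau\, g_{\sigma\tau}^{-1} \in T(\cO_L)$, abelianness of $T$ makes $\phi$ a genuine $\Gal(L/F)$-action on $T$ (the $\mu$-conjugation that would a priori appear acts trivially on the abelian $T$), and the cocycle identity for $c$ becomes the twisted relation $t_\sigma\, \phi_\sigma(t_\tau)\, \mu_{\sigma,\tau} = t_{\sigma\tau}$.

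Applying $\nu$ and passing to the quotient $T(L)/T(\cO_L) \cong X_*(T) \otimes \Gamma_L$ (using that $T_L$ is split), the defect $\alpha_\sigma := \nu(t_\sigma)$ is a $\phi$-twisted 1-cocycle with values in $X_*(T) \otimes \Gamma_L$; the action $\phi$ is trivial on the $\Gamma_L$-factor because $\nu$ is Galois-invariant. A $T(L)$-coboundary modification of $c$ by $h \in T(L)$ shifts $\alpha$ by the $\phi$-coboundary of $\nu(h)$, so $c$ can be brought into $G(\cO_L)$ precisely when $[\alpha]$ vanishes in $H^1_\phi(\Gal(L/F), X_*(T) \otimes \Gamma_L)$. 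This class is $N$-torsion for $N := |\Gal(L/F)|$, which is coprime to $\Char k$ by tameness. I kill it by enlarging $L$ to a tame Galois extension $L'/F$ with $\Gamma_{L'} \supset \frac{1}{N}\Gamma_L$, obtained by adjoining $N$-th roots of finitely many uniformizers covering the support of $\alpha$. Writing $N\alpha_\sigma = \phi_\sigma(\xi_0) - \xi_0$ in $X_*(T) \otimes \Gamma_L$, the element $\xi := \xi_0 / N \in X_*(T) \otimes \Gamma_{L'}$ satisfies $\phi_\sigma(\xi) - \xi = \alpha_\sigma$; the crucial point is that division by $N$ on the $\Gamma_{L'}$-factor is $\phi$-equivariant, because $\phi$ acts trivially there. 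Lifting $\xi$ to some $h \in T(L')$ via $\nu$ and modifying the inflation of $c$ by $h$ yields a cohomologous cocycle valued in $G(\cO_{L'}) \subset G(\Oin)$; further inflation to $\Gal(F) = \Galtr(F)$ exhibits $\gamma$ as a loop torsor. The main technical hurdle is the twisted cohomology bookkeeping around the enlargement $L'/L$: verifying that $\phi$ is really an action (this relies on $T$ being abelian), that the valuation descends to an equivariant map, and that the explicit primitive $\xi_0/N$ indeed splits the inflated class in the larger coefficient module.
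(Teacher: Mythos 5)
There is a genuine gap in the final step of your argument, and it is exactly at the step you flagged as the main technical hurdle. After modifying the inflated cocycle by $h\in T(L')$ you write that the result is ``valued in $G(\cO_{L'})\subset G(\Oin)$''. That inclusion is false: $\Oin$ is the Henselian valuation ring of the \emph{maximal unramified} extension $F_{\oin}$, while you built $L'$ precisely by adjoining $N$-th roots of uniformizers, so $L'/F$ is ramified, $L'\not\subset F_{\oin}$, and hence $\cO_{L'}\not\subset\Oin$. Your cocycle $c'_\sigma=t'_\sigma g_\sigma$ has $t'_\sigma\in T(\cO_{L'})$ with valuation zero, but a valuation-zero unit in a ramified tame extension need not lie in $\Oin$ (think of $1+\pi^{1/2}$ for a uniformizer $\pi$). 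So being a loop cocycle has not been established. There is a related difficulty earlier: you let $L$ be any finite Galois extension through which $c$ factors and which splits $T$; nothing forces such $L$ to be unramified, so $g_\sigma\in G(\cO_L)$ is already not known to lie in $G(\Oin)$ either. (One could lift $\pi(c_\sigma)$ into $G(\Oin)$ directly, since $\Oin$ is strictly Henselian; this pushes the difficulty entirely onto controlling $t'_\sigma$, which would then require a very explicit choice of $h$ as a ``monomial in uniformizers'' and a calculation showing the leftover unit is a product of roots of unity.)

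There is a second, more structural issue with the step ``$[\alpha]$ is $N$-torsion, so divide $\xi_0$ by $N$ after adjoining $N$-th roots''. Here $N=|\Gal(L/F)|=[\Gamma_L:\Gamma_F]\cdot[l:k]$; tameness forces $[\Gamma_L:\Gamma_F]$ to be coprime to $p=\Char k$, but $[l:k]$ can be divisible by $p$. Since $\Gamma_{\Ftr}=\bZ_{(p)}\otimes\Gamma_F$, there is no tame extension $L'$ with $\tfrac{1}{N}\Gamma_L\subset\Gamma_{L'}$ once $p\mid N$, so the extension you need may not exist inside $\Fsep=\Ftr$. To rescue this one would have to show that the class $[\alpha]\in H^1(\Gal(L/F),X_*(T)\otimes\Gamma_L)$ is annihilated by the prime-to-$p$ part of $N$, and this is not automatic for an arbitrary integral Galois representation.

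The paper sidesteps both problems by a more formal argument: it splits $G(\Fsep)/T(\Oin)$ as a semidirect product $\bigl(T(\Fsep)/T(\Oin)\bigr)\rtimes\bigl(G(\Oin)/T(\Oin)\bigr)$ and then proves that the kernel $T(\Fsep)/T(\Oin)\cong X_*(T)\otimes(\Fsep^*/\Oin^*)$ is \emph{uniquely divisible} (divisible because $\Fsep=\Falg$, torsion-free by a Hensel argument showing that $\alpha^n\in\Oin^*$ forces $\alpha\in\Oin^*$). Unique divisibility kills all higher cohomology at once, so one never leaves $\Oin$ and never needs to invert $N$. Your approach trades this one structural fact for an explicit coboundary computation, but the coboundary inevitably takes values in a ramified extension; morally, the element you are dividing by $N$ belongs to the group $\Fsep^*/\Oin^*$, where division is automatic, not to $\Gamma_L$, where it forces ramification. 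If you want to pursue the explicit route, you would need to carry out the valuation splitting with a genuine uniformizing section, verify that the residual unit is a cocycle of roots of unity (hence in $\Oin$), and replace the naive $N$-divisibility by the unique divisibility of $\Fsep^*/\Oin^*$; at that point you have essentially reconstructed the paper's argument.
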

\begin{proof}
    Set $T = G^{\circ}$ and $W = G/T$. We have an exact sequence:
    $$1\to T \to G\to W\to 1.$$
    Since $G$ is smooth, $W$ is an \'etale group over $k_0$ \cite[Expose VI, Proposition 5.5.1]{SGA3}. This implies that $W_{k_{0,\sep}}$ is a constant finite group and in particular $$W(k_{0,\sep})=W(\Oin) = W(\Fsep).$$ Therefore $T(\Oin) \subset G(\Fsep)$ is a \emph{normal} subgroup. Consider the induced exact sequence:
    \begin{equation}\label{e.8}
        1\to T(\Fsep)/T(\Oin) \to G(\Fsep)/T(\Oin) \to W(k_{0,\sep}) \to 1.
    \end{equation}
    Let $\gamma \in H^1(F,G)$ and denote by $\gamma'\in H^1(F,G(\Fsep)/T(\Oin))$ the image of $\gamma$ under the natural map:
    $$H^1(F,G) \to H^1(F,G(\Fsep)/T(\Oin)).$$
    The torsor $\gamma$ lies in the image of $H^1(F,G(\Oin))\to H^1(F,G)$ if and only if $\gamma'$ lies in the image of the map
    $$H^1(F,G(\Oin)/T(\Oin))\to H^1(F,G(\Fsep)/T(\Oin)).$$ 
    See for example \cite[Lemma 3.1]{ofek2022reduction}.
    
    The exact sequence \eqref{e.8} splits.
    Indeed, the subgroup $$G(\Oin)/T(\Oin) \subset G(\Fsep)/T(\Oin)$$ intersects $T(\Fsep)/T(\Oin)$ trivially and it surjects onto $W(k_{0,\sep})$ because $G(k_{0,\sep})\subset G(\Oin)$.  Therefore we may rewrite \eqref{e.8} as:
    \begin{equation}\label{e.9}
        1\to T(\Fsep)/T(\Oin) \to (T(\Fsep)/T(\Oin)) \rtimes (G(\Oin)/T(\Oin)) \to W(k_{0,\sep}) \to 1.
    \end{equation}
    Since \eqref{e.9} is split, to show that $\gamma$ is induced from $H^1(F,G(\Oin)/T(\Oin))$ it suffices to prove that $T(\Fsep)/T(\Oin)$ is a uniquely divisible abelian group  by Lemma~\ref{lem.divisible_semi_direct} below. Let $X_*(T)$ be the $\Gal(F)$-module of cocharacters of $T$. We have isomorphisms of $\Gal(F)$-modules:
    $$T(\Fsep) \cong X_*(T) \otimes_\bZ \Fsep^*, \ T(\Oin) \cong X_*(T) \otimes_\bZ \Oin^*.$$
    Therefore to show $$T(\Fsep)/T(\Oin) \cong X_*(T)\otimes (\Fsep^*/\Oin^*)$$ is uniquely divisible it suffices to show $ \Fsep^*/\Oin^*$ is uniquely divisible. The group $\Fsep^*/\Oin^*$ is divisible because $\Fsep^* = \Falg^*$ is divisible. To show $ \Fsep^*/\Oin^*$ is torsion-free,  let $\alpha \in \Fsep^*$ be such that $\alpha^n \in \Oin^*$. Our goal is to prove $\alpha\in \Oin^*$. Let $n = q^r m$ for some $m$ coprime to $q=\Char k$ and integer $r$. The polynomial $x^m - \alpha^{n}$ is separable and splits fully over the residue field of $\ksep$ of $\Fin$. By Hensel's lemma, it follows that all roots of $x^m - \alpha^{n}$ lie in $\Oin^*$. In particular, $\alpha^{q^r}\in \Oin^*$. Since $F$ is perfect,  so is $\Fin$. Therefore the unique $q^r$-th root of $\alpha^{q^r}$ must already lie in $\Fin$. That is, $\alpha \in \Fin$. Since $\nu(\alpha)=0$ we conclude that $\alpha \in \Oin^*$. This shows $\Fsep^*/\Oin^*$ is torsion-free and finishes the proof. 
\end{proof}

\begin{lemma}\label{lem.divisible_semi_direct}
Let $A,B$ be $\Gal(F)$ groups for some field $F$ such that $B$ acts on $A$ compatibly with the $\Gal(F)$-action. If $A$ is a uniquely divisible abelian group, then the natural map:
\begin{equation}\label{e.lem.divisibility1}
H^1(\Gal(F),B) \to H^1(\Gal(F),A\rtimes B)
\end{equation}
is a bijection.
\end{lemma}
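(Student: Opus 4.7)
My plan is to exploit the splitting of the short exact sequence of $G$-groups
\[
1 \to A \to A\rtimes B \overset{\pi}{\to} B \to 1, \qquad G := \Gal(F),
\]
via the canonical inclusion $i : B \hookrightarrow A\rtimes B$; by definition, $i_*$ is the map \eqref{e.lem.divisibility1}. Injectivity is immediate since $\pi_* \circ i_* = \id$ on $H^1(G, B)$, so the real content is surjectivity. Equivalently, I need to show that every fiber of $\pi_* : H^1(G, A\rtimes B) \to H^1(G, B)$ is a singleton.

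For this I will invoke the standard twisting principle in non-abelian Galois cohomology (the analogue of the long exact sequence for an extension with normal abelian kernel, as in Serre's \emph{Cohomologie Galoisienne} I.5.5): the fiber of $\pi_*$ over a class $[b]$ is a quotient of $H^1(G, {}_b A)$, where ${}_b A$ denotes the abelian group $A$ equipped with the twisted $G$-action $\sigma \cdot a = b(\sigma) \cdot {}^{\sigma}a$, the $B$-action on $A$ being used here. Since this fiber already contains $i_*[b]$, it will suffice to show $H^1(G, {}_b A) = 0$ for every cocycle $b$.

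The key observation is that twisting modifies only the $G$-action and not the underlying abelian group, so ${}_b A$ is again uniquely divisible, i.e.\ a $\bQ$-vector space carrying a continuous $G$-action. I will therefore reduce to the following general claim: for any profinite $G$ and any continuous $G$-module $M$ that is uniquely divisible, $H^1(G, M) = 0$. To prove this I write
\[
H^1(G, M) \;=\; \varinjlim_U H^1(G/U, M^U),
\]
over open normal subgroups $U\subset G$. Multiplication by $n$ on $M$ is a $G$-equivariant bijection for every $n\geq 1$, hence restricts to a bijection on $M^U$, so each $M^U$ is still uniquely divisible. By functoriality, $H^1(G/U, M^U)$ is then uniquely divisible. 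But it is also annihilated by $|G/U|$ (the standard fact that cohomology of a finite group is torsion of order dividing the group order), so it vanishes.

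The only genuinely delicate point is the description of the fibers of $\pi_*$ via twisting: one needs to quote the correct non-abelian cohomology statement identifying these fibers with a subquotient of $H^1(G, {}_b A)$. This is well known but a little fiddly to state precisely; once in hand, the vanishing argument above makes the rest routine.
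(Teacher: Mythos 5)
Your proof follows the same route as the paper: injectivity from the splitting $\pi_*\circ i_* = \id$, surjectivity by twisting to reduce to $H^1(G,{}_bA)=0$, which holds because the twisted module ${}_bA$ remains uniquely divisible. The only difference is that you prove the vanishing of $H^1$ of a uniquely divisible continuous module directly (via the colimit over finite quotients and the torsion bound), whereas the paper cites \cite[Corollary 4.2.7]{gille_szamuely_2006}; your argument for that sub-lemma is correct.
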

\begin{proof}
The map \eqref{e.lem.divisibility1} is injective because it is split by the map 
\begin{equation}\label{e.lem.divisibility2}
H^1(\Gal(F),A\rtimes B) \to H^1(\Gal(F),B)
\end{equation} 
induced by the projection $A\rtimes B\to B$. Moreover, \eqref{e.lem.divisibility1} is surjective because its left inverse \eqref{e.lem.divisibility2} is injective. Indeed, the fibers of \eqref{e.lem.divisibility2} over $[b_\sigma]\in H^1(\Gal(F),B)$ is in bijective correspondence with $H^1(\Gal(F),{}_b A)$ by \cite[Page 52, Corollary 2]{serre1997galois} and $H^1(\Gal(F),{}_b A)$ is a singleton because $A$ is uniquely divisible \cite[Corollary 4.2.7]{gille_szamuely_2006}. This shows \eqref{e.lem.divisibility1} is a bijection.
\end{proof}

We will need the following definition from valuation theory.
\begin{definition}
    Let $q = \Char F$ be non-zero. An extension of Henselian valued fields $F\subset L$ with value groups $\Gamma_{F}\subset \Gamma_L$ and residue fields $k\subset l$ is called \emph{purely wild} if the following conditions hold:
\begin{enumerate}
    \item The group $\Gamma_L/\Gamma_F$ is a $q$-group.
    \item The field extension $l/k$ is algebraic and purely inseparable.
\end{enumerate}
\end{definition}
We will use the following proposition in the proof of part (1) of Theorem~\ref{thm.main} to avoid assuming $\Char k$ is good for $G$.
\begin{proposition}\label{prop.loop_prime-to-p} 
    Assume $G^{\circ}$ is reductive. For any $G$-torsor $\gamma\in H^1(F,G)$ there exists a purely wild finite extension $F\subset L$ of degree a power of $\Char k = q$ such that $\gamma_L$ is a loop torsor.
\end{proposition}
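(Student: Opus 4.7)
The plan is to isolate the wild ramification of $\gamma$ via a Schur–Zassenhaus decomposition — reducing to the case of a tamely ramified torsor — and then upgrade the tame representative to an integral (loop) representative, following the approach of \cite{gille2024newloop}. If $q = 0$ there is nothing to do, so assume $q > 0$.

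First, I would pick a finite Galois extension $F'/F$ that splits $\gamma$, and set $\Gamma = \Gal(F'/F)$, with $I \subset \Gamma$ the inertia and $P \subset I$ the wild inertia (the unique $q$-Sylow of $I$). Because $P$ is characteristic in the normal subgroup $I$, it is itself normal in $\Gamma$; moreover $|P|$ is a $q$-power and $[\Gamma : P]$ is coprime to $q$, so $P$ is a normal Hall subgroup and the Schur–Zassenhaus theorem yields a decomposition $\Gamma = P \rtimes H$.

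Next I would set $L := (F')^H$ and check that $L/F$ is a finite purely wild extension of degree $|P|$, a power of $q$. The degree is clear; the relation $HI = \Gamma$ (which follows from $HP = \Gamma$ and $P \subset I$) shows that $H$ surjects onto $\Gamma/I = \Gal(l'/k)$, whence the residue extension of $L/F$ is trivial. Ostrowski's theorem~\ref{thm.ostrowski} then forces $[\Gamma_L : \Gamma_F]$ to be a $q$-power, so $\Gamma_L/\Gamma_F$ is a $q$-group. Over $L$, the cocycle representing $\gamma$ factors through $\Gal(F'/L) = H$, which has order coprime to $q$, so $F'/L$ is tamely ramified, $F' \subset L_{\tr}$, and $\gamma_L \in H^1_{\tr}(L, G)$.

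The main obstacle is the last step: promoting the tame torsor $\gamma_L$ to a loop torsor, i.e.\ finding a cohomologous cocycle with values in $G(L_{\oin})$ rather than merely in $G(L_{\tr})$. For this, I would either appeal directly to the results of \cite{gille2024newloop} on the integrality of tame cocycles for reductive groups, or smooth the cocycle $c|_H$ by Hensel lifting of suitable transporter schemes (in the spirit of Lemmas~\ref{lem.conj_of_morphism} and~\ref{lem.Hensel_coefficient_field}), exploiting that $G^\circ$ is smooth and reductive. The key difficulty is that, outside of the good characteristic situation covered by Proposition~\ref{lem.loop_char_good}, one cannot realize $\gamma_L$ as induced from a finite subgroup of order coprime to $q$, so the loop structure has to be produced by a direct analysis of the arithmetic and geometric parts of the tame cocycle.
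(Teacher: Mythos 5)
Your Schur--Zassenhaus reduction is clean and correct as far as it goes: passing to $L = (F')^H$ makes $\gamma_L$ a \emph{tamely ramified} torsor, and $L/F$ is a finite purely wild extension of $q$-power degree (with the small caveat that the residue extension of $L/F$ is purely inseparable rather than trivial in general). But the step you flag as ``the main obstacle'' is a genuine gap, not a loose end: tamely ramified torsors under reductive groups are \emph{not} loop torsors in general, and the paper itself exhibits a counterexample at the end of Section~\ref{sect.all_torsors_are_loops} --- a tame $T_\varepsilon$-torsor over $F = k(\!(t)\!)$ with $\operatorname{char} k = 2$ that is not a loop torsor, the obstruction being that no $s \in \Ftr^*$ can satisfy $2\nu(s) = \nu(t)$ because $\Gamma_{\Ftr}/\Gamma_F$ has no $2$-torsion. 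Your $L$ is chosen to make the torsor tame, but $L$ itself still has wild ramification (in general $\Ltr \subsetneq L_{\sep}$), so exactly this obstruction survives. Neither the Hensel-lifting lemmas (\ref{lem.conj_of_morphism}, \ref{lem.Hensel_coefficient_field}), which presuppose a cocycle already with values in $G(\Oin)$, nor any result in \cite{gille2024newloop} will promote a $G(\Ltr)$-valued cocycle to a $G(L_{\oin})$-valued one.

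The paper's argument is structurally different: it passes to a \emph{maximal} purely wild extension $L_\infty/F$, an infinite pro-$q$ extension, precisely so that $L_{\infty,\tr} = L_{\infty,\sep} = L_{\infty,\alg}$, i.e.\ so that the base field itself has no wild ramification left. Over such a field, Lemma~\ref{lem.loop_normalizer_torus} shows every $N$-torsor is a loop torsor for $N = N_G(T)$, and surjectivity of $H^1(L_\infty,N) \to H^1(L_\infty,G)$ \cite[Corollary 5.3]{chernousov2008reduction} then gives $H^1_{\lop}(L_\infty,G) = H^1(L_\infty,G)$. The finite extension in the statement is then extracted by a spreading-out argument: writing $L_\infty = \bigcup L_i$, the loop cocycle over $L_\infty$ factors through some $\Gal(E/L_\infty)$ and lands in $G(\cO_{L_\infty,\oin}) = \bigcup_i G(\cO_{L_i,\oin})$, so by Lemma~\ref{lem.limit_of_ext} and \cite[Theorem 2.1]{margaux2007passage} it descends to a loop cocycle over $L_j$ for $j$ large, with $\gamma_{L_j} = \eta_{L_j}$. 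If you want to salvage your approach, the missing ingredient is not a normalization of the tame cocycle but a choice of $L$ that kills the wild ramification of the \emph{base field}, which is necessarily infinite in the first step; the Schur--Zassenhaus splitting controls only the ramification of the specific splitting field $F'$, not of $\Ftr/F$.
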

\begin{proof}
    Let $F\subset L$ be a maximal algebraic purely wild extension of $F$; see \cite[Section 4]{kuhlmann1986immediate}. The field $L$ satisfies 
    $$\Ftr \cdot L = \Falg,\  \Ftr\cap L = F, \ \ \Ltr = L_{\sep}=L_{\alg}.$$
    See \cite[Theorem 4.3]{kuhlmann1986immediate} and  \cite[Lemma 6.3]{kuhlmann1986immediate} respectively. Moreover, $L/F$ is a pro-$q$ extension. Indeed, for any finite subextension $F\subset L'\subset L$ with residue field extension $l'/k$, both $[\Gamma_{L'}:\Gamma_F]$ and $[l':k]$ are powers of $q$ because $L'/F$ is purely wildly ramified \cite[Section 4]{kuhlmann1986immediate}. Therefore $[L':F]$ is a power of $q$ by Ostrowski's theorem.  
    
    Let $T\subset G$ be a maximal torus defined over $k_0$ and $N = N_G(T)$ its normalizer. Note that $N$ is smooth because $T$ is of multiplicative type and $G$ is smooth \cite[Expose XI, Corollaire 2.4]{SGA3}. By Lemma~\ref{lem.loop_normalizer_torus}, we have $$H^1_{\lop}(L,N)=H^1(L,N).$$ 
    Since $H^1(L,N) \to H^1(L,G)$ is surjective \cite[Corollary 5.3]{chernousov2008reduction}, this gives $$H^1_{\lop}(L,G)=H^1(L,G).$$
    Write $L$ as a union $L= \bigcup L_i$ of a chain $L_1\subset L_2 \subset \dots $ of finite purely wild extensions of $F\subset L_i$ of degree a power of $q$. We need to show $\gamma_{L_i}$ is a loop torsor for some $i$. Since $\gamma_L$ is a loop torsor, it is represented by a cocycle:
    $$c_\sigma : \Galtr(L)\to G(\cO_{L,\oin}).$$
    Since $c_{(\cdot)}$ is continuous it factors through $\Gal(E/L)$ for some finite Galois extension $E/L$. By Lemma~\ref{lem.limit_of_ext}, for all large enough $i$, $E$ is a compositum $E= E_i L$ for some tamely ramified Galois extension $E_i/L_i$ such that the restriction map $$\res_i: \Gal(E/L)\to \Gal(E_i/L_i)$$ is an isomorphism. We have $\cO_{L,\oin} = \bigcup_i \cO_{L_i,\oin}$, which implies  $G(\cO_{L,\oin}) =\bigcup_i G(\cO_{L_i,\oin})$ by \cite[Lemma 10.62]{gortz2010algebraic}. Choose an integer $i$ large enough such that: $$\forall\sigma\in \Gal(E/L), \  c_\sigma \in G(\cO_{L_i,\oin}).$$ 
    Therefore $c_\sigma$ is the restriction of the cocycle $c^{(i)}_\sigma: \Gal(E_i/L_i) \to  G(\cO_{L_i,\oin})$ given by:
    $$c^{(i)}_\sigma = c_{\res_i^{-1}(\sigma)}.$$
    Set $\eta = [c^{(i)}_\sigma]\in H^1_{\lop}(L_i,G)$.  Since $c_\sigma$ is the restriction of $c^{(i)}_\sigma$ to $L$, we have $$ \gamma_L=\eta_L .$$ 
Since $H^1(L,G) = \colim H^1(L_i,G)$ by \cite[Theorem 2.1]{margaux2007passage}, this implies $$\gamma_{L_j} =\eta_{L_j}$$
 is a loop torsor for large enough $i \leq j$. 
\end{proof}

We have seen that in the absence of wild ramification all torsors are loop torsors. This might lead one to guess that all tamely ramified torsors are loop torsors. We finish with an example showing this is not the case.
\begin{example}
    Let $k$ be a field of characteristic $2$ equipped with a surjective homomorphism $$\eps: \Gal(k) \to \GL_1(\bZ) =\{\pm 1\}.$$ The homomorphism $\eps$ defines a $1$-dimensional torus $T_\eps := {}_\eps \Gm$. The $\Gal(k)$-action on an element $\alpha\in T_\eps(\ksep) = \ksep^*$ is given by:
    $${}^\sigma \alpha = \sigma(\alpha)^{\eps(\sigma)}.$$
    Let $F = k(\!(t)\!)$ and consider the decomposition $\Psi_t: \Gal(k)\ltimes \hat{\bZ}' \to \Galtr(F)$ of Proposition~\ref{prop.decomp_of_Gal}. We claim the following $\Galtr(F)$-cocycle represents a tamely ramified $T_\eps$-torsor which is not a loop torsor:
    $$c_{\Psi_t(\sigma,n)} =  \begin{cases}
        t & \text{ if }\eps(\sigma) = -1 \\
        1 & \text{ if }\eps(\sigma) = 1
    \end{cases}$$
    To see this, we assume that $c_\tau$ is cohomologous to a cocycle taking values in $T_\eps(\Oin)$ and reach a contradiction. Let $s\in T_\eps(\Ftr) = \Ftr^*$ be such that for all $\tau\in \Galtr(F)$: 
    \begin{equation}\label{e.ex_not_loop1}
        s^{-1} c_\tau {}^\tau s \in T_{\eps}(\Oin) = \Oin^{*}.
    \end{equation}
    We choose $\sigma\in \Gal(k)$ such that $\eps(\sigma) =-1$ and compute:
    $$s^{-1}c_{\Psi_t(\sigma,0)} {}^{\Psi_t(\sigma,0)} s = t s^{-1} {}^{\sigma} s = t s^{-1} \sigma(s)^{\eps(\sigma)}=t (s \sigma(s))^{-1}.$$
    Let $\nu$ be the Henselian valuation on $\Ftr$ extending the $t$-adic valuation on $F$. Equation \eqref{e.ex_not_loop1} gives $\nu(t(s \sigma(s))^{-1}) = 0$ and therefore:
    $$2\nu(s) = \nu(s) + \nu(\sigma(s)) = \nu(t).$$
    This contradicts the fact that $\Ftr/F$ is a tamely ramified extension and so $\Gamma_{\Ftr}/\Gamma_F$ contains no $2$-torsion by definition \cite[Appendix A.1]{tignol2015value}. Let $l/k$ be the separable quadratic extension defined by $\eps$. We note that a standard computation shows $T_\eps$ is isomorphic to the norm one torus $R^{(1)}_{l/k}(\Gm)$. 
\end{example}

\section{Proof of Theorem~\ref{thm.main}: First reduction}\label{sect.reduction_to_reductive}
In this section we reduce Theorem~\ref{thm.main} to the case where $G^{\circ}$ is reductive.  We will assume $k_0$ is perfect throughout the section, because this is the only case in which $G^{\circ}$ being reductive is an extra assumption.  The following lemma is false over imperfect base fields; see \cite{tan2013essential}.

\begin{lemma}\label{lem.ed_unipotent_normal}
    Let $U\subset G$ be a normal smooth and connected unipotent subgroup. For any $p\neq \Char k_0$, we have:
    $$\ed(G) = \ed(G/U) , \ \ \ed(G;p)=\ed(G/U;p).$$
\end{lemma}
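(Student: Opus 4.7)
The plan is to prove the stronger claim that the quotient $\pi: G \to G/U$ induces a bijection
\[
\pi_*: H^1(F, G) \xrightarrow{\sim} H^1(F, G/U)
\]
for every field extension $F/k_0$. Granting this, both equalities of the lemma follow by a standard descent argument: for any $\gamma \in H^1(L, G)$ the inequality $\ed(\pi_*\gamma) \leq \ed(\gamma)$ is functorial, and conversely any field of definition $F_0 \subset L$ of $\pi_*\gamma$ is also a field of definition of $\gamma$, because a lift $\gamma_0 \in H^1(F_0, G)$ of $(\pi_*\gamma)_{F_0}$ provided by the bijection must satisfy $(\gamma_0)_L = \gamma$ by injectivity of $\pi_*$ over $L$. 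Running the same argument along prime-to-$p$ extensions of $F$ yields $\ed(G;p) = \ed(G/U;p)$.

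To prove the bijectivity of $\pi_*$, invoke the standard exact sequence of pointed sets
\[
G(F) \to (G/U)(F) \to H^1(F, U) \to H^1(F, G) \to H^1(F, G/U)
\]
together with its analogues for inner twists of $1 \to U \to G \to G/U \to 1$. Both injectivity and surjectivity of $\pi_*$ reduce, by the usual twisting argument, to the claim that $H^1(F, {}_\eta U) = 1$ for every cocycle $\eta$ defining a twist of $U$. Since $k_0$ is perfect and $U$ is smooth, connected and unipotent, the lower central series of $U$ is $G$-stable (being characteristic) with commutative smooth connected unipotent---hence vector group---successive quotients. By dévissage through the resulting short exact sequences of normal subgroups, the desired vanishing reduces to the case where $U$ is a vector group $\Ga^m$ with a linear $G$-action.

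For $U = \Ga^m$ with a linear $G$-action (factoring through a representation $G \to \GL_m$), any twist ${}_\eta U$ is a form of $\Ga^m$ over $F$; as a $\Gal(F)$-module it is isomorphic to $V \otimes_F \Fsep$ for some $m$-dimensional $F$-vector space $V$, and thus $H^1(F, {}_\eta U) \cong V \otimes_F H^1(F, \Fsep) = 0$ by additive Hilbert 90. The main obstacle is the construction of the $G$-equivariant filtration with vector group successive quotients: this step genuinely requires perfection of $k_0$, since over imperfect fields, abelian smooth connected unipotent groups need not be vector groups. This is precisely the reason the lemma fails over imperfect base fields, as already remarked in the excerpt.
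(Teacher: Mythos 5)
Your overall strategy is the same as the paper's: the essential-dimension equalities follow formally from the bijectivity of $\pi_*\colon H^1(F,G)\to H^1(F,G/U)$ for all $F/k_0$, and the paper simply invokes this bijectivity from Sansuc's Lemme~1.13 without reproving it. Where you diverge is in attempting to reprove Sansuc's statement, and there your sketch has two genuine gaps.

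First, for surjectivity of $\pi_*$ the relevant obstruction lives in the (twisted) \emph{second} cohomology, not the first. After d\'evissage to an abelian kernel $V$, a class in $H^1(F,G/V)$ lifts to $H^1(F,G)$ precisely when its image under the connecting map to $H^2(F,{}_\eta V)$ vanishes. Your reduction ``both injectivity and surjectivity reduce to $H^1(F,{}_\eta U)=1$'' is therefore not correct as stated; you also need $H^2(F,{}_\eta V)=0$ at each abelian stage. (This is fixable, since $H^i(F,\Ga)=0$ for all $i\ge 1$, but it must be said.)

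Second, and more importantly, the claim that the successive quotients of the lower central series of $U$ are vector groups because they are commutative, smooth, connected, unipotent over a perfect field is false: the Witt group $W_2$ over $\bF_p$ is smooth, connected, commutative, unipotent but has $p$-torsion subgroup $\Ga$ and $W_2/W_2[p]\cong\Ga$, so it is not killed by $p$ and is not a vector group. What is true over a perfect field is that any smooth connected unipotent group is \emph{split}, i.e.\ admits a characteristic composition series with successive quotients isomorphic to $\Ga$; you should d\'eviss\'e down to $\Ga$ rather than to $\Ga^m$ via the lower central series. Once you are at $\Ga$, one also needs care because in characteristic $p$ the automorphism scheme of $\Ga$ is larger than $\Gm$, so ``any twist is a linear form'' is not automatic; the correct statement is that the filtration can be chosen so that the induced $G$-action on each $\Ga$-layer is linear, which is precisely the content Sansuc uses. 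Given these subtleties, the paper's choice to cite Sansuc's Lemme~1.13 is both shorter and safer than reproving it.
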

\begin{proof}
    This follows from the fact that the induced map 
\begin{equation}\label{e.coh_of_unipotent}
H^1(k,G)\to H^1(k,G/U)
\end{equation}
    is a bijection for all fields $k_0 \subset k$ \cite[Lemma 1.13]{sansuc1981groupe}. 
\end{proof}

Let $A\subset G$ be a finite diagonalizable subgroup. Denote the unipotent radical of $G$ by $\Ru(G)$ and set $\overline{G}:= G/\Ru(G)$. Note that $\Ru(G)$ is normal in $G$ because it is a characteristic subgroup of $G^{\circ}$. Let $\pi: G\to \overline{G}$ be the natural quotient map. By \cite[Proposition 9.6]{borel_lag} and the remark following it, the restriction of $\pi$ to $C_G(A)$ gives a quotient map:
$$\pi_{|C_G(A)}: C_G(A) \to C$$
onto a smooth subgroup $C \subset C_{\overline{G}}(\pi(A))$ with
\begin{equation}\label{e.reduction6}
    \dim(C) = \dim(C_{\overline{G}}(\pi(A))).
\end{equation} 
Let $\Ru(G)^{A} = \Ru(G)\cap C_G(A)$ be the subgroup fixed by the conjugation action of $A$ on $\Ru(G)$. There is a short exact sequence:
$$1\to \Ru(G)^A \to C_G(A) \overset{\pi}{\to} C\to 1.$$
The unipotent group $\Ru(G)^A$ is smooth and connected by \cite[Theorem 13.7]{milne2017algebraic} and \cite[Proposition 9.4]{borel_lag} respectively. Therefore Lemma~\ref{lem.ed_unipotent_normal} gives:
\begin{equation}\label{e.reduction2}
\ed(C_G(A)) = \ed(C) , \ \ \ed(C_G(A);p)=\ed(C;p).
\end{equation}
Since $ \dim(C) = \dim(C_{\overline{G}}(\pi(A)))$, \cite[Proposition 3.15]{merkurjev-survey} and \eqref{e.reduction2} give:
\begin{equation}\label{e.reduction3}
    \ed(C_{\overline{G}}(\pi(A))) \geq \ed(C) = \ed(C_G(A))
\end{equation}
Similarly, we obtain for any prime $p\neq \Char k_0$:
\begin{equation}\label{e.reduction4}
    \ed(C_{\overline{G}}(\pi(A));p) \geq \ed(C_G(A);p)
\end{equation}
To finish the reduction to the reductive case we will need the following lemma.
\begin{lemma}\label{lem.reduction_to_reductive_aniso}
    If $C_G(A)$ admits anisotropic torsors over a field $k_0\subset k$, then so does $C_{\overline{G}}(\pi(A))$.
\end{lemma}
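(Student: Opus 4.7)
The plan is to take an anisotropic $C_G(A)$-torsor and push it forward along the composition
\[
C_G(A) \;\overset{\pi}{\twoheadrightarrow}\; C \;\hookrightarrow\; C_{\overline{G}}(\pi(A)),
\]
applying the two parts of Lemma~\ref{lem.aniso_image} in turn to show that anisotropy is preserved at each stage.

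First, let $\gamma \in H^1(k,C_G(A))$ be an anisotropic torsor. Recall that the unipotent group $\Ru(G)^A = \ker(\pi_{|C_G(A)})$ was already observed to be smooth and connected by the preceding discussion, and that $k_0$ is perfect throughout this section. Applying Lemma~\ref{lem.aniso_image}(2) to the quotient map $\pi_{|C_G(A)}: C_G(A) \twoheadrightarrow C$ with unipotent kernel $\Ru(G)^A$ then yields that the pushforward $(\pi_{|C_G(A)})_*(\gamma) \in H^1(k,C)$ is anisotropic.

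Next, I would show that the natural inclusion $\iota: C \hookrightarrow C_{\overline{G}}(\pi(A))$ restricts to an isomorphism on identity components. This follows immediately from \eqref{e.reduction6}: $C^{\circ}$ is a smooth connected closed subgroup of $C_{\overline{G}}(\pi(A))^{\circ}$ of equal dimension, and two connected algebraic groups of the same dimension, one contained in the other, must coincide. In particular the hypothesis of Lemma~\ref{lem.aniso_image}(1) is met for $\iota$. Applying it, the pushforward $\iota_*((\pi_{|C_G(A)})_*(\gamma)) \in H^1(k, C_{\overline{G}}(\pi(A)))$ is anisotropic, which is exactly what we need.

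There is no real obstacle here beyond assembling the already-proven ingredients: the quotient step relies on the perfect base field assumption of this section via Lemma~\ref{lem.aniso_image}(2), and the inclusion step uses only the dimension identity \eqref{e.reduction6} together with Lemma~\ref{lem.aniso_image}(1). The most delicate point, if any, is simply to record carefully why $C^{\circ} = C_{\overline{G}}(\pi(A))^{\circ}$; everything else is a one-line application of the relevant lemma.
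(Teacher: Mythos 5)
Your proof is correct and follows exactly the same route as the paper: push forward along $\pi_{|C_G(A)}\colon C_G(A)\twoheadrightarrow C$ using Lemma~\ref{lem.aniso_image}(2), then along the inclusion $C\hookrightarrow C_{\overline{G}}(\pi(A))$ using Lemma~\ref{lem.aniso_image}(1) via the dimension equality \eqref{e.reduction6}. You simply spell out more explicitly than the paper why $C^{\circ}=C_{\overline{G}}(\pi(A))^{\circ}$, which the paper leaves at ``because $\dim(C)=\dim(C_{\overline{G}}(\pi(A)))$.''
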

\begin{proof}
    Assume $\gamma\in H^1(k,C_G(A))$ is anisotropic. The push-forward $\pi_*(\gamma)\in H^1(k,C)$ is anisotropic by Lemma~\ref{lem.aniso_image}(2). The push-forward of $\pi_*(\gamma)$ to $H^1(k,C_{\overline{G}}(\pi(A)))$ is anisotropic by Lemma~\ref{lem.aniso_image}(1) because $\dim(C) = \dim(C_{\overline{G}}(\pi(A)))$.
\end{proof}

\begin{proposition}\label{prop.reduction_to_reductive}
    To establish Theorem~\ref{thm.main}, it suffices to prove it under the additional assumption that $G^{\circ}$ is reductive.
\end{proposition}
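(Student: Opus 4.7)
The plan is to reduce the general statement of Theorem~\ref{thm.main} (where $k_0$ is assumed perfect but $G^\circ$ need not be reductive) to the special case in which $G^\circ$ is reductive, by passing from $G$ to its reductive quotient $\overline{G} = G/\Ru(G)$. Since $k_0$ is perfect, $\overline{G}^\circ$ is reductive by \cite[Proposition 19.11]{milne2017algebraic}, so Theorem~\ref{thm.main}, assumed in the reductive case, will apply to $\overline{G}$. The bulk of the preliminary work has already been done in the lemmas of this section; it remains only to combine them.

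First, I would record the behavior of the three main invariants under the quotient $\pi \colon G \twoheadrightarrow \overline{G}$. Lemma~\ref{lem.ed_unipotent_normal}, applied to the smooth connected unipotent normal subgroup $\Ru(G) \triangleleft G$, gives $\ed(G) = \ed(\overline{G})$ and $\ed(G;p) = \ed(\overline{G};p)$. The intersection $A \cap \Ru(G)$ is simultaneously a finite diagonalizable group of order coprime to $\Char k_0$ and a subgroup of a smooth connected unipotent group, hence trivial; therefore $\pi$ restricts to an isomorphism of $A$ onto a finite diagonalizable subgroup $\pi(A) \subset \overline{G}$ of the same rank, which is a $p$-group precisely when $A$ is.

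Next, I would verify that the hypotheses of Theorem~\ref{thm.main} transfer from $(G,A)$ to $(\overline{G},\pi(A))$. In the setting of part (2), if $\Char k_0$ is good for $G$, then condition (1) of Definition~\ref{def.char} holds vacuously for $\overline{G}$ (since $\overline{G}^\circ$ is reductive), and condition (2) is phrased entirely in terms of $\overline{G}$, so it is inherited; hence $\Char k_0$ is good for $\overline{G}$. The anisotropy hypothesis is exactly the content of Lemma~\ref{lem.reduction_to_reductive_aniso}: if $C_G(A)$ admits an anisotropic torsor over some $k_0 \subset k$ (with $k$ $p$-closed in part (1)), then so does $C_{\overline{G}}(\pi(A))$ over the same $k$.

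Finally, applying Theorem~\ref{thm.main} in the reductive case to $\overline{G}$ and $\pi(A)$ yields
$$\ed(\overline{G}) \geq \ed(C_{\overline{G}}(\pi(A))), \qquad \ed(\overline{G};p) \geq \ed(C_{\overline{G}}(\pi(A));p).$$
Chaining these with \eqref{e.reduction3} and \eqref{e.reduction4}, and with the equalities $\ed(G) = \ed(\overline{G})$ and $\ed(G;p) = \ed(\overline{G};p)$ from Step~1, delivers the desired inequalities $\ed(G) \geq \ed(C_G(A))$ and $\ed(G;p) \geq \ed(C_G(A);p)$. There is no genuine obstacle in this argument; the proposition is essentially a bookkeeping exercise, and the only point requiring minor care is the verification in the third step that the technical hypothesis ``good characteristic'' passes from $G$ to $\overline{G}$.
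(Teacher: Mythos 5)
Your argument is correct and follows essentially the same route as the paper: quotient by $\Ru(G)$, invoke Lemma~\ref{lem.ed_unipotent_normal} to equate $\ed(G)$ with $\ed(\overline{G})$, use Lemma~\ref{lem.reduction_to_reductive_aniso} and inequalities \eqref{e.reduction3}, \eqref{e.reduction4} to transfer the anisotropy hypothesis and compare the centralizers, verify that goodness of characteristic passes to $\overline{G}$, and then chain. Your extra observation that $A \cap \Ru(G)$ is trivial, so $\pi$ carries $A$ isomorphically onto $\pi(A)$, is a harmless refinement the paper leaves implicit (diagonalizable groups are already closed under quotients, which is all that is strictly needed).
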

\begin{proof}
    We assume that Theorem~\ref{thm.main} holds for $G/\Ru(G)=\overline{G}$ and show that it holds for $G$ (note that $\overline{G}^{\circ} = G^{\circ}/\Ru(G)$ is reductive because $k_0$ is perfect \cite[Proposition 19.11]{milne2017algebraic}).

    Let $A\subset G$ be a finite diagonalizable subgroup such that $C_G(A)$ admits anisotropic torsors over a field $k_0\subset k$. Then $C_{\overline{G}}(\pi(A))$ admits anisotropic torsors over $k$ as well by Lemma~\ref{lem.reduction_to_reductive_aniso}. We prove the two parts of Theorem~\ref{thm.main} separately.

    \smallskip
    (1) Let $p\neq \Char k_0$ be a prime. If $k$ is $p$-closed, and $A$ is a $p$-group, then Theorem~\ref{thm.main}(1) gives:
    $$ \ed({\overline{G}};p)\geq  \ed(C_{\overline{G}}(\pi(A));p).$$
    Combining this with Lemma~\ref{lem.ed_unipotent_normal} and \eqref{e.reduction4}, we get:
    $$\ed(G;p) = \ed({\overline{G}};p)\geq  \ed(C_{\overline{G}}(\pi(A));p) \geq \ed(C_G(A);p).$$
    Therefore the conclusion of  Theorem~\ref{thm.main}(1) holds for $G$.

\smallskip
    (2) If the characteristic of $k_0$ is good for $G$, then it is good for $\overline{G}$ as well by Definition~\ref{def.char}. Therefore Theorem~\ref{thm.main}(2) gives:
    $$ \ed({\overline{G}})\geq  \ed(C_{\overline{G}}(\pi(A))).$$
    By Lemma~\ref{lem.ed_unipotent_normal} and \eqref{e.reduction3}, we have:
    $$\ed(G) = \ed({\overline{G}})\geq  \ed(C_{\overline{G}}(\pi(A))) \geq \ed(C_G(A)).$$
    This shows  Theorem~\ref{thm.main}(2) holds for $G$.
\end{proof}

\section{Conclusion of the proof of Theorem~\ref{thm.main}}\label{sect.proof_of_main_thm}

In this section, $F$ will be an iterated Laurent series field $k(\!(t_1)\!)\dots(\!(t_r)\!)$ over $k$ for some field $ k_0\subset k$. The valuation $\nu$ on $F$ is the $(t_1,\dots,t_r)$-adic valuation. We can and shall assume that $G^{\circ}$ is reductive by Proposition~\ref{prop.reduction_to_reductive}. We start the proof of Theorem~\ref{thm.main} with an elementary lemma.

\begin{lemma}\label{lem.conjugation}
    Let $x = (x_1,\dots,x_d), y = (y_1,\dots,y_d)\in G(\ksep)^d$ be $d$-tuples of elements. Assume that $x_1,\dots,x_d$ generate a finite abelian group $A\subset G(\ksep)$ such that $|A|$ is coprime to $\Char k$. Let $G$ act on $G^d$ by conjugation. If $x,y$ are conjugate in $G(l_{\sep})^d$  for some field extension $k\subset l$ , then $x,y$ are conjugate in $G(k_{\sep})^d$.
\end{lemma}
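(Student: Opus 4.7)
My plan is to reinterpret the conjugation problem for tuples as a conjugation problem for homomorphisms out of a diagonalizable group scheme, so that the smoothness result from SGA~3 used in Section~\ref{sect.a_theorem_of_GP} becomes available. First, I will package the tuple $x=(x_1,\dots,x_d)$ as a group homomorphism: let $M$ be the finite abelian group presented as the quotient of $\bZ^d$ by the kernel of the map sending the $i$-th generator to $x_i$, so that $M$ is a finite abelian group with $|M|=|A|$ coprime to $\Char k$, and the induced $\rho_x:M\to G(\ksep)$ is injective with image $A$. Since $y_i$ is obtained from $x_i$ by conjugation by a single element of $G(l_{\sep})$, the $y_i$ satisfy the same relations as the $x_i$, so the assignment $m\mapsto $ image of $m$ under $x\mapsto y$ gives a well-defined homomorphism $\rho_y:M\to G(\ksep)$.

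Next I will use that, after choosing a product decomposition $M\cong\bZ/n_1\bZ\times\cdots\times\bZ/n_s\bZ$ and the primitive roots of unity $\zeta_{n_i}\in\ksep$, the constant group scheme $\underline{M}$ over $\ksep$ is identified with $\mu_{n_1}\times\cdots\times\mu_{n_s}$; in particular it is diagonalizable and hence of multiplicative type. I will then consider the transporter scheme $\Tran_G(\rho_x,\rho_y)$ over $\ksep$, whose functor of points sends an $R$-algebra to the set of $g\in G(R)$ satisfying $g\rho_{x,R}g^{-1}=\rho_{y,R}$. Because a homomorphism out of $\underline{M}$ is determined by its values on generators, this scheme coincides with the naive tuple transporter cut out of $G_{\ksep}$ by the equations $g x_i g^{-1}=y_i$. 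By \cite[Expose XI, Corollaire 5.2]{SGA3}---the same result invoked in the proof of Lemma~\ref{lem.conj_of_morphism}---this transporter is smooth over $\ksep$, the key input being that $\underline{M}$ is of multiplicative type.

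To conclude, the hypothesis of the lemma provides an $l_{\sep}$-point of the transporter, so the transporter is a non-empty smooth scheme of finite type over the separably closed field $\ksep$. Any such scheme admits a $\ksep$-rational point, because at any closed point the residue field is separable over $\ksep$, hence equal to $\ksep$. This yields an element of $G(\ksep)$ simultaneously conjugating each $x_i$ to $y_i$, which is exactly the desired conclusion.

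The one step requiring care is the smoothness of the transporter: it depends crucially on the hypothesis that $|A|$ is coprime to $\Char k$, since this is precisely what lets us realize $\underline{M}$ as a diagonalizable group scheme and apply the SGA~3 smoothness theorem. The rest of the argument---passing between tuples and homomorphisms, and descending rational points from $l_{\sep}$ to $\ksep$ on a smooth scheme---is formal once the smoothness is in place.
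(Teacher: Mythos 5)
Your proof is correct, and it takes a somewhat different route from the paper's. The paper argues via the $G$-orbit $X\subset G^d$ of $x$: since orbits of smooth group actions are locally closed, $y$ lying in $X(l_{\sep})$ and in $G^d(\ksep)$ forces $y\in X(\ksep)$; then, because the stabilizer $C_G(A)$ is smooth, the orbit map $G(\ksep)\to X(\ksep)\cong(G/C_G(A))(\ksep)$ is surjective. You instead package the tuples as homomorphisms $\rho_x,\rho_y:\underline{M}\to G_{\ksep}$ out of a diagonalizable group scheme and work directly with the transporter $\Tran_G(\rho_x,\rho_y)$ --- precisely the scheme whose rational points are the desired conjugating elements --- invoking smoothness of the transporter \cite[Expose XI, Corollaire 5.2]{SGA3} together with the standard fact that a nonempty smooth finite-type scheme over a separably closed field has a rational point. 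Both arguments rest on the same smoothness phenomenon (centralizers and transporters of multiplicative-type subgroups in a smooth group are smooth, which is exactly what the coprimality hypothesis buys); the paper separates the problem into membership ($y\in X(\ksep)$, via locally closed orbits) and lifting (surjectivity of the orbit map on $\ksep$-points), while yours folds both into a single existence statement about the transporter. Your framing is also more directly parallel to Lemma~\ref{lem.conj_of_morphism}, where the paper runs this transporter argument over $\Oin$ using Hensel's lemma in place of the separably-closed-field step.
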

\begin{proof}
     Let $X\subset G^d$ denote the $G$-orbit of $x$ under the conjugation action on $G^d$.  By assumption $y\in X(l_{\sep})$. Since $y\in G^d(\ksep)$ and $X\subset G^d$ is locally closed, we get $$y\in X(l_{\sep})\cap G^d(\ksep) = X(\ksep).$$ The stabilizer $\stab_{G}(x)$ is $C_G(A)$ and so the orbit map induces an isomorphism $X \cong G/C_G(A)$ \cite[Corollary 7.13]{milne2017algebraic} over $\ksep$. The homomorphism $$G(\ksep)\to G/C_G(A)(\ksep), \ g\mapsto gxg^{-1}$$ is surjective because $C_G(A)$ is smooth \cite[Theorem 13.9]{milne2017algebraic}. Therefore $y\in X(\ksep)$ implies $y= gxg^{-1}$ for some $g\in G(\ksep)$.
\end{proof}
The following descent lemma is the main ingredient of the proof of Theorem~\ref{thm.main}.

\begin{lemma}\label{lem.descent}
    Let $\vphi: \I{F}\to A(\ksep)$ be a continuous surjection onto a diagonalizable $k$-subgroup $A\subset G_k$ and $[a_\sigma]\in H^1_{\an}(k,C_G(A)(\ksep))$ an anisotropic torsor. Let $F_1 \subset F$ be a Henselian subfield with $\Gamma_{F_1} = \Gamma_F$ and choose a uniformizer $\pi$ for $F_{1,\tr}$.  If $[a_\sigma,\vphi]_\pi\in H^1(F,G)$ descends to a loop torsor $\gamma_1 \in H^1_{\lop}(F_1,G)$, then $[a_\sigma]$ descends to the residue field of $F_1$.
\end{lemma}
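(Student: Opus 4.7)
\emph{Proof plan.} The idea is to lift the given loop decomposition of $\gamma_1$ over $F_1$ up to $F$, replace its data by residues using Lemma~\ref{lem.Hensel_coefficient_field}, and then apply the uniqueness theorem for anisotropic loop torsors (Theorem~\ref{thm.uniqueness}) to read off descent data for $[a_\sigma]$.

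I will first write $\gamma_1 = [b_\sigma,\psi]_\pi$ with $b_\sigma \in Z^1(\Gal(k_1),G(\cO_{F_1,\oin}))$ and $\psi:\I{F_1}\to G(\cO_{F_1,\oin})$. Since $\Gamma_{F_1}=\Gamma_F$, the natural maps yield $\I{F_1}=\I{F}$ and $\cO_{F_1,\oin}\subset \cO_{F,\oin}$; moreover $\Gamma_{F_{1,\tr}} = \Gamma_{\Ftr}$, so $\pi$ doubles as a uniformizer for $F_\tr$. Applying Corollary~\ref{cor.field_ext_funct} to the extension $F_1\subset F$ (residue fields $k_1\subset k$) gives
$$(\gamma_1)_F = [\Inf_{k/k_1}(b)_\sigma,\psi]_\pi \in H^1_{\lop}(F,G).$$
Because $F$ is an iterated Laurent series field, it admits a coefficient field $\ksep\subset \cO_{F,\oin}$, and Lemma~\ref{lem.Hensel_coefficient_field} then gives
$$(\gamma_1)_F = [\Inf_{k/k_1}(\overline{b})_\sigma,\overline{\psi}]_\pi,$$
with $\overline{b}_\sigma\in Z^1(\Gal(k_1),G(k_{1,\sep}))$ and $\overline{\psi}:\I{F}\to G(k_{1,\sep})$ the residue reductions.

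By hypothesis the same class equals $[a_\sigma,\vphi]_\pi$. Since $\vphi$ surjects onto $A(\ksep)$, we have $C_G(\vphi)=C_G(A)$, so the anisotropy of $[a_\sigma]\in H^1(k,C_G(A))$ together with \cite[Proposition 4.9]{gille2024newloop} yields anisotropy of the loop torsor $[a_\sigma,\vphi]_\pi\in H^1(F,G)$ (using that $G^{\circ}$ is reductive, as arranged in Proposition~\ref{prop.reduction_to_reductive}). Theorem~\ref{thm.uniqueness} then produces $s\in G(\ksep)$ with
$$s^{-1}a_\sigma\,{}^\sigma s = \Inf_{k/k_1}(\overline{b})_\sigma, \qquad s^{-1}\vphi(f)s = \overline{\psi}(f)$$
for all $\sigma\in\Gal(k)$ and $f\in\I{F}$.

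The first identity exhibits $a_\sigma$ as a cocycle in $G(\ksep)$ cohomologous, via the $0$-cochain $s$, to the inflation of the $\Gal(k_1)$-cocycle $\overline{b}_\sigma$; hence $[a_\sigma]$ lies in the image of $H^1(k_1,G)\to H^1(k,G)$, which is the required descent to the residue field of $F_1$. I expect the main obstacle to be securing the anisotropy of $[a_\sigma,\vphi]_\pi$: this is precisely what allows Theorem~\ref{thm.uniqueness} to transport the equality of loop torsors over $F$ into a concrete conjugation relation over $\ksep$, and it is also what motivates the standing anisotropic-torsor hypothesis in Theorem~\ref{thm.main}.
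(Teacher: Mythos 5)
There is a genuine gap at the end: your conclusion shows only that $[a_\sigma]$ descends as a $G$-torsor, i.e.\ lies in the image of $H^1(k_1,G)\to H^1(k,G)$, but the lemma (and its sole downstream use, Corollary~\ref{cor.lower_bound_corollary}) requires descent as a $C_G(A)$-torsor, i.e.\ that $[a_\sigma]$ lies in the image of $H^1(k_1,C_G(A))\to H^1(k,C_G(A))$. The essential dimension being bounded in Corollary~\ref{cor.lower_bound_corollary} is $\ed(\eta)$ for $\eta=[a_\sigma]\in H^1(k,C_G(A))$, and a $G$-coefficient descent gives no control over that.

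To close the gap one cannot simply read off the first conjugation identity; the element $s\in G(\ksep)$ produced by Theorem~\ref{thm.uniqueness} has no reason a priori to centralize $A$, and $\overline{b}_\sigma$ has no reason a priori to take values in $C_G(A)$. The paper fixes both: first it uses Lemma~\ref{lem.conjugation} (to conjugate $\overline{\psi}$ to $\vphi$ by an element of $G(k_{1,\sep})$) together with Hensel's lemma and Lemma~\ref{lem.cohomolog_criterion} to replace the loop decomposition of $\gamma_1$ by a cohomologous one with $\overline{\psi}=\vphi$. Then the second identity $s^{-1}\vphi(f)s=\overline{\psi}(f)=\vphi(f)$ forces $s\in C_G(A)(\ksep)$ because $\vphi$ surjects onto $A(\ksep)$, and Lemma~\ref{lem.centralize_over_residue} gives $\overline{b}_\sigma\in C_G(A)(k_{1,\sep})$. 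Only then is the first conjugation identity a cohomology relation in $C_G(A)(\ksep)$, yielding the required $C_G(A)$-descent. Your opening steps (Corollary~\ref{cor.field_ext_funct}, Lemma~\ref{lem.Hensel_coefficient_field}, anisotropy via \cite[Proposition~4.9]{gille2024newloop}, Theorem~\ref{thm.uniqueness}) are correct and match the paper; it is this normalization step that is missing and cannot be skipped.
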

\begin{proof}
   
    Let $k_1$ denote the residue field of $F_1$ and $\cO_{F_1,\oin}$ denote the inertial closure of the valuation ring of $F_1$. By assumption $\gamma_1$ is a loop torsor and so we can write $\gamma_1=[a'_\sigma,\vphi']_{\tau}$ for some $\vphi': \I{F_1}\to G(\cO_{F_1,\oin}), \ a'_\sigma\in G(\cO_{F_1,\oin})$. Since $\Gamma_{F_1} = \Gamma_F$, we have by Corollary~\ref{cor.field_ext_funct}:
    \begin{equation*}
        [\Inf_{k/k_1}(a')_\sigma, \vphi']_\pi = (\gamma_1)_F = \gamma = [a_\sigma,\vphi]_\pi.
    \end{equation*}
   Since $[a_\sigma]$ is anisotropic, so is $\gamma$ by \cite[Proposition 4.9]{gille2024newloop}. Therefore Theorem~\ref{thm.uniqueness} implies that there exists $s\in G(\ksep)$ such that: 
    \begin{equation}\label{e.22}
       s^{-1} a_\sigma {}^{\sigma} s = \overline{\Inf_{k/k_1}(a')_\sigma}, \ \ s^{-1} {\vphi(x)} s = \overline{\vphi'(x)}.
    \end{equation}
    By Lemma~\ref{lem.conjugation}, there exists $s_1 \in G(k_{1,\sep})$ such that
    $$s_1^{-1} {\vphi(x)} s_1 = \overline{\vphi'(x)}$$
    Lift $s_1$ to an element $h \in G(\cO_{F_1,\oin})$ using Hensel's lemma to get $$\overline{h}^{-1}\vphi(x)\overline{h} = \overline{\vphi'(x)}.$$ We replace $\langle a'_\sigma,\vphi'\rangle_\pi$ by the cohomologous cocycle $\langle h^{-1}a'_\sigma {}^\sigma h, h^{-1}\vphi'h\rangle_\pi$ using Lemma~\ref{lem.cohomolog_criterion} to assume $$\overline{\vphi'(x)} = {\vphi(x)}$$ without loss of generality. 
    Then \eqref{e.22} shows $s \in C_G(A)(\ksep)$ because $A(\ksep) = \im \vphi$. 
    We have $\overline{a'_\sigma}\in C_G(A)(k_{0,\sep})$ for all $\sigma\in \Gal(k_1)$ by Lemma~\ref{lem.centralize_over_residue}. Since $s \in C_G(A)(\ksep)$, we get
    \begin{equation}\label{e.13}
        [\Inf_{k/k_1}\overline{a'_\sigma}] =[a_\sigma] \in H^1(k,C_G(A)).
    \end{equation}
    Therefore \eqref{e.13} proves $[a_\sigma] \in H^1(k,C_G(A))$ descends to $k_1$ as we wanted to show.
\end{proof}

Without the assumption $\Gamma_{F_1}= \Gamma_F$ in Lemma~\ref{lem.descent}, we cannot say with certainty that $[a_\sigma]$ descends to the residue field of $F_1$. However, we can still get a lower bound on the transcendence degree of $F_1$ in terms of the essential dimension of $[a_\sigma]$.

\begin{corollary}\label{cor.lower_bound_corollary}
    Let $\vphi: \I{F}\to A(\ksep)$ be a continuous surjection onto a diagonalizable $k$-subgroup $A\subset G_k$ and $\eta=[a_\sigma]\in H^1_{\an}(k,C_G(A)(\ksep))$ be an anisotropic torsor. If $[a_\sigma,\vphi]_\pi\in H^1(F,G)$ descends to a loop torsor $\gamma_1 \in H^1_{\lop}(F_1,G)$ for some Henselian subfield $F_1 \subset F$ (with respect to $\nu_{|F_1}$), then $\trdeg_{k_0}(F_1) \geq \ed(\eta)$.
\end{corollary}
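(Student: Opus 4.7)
The plan is to reduce to Lemma~\ref{lem.descent}, whose hypothesis $\Gamma_{F_1}=\Gamma_F$ is not available here. I would enlarge $F_1$ inside $F$ to an intermediate Henselian subfield $F_2$ satisfying $\Gamma_{F_2}=\Gamma_F$, apply the lemma to $F_2$ to descend $\eta$ to the residue field $k_2$ of $F_2$, and finally bound $\trdeg_{k_0}(k_2)\leq \trdeg_{k_0}(F_1)$ via Abhyankar's inequality.

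Step 1 (enlargement). Fix a $\bZ$-basis $\gamma_1,\dots,\gamma_r$ of $\Gamma_F\cong\bZ^r$ and let $F_2$ be the Henselization in $F$ of $F_1(\pi^{\gamma_1},\dots,\pi^{\gamma_r})$. This is a Henselian subfield of $F$ containing $F_1$, and its value group contains each $\gamma_i$, hence equals $\Gamma_F$. Since $\pi$ is a group homomorphism, $\pi^\gamma\in F_2$ for every $\gamma\in\Gamma_F=\Gamma_{F_2}$, so $\pi$ is still a uniformizer for $F_{2,\tr}$ in the sense of Remark~\ref{rem.uniformizers}. Moreover $\trdeg_{F_1}(F_2)\leq r$ since $F_2/F_1$ is generated by $r$ elements (Henselization is algebraic).

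Step 2 (applying Lemma~\ref{lem.descent}). Any loop cocycle $\Galtr(F_1)\to G(\cO_{F_1,\oin})$ representing $\gamma_1$ pulls back along the restriction $\Galtr(F_2)\to \Galtr(F_1)$ to a cocycle with values in $G(\cO_{F_1,\oin})\subset G(\cO_{F_2,\oin})$, so $\gamma_2:=(\gamma_1)_{F_2}$ is a loop torsor over $F_2$. Since $(\gamma_2)_F=(\gamma_1)_F=[a_\sigma,\vphi]_\pi$, Lemma~\ref{lem.descent} applied with $F_2\subset F$ implies that $\eta=[a_\sigma]$ descends to $k_2$. In particular $\ed(\eta)\leq \trdeg_{k_0}(k_2)$.

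Step 3 (Abhyankar bookkeeping). Apply Abhyankar's inequality to $F_2/F_1$:
\[
\trdeg_{F_1}(F_2)\geq \trdeg_{k_1}(k_2)+\dim_{\bQ}\bigl((\Gamma_F/\Gamma_{F_1})\otimes_{\bZ}\bQ\bigr)=\trdeg_{k_1}(k_2)+\bigl(r-\rank(\Gamma_{F_1})\bigr),
\]
which combined with $\trdeg_{F_1}(F_2)\leq r$ yields $\trdeg_{k_1}(k_2)\leq \rank(\Gamma_{F_1})$. Applying Abhyankar to $F_1/k_0$ gives $\trdeg_{k_0}(F_1)\geq \trdeg_{k_0}(k_1)+\rank(\Gamma_{F_1})$. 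Chaining the inequalities,
\[
\ed(\eta)\leq \trdeg_{k_0}(k_2)=\trdeg_{k_0}(k_1)+\trdeg_{k_1}(k_2)\leq \trdeg_{k_0}(k_1)+\rank(\Gamma_{F_1})\leq \trdeg_{k_0}(F_1).
\]

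The real content of the argument is Lemma~\ref{lem.descent}; the rest is a careful choice of the intermediate field $F_2$ (built from the values $\pi^{\gamma_i}$ precisely so that the given uniformizer $\pi$ continues to serve) together with a routine application of Abhyankar's inequality. The only mild subtlety is verifying that loop torsors are preserved under the base change $F_1\subset F_2$ and that $\pi$ remains a uniformizer for $F_{2,\tr}$ — both are essentially formal consequences of the inclusions $\cO_{F_1,\oin}\subset \cO_{F_2,\oin}$ and $\Gamma_{F_2}=\Gamma_F$.
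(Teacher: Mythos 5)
Your proof is correct and follows essentially the same route as the paper's: enlarge $F_1$ to an intermediate field $F_2$ with full value group $\Gamma_F$ (built so that the given $\pi$ stays usable), descend $\eta$ to the residue field of $F_2$ via Lemma~\ref{lem.descent}, and then compare transcendence degrees. Two small remarks on the comparison. First, the paper sets $F_2 = F_1(\pi^\gamma;\gamma\in\Gamma_F)$ without Henselizing, but Lemma~\ref{lem.descent} is stated for a \emph{Henselian} subfield of $F$; your explicit Henselization closes that (innocuous but real) gap, and since Henselization is algebraic it costs nothing in transcendence degree. Second, where you apply Abhyankar's inequality twice (once to $F_2/F_1$ and once to $F_1/k_0$) and track $\rank(\Gamma_{F_1})$ and $\trdeg_{k_1}(k_2)$ separately, the paper applies it once directly to $F_2/k_0$ and chains
\[
\trdeg_{k_0}(F_1)\ \geq\ \trdeg_{k_0}(F_2) - r\ \geq\ \trdeg_{k_0}(k_2)\ \geq\ \ed(\eta),
\]
using only that $F_2/F_1$ is generated by $r$ elements and that $\Gamma_{F_2}$ has rational rank $r$. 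Your version is slightly longer but no less correct; both give the same bound.
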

\begin{proof}
    Set $F_2 = F_1(\pi^{\gamma};\gamma\in \Gamma_{F})$ and let $k_2$ denote the residue field of $F_2$. Since $\Gamma_{F_2}=\Gamma_F$, Lemma~\ref{lem.descent} implies $\eta$ descends to $k_2$ and so $\trdeg_{k_0}(k_2)\geq \ed(\eta)$. Therefore:
    $$\trdeg_{k_0}(F_1) \geq \trdeg_{k_0}(F_2) - r \geq \trdeg_{k_0}(k_2) \geq \ed(\eta).$$
    Here the first inequality follows because  $F_2$ is generated by $r$ elements over $F_1$. The second inequality from the fact that $k_2$ is the residue field of a valuation of rank  $r$ on $F_2$; see \cite[Chapter VI, Theorem 3, Corollary 1]{zariski1960commutative}.
\end{proof}

We proceed to prove Theorem~\ref{thm.main}. Assume $C_G(A)$ admits an anisotropic torsor $[a_\sigma]=\eta \in H^1_{\an}(k,C_G(A))$ over $k$. We may assume that $\eta$ is versal by Proposition~\ref{prop.versal_aniso} and so:
    \begin{equation}\label{e.23}
    \ed(\eta) = \ed(C_G(A)) \ \text{ and } \ \ed(\eta;p) = \ed(C_G(A); p).
    \end{equation}
Set $r=\rank (A)$, let $F= k(\!(t_1)\!)\dots(\!(t_r)\!)$ be a field of iterated Laurent series equipped with the $(t_1,\dots,t_r)$-adic valuation. Since $\Gamma_F = \bZ^r$, we have $\I{F} = \Hom(\Gamma_F,\hat{\bZ}')= \hat{\bZ}'^r$. Choose a surjection $\vphi: \I{F}\to A(\ksep)$ and set $$\gamma = [a_\sigma,\vphi]_\pi \in H^1_{\lop}(F,G).$$ 
Note that $[a_\sigma,\vphi]_\pi$ is well-defined by Example~\ref{ex.main} and anisotropic by \cite[Proposition 4]{gille2024newloop}. We prove the two parts of Theorem~\ref{thm.main} separately. We start from Part (2) because it is simpler.

\smallskip
\begin{proof}[Proof of Theorem~\ref{thm.main}(2)] Assume $\gamma$ descends to a field $k\subset F_1 \subset F$ with:  
    $$\trdeg_{k_0}(F_1) = \ed(\gamma).$$ 
    We may replace $F_1$ by its Henselization inside of $F$ to assume $F_1$ is Henselian; see \cite[Appendix A.3]{tignol2015value}. Indeed, the Henselization $F_1 \subset F_1^h \subset F$ is algebraic over $F_1$ and therefore $\trdeg_{k_0}(F_1) = \trdeg_{k_0}(F_1^h)$.
    Under the characteristic assumptions in Part (2), $\gamma$ descends to a loop torsor over $F_1$ by Proposition~\ref{lem.loop_char_good}. Therefore Corollary~\ref{cor.lower_bound_corollary} and \eqref{e.23} give  
    \begin{equation}\label{e.equation_that_fails_in_example}
        \ed(\gamma)=\trdeg_{k_0}(F_1)\geq \ed(\eta) = \ed(C_G(A)).
    \end{equation}
    This finishes the proof because $\ed(G)\geq \ed(\gamma)$ by definition.
\end{proof}
\begin{proof}[Proof of Theorem~\ref{thm.main}(1)] Assume $A$ is a $p$-group and $k$ is $p$-closed. Let $L/F$ be a prime-to-$p$ extension and $k_0\subset L_1\subset L$ a field such that  $\gamma_L$ descends to $\gamma_1 \in H^1(L_1,G)$ and:
\begin{equation}\label{e.24}
    \trdeg_{k_0}(L_1) = \ed(\gamma;p).
\end{equation}
Note that $L$ is Henselian \cite[Proposition A.30]{tignol2015value}. As in the proof of Part (1), we replace $L_1$ by its Henselization inside of $L$ to assume it is Henselian without changing $\trdeg_{k_0}(L_1)$.
By Proposition~\ref{prop.loop_prime-to-p}, there exists a prime-to-$p$ extension $L'_1/L_1$ such that $(\gamma_1)_{L'_1}$ is a loop torsor. Note that $L'_1$ is contained in a prime-to-$p$ extension $L\subset L'$ \cite[Lemma 6.1]{merkurjev2009essential}. The residue field of $L'$ is $k$ because $k$ is $p$-closed by Ostrowski's theorem.
By Corollary~\ref{cor.choosing_uniformizers}, there exists a uniformizer $\tau$ for $L'_{\tr}$ such that:
$$\gamma_{L'} = [a_\sigma, \vphi_{|L'}]_\tau$$
Note that $\gamma_{L'}$ is anisotropic and $\im\vphi_{|L'} = A(\ksep)$ by Lemma~\ref{lem.aniso_prime-to-p}. The field $L'$ is isomorphic (as a valued field) to an iterated Laurent series field because it is a finite extension of $F$; see the first paragraph of \cite[Page 199]{gille2009lower}.
Since $\gamma_{L'}$ descends to a loop torsor over $L'_1$, Corollary~\ref{cor.lower_bound_corollary} implies $$\trdeg_{k_0}(L'_1)\geq \ed(\eta) \geq \ed(\eta;p).$$
Since $L_1 \subset L'_1$ is an algebraic extension we get from \eqref{e.24}: 
$$\ed(\gamma;p) = \trdeg_{k_0}(L_1)= \trdeg_{k_0}(L'_1)\geq \ed(\eta;p) = \ed(C_G(A);p)$$
This finishes the proof because $\ed(G;p) \geq \ed(\gamma;p)$ by definition.
\end{proof}

We end this section with an example showing why the assumption that $C_G(A)$ admits anisotropic torsors is required for our approach to work.
\begin{example}
    Let $G = \PGL(V)$ be the projective linear group associated to the vector space $V = \bC^4 \otimes \bC^2$. Let $A\subset G$ be the subgroup generated by the equivalence class of the matrix $$a = \id_{\bC^4} \otimes d$$ where $d \in \GL_2(\bC)$ is the diagonal matrix:
    $$d = \begin{pmatrix} 1 & 0 \\ 0 & \zeta_3 \end{pmatrix}.$$
    Here $\zeta_3$ is a primitive $3$-rd root of unity. We will prove that for any torsor $[a_\sigma]\in H^1(k,C_G(A))$ and any surjection $\vphi: \I{F} \to A(\ksep)$, \eqref{e.equation_that_fails_in_example} fails for the loop torsor $\gamma= [a_\sigma,\vphi]_\pi$. That is, we will prove:
    $$\ed(\gamma) \lneq \ed(C_G(A)).$$
    The failure of \eqref{e.equation_that_fails_in_example} for all $C_G(A)$-torsors is explained by the fact that, as we shall see, $C_G(A)$ does not admits anisotropic torsors.
    
    A choice of a basis $b_1,b_2 \in \bC^2$ allows us to view matrices $g \in \GL(V)$ as block matrices:
    $$g = \begin{pmatrix} g_{11} & g_{12} \\ g_{21} & g_{22} \end{pmatrix},$$
    where $g_{ij} \in \Mat_4(\bC)$ are four by four matrices. The action of $g$ on $u \otimes b_j$ is given by
    $$g u\otimes b_j = g_{1j} u \otimes b_1 + g_{2j} u \otimes b_2. $$
    The corresponding block decomposition of $a = \id_{\bC^4} \otimes d$ is
    $$a = \begin{pmatrix} I_4 & 0 \\ 0 & \zeta_3 I_4 \end{pmatrix}.$$
    Here $I_4 \in \Mat_4(\bC)$ is the identity matrix. Using this, one see computes the centralizer of $A = \langle a\rangle$:
    $$C_{G}(A) = \Big\{  \bigg [ \begin{pmatrix} g_{11} & 0 \\ 0 & g_{22} \end{pmatrix} \bigg ]  \mid g_{11},g_{22}\in \GL_4(\bC) \Big \}.$$
    We identify the right hand side with $\GL_4\times \GL_4 / \lambda(\Gm)$, where $\lambda:\Gm \to \GL_4\times \GL_4$ is the embedding  $\lambda(s) = (s I_4 , s I_4)$. Consider the diagonal embedding:
    $$\Delta: \PGL_4 \to C_{G}(A), \ \ [g] \mapsto [g\otimes I_2]  = \bigg [ \begin{pmatrix} g & 0 \\ 0 &g \end{pmatrix} \bigg ].$$
    Let $\bC \subset F$ be a field. By \cite[Theorem A.1]{cernele-reichstein}, the quotient map $\pi: C_{G}(A) \to \PGL_4\times \PGL_4$ induces an embedding
    $$\pi_*: H^1(F, C_{G}(A)) \hookrightarrow H^1(F,\PGL_4)^{\times 2}$$
    onto the diagonal subset consisting of all pairs $(x,x)$ with $x\in H^1(F,\PGL_4)$. This subset is precisely the image of $\pi_* \circ \Delta_*$, where $\Delta_*: H^1(F,\PGL_4) \to H^1(F,C_{G}(A))$ is the map induced by $\Delta$. We conclude that $\Delta_*$ must be an isomorphism. In particular, we get:
    $$\ed(C_{G}(A)) = \ed(\PGL_4) = 5$$
    by \cite{merkurjev-PGLp^2}. Let $\gamma \in H^1(F,G)$ be induced from a $C_G(A)$-torsor. Since $\Delta_*$ is an isomorphism, $\gamma$ is represented by a cocycle of the form $\Delta(c_\sigma)$ for some cocycle $c_\sigma \in\PGL_4$. Recall that $\PGL_n$-torsors classify central simple algebras of degree $n$ up to isomorphism; see for example \cite[Chapter 2]{gille_szamuely_2006}. If the cocycle $c_\sigma$ corresponds to some central simple algebra $B$ of degree $4$ over $F$, then the cocycle $\Delta(c_\sigma)$ corresponds to the matrix algebra $M_2(B)=B \otimes M_2(\bC)$ over $B$ because $\Delta(c_\sigma)= c_\sigma \otimes I_2$. Both the algebra $M_2(B)$ and $\gamma$ descend to a field $F_0\subset F$ with $\trdeg_{\bC}(F_0)\leq 4$ by \cite[Corollary 1.4]{lorenz2003}. This gives the upper bound
    $$\ed(\gamma) \leq 4.$$
    In particular, for any loop torsor $\gamma= [a_\sigma,\vphi]_\pi$ with $\im\vphi =A(\ksep)$ over any Henselian field $\bC \subset F$ the essential dimension of $\gamma$ is strictly smaller than $\ed(C_G(A))$:
    $$\ed(\gamma) \leq 4 < 5 = \ed(C_G(A)).$$
    This shows the assumption that $\gamma$ is anisotropic is necessary for our proof of Theorem~\ref{thm.main} to work. Note that in this example any $C_G(A)$-torsor $\eta = [a_\sigma]$ is isotropic because $Z(C_G(A))\cong \Gm$ embeds into the twisted group ${}_{a_\sigma} C_G(A)$. 
\end{example}

\section{Reductive subgroups of maximal rank}\label{sect.pf_of_cor_ofmainthm}

Some of the inequalities in Theorem~\ref{thm.concrete_bounds} follow from the next useful corollary of Theorem~\ref{thm.main} whose proof relies on Borel–de Siebenthal theory. Recall that an algebraic subgroup $H\subset G$ is called a subgroup of maximal rank if a maximal torus in $H$ is remains maximal as a torus in $G$.

\begin{corollary}\label{cor.ofmainthm}
    Let $G$ be a group over a field $k_0$ with $G^{\circ}$ reductive and assume $\Char k_0 \neq 2,3$. Let $H\subset G$ be a (connected) reductive subgroup of maximal rank. Assume that $Z(H)$ is finite and diagonalizable.
     \begin{enumerate}
     
        \item Let $p\neq \Char k_0$ be a prime. If $Z(H)$ is a $p$-group and $H$ admits an anisotropic torsor  over some $p$-closed field $k_0\subset k$, then we have:
        $$\ed(G;p) \geq \ed(H;p).$$
        
        \item Assume $\Char k_0$ is good for $G$ (see Definition~\ref{def.char}). If $H$ admits an anisotropic torsor  over some field $k_0\subset k$, then we have:
        $$\ed(G) \geq \ed(H).$$

    \end{enumerate}
\end{corollary}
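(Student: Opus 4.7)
The strategy is to apply Theorem~\ref{thm.main} with the finite diagonalizable subgroup $A := Z(H) \subset G$. By hypothesis $A$ is finite and diagonalizable, and in case~(1) it is a $p$-group; in case~(2) the characteristic of $k_0$ is good for $G$. So the group-theoretic hypotheses of Theorem~\ref{thm.main} are met. The remaining tasks are: (a) to identify $C_G(A)$ with $H$, and (b) to verify the anisotropy hypothesis so that the theorem applies.

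For~(a), the key tool is Borel--de Siebenthal's theorem on reductive subgroups of maximal rank. I would choose a maximal torus $T \subset H$; by the maximal-rank hypothesis, $T$ is also maximal in $G$. Since $H$ is connected reductive, $Z(H) \subset T$ is cut out by the roots in $\Phi_H$, and a standard centralizer computation in a split reductive group identifies the root system of $C_G(Z(H))^\circ$ with respect to $T$ as $\{\alpha \in \Phi_G : \alpha|_{Z(H)} = 1\}$, while the component group $C_G(Z(H))/C_G(Z(H))^\circ$ is realized as a subgroup of $N_G(T)/T$ fixing $Z(H)$, modulo the Weyl group of $C_G(Z(H))^\circ$. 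Using $\Char k_0 \neq 2,3$ (and the fact that $Z(H)$ is diagonalizable), Borel--de Siebenthal's classification shows that both the root system and the component group coincide with those of $H$, so that $C_G(Z(H)) = H$.

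For~(b), once $C_G(A) = H$, the hypothesis that $H$ admits an anisotropic torsor over $k$ (over a $p$-closed $k$ in case~(1)) is exactly the anisotropy hypothesis required by Theorem~\ref{thm.main}. Applying that theorem yields $\ed(G;p) \geq \ed(C_G(A);p) = \ed(H;p)$ in case~(1) and $\ed(G) \geq \ed(C_G(A)) = \ed(H)$ in case~(2). If instead one only succeeds in showing $C_G(Z(H))^\circ = H$ (but the component group is nontrivial), the fallback is to note that the inclusion $H \hookrightarrow C_G(A)$ is an isomorphism on identity components, so Lemma~\ref{lem.aniso_image}(1) allows one to push the anisotropic $H$-torsor forward to a $C_G(A)$-torsor that is anisotropic, and then to argue separately that the component group of $C_G(A)$ does not decrease essential dimension.

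The main obstacle is step~(a): verifying via Borel--de Siebenthal that $C_G(Z(H))$ equals $H$ exactly, and not merely on identity components. This requires careful bookkeeping of the closed root subsystem $\Phi_H \subset \Phi_G$ together with the action of $N_G(T)/T$ on $Z(H)$, and this is where the good-characteristic hypothesis and the diagonalizability of $Z(H)$ are essential (they rule out accidental roots of $G$ trivial on $Z(H)$, and accidental Weyl-group elements fixing $Z(H)$, respectively).
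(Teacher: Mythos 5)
Your primary route — proving $C_G(Z(H)) = H$ on the nose — is not what Borel--de Siebenthal gives, and in general the equality fails. Borel--de Siebenthal identifies the \emph{identity component}: $C_G(Z(H))^{\circ} = C_{G^{\circ}}(Z(H))^{\circ} = H$. The component group $C_G(Z(H))/C_G(Z(H))^{\circ}$ can be nontrivial, and nothing in the classification of maximal-rank subsystems rules this out; your assertion that ``the root system and the component group coincide with those of $H$'' is unsupported. The paper does not make this claim — it works directly with the inclusion $H = C_G(Z(H))^{\circ} \subset C_G(Z(H))$.

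Your fallback is the right idea and is essentially the paper's argument, but it is left incomplete at the decisive step. You correctly note that $H \hookrightarrow C_G(Z(H))$ restricts to an isomorphism on identity components, so Lemma~\ref{lem.aniso_image}(1) transports anisotropic $H$-torsors to anisotropic $C_G(Z(H))$-torsors, satisfying the hypothesis of Theorem~\ref{thm.main}. But you then defer the remaining piece — that passing from $H$ to $C_G(Z(H))$ does not decrease essential dimension — without saying how. The paper handles this precisely via the dimension equality $\dim C_G(Z(H)) = \dim H$ (a consequence of $C_G(Z(H))^\circ = H$) together with \cite[Lemma 2.2]{brv-annals}, which yields $\ed(C_G(Z(H))) \geq \ed(H)$ and $\ed(C_G(Z(H));p) \geq \ed(H;p)$. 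Combining this with Theorem~\ref{thm.main} finishes the proof. Without this lemma (or an equivalent), your fallback does not close.
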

\begin{proof}
   By Borel-de Siebenthal's theorem, we have $$C_G(Z(H))^{\circ} = C_{G^{\circ}}(Z(H))^{\circ} =  H.$$ See \cite{borel1949sous} for the characteristic zero case and \cite{pepin2012subgroups} for the general case. Therefore $$\dim(C_G(Z(H))) = \dim H.$$ By \cite[Lemma 2.2]{brv-annals}, we get:
   $$\ed(C_G(Z(H))) \geq \ed(H), \ \ \ed(C_G(Z(H));p) \geq \ed(H;p).$$ 
   Now the corollary follows from Theorem~\ref{thm.main}.
\end{proof}

Corollary~\ref{cor.ofmainthm} allows us to prove the following parts of Theorem~\ref{thm.concrete_bounds}.
\begin{proposition}\label{prop.partofconcreteboundsthm1}
    Assume $\Char k_0 \neq 2,3$. 
    \begin{enumerate}
        \item $\ed(E_8;2) = \ed(\HSpin_{16};2)$
        \item $\ed(E_8;3) \geq 13$
        \item $\ed(E_7^{\ad};2) \geq 19$
    \end{enumerate}
\end{proposition}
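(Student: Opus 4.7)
The approach for all three parts is uniform. For the given group $G$ and prime $p$, I would locate a reductive subgroup $H\subset G$ of maximal rank whose centre $Z(H)$ is a finite diagonalizable $p$-group, verify that $H$ admits an anisotropic torsor over some $p$-closed extension of $k_0$, and then invoke Corollary~\ref{cor.ofmainthm}(1) to deduce
$$\ed(G;p)\;\geq\; \ed(H;p).$$
The required subgroup $H$ is read off the extended Dynkin diagram of $G$ via Borel--de Siebenthal \cite{borel1949sous}, and the quantitative bounds come from existing lower bounds on $\ed(H;p)$ (either from a separately established proposition of this paper --- e.g.\ the independently proved $\ed(\HSpin_{16};2)\geq 56$ --- or from the literature such as \cite{lotscher2013essential2,chernousov2013essential}).

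For (1), the natural choice is $H=\HSpin_{16}$, which embeds in $E_8$ as a maximal rank subgroup arising from the $D_8$ subsystem of the extended Dynkin diagram of $E_8$; its centre is $\mu_2$. Anisotropic $\HSpin_{16}$-torsors over suitable $2$-closed fields are standard to produce (e.g.\ via Pfister-form/spin group constructions), so Corollary~\ref{cor.ofmainthm}(1) gives the new inequality $\ed(E_8;2)\geq \ed(\HSpin_{16};2)$. To obtain the reverse bound, I would show that every $E_8$-torsor over a $2$-closed field $F$ admits a reduction of structure group to $\HSpin_{16}$, i.e.\ that $H^1(F,\HSpin_{16})\to H^1(F,E_8)$ is surjective for such $F$. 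This should follow by combining a Tits-style reduction of $E_8$-torsors to their $D_8$ maximal rank subgroup with a vanishing argument for the obstruction to lifting along the central isogeny $\HSpin_{16}\to \PSO_{16}^+$, using that $\Br(F)[2]=0$ when $F$ is $2$-closed.

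For (2) I would take the $E_6+A_2$ type subgroup $H=(E_6^{sc}\times \SL_3)/\mu_3\subset E_8$ (a diagonal $\mu_3$ quotient), and for (3) the $D_6+A_1$ type subgroup $H=(\Spin_{12}\times \SL_2)/\mu_2\subset E_7^{\ad}$ (or its half-spin variant compatible with the centre of $E_7^{\ad}$). In each case the centre is a $p$-group of the required kind, and anisotropic torsors exist by standard constructions from anisotropic quadratic forms, cubic division algebras, or versal torsors of the individual factors. Corollary~\ref{cor.ofmainthm}(1) then reduces the problem to a numerical lower bound on $\ed(H;p)$, which I would extract by combining the known bounds on $\ed(E_6^{sc};3)$ and $\ed(\Spin_{12};2)$ with the additivity of essential dimension at a prime up to the central $\mu_p$ quotient, in the style of \cite{lotscher2013essential2,chernousov2013essential}.

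The principal obstacle is the $\leq$ direction in (1): the inequality $\ed(\HSpin_{16};2)\geq \ed(E_8;2)$ is \emph{not} supplied by Corollary~\ref{cor.ofmainthm} and requires a genuine classification/lifting argument for $E_8$-torsors over $2$-closed fields. A secondary difficulty is confirming that the chosen $H$ in (2) and (3) admits the required anisotropic torsors and that the available lower bounds on its essential dimension at $p$ exactly match the numerical targets $13$ and $19$; in particular one must carefully account for the central $\mu_p$ quotient that identifies the two simple factors of $H$.
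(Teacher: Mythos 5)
Your overall strategy---locating a reductive maximal-rank subgroup $H\subset G$ with diagonalizable $p$-group centre, verifying that $H$ admits anisotropic torsors over a $p$-closed field, and then invoking Corollary~\ref{cor.ofmainthm} to get $\ed(G;p)\geq \ed(H;p)$---is exactly the paper's approach, and your choice of $H=\HSpin_{16}$ in Part~(1) is the one the paper uses. However, there are two substantive gaps.

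First, your argument for the reverse inequality $\ed(E_8;2)\leq \ed(\HSpin_{16};2)$ rests on the claim that $\Br(F)[2]=0$ for a $2$-closed field $F$. This is false: $\mathbb{R}$ is $2$-closed and $\Br(\mathbb{R})\cong \mathbb{Z}/2$. Being $p$-closed means the absolute Galois group is pro-$p$; it imposes no vanishing on $p$-torsion in Brauer groups. The correct input (which the paper cites) is a theorem of Garibaldi showing that every $E_8$-torsor admits a reduction of structure group to $\HSpin_{16}$ after an odd-degree extension, a fact proved by Rost-invariant and torsor-classification techniques rather than a Brauer-group vanishing argument.

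Second, in Parts~(2) and~(3) you choose the subsystem subgroups of type $E_6+A_2$ and $D_6+A_1$, i.e.\ $H=(E_6^{sc}\times\SL_3)/\mu_3$ and $H=(\Spin_{12}\times\SL_2)/\mu_2$. These are legitimate Borel--de Siebenthal subgroups, but the crucial numerical inputs $\ed(H;3)\geq 13$ and $\ed(H;2)\geq 19$ are not in the literature for these choices; you'd have to prove them from scratch, and it is far from clear they hold. The paper instead uses the type-$A$ subsystems: $H=\SL_9/\mu_3$ for $E_8$ at $p=3$, where $\ed(\SL_9/\mu_3;3)=13$ is a theorem of Baek--Merkurjev and Chernousov--Merkurjev, and $H=\SL_8/\mu_4$ for $E_7^{\ad}$ at $p=2$, where $\ed(\SL_8/\mu_4;2)=19$ is due to Chernousov--Merkurjev. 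The anisotropic torsors are then produced concretely from division algebras of the right index and period. So while your framework is right, the specific subgroups you pick leave the required quantitative bounds unproved.
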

\begin{proof}
     (1) There exists an embedding $\HSpin_{16}\subset E_8$ \cite[Example 4.3]{garibaldi2016E8} and an anisotropic $\HSpin_{16}$-torsor over a $2$-closed field $k_0\subset k$ by Lemma~\ref{lem.HSpin_admits_aniso}. Therefore by Corollary~\ref{cor.ofmainthm} $$\ed(E_8;2)\geq \ed(\HSpin_{16};2).$$ The inequality
    $$\ed(E_8;2)\leq \ed(\HSpin_{16};2)$$
    follows from the fact that any $E_8$-torsor admits reduction of structure to $\HSpin_{16}$ after an odd degree extension; see the first paragraph of the proof of \cite[Theorem 16.2]{garibaldi2016E8}.

    \smallskip
    (2) By \cite[Example 4.4]{garibaldi2016E8}, $\SL_9/\mu_3$ is a maximal rank subgroup of $E_8$ and we have $ \ed(\SL_9/\mu_3;3) = 13$ by results of S.\kern-.1em\ Baek-Merkurjev \cite{baek2012essential} and Chernosouv-Merkurjev \cite[Theorem 1.1]{chernousov2013essential}. Therefore Corollary~\ref{cor.ofmainthm} will give:
    $$\ed(E_8;3) \geq \ed(\SL_9/\mu_3;3) = 13,$$
    once we show $\SL_9/\mu_3$ admits anisotropic torsors over a $3$-closed field. This is simple to see using the theory of central simple algebras. The reader is referred to \cite{gille_szamuely_2006} for the relevant definitions.
    There exists a central division algebra $D$ over a field $k_0\subset k$ of period $3$ and index $9$ \cite[Lemma 4.8]{ofek2022reduction}. One can take $k$ to be $3$-closed because the index and period of $D$ are unaffected by passing to prime-to-$3$ extensions of $k$ \cite[Theorem 3.15]{saltman1999lectures}. The algebra $D$ corresponds to an anisotropic $\PGL_9$-torsor $\gamma \in H^1(k,\PGL_9)$ by \cite[Lemma 5.4]{ofek2022reduction}. Let $\pi_*: H^1(k,\SL_9/\mu_3) \to H^1(k,\PGL_9)$ be the pushforward map. There exists $\eta\in H^1(k,\SL_9/\mu_3)$ such that $\pi_*(\eta) = \gamma$  by \cite[Lemma 5.3]{ofek2022reduction} and $\eta$ is anisotropic by \cite[Corollary 3.4]{ofek2022reduction}. 

     \smallskip
    (3) There exists an embedding  $\SL_8/\mu_4 \subset E_7^{\ad}$ and $\rank(\SL_8/\mu_4) = \rank(E_7^{\ad})$; see \cite[Corollary 4.2]{cohen1987finite} or \cite[Table 4]{duan2013isomorphism}.  We have $\ed(\SL_8/\mu_4;2) = 19$ by \cite[Corollary 1.2]{chernousov2013essential}. To deduce the inequality $$\ed(E_7^{\ad};2)\geq  \ed(\SL_8/\mu_4;2) = 19$$ from Corollary~\ref{cor.lower_bound_corollary}, one needs to prove  the existence of an anisotropic $\SL_8/\mu_4$-torsor over some $2$-closed field. As in Part (2), this is done by first constructing a central division algebra $D$ of index $8$ and period $4$ over some $2$-closed field $k_0\subset k$, and then lifting the anisotropic $\PGL_8$-torsor corresponding to $D$ to an anisotropic $\SL_8/\mu_4$-torsor using  \cite[Lemma 5.3]{ofek2022reduction}. 
\end{proof}

\begin{remark}\label{rem.notions_of_isotropy}
    In \cite{ofek2022reduction}, a torsor $\gamma\in H^1(k,G)$ is defined to be isotropic if there exists a proper parabolic subgroup $P \subset G$ such that $\gamma$ lies in the image of the induced map $H^1(k,P)\to H^1(k,G)$. In the proof of Proposition~\ref{prop.partofconcreteboundsthm1} above, we used the fact that this alternative definition of isotropy is equivalent to our notion of isotropy when $G$ is a quasi-split semisimple group. To prove this fact, assume that $G$ is quasi-split and semisimple. Then $\gamma$ is isotropic in the sense of \cite{ofek2022reduction} if and only if ${}_\gamma G$ admits proper parabolic subgroups \cite[Lemma 2.2]{ofek2022reduction}. Since $Z({}_\gamma G)$ is finite, ${}_\gamma G$ admits proper parabolic subgroups if and only if it contains non-trivial split tori \cite[Corollary 4.17]{borel1965groupes}. Therefore $\gamma$ is isotropic in the sense of \cite{ofek2022reduction} if and only if it is isotropic according to our definition.
\end{remark}

\section{Abelian subgroups arising from gradings on the character lattice}\label{sect.gradings_on_char}

In this section we assume $G^{\circ}$ is a split reductive group. Let $T\subset G$ be a split maximal torus defined over $k_0$. We will give a systematic root-theoretic approach to choosing diagonalizable $p$-subgroups $A\subset T$ such that:
\begin{enumerate}
    \item The connected centralizer $C_G(A)^{\circ}$ is $T$.
    \item The group $A$ satisfies the conditions of Theorem~\ref{thm.main}(1).
\end{enumerate} 
These conditions allow us to bound $\ed(C_G(A);p)$ from below using \cite{lotscher2013essential2} and to upgrade our lower bound on $\ed(C_G(A);p)$ to a lower bound on $\ed(G;p)$ using Theorem~\ref{thm.main}. We will obtain a combinatorial formula for lower bounds on $\ed(G;p)$ in terms of gradings on the character lattice $X(T)$.

\begin{definition}
   Fix a prime $p\neq \Char k_0$ and an abstract abelian $p$-group $\cV$. A \emph{$\cV$-grading on $X(T)$} is a surjective homomorphism $$\eps: X(T)\to \cV.$$
\end{definition}
Let $X(\cV) = \Hom(\cV,\Gm)$ be the group of characters of $\cV$. The group $X(\cV)$ is diagonalizable of order $|\cV|$.  Any $\cV$-grading induces an embedding $\eps^* : X(\cV) \to T$ by the anti-equivalence between finitely generated abelian groups and groups of multiplicative type and finite type over $k$; see e.g. \cite[Corollary 8.3]{borel1965groupes}. We will denote the image of this embedding by \begin{equation}\label{e.def_Aeps}
A_\eps:= \eps^*(X(\cV)).
\end{equation}
The following lemma gives us a way to verify the condition $C_G(A_\eps)^{\circ}=T$ holds. 

\begin{lemma}\label{lem.cent_is_T}
    Let $\Phi \subset X(T)$ be the root system associated to $T\subset G$.
    We have $C_G(A_\eps)^{\circ}=T$ if and only if 
    \begin{equation}\label{e.31}
    \forall \alpha\in \Phi, \ \eps(\alpha)\neq 0.
    \end{equation}
\end{lemma}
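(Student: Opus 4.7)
The plan is to reduce the lemma to two standard structural facts. First I would identify the character group of $A_\eps$: under the antiequivalence between groups of multiplicative type of finite type over $k_0$ and finitely generated abelian groups, the embedding $\eps^*\colon X(\cV)\hookrightarrow T$ is dual to $\eps\colon X(T)\twoheadrightarrow \cV$, so $X(A_\eps) = \cV$ and the restriction map $X(T)\to X(A_\eps)$ is precisely $\eps$. Consequently, for any $\alpha \in X(T)$, one has $\alpha|_{A_\eps} = 1$ in $X(A_\eps)$ if and only if $\eps(\alpha) = 0$ in $\cV$.

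Next I would invoke the well-known root-theoretic description of centralizers of multiplicative type subgroups in a split reductive group: since $G^{\circ}$ is split reductive with split maximal torus $T$ and $A_\eps\subset T$ is of multiplicative type, the centralizer $C_{G^{\circ}}(A_\eps)$ is smooth, connected and reductive, contains $T$ as a maximal torus, and has root system
\[
\Phi(C_{G^{\circ}}(A_\eps),T) = \{\alpha\in\Phi : \alpha|_{A_\eps}=1\};
\]
see for example \cite[Expose XXII, Proposition 5.10.5]{SGA3}. Since $A_\eps\subset G^{\circ}$, the subgroup $C_G(A_\eps)\cap G^{\circ}$ equals $C_{G^{\circ}}(A_\eps)$, and its identity component is therefore $C_G(A_\eps)^{\circ} = C_{G^{\circ}}(A_\eps)$. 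In particular, $C_G(A_\eps)^{\circ} = T$ if and only if the root system displayed above is empty.

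Combining the two steps gives the lemma: $C_G(A_\eps)^{\circ} = T$ if and only if no $\alpha\in\Phi$ restricts trivially to $A_\eps$, i.e.\ if and only if $\eps(\alpha)\neq 0$ for every $\alpha\in\Phi$. The only step requiring attention is the root-theoretic description of $C_{G^{\circ}}(A_\eps)$; this is however classical, following from the decomposition $\mathrm{Lie}(G^{\circ}) = \mathfrak t \oplus \bigoplus_{\alpha\in\Phi}\mathfrak g_\alpha$ under the adjoint action of $T$ together with the smoothness of centralizers of multiplicative type subgroups in a smooth group scheme.
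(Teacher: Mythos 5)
Your approach is genuinely different from the paper's, and in fact shorter: you invoke the standard structural theorem about centralizers of diagonalizable subgroups of a split maximal torus, whereas the paper gives a self-contained argument via the Bruhat decomposition (reducing to showing that $C_G(A_\eps)$ is generated by $N_G(T)\cap C_G(A_\eps)$ and the root subgroups $U_\alpha$ with $\eps(\alpha)=0$). Both routes correctly reduce the lemma to the dual observation that $\alpha|_{A_\eps}=1$ in $X(A_\eps)=\cV$ if and only if $\eps(\alpha)=0$, and both read off the answer from the Lie-algebra decomposition. What your version buys is brevity by outsourcing the structure theory; what the paper's version buys is self-containedness and, implicitly, the explicit description of the non-identity components of $C_G(A_\eps)$ used later (\eqref{e.description_of_W(eps)}).

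There is, however, one incorrect claim in your argument that needs to be flagged: you assert that $C_{G^{\circ}}(A_\eps)$ is \emph{connected}. This is false in general for finite diagonalizable subgroups. For example, with $G=\PGL_2$, $T$ the diagonal torus and $A=\mu_2\subset T$, one has $C_{G}(A)=N_G(T)$, which is disconnected; more pointedly, the entire purpose of the surrounding section is that the component group $W(\eps)=C_G(A_\eps)/C_G(A_\eps)^{\circ}$ is typically nontrivial and comes from elements of $N_{G^{\circ}}(T)$, so the groups $C_{G^{\circ}}(A_\eps)$ arising in the paper's applications are disconnected. (Connectedness of centralizers of diagonalizable subgroups is special to subtori, or to the simply connected case by Steinberg's theorem.) Fortunately this does not invalidate your proof: the structural theorem you cite gives that the \emph{identity component} of $C_{G^{\circ}}(A_\eps)$ is smooth, reductive, contains $T$ as a maximal torus, and has Lie algebra $\mathfrak t\oplus\bigoplus_{\alpha|_{A_\eps}=1}\mathfrak g_\alpha$; and since $C_G(A_\eps)^{\circ}=\bigl(C_{G^{\circ}}(A_\eps)\bigr)^{\circ}$ (because $C_G(A_\eps)^{\circ}\subset G^{\circ}$), the conclusion that $C_G(A_\eps)^{\circ}=T$ if and only if no root restricts trivially to $A_\eps$ follows unchanged. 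So the argument is correct once the connectedness claim is deleted and the statement is made about identity components throughout.
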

\begin{proof}
   The inclusion $T\subset C_G(A_\eps)^{\circ}$ holds because $A_\eps\subset T$. Therefore the equality $C_G(A_\eps)^{\circ}=T$ is equivalent to:
   \begin{equation}\label{e.cent_is_T-goal}
       \dim T = \dim C_G(A_\eps).
   \end{equation}
   To check this equality of dimensions we can base change to $k_{0,\alg}$ to assume that $k_0$ is algebraically closed. Moreover, we may assume $G$ is connected by replacing $G$ with $G^{\circ}$ because this does not affect \eqref{e.cent_is_T-goal}.
   The proof is is a variation of \cite[Theorem 2.2]{humphreys1995conjugacy}. We include the details for completeness. Let $H\subset G$ be the subgroup generated by $N_G(T)\cap C_G(A_\eps)$ and all root subgroups $U_\alpha$ with $\eps(\alpha) =0$. It will suffice to prove $$H= C_G(A_\eps).$$
    To prove the inclusion $H\subset C_G(A_\eps)$ it suffices to prove $U_\alpha\subset C_G(A_\eps)$ for any $\alpha\in \Phi$ such that $$\eps(\alpha)=0.$$
    The group $A_\eps$ commutes with $U_\alpha$ if and only if
    $$\alpha(\eps^*(X(\cV))) =1.$$
    Now note that $\alpha \circ \eps^*$ is dual to the homomorphism $\eps \circ \alpha^* : X(\Gm) \to \cV$. Therefore: 
    \begin{equation}\label{e.41}
        \alpha(\eps^*(A_\eps)) =1\iff \eps(\alpha)= 0 \iff U_\alpha \subset C_G(A_\eps).
    \end{equation}

    \smallskip
    To prove $H\supset C_G(A_\eps)$, choose representatives $n_w\in N_{G}(T)$ for Weyl group elements $w\in W  = N_{G}(T)/T$. Choose an ordering of the roots $\Phi$ and let $U_+ ,U_-\subset G$ be the corresponding unipotent groups generated by the positive and negative root groups respectively. An element $x\in C_G(A_\eps)(k_0)$  has a unique Bruhat decomposition:
    $$x= un_w tv,$$
    where $v\in U_+,t\in T,u\in U_+ \cap n_w U_- n_w^{-1}$. The uniqueness of the decomposition forces $u,n_w,v\in C_G(A_\eps)$ because $A_\eps$ normalizes $U_+,N_{G}(T),U_-$. Therefore we can assume $x= u$ or $x=v$. The two cases are similar, we will focus on the case $x=u\in U_+$. Up to an ordering $\alpha_1,\dots,\alpha_d$ of the positive roots, $u$ can be written uniquely as a product:
    $$u = p_{\alpha_1}(t_1)\dots p_{\alpha_d}(t_d),$$
    where $p_\alpha$ is a parametrization of $U_\alpha$ and $t_i\in k_0$.  For any $v\in \cV$, $u\in C_G(A_\eps)$ implies:
    $$p_{\alpha_1}(t_1)\dots p_{\alpha_d}(t_d)= \eps^*(a) u \eps^*(a)^{-1} = p_{\alpha_1}(\alpha_1(\eps^*(a))t_1)\dots p_{\alpha_d}(\alpha_d(\eps^*(a))t_d).$$
    By uniqueness of the decomposition, we get for all $1\leq i\leq d$:
    $$\alpha_i(\eps^*(a))t_i = t_i.$$
    If $t_i\neq 0$, then $\alpha_i(\eps^*(A_\eps)) =1$ and \eqref{e.41} implies $\eps(\alpha) =0$. Therefore $p_{\alpha_i}(t_i)\in H$ for all $1\leq i\leq d$ and $u\in H$, as we wanted to show. 
\end{proof}

Let $\eps$ be a $\cV$-grading such that $\eps(\alpha)\neq 0$ for any $\alpha\in \Phi$.  By Lemma~\ref{lem.cent_is_T} we have $$C_G(A_\eps)^{\circ}=T.$$ We denote the component group by
$$W(\eps)  := C_G(A_\eps)/C_G(A_\eps)^\circ.$$
Since $T$ is commutative $W(\eps)$ acts naturally on $T= C_G(A_\eps)^{\circ}$ by conjugation. We consider the inclusion into the Weyl group:
\begin{equation}\label{e.emb_into_Weyl}
    W(\eps) \subset W = N_G(T)/T.
\end{equation}
Denote the natural action of $W$ on $\chi\in X(T)$ by $w.\chi$. The subgroup $W(\eps)\subset W$ can be explicitly described as the subgroup of $W$ preserving $\eps$: 
\begin{equation}\label{e.description_of_W(eps)}
    W(\eps) = \{ w\in W \mid  \eps(w.\chi) = \eps(\chi), \ \text{for any }\chi\in X(T)\}.
\end{equation}
Indeed, we have $C_G(A_\eps) \subset N_G(T)$ and therefore by definition:
$$C_G(A_\eps) = \{n\in N_G(T) \mid \Ad(n) \circ \eps^* = \eps^*\}.$$
Here $\Ad(n) \in \Aut(N_G(T))$ denotes conjugation by $n$. 
Since the functor $A \mapsto X(A)$ is an anti-equivalence between the category finitely generated abelian groups and the category of diagonalizable groups  \cite[Corollary 8.3]{borel1965groupes}, $\inn(n) \circ \eps^* = \eps^*$ if and only if $n T \in W$ preserves $\eps$.

To give a lower bound on the essential dimension of $C_G(A_\eps)$, it is useful to introduce the following definition:
\begin{definition}{\cite{macdonald}}
    Let $S$ be a finite group acting on a finitely generated abelian group $\cU$. A subset $\Gamma \subset \cU$ is called \emph{$p$-generating} if the subgroup generated by $\Gamma$ is of finite and coprime to $p$ index in $\cU$.
    Choose a Sylow $p$-subgroup $S_p\subset S$. The \emph{symmetric $p$-rank} of the $S$-action on $\cU$ is the following integer:
    $$\SymRank(S,\cU;p) = \min\{ | \Gamma| \mid  \Gamma\subset \cU \text{ is } S_p\text{-invariant and }p\text{-generating}\}. $$
\end{definition}
The notion of $p$-symmetric rank is related to essential dimension at $p$ by work of M.\kern-.1em\ Macdonald-R.\kern-.1em\ L\"{o}tscher-A.\kern-.1em\ Meyer-Reichstein.
\begin{proposition}{\cite[Theorem 1.10]{macdonald}}\label{prop.inequa_ed_C(eps)}
    Let $N$ be a smooth linear algebraic group over $k_0$ such that $N^{\circ} = T$ is a split torus. We have:
    $$\ed(N;p) \geq \SymRank(N/T,X(T);p) - \dim T.$$
    Here the $N/T$ action on $X(T)$ is induced from the conjugation action of $N/T$ on $T$.
\end{proposition}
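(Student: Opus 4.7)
The plan is to deduce this bound as an instance of the general minimum-faithful-representation lower bound for essential dimension at a prime, and then translate the resulting representation-theoretic quantity into the combinatorial invariant $\SymRank(N/T, X(T); p)$. The starting point, due to R.\kern-.1em\ Lötscher, A.\kern-.1em\ Meyer and Z.\kern-.1em\ Reichstein (see \cite{lotscher2013essential2}), asserts that for any smooth linear algebraic group $N$ and any generically free, $p$-faithful representation $V$ of $N$, one has $\ed(N;p) \geq \dim V - \dim N$; here a representation is $p$-faithful if the kernel of $N \to \GL(V)$ has order coprime to $p$. Since $\dim N = \dim T$ in our setting, it suffices to exhibit $p$-faithful generically free representations of $N$ of dimension exactly $\SymRank(N/T, X(T); p)$.

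Next I would translate to weight combinatorics. Any representation of $N$ restricts on $T$ to a sum of characters, producing a multiset $\Gamma \subset X(T)$; this multiset is $N/T$-invariant because $T$ is normal in $N$. Since $T$ is split, the restriction to $T$ is $p$-faithful if and only if $\Gamma$ $p$-generates $X(T)$ in the sense defined in the section. So the minimum dimension of a $p$-faithful representation of $N$ is bounded below by the minimum size of an $N/T$-invariant, $p$-generating $\Gamma$. After passing to the preimage $N_p \subset N$ of a Sylow $p$-subgroup $S_p \subset N/T$, which does not affect $\ed(-;p)$ since $[N:N_p]$ is coprime to $p$, I may replace $N/T$-invariance by mere $S_p$-invariance. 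By definition, the minimum size of such a $\Gamma$ is then $\SymRank(N/T, X(T); p)$, giving the desired inequality.

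The main obstacle is arranging genuine generic freeness of the representation built from a minimal $\Gamma$, since the weight data only guarantees $p$-faithfulness a priori. The standard device is to direct-sum with a fixed faithful representation of $N/T$ pulled back along $N \to N/T$: its $T$-weights are trivial, so $\Gamma$ does not change, while the enlarged representation acquires a generically free open locus for the $N$-action. All of this bookkeeping --- the Sylow reduction, the extra summand for generic freeness, and the passage through prime-to-$p$ field extensions implicit in $\ed(-;p)$ --- is precisely what is carried out in MacDonald's proof of \cite[Theorem 1.10]{macdonald}, and my proof would invoke that theorem directly.
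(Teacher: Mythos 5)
Your translation to weight combinatorics is the right idea, but your framing of the Lötscher--Meyer--Reichstein input is reversed, and the reversal matters. You state that ``for any generically free, $p$-faithful representation $V$ of $N$, one has $\ed(N;p) \geq \dim V - \dim N$''. This cannot hold as stated (take $V$ of huge dimension). What the cited result actually gives, for a smooth group $N$ with $N^{\circ}$ a torus, is a lower bound by the \emph{minimum} dimension of a $p$-faithful representation,
$$\ed(N;p) \;\geq\; \min\{\dim\rho : \rho \text{ is $p$-faithful}\} \;-\; \dim N,$$
and ``generically free'' plays no role on this side at all; that hypothesis appears only in the companion \emph{upper} bound. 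Because your quantifier is backwards, you conclude that ``it suffices to exhibit $p$-faithful generically free representations of $N$ of dimension exactly $\SymRank(N/T, X(T); p)$''. That is the wrong direction: constructing a small representation bounds the minimum from \emph{above}, which yields nothing here. What the argument actually needs --- and what you do state correctly in your middle paragraph --- is that \emph{every} $p$-faithful representation $\rho$ of $N$ has an $N/T$-invariant, $p$-generating weight set in $X(T)$, hence $\dim\rho \geq \SymRank(N/T,X(T);p)$; plugged into the displayed minimum, this single observation finishes the proof. In particular, no Sylow reduction on $N$ is needed for the lower bound (an $N/T$-invariant subset is automatically $S_p$-invariant), and your final paragraph about arranging generic freeness by adding a pulled-back summand is addressing the \emph{upper}-bound half of MacDonald's theorem, which is not being claimed by the proposition.

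For context, the paper gives no independent proof: it cites \cite[Theorem 1.10]{macdonald} directly and only remarks that the hypothesis there (that $N$ is the normalizer of a split maximal torus in a simple group) is unused in the lower-bound direction. Your closing deference to MacDonald's theorem therefore matches what the paper does, but the surrounding narrative as written contains a real logical error --- the ``for any $V$'' statement of the lower bound and the ``it suffices to exhibit'' conclusion drawn from it --- and would need to be corrected before it could stand on its own.
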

Note that while $N$ is assumed to be the normalizer of a split maximal torus in a simple algebraic group in \cite[Theorem 1.10]{macdonald}, this assumption is not used in the proof of the lower bound $\ed(N;p) \geq \SymRank(N/T,X(T);p)$. Setting $C_G(A_\eps) = N$ in Proposition~\ref{prop.inequa_ed_C(eps)}, we obtain:
\begin{corollary}\label{cor.inequa_ed_C(eps)}
    If $\eps$ satisfies \eqref{e.31}, then $$\ed(C_G(A_\eps);p) \geq \SymRank(W(\eps),X(T);p) - \dim T .$$
\end{corollary}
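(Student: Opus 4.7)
The plan is to deduce this corollary as a direct application of Proposition~\ref{prop.inequa_ed_C(eps)} with $N = C_G(A_\eps)$. The only real work is to verify that the hypotheses of the proposition are satisfied and that the data identify correctly.

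First, I would check smoothness: since $A_\eps$ is diagonalizable (hence of multiplicative type) and $G$ is a smooth algebraic group, the centralizer $C_G(A_\eps)$ is a smooth closed subgroup of $G$ by \cite[Expos\'e XI, Corollaire 5.3]{SGA3}. Next, the hypothesis \eqref{e.31} together with Lemma~\ref{lem.cent_is_T} yields
\[
    C_G(A_\eps)^{\circ} = T,
\]
and $T$ is a split torus by assumption. Thus $N = C_G(A_\eps)$ meets all the standing hypotheses of Proposition~\ref{prop.inequa_ed_C(eps)}.

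Second, I would match up the quotient and its action on the character lattice. By definition
\[
    N/N^{\circ} \;=\; C_G(A_\eps)/T \;=\; W(\eps),
\]
and the conjugation action of $N/N^{\circ}$ on $T$ coincides with the restriction to $W(\eps)$ of the Weyl group action under the inclusion \eqref{e.emb_into_Weyl}, so the induced action on $X(T)$ is the one used in the definition of $\SymRank(W(\eps),X(T);p)$.

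With these identifications in place, Proposition~\ref{prop.inequa_ed_C(eps)} applied to $N = C_G(A_\eps)$ delivers
\[
    \ed(C_G(A_\eps);p) \;\geq\; \SymRank(W(\eps),X(T);p) - \dim T,
\]
which is exactly the claim. Since the entire argument is a bookkeeping exercise once Lemma~\ref{lem.cent_is_T} is in hand, I do not anticipate any serious obstacle; the only subtle point is being careful that the finite group acting in the definition of symmetric $p$-rank is indeed $W(\eps)$ with its natural action on $X(T)$, which is precisely the content of \eqref{e.description_of_W(eps)}.
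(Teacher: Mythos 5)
Your argument is exactly the paper's: the corollary is obtained by setting $N = C_G(A_\eps)$ in Proposition~\ref{prop.inequa_ed_C(eps)}, with Lemma~\ref{lem.cent_is_T} supplying $C_G(A_\eps)^{\circ}=T$ and $W(\eps)=C_G(A_\eps)/T$ acting on $X(T)$ as in \eqref{e.description_of_W(eps)}. You have simply spelled out the routine verifications (smoothness of the centralizer, identification of the component group and its action) that the paper leaves implicit.
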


Next we formulate a condition on $\eps$ that ensures the hypotheses of Theorem~\ref{thm.main}(1) hold for $A_\eps$ and $G$. 
\begin{lemma}\label{lem.C(Aeps)_admits_aniso}
    Assume $\eps$ satisfies \eqref{e.31} and choose a Sylow $p$-subgroup $W(\eps)_p\subset W(\eps)$. If we have \begin{equation}\label{e.32}
        X(T)^{W(\eps)_p} = \{0\},
        \end{equation} 
        then  $C_G(A_\eps)$ admits anisotropic torsors over some $p$-closed overfield $k_0\subset k$. 
\end{lemma}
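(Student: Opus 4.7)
The plan is to exhibit an anisotropic torsor for $N := C_G(A_\eps)$ by constructing a versal $N$-torsor and passing to a $p$-closure of its base field. By \eqref{e.31} and Lemma~\ref{lem.cent_is_T}, one has $N^\circ = T$, so there is a short exact sequence $1 \to T \to N \overset{\pi}{\to} W(\eps) \to 1$. For any torsor $\gamma \in H^1(k, N)$, the pushforward $\pi_*(\gamma) \in H^1(k, W(\eps))$ is classified by a continuous homomorphism $\rho : \Gal(k) \to W(\eps)$, unique up to conjugacy. Since $T$ is split, the $\Gal(k)$-action on $X({}_\gamma T) = X(T)$ factors through $\rho$ composed with the natural $W(\eps)$-action on $X(T)$. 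The identity component of ${}_\gamma N$ is ${}_\gamma T$, and since $\Gm$ is connected, ${}_\gamma N$ contains a copy of $\Gm$ if and only if ${}_\gamma T$ does, which by duality is equivalent to $X(T)^{\im\rho} \neq 0$. Therefore it suffices to produce an $N$-torsor over some $p$-closed field for which $X(T)^{\im\rho} = 0$.

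I would construct such a torsor as the restriction of a versal one. Taking a generically free linear representation $V$ of $N$ (which exists since $N$ is a smooth linear algebraic group), set $K := k_0(V)^N$ and $E := k_0(V)^T$. Then $E/K$ is Galois with group $W(\eps)$, and the generic fiber of the quotient map $V \to V/N$ on the free locus gives a versal $N$-torsor $\eta \in H^1(K, N)$ whose pushforward $\pi_*(\eta)$ corresponds to the surjection $\rho_K : \Gal(K) \twoheadrightarrow W(\eps)$ associated to $E/K$. I then pass to a $p$-closure $k := K^{(p)}$, which is $p$-closed by construction.

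The main step is the Galois-theoretic identification of $\im(\rho_k)$ with a $p$-Sylow subgroup of $W(\eps)$, where $\rho_k := \rho_K|_{\Gal(k)}$. Realizing $k$ as the fixed field of a $p$-Sylow $P \subset \Gal(K_{\sep}/K)$ and noting that surjections of profinite groups carry Sylow subgroups to Sylow subgroups, the image $Q$ of $P$ under $\Gal(K_{\sep}/K) \twoheadrightarrow W(\eps)$ is a $p$-Sylow of $W(\eps)$; the standard restriction isomorphism $\Gal(E \cdot k/k) \cong \Gal(E/E \cap k)$ then identifies $\im(\rho_k)$ with $Q$ up to conjugacy. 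Since all $p$-Sylows of $W(\eps)$ are conjugate and conjugation in $W(\eps)$ permutes the fixed subspaces of subgroups acting on $X(T)$, the hypothesis $X(T)^{W(\eps)_p} = 0$ yields $X(T)^{\im\rho_k} = 0$. By the first paragraph, $\eta|_k$ is the desired anisotropic $N$-torsor over the $p$-closed field $k$. The only mildly technical part is this final Galois-theoretic identification, which should amount to routine book-keeping.
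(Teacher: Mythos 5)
Your proof is correct and follows a genuinely different route from the paper's. The paper's argument lifts the chosen $p$-Sylow $W(\eps)_p$ to a finite $p$-subgroup $P\subset C_G(A_\eps)$ via \cite[Lemma 5.3]{lotscher2013essential2}, constructs an explicit $p$-closed field $k$ whose absolute Galois group is free pro-$p$ of infinite rank, defines the torsor from a surjection $\Gal(k)\twoheadrightarrow P$, and verifies anisotropy by an averaging argument: if $f^*:X(T)\to\bZ$ were a $P$-equivariant surjection coming from an embedding $\Gm\to{}_\phi T$, then $\sum_{g\in P}g.\chi$ would be a nonzero element of $X(T)^P$. You instead reduce the anisotropy of an arbitrary $C_G(A_\eps)$-torsor to the vanishing of $X(T)^{\im\rho}$, where $\rho:\Gal(k)\to W(\eps)$ classifies the image of the torsor in $H^1(k,W(\eps))$, take a versal torsor over $K=k_0(V)^{C_G(A_\eps)}$, restrict to a $p$-closure $K^{(p)}$, and argue that the image of the Sylow pro-$p$ subgroup $\Gal(K^{(p)})\subset\Gal(K)$ under the surjection onto $W(\eps)$ associated to $k_0(V)^T/K$ is a $p$-Sylow of $W(\eps)$. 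The economy is that your version avoids the lifting lemma altogether; in exchange it leans on the rational-quotient description of the versal torsor, including that $k_0(V)^T/K$ is Galois with group $W(\eps)$ (which holds because $T$ is normal in $C_G(A_\eps)$ and a generically free $C_G(A_\eps)$-action on $V$ induces a generically free $W(\eps)$-action on $V/T$). Your anisotropy criterion is cleaner than the paper's pointwise contradiction, and the ``if and only if'' you invoke does hold, since $X(T)^{\Gal(k)}$ and $X_*(T)^{\Gal(k)}$ have the same rank over $\bQ$. Both arguments are sound.
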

\begin{proof}
    
Let $\pi: C_G(A_\eps) \to W(\eps) = C_G(A_\eps)/C_G(A_\eps)^\circ$ be the natural projection. There exists a finite  $p$-group $P\subset C_G(A_\eps)$ such that $\pi(P) = W(\eps)_p$ by \cite[Lemma 5.3]{lotscher2013essential2}. By assumption we have:
  \begin{equation}\label{e.40}
      X(T)^{P} = \{0\}.
  \end{equation}
    Replace $k_0$ by its algebraic closure $k_{0,\alg}$ to assume $P$ is a constant group without loss of generality.
  There exists a $p$-closed field $k$ containing $k_0$ and a surjection $\phi: \Gal(k) \to P$. For example, if $k_1 = k_{0,\alg}(t)$ and $k_1\subset k$ is a $p$-closure of $k_1$, then $\Gal(k)$ is a Sylow $p$-subgroup of $\Gal(k_1)$ \cite[Proposition
101.16]{ekm}. By \cite[Page 80, Example (b)]{serre1997galois} the cohomological dimension of $k$ is one. Note that $k_{0,\alg}(t)^*/k_{0,\alg}(t)^{*p}=H^1(k_1,\bF_p)$ embeds into $H^1(k,\bF_p)$ via the restriction map $H^1(k_1,\bF_p)\to H^1(k,\bF_p)$ because the degree of any finite subextension $k_1 \subset k_2 \subset k$ is coprime to $p$. Therefore $H^1(k,\bF_p)$ is infinite and $\Gal(k)$ is a free pro-$p$-group of infinite rank; see \cite[Page 30, Proposition 24]{serre1997galois}. In particular, there exists a surjection $\phi: \Gal(k) \to P$. This surjection defines a $C_G(A_\eps)$-torsor $[\phi]\in H^1(k,C_G(A_\eps))$ because $P$ is constant. The connected component of the twisted group ${}_\phi C_G(A_\eps)$ is the torus ${}_\phi T$ whose character $\Gal(k)$-module is $X(T)$ equipped with the $\Gal(k)$-action given by:
  $$\sigma. \chi = \phi(\sigma)(\chi).$$
  To show $[\phi]\in H^1(k,C_G(A_\eps))$ is anisotropic we assume there exists an embedding $f:\Gm\to {}_\phi T$ and reach a contradiction. The embedding $f$ corresponds by duality to a $P$-invariant surjection:
  $$f^*: X(T) \to X(\Gm).$$
  See \cite[Corollary 8.3]{borel1965groupes}.
  Let $\chi\in X(T)$ be such that $f^*(\chi)\neq 0$. By the $P$-invariance of $f^*$    :
  $$ f^*(\sum_{p\in P} p.\chi)= |P|f^*(\chi) \neq 0.$$
  In particular, $\sum_{p\in P} p.\chi \in X(T)^{P}$ is non-zero, contradicting \eqref{e.40}.
\end{proof}

Lemma~\ref{lem.C(Aeps)_admits_aniso} allows us to apply Corollary~\ref{cor.inequa_ed_C(eps)} and Theorem~\ref{thm.main} to get a lower bound on $\ed(G;p)$.
\begin{proposition}\label{prop.sym_rank_lower}
    Let $G,T$ and $\eps: X(T)\to \cV$ be as above. If $\eps$ satisfies \eqref{e.31} and \eqref{e.32}, then
        \begin{equation}\label{e.33}
        \ed(G;p) \geq \SymRank(W(\eps),X(T);p) - \dim T.
        \end{equation}
\end{proposition}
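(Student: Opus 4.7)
The plan is to assemble the proposition directly from the two preceding lemmas together with Theorem~\ref{thm.main}(1) and Corollary~\ref{cor.inequa_ed_C(eps)}. All the hard work has essentially been done: conditions \eqref{e.31} and \eqref{e.32} were specifically engineered to match the hypotheses needed to invoke these earlier results for the subgroup $A_\epsilon \subset T$ defined in \eqref{e.def_Aeps}.

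First I would observe that $A_\epsilon = \epsilon^*(X(\mathcal{V}))$ is a finite diagonalizable $p$-subgroup of $G$ (since $\mathcal{V}$ is a finite abelian $p$-group and $A_\epsilon$ is Cartier dual to $\mathcal{V}$), and that its centralizer has identity component equal to $T$ by Lemma~\ref{lem.cent_is_T}, using \eqref{e.31}. In particular, $C_G(A_\epsilon)^\circ = T$ is a torus, so $C_G(A_\epsilon)$ is smooth with $C_G(A_\epsilon)/C_G(A_\epsilon)^\circ = W(\epsilon)$. Then Lemma~\ref{lem.C(Aeps)_admits_aniso}, applied using hypothesis \eqref{e.32}, produces a $p$-closed overfield $k_0 \subset k$ over which $C_G(A_\epsilon)$ admits an anisotropic torsor.

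With these two facts in place, all hypotheses of Theorem~\ref{thm.main}(1) are satisfied for the prime $p \neq \Char k_0$ and the subgroup $A_\epsilon \subset G$ (note that $G^\circ$ is reductive by assumption, so the standing hypothesis on $G$ in Theorem~\ref{thm.main} holds). Applying Theorem~\ref{thm.main}(1) yields
\begin{equation*}
\ed(G;p) \geq \ed(C_G(A_\epsilon);p).
\end{equation*}
Finally, Corollary~\ref{cor.inequa_ed_C(eps)} (which itself is a consequence of Proposition~\ref{prop.inequa_ed_C(eps)} applied with $N = C_G(A_\epsilon)$, using $N^\circ = T$ and $N/T = W(\epsilon)$) gives
\begin{equation*}
\ed(C_G(A_\epsilon);p) \geq \SymRank(W(\epsilon),X(T);p) - \dim T.
\end{equation*}
Chaining the two inequalities produces \eqref{e.33}.

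Since every step is an invocation of a result already established in this section, there is no real obstacle. The only thing worth double-checking is the applicability of Theorem~\ref{thm.main}(1): one needs $p \neq \Char k_0$ (which is implicit, as otherwise the $\mathcal{V}$-grading notion at $p$ does not make sense in characteristic $p$ since $A_\epsilon$ would not be diagonalizable in the sense of Section~\ref{sect.notation}), and one needs $A_\epsilon$ to be a $p$-subgroup — this is automatic from $\mathcal{V}$ being a $p$-group via the anti-equivalence between finitely generated abelian groups and groups of multiplicative type.
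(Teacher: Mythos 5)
Your proposal is correct and matches the paper's proof exactly: both apply Lemma~\ref{lem.C(Aeps)_admits_aniso} (via \eqref{e.32}) and Theorem~\ref{thm.main}(1) to obtain $\ed(G;p)\geq \ed(C_G(A_\eps);p)$, and then invoke Corollary~\ref{cor.inequa_ed_C(eps)} (via \eqref{e.31}) to conclude. You have simply spelled out the verifications of the hypotheses in slightly more detail than the paper does.
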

\begin{proof}
Lemma~\ref{lem.C(Aeps)_admits_aniso} and Theorem~\ref{thm.main}(1) imply $\ed(G;p) \geq \ed( C_G(A_\eps);p)$. Therefore \eqref{e.33} follows from Corollary~\ref{cor.inequa_ed_C(eps)}.
\end{proof}

\section{Proof of Theorem 1.5: The overall strategy}\label{sect.setup_computations}

\subsection{Setup for the next sections}
In the next four sections we prove all parts of Theorem~\ref{thm.concrete_bounds} except for Parts (2) and (4) which are included in Proposition~\ref{prop.partofconcreteboundsthm1}. We also reprove Merkurjev's lower bound \eqref{e.lower_PGL_n}. In each section we focus on the essential dimension of a group $G$ at a prime $p$. We choose a grading $\eps$ on the character lattice $X(T)$ of a maximal split torus $T\subset G$. We check that $\eps$ satisfies \eqref{e.31} and $W(\eps)$ satisfies \eqref{e.32}. Applying Proposition~\ref{prop.sym_rank_lower} then gives:
$$\ed(G;p) \geq \SymRank(W(\eps),X(T);p) - \dim T.$$
Finally, we prove a lower bound on $\SymRank(W(\eps),X(T);p)$ using  Lemma~\ref{lem.symrank_fancy}. This last step involves proving lower bounds on the size of $W(\eps)$-orbits. We will use the following notation:
\begin{itemize}
    \item For any set $I$ and ring $R$, $R^I$ is the free $R$-module with standard basis $\{e_i | i\in I \}$. The coordinates of an element $x\in R^I$ in the standard basis are denoted $(x_i)_{i\in I}$.

    \item  The group of permutations of a set $X$ is denoted $\Sym(X)$. If a finite group $S$ acts on $X$, we denote the $S$-orbit of $x\in X$ by $S x$ and its stabilizer by $\Stab_{S}(x)$.
    
    \item If $G$ is an adjoint simple group of type $\Delta$, we identify the character lattice $X(T)$ with the root lattice $Q(\Delta)$. We use the description of the roots $\Phi\subset Q(\Delta)$ and the Weyl group $W$ action given in \cite{conway2013sphere}.

    \item We always assume $\Char k_0\neq p$.
 
\end{itemize}

We also note that for any subgroup $S \subset W(\eps)$ we have:
\begin{equation}\label{e.symrank_monotone}
    \SymRank(W(\eps),X(T);p) \geq \SymRank(S,X(T);p).
\end{equation}
To see this, choose Sylow $p$-subgroup $S_p, W(\eps)_p$ of $S$ and $W(\eps)$ such that $S_p \subset W(\eps)_p$. Then \eqref{e.symrank_monotone} follows from the fact that a $p$-generating $W(\eps)_p$-invariant subset $\Gamma\subset X(T)$ is $S_p$-invariant.

\subsection{A lower bound on the symmetric rank}
Let $p$ be a prime and $S$  a finite $p$-group. Assume $S$ acts on a finitely generated abelian group $\cU$. The next elementary lemma gives a useful lower bound on $\SymRank(S,\cU;p)$.

\begin{lemma}\label{lem.symrank_fancy}
    Let $\eps: \cU \to \bF_p^{d}$ be an $S$-invariant surjective homomorphism and assume we are given a direct sum decomposition $L_1\oplus L_2 = \bF_p^d$. Let $\eps_i : U\to L_i$ be the composition of $\eps$ with the projection $\pi_i: U \to L_i$ for $i=1,2$.
     If $|S u|\geq C_i$ whenever $\eps_i(u)\neq 0$, then:
    $$\SymRank(S,\cU;p) \geq C_1 \dim_{\bF_p} L_1 + C_2 \dim_{\bF_p} L_2.$$
\end{lemma}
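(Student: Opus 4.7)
The plan is to verify the bound directly from the definition of $\SymRank$. Let $\Gamma\subset\cU$ be an arbitrary $S$-invariant $p$-generating subset; I will show $|\Gamma|\geq C_1 d_1+C_2 d_2$, where $d_i=\dim_{\bF_p}L_i$. Since the cokernel of $\langle\Gamma\rangle\hookrightarrow\cU$ has order prime to $p$, pushing it forward via $\eps$ gives an $\bF_p$-vector space of order prime to $p$, which must therefore vanish. Hence $\eps(\Gamma)$ spans $\bF_p^d=L_1\oplus L_2$.

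Because $\eps$ is $S$-invariant, it collapses every $S$-orbit in $\Gamma$ to a single point. I would write $\Gamma=\bigsqcup_{j}Su_j$ and sort the orbit representatives into $J_1=\{j:\eps_1(u_j)\neq 0,\ \eps_2(u_j)=0\}$, its symmetric analogue $J_2$, and $J_{12}=\{j:\eps_1(u_j)\neq 0\text{ and }\eps_2(u_j)\neq 0\}$; orbits with $\eps(u_j)=0$ can be discarded. Setting $a=|J_1|$, $b=|J_2|$, $c=|J_{12}|$, the projection $\eps_i$ vanishes on all orbits outside $J_i\cup J_{12}$, yet its image on $\Gamma$ must still span $L_i$; this gives $a+c\geq d_1$ and $b+c\geq d_2$. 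The full family of $a+b+c$ nonzero images $v_j:=\eps(u_j)$ must span $\bF_p^d$, yielding the joint bound $a+b+c\geq d_1+d_2$.

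The orbit-size hypothesis yields $|Su_j|\geq C_i$ whenever $\eps_i(u_j)\neq 0$, so $|Su_j|\geq\max(C_1,C_2)$ for $j\in J_{12}$. Summing over orbits produces
$$|\Gamma|\geq aC_1+bC_2+c\max(C_1,C_2).$$
A short case analysis then concludes the proof. Assuming without loss of generality that $C_1\leq C_2$: if $a\geq d_1$, then directly $|\Gamma|\geq aC_1+(b+c)C_2\geq d_1 C_1+d_2 C_2$ using $b+c\geq d_2$; if $a<d_1$, the joint bound instead gives $b+c\geq d_1+d_2-a$, so
$$|\Gamma|\geq aC_1+(d_1+d_2-a)C_2=d_1 C_1+d_2 C_2+(d_1-a)(C_2-C_1)\geq d_1 C_1+d_2 C_2.$$
No step is genuinely difficult; the only real subtlety is noticing that both the per-component bounds $a+c\geq d_1$, $b+c\geq d_2$ and the joint spanning bound $a+b+c\geq d_1+d_2$ are needed simultaneously to correctly absorb the contribution of mixed orbits in $J_{12}$, which would otherwise be double-counted.
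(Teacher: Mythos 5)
Your proof is correct, and it follows a genuinely different route from the paper. Both arguments start the same way: $\eps(\Gamma)$ must span $\bF_p^d$ because $\Gamma$ is $p$-generating, and $S$-invariance of $\eps$ collapses each orbit to a point. After that the arguments diverge. The paper picks $d$ elements $\gamma_1,\dots,\gamma_d\in\Gamma$ whose images form a basis of $\bF_p^d$, invokes a small exchange fact about bases of a direct sum to relabel so that the first $d_1$ of them project to a basis of $L_1$ and the remaining $d_2$ to a basis of $L_2$, and reads off the bound from the disjointness of those $d$ orbits. You instead partition the orbits into three classes $J_1,J_2,J_{12}$ according to which projections $\eps_1,\eps_2$ are nonzero, extract the three counting constraints $a+c\ge d_1$, $b+c\ge d_2$, $a+b+c\ge d_1+d_2$, and close with a case analysis under $C_1\le C_2$. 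Your version avoids the (unwritten, though standard) exchange step in the paper and is more elementary, at the cost of being longer and requiring the joint spanning bound $a+b+c\ge d_1+d_2$ to control mixed orbits; the paper's approach is shorter once the relabeling fact is granted, since the chosen orbits are automatically disjoint and there is no need to compare $C_1$ and $C_2$. Both proofs are complete and correct.
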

\begin{proof}
Denote $d_1 = \dim_{\bF_p} L_1$ and $d_2 = \dim_{\bF_p} L_2$.
Let $\Gamma\subset \cU$ be an $S$-invariant, $p$-generating subset such that 
$$|\Gamma| = \SymRank(S,\cU;p).$$
The image $\eps(\Gamma)$ is $p$-generating in $\bF_p^d$ because $\eps$ is surjective. Because $\bF_p^d$ contains no proper subgroups of index coprime to $p$,  $\eps(\Gamma)$ has to generate $\bF_p^d$. Therefore $\Gamma$ contains elements $\gamma_1,\dots,\gamma_d$ such that $\eps(\gamma_1),\dots,\eps(\gamma_d)$ is a basis for $\bF_p^d$. We denote $b_i = \eps(\gamma_i)$ for all $1\leq i\leq d$.

An elementary linear algebra argument shows that, up to relabeling, we can assume that the projections $\pi_1(b_1),\dots,\pi_1(b_{d_1})$ to $L_1$ are a basis for $L_1$, and the projections $\pi_2(b_{d_1+1}),\dots,\pi_2(b_{d_2})$ are a basis for $L_2$. We have for all $1\leq i\leq d_1$:
$$\eps_1(\gamma_i)= \pi_1\eps(\gamma_i) = \pi_1(b_i)  \neq 0.$$
By the last assumption of the Lemma, this implies $|S \gamma_i| \geq C_1$.
Similarly, we have
$$|S \gamma_i| \geq C_2 \text{ for all }d_1 < i\leq d_2.$$
Since $\eps$ is $S$-invariant, the orbits $S\gamma_1,\dots, S\gamma_d$ are disjoint. We conclude:
\[\SymRank(S,\cU;p) = |\Gamma|  \geq |S\gamma_1|+\dots+|S\gamma_d| \geq C_1 d_1+C_2 d_2. \qedhere \]
\end{proof}

\section{Essential dimension of \texorpdfstring{$\PGL_{p^n}$}{PGL} at \texorpdfstring{$p$}{p}}\label{subsect.PGL_n}
Our first application of Proposition~\ref{prop.sym_rank_lower} is to give a new proof of Merkurjev's lower bound on $\ed(\PGL_{p^n};p)$ \eqref{e.lower_PGL_n}. By Proposition~\ref{prop.sym_rank_lower}, it suffices to prove:
\begin{proposition}\label{prop.rank_PGLn}
Let $p \neq \Char k_0$ be a prime and $n\geq 1$.
Let $T\subset \PGL_{p^n}$ be a maximal split torus with corresponding Weyl group $W$. There exists an $\bF_p^n$-grading $\eps:X(T) \to \bF_p^n$ satisfying \eqref{e.31} and \eqref{e.32} such that
    \begin{equation}\label{e.symrankPGL_n}
    \SymRank(W(\eps),X(T);p) \geq n p^n.
\end{equation}
\end{proposition}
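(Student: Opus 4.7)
The plan is to build $\eps$ from a natural identification of the coordinate set with the elementary abelian group $\bF_p^n$. Set $N = p^n$, fix a bijection $\{1,\dots,N\} \leftrightarrow \bF_p^n$, and identify
\[
X(T) = \Big\{ x = \sum_{v \in \bF_p^n} x_v e_v \in \bZ^{\bF_p^n} \; \Big| \; \sum_v x_v = 0 \Big\}.
\]
Define $\eps : X(T) \to \bF_p^n$ by $\eps(\sum x_v e_v) = \sum x_v v$. This is surjective since $\eps(e_w - e_0) = w$. The roots are $e_u - e_v$ for $u \neq v$, and $\eps(e_u - e_v) = u - v \neq 0$, so condition \eqref{e.31} holds.

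To identify $W(\eps)$, note that the Weyl group $W = \Sym(\bF_p^n)$ acts on $X(T)$ by permuting coordinates. Using the description \eqref{e.description_of_W(eps)}, a permutation $w$ lies in $W(\eps)$ iff $\eps(e_{w(u)} - e_{w(v)}) = \eps(e_u - e_v)$ for all $u, v$, i.e., $w(u) - w(v) = u - v$ in $\bF_p^n$. This forces $w$ to be a translation on $\bF_p^n$, so $W(\eps) \cong \bF_p^n$ acts on the index set by translations. In particular $W(\eps)$ is its own Sylow $p$-subgroup. An $x \in X(T)$ fixed by all translations is constant in its coordinates, and the sum-zero condition then forces $x = 0$; this gives \eqref{e.32}.

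The key step is a lower bound on $\SymRank(W(\eps), X(T); p)$ via Lemma~\ref{lem.symrank_fancy}, applied with $L_1 = \bF_p^n$ and $L_2 = 0$. For this I need: whenever $\eps(x) \neq 0$, the stabilizer of $x$ in $W(\eps)$ is trivial, giving $|W(\eps) x| = p^n$. I will prove the contrapositive: if $c \in \bF_p^n \setminus \{0\}$ stabilizes $x$, then $x$ is constant on cosets of the line $H = \langle c \rangle$, and
\[
\eps(x) \;=\; \sum_{w + H} x_w \Big(\sum_{v \in w + H} v\Big)
\]
where the outer sum is over cosets. For $p$ odd, $\sum_{v \in w + H} v = pw + \tfrac{p(p-1)}{2} c \equiv 0 \pmod p$, so $\eps(x) = 0$ immediately. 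For $p = 2$, the inner sum equals $c$, so $\eps(x) = c \cdot \sum_{w + H} x_w$, but constancy of $x$ on cosets of size $2$ and $\sum_v x_v = 0$ force $\sum_{w+H} x_w = 0$, giving $\eps(x) = 0$ as well. Lemma~\ref{lem.symrank_fancy} with $C_1 = p^n$ and $d_1 = n$ then yields the desired $\SymRank(W(\eps), X(T); p) \geq n p^n$.

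The main obstacle is the stabilizer-versus-grading computation above; the mild subtlety is handling $p = 2$ separately, where the naive mod-$p$ vanishing of $\sum_{h \in H} h$ fails and one must instead exploit the sum-zero condition defining $X(T)$.
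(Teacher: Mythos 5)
Your proof is correct and follows essentially the same route as the paper: same identification of $X(T)$ with the sum-zero sublattice of $\bZ^{\bF_p^n}$, same grading $\eps$, same identification of $W(\eps)$ with translations, and the same coset-sum computation to show $\Stab_{W(\eps)}(x)$ is trivial when $\eps(x)\neq 0$, including the separate treatment of $p=2$ via the sum-zero constraint. The paper packages the stabilizer computation as a slightly more general lemma (assuming only $4\mid \sum_v x_v$ in the $p=2$ case), but the argument is the same.
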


\subsection*{Definition of \texorpdfstring{$\eps$}{epsilon}}
We start by introducing the notation needed to define $\eps$. We set $\cV = \bF_p^n$ and identify the character lattice $X(T)$  of the diagonal torus $T\subset \PGL_{p^n}$ with the following sublattice of $\bZ^\cV$ as in \cite[Section 6]{conway2013sphere}:
    $$\bZ^\cV_0  = \{ x \in \bZ^\cV\mid \sum_{v\in \cV} x_v = 0\}$$
Define $\eps: \bZ^\cV_0 \to \cV$ to be the surjective homomorphism given by:
    $$\eps(x) = \sum_{v\in \cV} x_v v.$$
\subsection*{Verification of Condition \eqref{e.31}} 
     The roots of $\Phi\subset \bZ^\cV_0$ are the vectors $e_u - e_v\in \bZ^\cV_0$ with $u\neq v$.  For any such root we have:
    $$\eps(e_u - e_v) = u-v \neq 0.$$
    Therefore $\eps$ satisfies \eqref{e.31}.

\subsection*{Description of the Weyl group}
    The Weyl group  $W$ is the symmetric group $\Sym(\cV)$ acting on $\bZ^\cV_0$ by permuting the standard basis vectors. For any $u\in \cV$, let $\lambda_u\in \Sym(\cV)$ be the translation by $u$. The subgroup $W(\eps)\subset \Sym(\cV)$ of permutations preserving $\eps$ is the subgroup consisting of all such translations. Indeed, if $u\in \cV$ and $x \in \bZ^\cV_0$ then $\lambda_u\in W(\eps)$ because:
    $$\eps( \lambda_u(x) ) = \sum_{v\in \cV} x_{v} (u+v) = \sum_{v\in \cV} x_v v + (\sum_{v\in \cV} x_v) u = \sum_{v\in \cV} x_v v = \eps(x).$$
    Here the second to last equality follows from the fact that $\sum_{v\in \cV} x_v = 0$.
    Conversely, if $\sigma \in \Sym(\cV)$ preserves $\eps$, then for all $v\in \cV$:
    $$v = \eps(e_0 - e_v) =\eps(e_{\sigma(0)} - e_{\sigma(v)}) = \sigma(0)- \sigma(v).$$
    Therefore $\sigma(v) = v+ \sigma(0)$. That is, $\sigma = \lambda_{\sigma(0)}$. This proves:
    $$W(\eps)= \{ \lambda_u \mid u\in \cV\}.$$
    We will identify $W(\eps)$ with $\cV$ using the isomorphism $\cV\tilde{\to} W(\eps), u\mapsto \lambda_u$.
    
\subsection*{Verification of Condition \eqref{e.32}}
    Note that $\cV$ is its own Sylow $p$-subgroup. If an element $x \in \bZ^\cV_0$ is fixed by $\cV$, then for any $u,v\in \cV$ we have:
    \begin{equation}\label{e.PGL_notimport}
        x_{v} = x_{u+v}.
    \end{equation}
    Since $x\in \bZ^\cV_0$ this implies $$0 = \sum_{v\in \cV} x_v  = |\cV| x_0.$$
    Therefore $x_v =x_0 =0$ for all $v\in \cV$ by \eqref{e.PGL_notimport}. We conclude that $W(\eps)= \cV$ satisfies \eqref{e.32}.

 \subsection*{Proof of the lower bound on \texorpdfstring{$\SymRank(W(\eps),X(T);p)$}{the symmetric rank}}
    To finish the proof of \eqref{e.symrankPGL_n} we will show that for any element $x\in \bZ^\cV_0$ with $\eps(v)\neq 0$ we have
    \begin{equation}\label{e.PGL_notimport1}
        |\cV x|= p^n.
    \end{equation} 
    Once we do so, Lemma~\ref{lem.symrank_fancy} will give:
    $$\SymRank(W(\eps),X(T);p) \geq \dim_{\bF_p}\cV \cdot p^n = n p^n.$$
    By the stabilizer-orbit formula, \eqref{e.PGL_notimport1} is equivalent to
    $$\Stab_{\cV}(x) = \{0\}.$$
    Therefore we need to prove that $\Stab_{\cV}(x)=\{0\}$ for all $x\in \bZ^{\cV}_0$ with $\eps(x)\neq 0$.
    We prove a slightly more general lemma:
    
    \begin{lemma}\label{lem.PGLn_element}
        Let $x\in\bZ^\cV$. Assume either $p$ is odd, or $\sum_{v\in \cV} x_v$ is divisible by four. If $\Stab_{\cV}(x) \neq \{0\}$, then
        $$\sum_{v\in \cV} x_v v = 0.$$
        In particular, if $\eps(x)\neq 0$ for some $x\in \bZ^\cV_0$, then $\Stab_{\cV}(x) = \{0\}$.
    \end{lemma}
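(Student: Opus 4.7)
The plan is to let $u\in \Stab_{\cV}(x)\setminus\{0\}$, consider the cyclic subgroup $H = \langle u\rangle\subset \cV$, which has order $p^k$ for some $k\geq 1$, and exploit the fact that the condition $\lambda_u(x)=x$ forces $x$ to be constant on cosets of $H$. Partitioning $\cV$ as a disjoint union of cosets $v_1+H,\dots, v_m+H$ with $x$ taking the constant value $c_i\in \bZ$ on $v_i+H$, I rewrite
\[
\sum_{v\in \cV} x_v v \;=\; \sum_{i=1}^{m} c_i \sum_{w\in H}(v_i + w) \;=\; |H|\sum_{i=1}^{m} c_i v_i \;+\; \Big(\sum_{i=1}^{m}c_i\Big)\sum_{w\in H} w
\]
in $\cV = \bF_p^n$.

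The main computation will be to evaluate $\sum_{w\in H} w \in \bF_p^n$. If $p$ is odd, pairing each $w$ with $-w$ gives $\sum_{w\in H}w = 0$. If $p=2$ and $|H|\geq 4$, then for each coordinate, the coordinate projection $H\to \bF_2$ is either trivial or surjective with kernel of even order, so again the sum vanishes. The only exceptional case is $p=2$ and $|H|=2$. In the vanishing case, the formula reduces to $|H|\sum c_i v_i$, which is $0$ in $\bF_p^n$ because $p\mid |H|$, and we are done.

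In the exceptional case $H=\{0,u\}$ with $p=2$, a direct computation gives $\sum_{v} x_v v = (\sum_i c_i)\,u$, while on the other hand $\sum_{v\in \cV} x_v = 2\sum_i c_i$. The assumption that $\sum_v x_v\equiv 0 \pmod 4$ thus forces $\sum_i c_i$ to be even, and hence $(\sum_i c_i)\,u=0$ in $\bF_2^n$. This disposes of the exceptional case. I do not expect any real obstacle; the only subtle point is remembering that the hypothesis on divisibility by $4$ is precisely what is needed to handle the $|H|=2$ situation in characteristic $2$.

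The final assertion is then immediate: for $x\in \bZ^{\cV}_0$ we have $\sum_v x_v =0$, which is divisible by $4$, so the hypothesis holds for every prime $p$; contrapositive of the main statement gives $\Stab_{\cV}(x)=\{0\}$ whenever $\eps(x)\neq 0$.
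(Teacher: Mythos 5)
Your proof is correct and takes essentially the same approach as the paper: both arguments sum over cosets of $\langle u\rangle$, reduce to evaluating $\sum_{w\in\langle u\rangle}w$, and invoke the divisibility-by-four hypothesis in the $p=2$ case (the paper computes $\sum_{w\in\langle u\rangle}w = \binom{p}{2}u$ directly and notes $p\mid\binom{p}{2}$ for odd $p$, where you instead pair $w$ with $-w$, but these are cosmetically different renderings of the same computation). One small remark: since $\cV = \bF_p^n$ is an elementary abelian $p$-group, every nonzero $u$ has order exactly $p$, so $|H|=p$ always; your analysis of the case $p=2$, $|H|\geq 4$ is vacuous (though harmless).
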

    \begin{proof}
        Assume that $x$ is fixed by $0\neq u\in \cV$ and let $\cC \subset \cV$ be a set of coset representatives for the quotient $\cV/\langle u\rangle$. Since $x$ is fixed by $u$, we have for all $v\in \cV$:
$$x_{v} = x_{u+v}.$$
Using this, we compute:
 \begin{align*}
     \sum_{v\in \cV} x_v v  &=\sum_{c\in \cC} \sum_{i= 0, 1,\dots,p-1} x_{c + iu} (c+iu)\\
            &= \sum_{c\in \cC} x_{c} \sum_{i= 0, 1,\dots,p-1}  (c+iu)\\
            &= \sum_{c\in \cC} x_{c} (pc+\binom{p}{2} u) = (\sum_{c\in \cC} x_{c})\binom{p}{2}u.
 \end{align*}  
Here in the last equality we used the fact that $pc = 0$ in $\cV$. If $p$ is odd, then it divides $\binom{p}{2}$ and we get
$$\sum_{v\in \cV} x_v v =(\sum_{c\in \cC} x_{c})\binom{p}{2}u = 0. $$
Otherwise $p=2$ and $4\mid \sum_{v\in \cV} x_v$. We have:
$$\sum_{v\in \cV} x_v = \sum_{c\in \cC}  x_{c} + x_{c+u} = 2\sum_{c\in \cC} x_{c}.$$
Since $\sum_{v\in \cV} x_v $ is divisible by four, this implies $\sum_{c\in \cC} x_{c}$ is even.
Therefore in $\cV = \bF_2^{n}$:
\[\sum_{v\in \cV} x_v v = (\sum_{c\in C} x_{c})\binom{p}{2}u = 0.\qedhere\]
    \end{proof}


\section{Essential dimension of \texorpdfstring{$\PGO^{+}_{2n}$}{PGO} at \texorpdfstring{$2$}{2}}

  In this section we prove Parts (5) and (6) of Theorem~\ref{thm.concrete_bounds}.  By Proposition~\ref{prop.sym_rank_lower}, it suffices to prove:
\begin{proposition}\label{prop.rank_Dn}
Let $n =2^r m \geq 4$ be an integer with $r\geq 0$ and $m$ odd. Let $T\subset \PGO^{+}_{2n}$ be a split maximal torus with corresponding Weyl group $W$. There exists a grading $\eps:X(T) \to \bF_2^{r+m-1}$ satisfying \eqref{e.31} and \eqref{e.32} such that
    \begin{equation}\label{e.symrankD_n}
    \SymRank(W(\eps),X(T);2) \geq \begin{cases}
        (r+m-1)2^{r+1} & \text{ if } r\geq 1 \\
        4(m-1) & \text{ if } r=0.
    \end{cases}
\end{equation}
\end{proposition}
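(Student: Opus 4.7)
My plan is to construct an explicit $\bF_2$-grading $\eps \colon X(T) \twoheadrightarrow \bF_2^{r+m-1}$ and apply Lemma~\ref{lem.symrank_fancy}. I identify $X(T)$ with the $D_n$ root lattice $Q(D_n) = \{x \in \bZ^n : \sum x_i \in 2\bZ\}$ and $W = W(D_n)$ with the group of even-product signed permutations as in \cite[Ch.~4]{conway2013sphere}. Using $n = 2^r m$, I write $\{1, \ldots, n\} = \bF_2^r \times \{1, \ldots, m\}$, take $\cV = \bF_2^r \oplus \bF_2^{m-1}$, and define $\eps(e_{(v,k)}) = (v, w_k)$ for a chosen set $\{w_0 = 0, w_1, \ldots, w_{m-1}\} \subset \bF_2^{m-1}$ that spans $\bF_2^{m-1}$ and has trivial translation stabilizer in $\bF_2^{m-1}$ (achievable for every odd $m \ge 1$; e.g.\ take $w_1, \ldots, w_{m-1}$ to be a basis when $m \ge 3$, and note that the case $m = 1$ is vacuous).

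Next I verify \eqref{e.31}, \eqref{e.32}, and compute $W(\eps)$. The labelling $\pi(v,k) := (v, w_k)$ is injective, so $\eps(\pm e_i \pm e_j) = \pi(i) + \pi(j) \ne 0$ for $i \ne j$, giving \eqref{e.31}. The key observation is that a permutation $\sigma \in S_n$ preserves $\eps|_{Q(D_n)}$ iff the ``defect'' $i \mapsto \pi(\sigma(i)) - \pi(i)$ is constant in $i$: componentwise, the defect viewed as a vector in $\bF_2^n$ must be $\bF_2$-orthogonal to $Q(D_n) \otimes \bF_2 = \ker(\mathbf{1}^T)$, hence lie in $\langle \mathbf{1}\rangle$. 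Thus $\sigma$ corresponds to a translation $u \in \cV$ with $\pi \circ \sigma = \pi + u$, and the allowed $u$'s form $\Stab_\cV(\im \pi)$, which by the design of $\{w_k\}$ equals $U := \bF_2^r \oplus 0 \cong \bF_2^r$, acting on indices by $(v, k) \mapsto (v+u, k)$. Even sign changes are automatically in $W(\eps)$, so $W(\eps) = U \ltimes (\{\pm 1\}^n)_{\mathrm{even}}$; the presence of the even sign changes forces \eqref{e.32}.

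The orbit analysis is then direct. A within-block two-support element $x = e_{(v,k)} + e_{(v',k)}$ with $v \ne v'$ has $U$-stabilizer $\{0, v+v'\}$, so its $U$-orbit has size $2^{r-1}$; combined with the $4$ sign choices this gives a $W(\eps)$-orbit of exactly $2^{r+1}$ elements. A cross-block two-support element has $U$-stabilizer $\{0\}$, hence a $W(\eps)$-orbit of size $2^{r+2}$; and elements with $\ge 3$ nonzero coordinates have even larger orbits. Applying Lemma~\ref{lem.symrank_fancy} with $L_1 = \cV$, $L_2 = 0$ and minimal orbit $2^{r+1}$ yields the $r \ge 1$ bound $\SymRank \ge (r+m-1)\,2^{r+1}$; for $r = 0$ within-block pairs do not exist so the minimal orbit is $4$, producing $\SymRank \ge 4(m-1)$.

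The main obstacle is the computation of $W(\eps)$: one must notice that preserving $\eps$ on $Q(D_n)$ (rather than on all of $\bZ^n$) is a strictly weaker condition, which is exactly what allows $W(\eps)$ to contain the translation stabilizer $U \cong \bF_2^r$ acting by simultaneous translation across the blocks. Without this extra $U$-factor, the within-block two-support orbit caps at $4$, which falls short of the required $2^{r+1}$ as soon as $r \ge 2$; designing $\pi$ so that its image has the full $\bF_2^r$ translation stabilizer (and, in particular, choosing the $w_k$'s with trivial stabilizer in $\bF_2^{m-1}$) is the combinatorial crux of the argument.
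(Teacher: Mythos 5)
Your proposal is correct and reaches the result by the same overall strategy as the paper — an equivalent choice of grading $\eps$ on $Q(D_n)$, identification of $W(\eps)$ with $\bF_2^r\ltimes(\bF_2^K)_0$, and an orbit--stabilizer bound fed into Lemma~\ref{lem.symrank_fancy} — but the step where you pin down the permutation part of $W(\eps)$ is argued differently and more efficiently. The paper's Lemma~\ref{lem.WepsDn} shows that an $\eps$-preserving permutation must be a translation by a fairly hands-on calculation with the elements $e_{(v,i)}+e_{(v',j)}\in Q(D_n)$, invoking $m\ge 3$ odd to rule out nontrivial block permutations. Your observation that $\sigma$ preserves $\eps|_{Q(D_n)}$ exactly when the defect $\pi\circ\sigma-\pi$ is constant — because $Q(D_n)$ reduces mod~$2$ to the hyperplane $\ker(\mathbf 1^{T})\subset\bF_2^{n}$ whose annihilator is $\langle\mathbf 1\rangle$ — combined with the trivial translation stabilizer of $\{w_0,\dots,w_{m-1}\}$ delivers the same conclusion in one stroke and makes transparent exactly why passing from $\bZ^n$ to the index-two sublattice $Q(D_n)$ buys the extra $\bF_2^r$ worth of translations. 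The only place you wave a hand is the assertion that every $x$ with $|Y_x|\ge 3$ and $\eps(x)\ne 0$ has $W(\eps)$-orbit of size at least $2^{r+1}$; this is true, but it does require the stabilizer-of-subset bound $|\Stab_{\cV}(Y_x)|\le|Y_x|$ (the paper's Lemma~\ref{lem.elementary_act_on_subsets}) together with a short case split on whether $Y_x=K$, which the paper carries out explicitly to obtain $|\Stab_{W(\eps)}(x)|\le 2^{n-2}$.
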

\subsection*{Definition of \texorpdfstring{$\eps$}{epsilon}}

We set notation in order to describe $X(T)$ and define $\eps$. Define $\cV=\bF_2^{r}$, $K = \cV \times \{1,\dots,m\}$ and
$$ (\bF_2^{m})_0 = \{ x \in \bF_2^m \mid \sum_{i=1,\dots,m} x_i =0\}.$$
Since $\PGO^+_{2n}$ is the split adjoint group of type $D_n$, we can choose a split maximal torus $T\subset \PGO^+_{2n}$ such that the character lattice $X(T)=Q(D_n)$ is identified with the following sublattice of $\bZ^{K}$ \cite[Section 7.1]{conway2013sphere}:
$$Q(D_n) = \{ x\in \bZ^K \mid  \sum_{k\in K} x_k \text{ is even}\}.$$
Pick a basis $\{b_1,\dots,b_m\}$ for $\bF_2^{m}$ and let $\eps: Q(D_{n}) \to \cV\oplus (\bF_2^{m})_0$ be the grading given by $$\eps(x) = \sum_{(v,i)\in K} x_{v,i}(v + b_i)\in \cV\oplus (\bF_2^{m})_0.$$ 
Note that $\eps$ is surjective because for any $i\neq j$ and $v\in \cV$ we have: $$\eps(e_{0,i}+e_{0,j}) =b_i+b_j, \ \ \eps(e_{0,1}+e_{v,1}) = (v,0).$$

\subsection*{Verification of Condition \eqref{e.31}}

    The roots $\Phi \subset Q(D_{n})$ are given by:
$$\Phi = \{ \pm e_k \pm e_s \mid k,s\in K, \  k\neq s \}. $$
   For any root $\alpha = \pm e_{(v,i)} \pm e_{(v',j)}$ we have:
    $$\eps(\alpha) =  v+v'+ b_i+b_j \neq 0.$$
Therefore $\eps$ satisfies \eqref{e.31}.

\subsection*{Description of the Weyl group}
The Weyl group of $G$ is $\Sym(K) \ltimes (\bF_2^{K})_0$, where $\Sym(K)$ is the group of permutations of $K$ and $(\bF_2^{K})_0$ is the group:
$$(\bF_2^{K})_0=\{ \delta\in \bF_2^{K} \mid \sum_k \delta_k = 0\}.$$
The action of $(\sigma,\delta)\in \Sym(K) \ltimes (\bF_2^{K})_0$ on $Q(D_n)$ is the restriction of the action on $\bZ^K$ defined by:
\begin{equation}\label{e.act_W(eps)_Dn}
(\sigma,\delta)e_k = (-1)^{\delta_k} e_{\sigma(k)}
\end{equation}
for all $k\in K$.
\begin{lemma}\label{lem.WepsDn}
    Identify $\cV$ with the subgroup of $\Sym(K)$ consisting of all translations $\lambda_u: K\to K$ by an element $u\in \cV$:
$$\lambda_u(v,i) = (u+v,i).$$
    We have $W(\eps) = \cV \ltimes (\bF_2^{K})_0$.
\end{lemma}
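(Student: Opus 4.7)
The plan is to decompose the computation along the semidirect product structure $W = \Sym(K) \ltimes (\bF_2^K)_0$ using the description of $W(\eps)$ given in \eqref{e.description_of_W(eps)}. First I would observe that every element of the normal subgroup $(\bF_2^K)_0$ automatically lies in $W(\eps)$: by \eqref{e.act_W(eps)_Dn} such an element only changes signs of coordinates of $x\in Q(D_n)$, but the grading $\eps$ takes values in the $\bF_2$-vector space $\cV \oplus (\bF_2^m)_0$ where all signs are trivial. The same observation shows that preservation of $\eps$ by a general element $(\sigma,\delta) \in W$ depends only on $\sigma$. It therefore suffices to identify the permutations $\sigma \in \Sym(K)$ that preserve $\eps$, and to show these are exactly the translations $\lambda_u$.

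For the inclusion $\cV \subseteq W(\eps)$, a direct computation for $u \in \cV$ and $x \in Q(D_n)$ yields
$$\eps(\lambda_u \cdot x) = \eps(x) + \Bigl(\sum_{k\in K} x_k\Bigr) u = \eps(x),$$
since $\sum_k x_k$ is even for $x\in Q(D_n)$ and $2u = 0$ in $\cV$. For the reverse inclusion, suppose $\sigma \in \Sym(K)$ preserves $\eps$ and write $\sigma(v,i) = (\alpha(v,i), \beta(v,i))$. Applying the identity $\eps(\sigma \cdot x) = \eps(x)$ to $x = e_{k_1} + e_{k_2} \in Q(D_n)$ for all pairs $k_1 = (v_1,i_1) \neq (v_2,i_2) = k_2$ gives, by directness of $\cV \oplus (\bF_2^m)_0$, the two equations
$$v_1 + v_2 = \alpha(v_1,i_1)+\alpha(v_2,i_2), \qquad b_{i_1}+b_{i_2} = b_{\beta(v_1,i_1)}+b_{\beta(v_2,i_2)}.$$

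From the second equation and linear independence of $b_1,\dots,b_m$ in $\bF_2^m$, taking $i_1 = i_2$ with $v_1 \neq v_2$ shows that $\beta(v,i)$ depends only on $i$, and the remaining combinatorial condition $\{i_1,i_2\} = \{\beta(i_1),\beta(i_2)\}$ for all distinct $i_1,i_2$ forces $\beta$ to be a product of disjoint transpositions. For $m \geq 3$, testing any non-trivial transposition against a third index rules it out, so $\beta$ is the identity (the case $m = 1$ is vacuous, and $m = 2$ is excluded by the hypothesis that $m$ is odd). Once $\beta = \id$, the first equation with $i_1 = i_2$ shows that $\alpha(v,i) + v$ is independent of $v$, and the case $v_1 = v_2$, $i_1 \neq i_2$ then shows it is also independent of $i$, so $\sigma = \lambda_u$ for a constant $u \in \cV$. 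The main obstacle is the combinatorial step identifying $\beta$ as the identity, which crucially uses that $m$ is odd in order to rule out a global involution when $m = 2$.
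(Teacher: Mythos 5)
Your proof is correct and follows essentially the same route as the paper's: reduce to permutations $\sigma \in \Sym(K)$ since $(\bF_2^K)_0$ clearly lies in $W(\eps)$, verify $\cV \subseteq W(\eps)$ by the parity argument, and then use the equations $\eps(\sigma(e_{k_1}+e_{k_2})) = \eps(e_{k_1}+e_{k_2})$ together with linear independence of the $b_i$ and oddness of $m$ to pin down $\sigma$ as a translation. The only organizational difference is that you first establish that the second component $\beta(v,i)$ is independent of $v$ before showing it is the identity, whereas the paper directly derives $g(v,i)=i$ for each fixed $(v,i)$ by testing against two distinct indices $j, j' \neq i$ (both of which exist because $m$ odd and $m \geq 2$ force $m \geq 3$); the content of the two arguments is the same. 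The intermediate remark that the condition "forces $\beta$ to be a product of disjoint transpositions" is a slight detour — as you note, for $m \geq 3$ it in fact forces $\beta = \id$ outright — but the argument you give to close the case $m \geq 3$ is sound.
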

\begin{proof}
The inclusion $(\bF_2^K)_0 \subset W(\eps)$ is easy to see. The inclusion $\cV\subset W(\eps)$ follows from the computation:
\begin{align*}
    \eps(\lambda_u(x)) &= \sum x_{v,i}(u+v + b_i)  \\
        &= \sum x_{v,i}(v+ b_i) +\sum x_{v,i}u \\
        &= \eps(x) + \sum x_{v,i} u = \eps(v).
\end{align*}
Here in the last equality we used the fact that $\sum x_{v,i}$ is even.
To verify the inclusion $W(\eps)\subset \cV\ltimes (\bF_2^{K})_0$, we assume $w=(\sigma,\delta) \in W(\eps)$  and show $\sigma = \lambda_u$ for some $u\in \cV$. Since $(\bF_2^K)_0\subset W(\eps)$, we can assume $w = \sigma$ without loss of generality. We start by denoting for any $(v,i)\in K$:
$$\sigma(v,i) = (f(v,i),g(v,i))\in K.$$
For any $(v,i),(v',j)\in K$ we have:
\begin{equation}\label{e.D_n1}
    f(v,i) + f(v',j) + b_{g(v,i)} + b_{g(v',j)} = \eps(\sigma(e_{v,i} + e_{v',j})) = \eps(e_{v,i} + e_{v',j}) = v+v' + b_i + b_j.
\end{equation}
Assume $i\neq j$. Then if $g(v,i) \neq i$, \eqref{e.D_n1} implies $$g(v,i) = j.$$ Since $m$ is odd if $i\neq j$ then $m\geq 3$. Therefore repeating the same argument with $1\leq j'\leq m$ different from $i$ and $j$ gives the contradiction $g(v,i) = j'$. Therefore we see that $$g(v,i) = i$$ for all $(v,i)\in K$. Setting $v'= 0, j =1$ in \eqref{e.D_n1}, we get for all $(v,i)\in K$:
$$f(v,i) - v = f(0,1).$$
Therefore $\sigma = \lambda_{f(0,1)}$ is the translation by $f(0,1)$. This finishes the proof.
\end{proof}

\subsection*{Verification of Condition \eqref{e.32}}
   Let $x \in Q(D_n)^{W(\eps)}$ be fixed by $W(\eps)$ and let $k\in K$ be some element. Since $ |K|= n \geq 3$ there exists $\delta\in (\bF_2^K)_0$ such that $\delta_k = 1$. The equation
   $$\delta x = x$$
   implies $x_k = 0$ because $\delta$ flips the sign of the $k$-th coordinate. Since $k\in K$ was arbitrary, we conclude that $x = 0$.
   Therefore $$Q(D_n)^{W(\eps)}=\{0\}.$$ Since $W(\eps)$ is its own Sylow $2$-subgroup, \eqref{e.32} follows.

\subsection*{Proof of the lower bound on \texorpdfstring{$\SymRank(W(\eps),X(T);2)$}{the symmetric rank}}
For the proof we will need the following elementary lemma.
\begin{lemma}\label{lem.elementary_act_on_subsets}
    Let $Y \subset K$ be a non-empty finite subset of $K = \cV\times \{1,\dots,m\}$. For any $u\in \cV$ denote:
    $$u+Y = \{ (u+v,i) \mid (v,i)\in Y\}.$$
    There are at most ${|Y|}$ elements $u\in \cV$ such that:
    \begin{equation}\label{e.calc-1}
        u+Y= Y.
    \end{equation}
\end{lemma}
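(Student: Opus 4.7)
The plan is to interpret the hypothesis group-theoretically: since $\cV$ acts on subsets of $K$ by translation in the first coordinate, the set
\[
S := \{u \in \cV \mid u+Y = Y\}
\]
is precisely the stabilizer of $Y$ under this action, and in particular a subgroup of $\cV$. The bound $|S| \leq |Y|$ then amounts to a standard orbit-stabilizer style observation: the stabilizer of a finite set cannot be larger than the set itself.

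To make this precise I would fix an arbitrary element $(v_0, i_0) \in Y$ (which exists because $Y$ is nonempty) and consider the evaluation map
\[
\mathrm{ev}: S \to Y,\quad u \mapsto (u+v_0, i_0).
\]
The map lands in $Y$ because $u + Y = Y$ for $u \in S$, and it is injective because the first coordinate of $\mathrm{ev}(u)$ determines $u$ uniquely (subtract $v_0$). Injectivity of $\mathrm{ev}$ immediately gives $|S| \leq |Y|$, which is the claim.

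There is no real obstacle here; the only subtlety worth flagging is that one should not confuse the stabilizer of $Y$ as a set with the pointwise stabilizer. The argument above uses only the setwise stabilizer, which is what the lemma statement concerns.
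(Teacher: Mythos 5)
Your proof is correct and is essentially the same as the paper's: the paper first normalizes $Y$ so that some $(0,i)\in Y$ and then observes that $u+Y=Y$ forces $(u,i)\in Y$, which is exactly your injective evaluation map $u\mapsto (u+v_0,i_0)$ without the preliminary translation. The observation that $S$ is a subgroup is true but not needed.
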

\begin{proof}
Replace $Y$ with $u+Y$ for some $u\in \cV$ such that $(-u,i)\in Y$ to assume that $(0,i)\in Y$ without loss of generality. Then $u+Y =Y$ implies $(u,i)\in Y$ and so there are at most $|Y|$ elements satisfying \eqref{e.calc-1}. 
\end{proof}

\begin{proof}[Proof of \eqref{e.symrankD_n}]
    Note that $W(\eps)$ is its own Sylow $2$-subgroup. By Lemma~\ref{lem.symrank_fancy}, it suffices to prove that for any $x\in Q(D_n)$ with $\eps(x)\neq 0$, we have $$|W(\eps) x| \geq \begin{cases}
        2^{r+1} & \text{ if }r\geq 1 \\
        4 & \text{ if }r=0.
    \end{cases}$$
     Let $x \in Q(D_{n})$ be such that $\eps(x) \neq 0$. We define $$Y_x = \{ (v,i)\in K \mid x_{a,i} \neq 0 \},$$
     and split into cases.

  \smallskip
  \textbf{Assume $r=0$.} We have $|Y_x|\geq 2$. Otherwise, $x = x_k e_k$ for some $k\in K, x_k\in 2\bZ$ and so $\eps(v) = 0$. Write:
  $$x = x_{k_1}e_{k_1} + x_{k_2}e_{k_2} + \sum_{k\neq k_1,k_2} x_k e_k,$$
  for some $x_{k_1},x_{k_2}\neq 0$.
  Since $|K| = n \geq 3$, for any $(\delta_1,\delta_2)\in \bF_2^2$ there exists $\delta\in (\bF_2^K)_0$ such that $\delta_{k_1}=\delta_{k_2}=1$. Therefore the orbit $W(\eps) x$ contains elements of the form:
  $$ (-1)^{\delta_1} x_{k_1}e_{k_1} + (-1)^{\delta_2} x_{k_2} e_{k_2} + \sum_{k\neq k_1,k_2} x'_k e_k,$$
 for some $x'_k\in \bZ$. We conclude that $|W(\eps)x| \geq  4$.

  \smallskip
  \textbf{Assume $r\geq 1$.} The stabilizer-orbit formula gives:
        $$|W(\eps)x| = |W(\eps)|/|\Stab_{W(\eps)}(x)| = 2^{r+n -1}/|\Stab_{W(\eps)}(x)|.$$
    Therefore our goal is to show:
        \begin{equation}\label{e.calc2}
            |\Stab_{W(\eps)}(x)| \leq 2^{n -2}
        \end{equation}
    If $w=(u,\delta)\in W(\eps)$  fixes $x$, then:
    $$u+ Y_x = Y_x.$$
    Therefore the projection onto the first component gives a short exact sequence:
    $$0\to \Stab_{(\bF_2^K)_0}(x) \to \Stab_{W(\eps)}(x) \to \Stab_{\cV}(Y_x).$$
    Here $\Stab_{\cV}(Y_x)$ is the stabilizer of $Y_x$ with respect to the action of $\cV$ on subsets of $K$ by translation. Thus we get an inequality:
    \begin{equation}\label{e.calc3}
        |\Stab_{W(\eps)}(x) | \leq |\Stab_{(\bF_2^K)_0}(x)| |\Stab_{\cV}(Y_x)|.
    \end{equation}
    By \eqref{e.act_W(eps)_Dn}, the stabilizer $\Stab_{(\bF_2^{K})_0}(x)$ is the following subspace of $(\bF_2^{K})_0$:  
    \begin{equation}\label{e.Dn_def_of_F^m_0}
    (\bF_2^{K\setminus Y_x})_0  := \{ \delta\in (\bF_2^{K})_0  \mid \forall k\in Y_x: \delta_k = 0\}.
    \end{equation}
    If $Y_x = K$ then $(\bF_2^{K\setminus Y_x})_0 = \{0\}$ and \eqref{e.calc3} gives:
    $$|\Stab_{W(\eps)}(x) | \leq  |\Stab_{\cV}(Y_x)| = |\cV| = 2^r.$$
    One can check that $2^r \leq  2^{n-2}$ using $n\geq 3$ (recall that $n = 2^r m)$.
    Therefore \eqref{e.calc2} holds if $Y_x = K$.
    Assume $Y_x \neq K$. We have:
    $$| (\bF_2^{K\setminus Y_x})_0| = |\bF_2^{|K\setminus Y_x| -1}| = 2^{n- |Y_x|-1}.$$
    Since $x\neq 0$, $Y_x$ is non-empty. Applying Lemma~\ref{lem.elementary_act_on_subsets} gives:
    $$|\Stab_{\cV}(Y_x)|\leq |Y_x|.$$
    Plugging this inequality into \eqref{e.calc3} we get:
    $$|\Stab_{W(\eps)}(x) | \leq 2^{n- |Y_x|-1} |Y_x| \leq 2^{n- |Y_x|-1}  2^{|Y_x|-1} = 2^{n -2}.$$
    Here the last inequality follows from the inequality $n \leq 2^{n-1}$ which holds for all natural numbers $n$. This shows \eqref{e.calc2} holds and finishes the proof.
\end{proof}

\begin{remark}
  Assume $r \geq 1$ and let ${v_1,\dots, v_r}$ be  a basis for $\cV$. One can check that the following is a $W(\eps)$-invariant generating set of size $(r+m-1)2^{r+1}$:
    $$\Gamma = \bigsqcup_{\begin{matrix} i = 1,\dots,r \end{matrix}} W(\eps)( e_{0,1} + e_{v_i,1}) \cup \bigsqcup_{\begin{matrix} j = 2,\dots,m \end{matrix}} W(\eps)( e_{0,1} + e_{v_1,j}).$$
    Therefore \eqref{e.symrankD_n} is in fact an equality if $r\geq 1$. Let $x=e_{0,1} +e_{v_i,1}$ or $x= e_{0,1}+e_{v_1,j}$ be one of the elements above. To see that $|W(\eps)x| =2^{r+1}$ one needs to check that $|\Stab_{W(\eps)}(x)|\geq 2^{n-2}$ and therefore \eqref{e.calc2} is an equality. For example, if $x = e_{0,1}+e_{v_i,1}$ then $\Stab_{W(\eps)}(x)$ contains $\langle v_i\rangle\ltimes (\bF_2^{K'})_0$ where $K' = K\setminus (\langle v_i \rangle\times \{1\})$. Therefore $|\Stab_{W(\eps)}(x)|\geq | \langle v_i\rangle\ltimes (\bF_2^{K'})_0| = 2^{ n -2}$. The computation in the case $x = e_{0,1}+e_{v_1,j}$ is similar. 
\end{remark}

\section{Essential dimension of \texorpdfstring{$\HSpin_{16}$ at $2$}{HSpin16}}

  In this section we prove Part (1) of Theorem~\ref{thm.concrete_bounds}. We have already proven $$\ed(E_8;2)= \ed(\HSpin_{16};2)$$ in Proposition~\ref{prop.partofconcreteboundsthm1}(1). It remains to show $\ed(\HSpin_{16};2)\geq 56$. By Proposition~\ref{prop.sym_rank_lower}, it suffices to prove:
\begin{proposition}\label{prop.rank_Hspin}
Let $T\subset \HSpin_{16}$ be a split maximal torus with corresponding Weyl group $W$. There exists a grading $\eps:X(T) \to \bF_2^{4}$ satisfying \eqref{e.31} and \eqref{e.32} such that
    \begin{equation}\label{e.symrankHspin}
    \SymRank(W(\eps),X(T);2) \geq 2^6.
\end{equation}
\end{proposition}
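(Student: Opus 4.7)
The strategy parallels the $\PGO^+_{16}$ case treated above, exploiting the extra cyclic factor in the character lattice of $\HSpin_{16}$. Realize $\HSpin_{16}$ as a group of type $D_8$ whose character lattice is
$$X(T) = Q(D_8) \cup (Q(D_8) + \omega), \qquad \omega := (1/2)(e_1+\cdots+e_8),$$
which coincides with the $E_8$ root lattice $Q(E_8)$. Fix a bijection $\{1,\dots,8\} \leftrightarrow \cV := \bF_2^3$ and regard coordinates as indexed by $\cV$. Define
$$\eps = (\eps_1, \tau) \colon Q(E_8) \longrightarrow \cV \oplus \bF_2 = \bF_2^4$$
by setting $\eps_1(x) = \sum_{v \in \cV} x_v v$ on $Q(D_8)$, extended by $\eps_1(\omega) = 0$ (which is consistent since $\sum_{v \in \cV} v = 0$ in $\bF_2^3$), and $\tau \colon Q(E_8) \twoheadrightarrow Q(E_8)/Q(D_8) = \bF_2$. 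Condition \eqref{e.31} is immediate since $\eps(\pm e_i \pm e_j) = (v_i + v_j, 0) \neq 0$ for $i \neq j$.

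Mimicking Lemma~\ref{lem.WepsDn}, one shows that the subgroup of $W(D_8) = \Sym(\cV) \ltimes (\bF_2^\cV)_0$ preserving $\eps_1$ on $Q(D_8)$ is $\cV \ltimes (\bF_2^\cV)_0$, where $\cV$ acts on itself by translation. The additional condition $\eps_1(w\omega) = 0$ yields (after a short computation using that $|\supp\delta|$ is even) the extra constraint $\sum_{v \in \supp\delta} v = 0$ in $\cV$ on the sign-change part. Hence
$$W(\eps) = \cV \ltimes (\bF_2^\cV)_{0,0}, \quad (\bF_2^\cV)_{0,0} := \{\delta \in (\bF_2^\cV)_0 : \textstyle\sum_v \delta_v v = 0 \text{ in } \cV\},$$
and a dimension count gives $|W(\eps)| = 8 \cdot 16 = 2^7$. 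To verify \eqref{e.32}, note that every $v \in \cV$ lies in the support of some element of $(\bF_2^\cV)_{0,0}$ (take an affine plane through $v$), so no nonzero integer vector is fixed by all sign changes; and for a half-integer $y + \omega$, the equation $(1,\delta)(y+\omega) = y + \omega$ would force $y_v = -1/2$ on $\supp(\delta)$, which is impossible.

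The main computation is the orbit lower bound: I claim $|W(\eps) \cdot x| \geq 16$ whenever $\eps_1(x) \neq 0$ or $\tau(x) \neq 0$. Combined with Lemma~\ref{lem.symrank_fancy} applied to the decomposition $\bF_2^4 = \cV \oplus \bF_2$, this yields
$$\SymRank(W(\eps), X(T); 2) \geq 16 \cdot 3 + 16 \cdot 1 = 64.$$
For half-integer $x$, the argument above shows $\Stab(x) \cap (\bF_2^\cV)_{0,0} = 1$, so $|\Stab(x)| \leq |\cV| = 8$. For integer $x$ with $\eps_1(x) \neq 0$, let $H$ denote the image of $\Stab(x)$ under the projection $W(\eps) \twoheadrightarrow \cV$; then $|x_v|$ is constant on $H$-cosets. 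Using $\sum_{v \in K} v = 0$ for any coset $K$ of a subgroup of $\cV$ of even order, one sees that $|H| \geq 4$ forces $\eps_1(x) = 0$, a contradiction. For $|H| \leq 2$, a count of the codewords of $(\bF_2^\cV)_{0,0}$ supported on $Z(x) := \{v : x_v = 0\}$---in terms of the dimension of the affine hull of $Z(x)$ in $\cV$---bounds $|\Stab(x) \cap (\bF_2^\cV)_{0,0}|$ by $4$, so $|\Stab(x)| \leq 8$ in all remaining cases.

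\textbf{Main obstacle.} The principal difficulty is the case analysis for orbit sizes of integer vectors with $\eps_1(x) \neq 0$, which requires simultaneously controlling the translation-symmetry image $H \subset \cV$ of the stabilizer and its intersection with the small code $(\bF_2^\cV)_{0,0}$. The key insight is that large translation-symmetry of $|x|$ (namely $|H| \geq 4$) already forces $\eps_1(x) = 0$, which reduces the analysis to the regime $|H| \leq 2$, where the sign-change contribution is controlled by the dimension of the affine hull of $Z(x)$.
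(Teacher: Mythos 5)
Your proposal is correct and takes essentially the same approach as the paper: you construct the same grading $\eps = (\eps_1,\tau)$, you arrive at the same description $W(\eps) = \cV \ltimes (\bF_2^\cV)_1$ of order $2^7$, and you prove the same orbit bound $|W(\eps)x| \geq 16$ by bounding $|\Stab_{W(\eps)}(x)| \leq 2^3$. The paper's verification of \eqref{e.32} is slicker (the all-ones sign change lies in $(\bF_2^\cV)_1$ and acts by $-1$), and the paper's case analysis for integer vectors is organized around $|\cV\setminus Y_x|$ together with the uniform bound $|\Stab_\cV(Y_x)| \leq |Y_x|$ from Lemma~\ref{lem.elementary_act_on_subsets}, whereas you split on the order of the translation image $H$ and first rule out $|H|\ge 4$; both routes land at the same inequality. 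One small slip worth correcting: the auxiliary claim that $\sum_{v\in K} v = 0$ for every coset $K$ of a subgroup of $\cV$ of even order is false for a subgroup of order $2$ (the sum over a coset of $\{0,h_0\}$ is $h_0 \neq 0$); the identity holds only for subgroups of $\bF_2$-dimension at least $2$, which is exactly the regime $|H| \geq 4$ where you actually invoke it, so the argument is unaffected but the statement should be tightened.
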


\subsection*{Definition of \texorpdfstring{$\eps$}{epsilon}}

We start by introducing the notation needed to define $\eps$. We set: $$\cV=\bF_2^{3} \text{ and } \nu = \frac{1}{2}\sum_{v\in V} e_v\in \bQ^\cV.$$  There exists a split maximal torus $T\subset \HSpin_{16}$ such that $X(T)$ is the following sublattice of $\bQ^{\cV}$ \cite[Section 3]{macdonald}: 
\begin{align}\label{e.D8_in_E8_lattice}
    X(T)  &= Q(D_8) \cup (\nu +Q(D_8)) \\
        &= \bigg\{ x\in\bQ^{\cV}\  {\Big|}\   \begin{matrix} \sum_{v\in \cV} x_v \text{ is even.}
            \\ \text{For all }v\in \cV, \ x_v\in \bZ  \ \text{or} 
            \\ \text{for all }v\in \cV, \ x_v \in 1/2+\bZ
\end{matrix}\bigg\}     
\end{align}
Here, as in the previous section, $Q(D_8)\subset \bZ^{\cV}$ is the sublattice:
$$Q(D_8) = \{ x\in \bZ^{\cV} \mid \sum_{v\in \cV} x_v \text{ is even}\}.$$
Let $\eps_0 :Q(D_8) \to \cV$ be the grading from the previous section given by:
$$\eps_0 (y)= \sum_{v\in \cV}y_v v.$$
Any vector $x \in X(T)$ can be written uniquely as $x =d\nu + y$, where $d \in \{0,1\}$ and $y\in Q(D_8)$. 
We define $\eps: X(T) \to \cV\oplus \bF_2$  by 
\begin{equation}\label{e.def_eps_E8}
    \eps(x) =\eps(d\nu + y) = (\eps_0(y), d) \in \cV \oplus \bF_2.
\end{equation}
One can check that $\eps$ is a surjective homomorphism. 
\subsection*{Verification of Condition \eqref{e.31}}
Any root $\alpha\in X(T)$ lies in $Q(D_8) \subset X(T)$ because it is a root of $\PGO^{+}_{16}$. We have $\eps_0(\alpha)\neq 0$ because we have shown $\eps_0$ satisfies \eqref{e.31} in the previous section. Therefore:
$$\eps(\alpha) = (\eps_0(\alpha),0)\neq 0.$$

\subsection*{Description of the Weyl group}
As in the previous section, the Weyl group of $\HSpin_{16}$ is the group
$$W = \Sym(\cV)\ltimes (\bF_2^{\cV})_0.$$
The action of $(\sigma,\delta)\in\Sym(\cV)\ltimes (\bF_2^{\cV})_0$ on $X(T)$ is the restriction of the action on $\bQ^\cV$ defined by:
\begin{equation}\label{e.act_W(eps)_HSpin}
(\sigma,\delta)e_v = (-1)^{\delta_v} e_{\sigma(v)}
\end{equation}
for all $v\in \cV$. Since $\eps_0:Q(D_8)\to \cV$ is the grading we considered in the previous section,  Lemma~\ref{lem.WepsDn} implies that the subgroup of $W$ preserving $\eps_0$ is the $2$-group:
$$W(\eps_0) = \cV \ltimes (\bF_2^{\cV})_0.$$
Note that $W(\eps) \subset W(\eps_0)$ by the definition of $\eps$. Since $\eps$ is a homomorphism, an automorphism $w\in W(\eps_0)$ is in $W(\eps)$ if and only if it satisfies:
$$\eps(w\nu)=\eps(\nu)=(0,1).  $$
Let $w = (u,\delta) \in W(\eps_0)$. We compute:
$$\eps(w \nu) = \eps(\nu - \sum_{\delta_v =1} e_v) = (\sum_{\delta_v =1} v, 1).$$
Therefore $w\in W(\eps)$  if and only if $\delta \in (\bF_2^\cV)_1$, where:
$$(\bF_2^\cV)_1 = \{ \delta \in (\bF^\cV_2)_0 \mid \\
    \sum_{\delta_v =1} v= 0
\}.$$
Clearly, $(\bF_2^\cV)_1$ is the kernel of the surjective homomorphism:
$$(\bF_2^{\cV})_0 \to \cV ,\ \ \delta \mapsto \sum_{v\in \cV} \delta_v v.$$
Therefore we have $\dim_{\bF_2} (\bF_2^\cV)_1 = \dim_{\bF_2} (\bF_2^\cV)_0- \dim_{\bF_2} \cV =7-3 =4$ and:
$$|W(\eps)| = |\cV\ltimes  (\bF_2^\cV)_1| = 2^{3 +4}=2^{7}.$$

\subsection*{Verification of Condition \eqref{e.32}}
The element $(1,\dots,1) \in (\bF_2^{\cV})_1\subset W(\eps)$ acts on $X(T)$ by multiplication by $-1$. Therefore $X(T)^{W(\eps)}=\{0\}$ and \eqref{e.32} is satisfied. We note that this implies a fact we used earlier in the proof of Proposition~\ref{prop.partofconcreteboundsthm1}(1).

\begin{lemma}\label{lem.HSpin_admits_aniso}
     There exists an anisotropic $\HSpin_{16}$-torsor over some $2$-closed field $k$ containing $k_0$. 
\end{lemma}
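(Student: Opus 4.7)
The proof should be a direct assembly of the results already developed in Sections~\ref{sect.funct_of_decomp} and \ref{sect.gradings_on_char}, applied to the specific grading $\eps: X(T) \to \cV \oplus \bF_2$ just constructed.

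The plan is as follows. First, I would note that we have already verified, in the discussion immediately preceding this lemma, that $\eps$ satisfies conditions \eqref{e.31} and \eqref{e.32}; moreover, the subgroup $A_\eps \subset \HSpin_{16}$ is a finite diagonalizable $2$-group, because it is the Cartier dual of the abelian $2$-group $\cV \oplus \bF_2 \cong \bF_2^4$. Lemma~\ref{lem.C(Aeps)_admits_aniso}, applied with $G = \HSpin_{16}$ and $p = 2$, then produces a $2$-closed field $k_0 \subset k'$ over which the centralizer $C_{\HSpin_{16}}(A_\eps)$ admits an anisotropic torsor.

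Next, I would upgrade this centralizer torsor to an anisotropic torsor for $\HSpin_{16}$ itself by invoking Corollary~\ref{cor.if_cent_admits_aniso_so_does_G}. Its hypotheses are met: $\HSpin_{16}$ is connected reductive, $A_\eps$ is a finite diagonalizable $2$-subgroup, and (by the previous step) $C_{\HSpin_{16}}(A_\eps)$ admits an anisotropic torsor over a $2$-closed field. The corollary then produces a valued overfield $k_0 \subset F$ with residue field $k'$ together with an anisotropic $\HSpin_{16}$-torsor over any $2$-closure $F^{(2)}$ of $F$. Since $F^{(2)}$ is $2$-closed by definition, one may take $k := F^{(2)}$, completing the proof.

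There is no essential obstacle here, since all the work has been done in the general machinery; the only thing being invoked specifically about $\HSpin_{16}$ is the verification of \eqref{e.31} and \eqref{e.32} for the grading $\eps$, which has already been carried out.
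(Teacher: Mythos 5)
Your proof is correct and follows essentially the same route as the paper: apply Lemma~\ref{lem.C(Aeps)_admits_aniso} (using the already-verified conditions \eqref{e.31} and \eqref{e.32} for the grading $\eps$) to produce an anisotropic $C_{\HSpin_{16}}(A_\eps)$-torsor over a $2$-closed field, then lift this to an anisotropic $\HSpin_{16}$-torsor via Corollary~\ref{cor.if_cent_admits_aniso_so_does_G}. Your version is slightly more explicit in checking the hypotheses of each cited result, but the logical content is identical.
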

\begin{proof}
    Let $A_\eps \subset \HSpin_{16}$ be the finite diagonalizable $2$-subgroup defined by $\eps$ as in \eqref{e.def_Aeps}.
    By Lemma~\ref{lem.C(Aeps)_admits_aniso}, 
    $C_G(A_\eps)$ admits anisotropic torsors over some $2$-closed field $k_0\subset k$ because $\eps$ satisfies \eqref{e.32}. Therefore the result follows from Corollary~\ref{cor.if_cent_admits_aniso_so_does_G}.
\end{proof}

\subsection*{Proof of the lower bound on \texorpdfstring{$\SymRank(W(\eps),X(T);2)$}{the symmetric rank}}
\begin{proof}[Proof of \eqref{e.symrankHspin}]
     By Lemma~\ref{lem.symrank_fancy}, in order to prove \eqref{e.symrankHspin} it suffices to prove that for any $x\in X(T)$ with $\eps(x) = (v,d)\neq 0$, we have $$|W(\eps) x| \geq 2^4.$$
    We handle the cases $d\neq 0$ and $v\neq 0$ separately.

\smallskip
    \textbf{Assume $d\neq 0$.} Then \eqref{e.def_eps_E8} implies $$x = \nu +y$$ for some $y\in Q(D_8)$. In particular, the coordinates $x_v= \frac{1}{2}+y_v$ are all half-integers and therefore non-zero. We conclude that $(\bF_2^\cV)_1$ acts freely on $x$ by sign changes and we have:
        $$|W(\eps) x | \geq |(\bF_2^\cV)_1 x| = 2^4.$$

\smallskip
     \textbf{Assume $v\neq 0$.} If $d \neq 0$, then $|W(\eps) x|\geq 2^4$ by the previous case. Therefore we can assume $d =0$, which implies $x\in Q(D_8)$ by \eqref{e.def_eps_E8}. We set: $$Y_x = \{ v\in \cV \mid x_v \neq 0 \}.$$
        The stabilizer-orbit formula gives:
        $$|W(\eps)x| = |W(\eps)|/|\Stab_{W(\eps)}(x)| = 2^7/|\Stab_{W(\eps)}(x)|.$$
        Therefore to show $|W(\eps)x|\geq 2^4$ it suffices to prove:
        \begin{equation}\label{e.calc4}
            |\Stab_{W(\eps)}(x)| \leq 2^3.
        \end{equation}
        A similar argument to the proof of \eqref{e.calc3} shows:
        \begin{equation}\label{e.calc5}
             |\Stab_{W(\eps)}(x) | \leq |\Stab_{(\bF_2^\cV)_1}(x)| |\Stab_{\cV}(Y_x)|.
        \end{equation}
        By \eqref{e.act_W(eps)_HSpin}, the stabilizer $\Stab_{(\bF_2^{\cV})_1}(x)$ is the following subspace of $(\bF_2^{\cV})_1$:  
        $$(\bF_2^{\cV\setminus Y_x})_1 = \bigg\{ \delta\in (\bF_2^\cV)_1 \ {\Big|}\  \forall v\in Y_x: \delta_v = 0 \bigg\}.$$
        The set $(\bF_2^{\cV\setminus Y_x})_1$ can also be described as the set of all relations satisfied by an even number of elements of ${\cV\setminus Y_x}$ in $\cV$. This gives the explicit formula:
        \begin{equation}\label{e.dim_even_relations}
            | (\bF_2^{\cV\setminus Y_x})_1 |= \bigg | \bigg\{ \cC \subset \cV\setminus Y_x \ {\Big|}\   \begin{matrix}  |\cC| \text{ is even}\\
            \sum_{c\in \cC} c = 0
        \end{matrix} \bigg\} \bigg |
        \end{equation}
        We divide further into three cases based on $|\cV\setminus Y_x|$. If $|\cV\setminus Y_x|\leq 3$, then $(\bF_2^{\cV\setminus Y_x})_1= \{0 \}$. Indeed, if $(\bF_2^{\cV\setminus Y_x})_1 \neq \{0\}$, then \eqref{e.dim_even_relations} implies that there exists a pair $\{u,v\}\subset \cV\setminus Y_x$ such that
        $$ u + v = 0.$$
        This is impossible because in characteristic two, the above equation implies $u=v$. Plugging $(\bF_2^{\cV\setminus Y_x})_1= \{0 \}$ into \eqref{e.calc5} gives:
        $$|\Stab_{W(\eps)}(x) | \leq  |\Stab_{\cV}(Y_x)| \leq   |\cV| =  2^3.$$
        Therefore \eqref{e.calc4} holds if $|\cV\setminus Y_x|\leq 3$.

        \smallskip
        If $|\cV\setminus Y_x| =4$, then by the previous case, the only non-empty subset of $\cV\setminus Y_x$  that might show up in the set \eqref{e.dim_even_relations} is $\cV\setminus Y_x$ itself. Therefore \eqref{e.dim_even_relations} gives:
        $$ |(\bF_2^{\cV\setminus Y_x})_1| \leq 2.$$
        Together with \eqref{e.calc5} this implies:
        $$|\Stab_{W(\eps)}(x) | \leq |(\bF_2^{\cV\setminus Y_x})_1 | |\Stab_{\cV}(Y_x)| \leq  2|\Stab_{\cV}(Y_x)|.$$ 
        Apply Lemma~\ref{lem.elementary_act_on_subsets} to get:
        $$|\Stab_{W(\eps)}(x) | \leq  2|\Stab_{\cV}(Y_x)| \leq 2|Y_x|.$$
        This implies \eqref{e.calc4} because $|Y_x| = |\cV| - |\cV \setminus Y_x| =4$.

        \smallskip
        If $|\cV\setminus Y_x| \geq 5$, then $\cV\setminus Y_x$ contains an $\bF_2$-basis $v_1,v_2,v_3$ for $\cV$ and two other vectors. Therefore $\cV\setminus Y_x$ contains either $0$ or $v_i + v_j$ for some $i\neq j$. Either way, the homomorphism
        $$f: (\bF_2^{\cV\setminus Y_x})_0 \to \cV, \ \delta \mapsto \sum_{v\in V} \delta_v v$$
        is easily seen to be surjective in this case (here $(\bF_2^{\cV\setminus Y_x})_0$ is defined as in \eqref{e.Dn_def_of_F^m_0}). Since $(\bF_2^{\cV\setminus Y_x})_1= \ker f$, this gives:
        $$\dim (\bF_2^{\cV\setminus Y_x})_1  = \dim (\bF_2^{\cV\setminus Y_x})_0 - \dim \cV = |\cV\setminus Y_x|-4.$$
        In particular, we have
        $$|(\bF_2^{\cV\setminus Y_x})_1 | = 2^{ |\cV\setminus Y_x|-4}.$$
        Plugging this into \eqref{e.calc5} and applying Lemma~\ref{lem.elementary_act_on_subsets} we get:
        $$|\Stab_{W(\eps)}(x) | \leq |(\bF_2^{\cV\setminus Y_x})_1 ||\Stab_{\cV}(Y_x)| \leq 2^{|\cV\setminus Y_x| - 4} |Y_x|.$$
        Note that we can apply Lemma~\ref{lem.elementary_act_on_subsets} because $x\neq 0$ implies $Y_x\neq \emptyset$. Using the inequality $|Y_x| \leq 2^{|Y_x| -1}$ we find:
        $$|\Stab_{W(\eps)}(x) | \leq 2^{|\cV\setminus Y_x| - 4} 2^{|Y_x| -1}= 2^{|\cV| -5} = 2^3.$$
        Therefore \eqref{e.calc4} holds in all cases and \eqref{e.symrankHspin} follows.
\end{proof}

\section{Essential dimension of \texorpdfstring{$E_{6}$ at $3$}{E6}}

 In this section we prove Part (3) of Theorem~\ref{thm.concrete_bounds}.  By Proposition~\ref{prop.sym_rank_lower}, it suffices to prove:
\begin{proposition}\label{prop.rank_E6}
Let $T\subset E_6^{\ad}$ be a split maximal torus with corresponding Weyl group $W$. There exists a grading $\eps:X(T) \to \bF_3^2$ satisfying \eqref{e.31} and \eqref{e.32} such that
    \begin{equation}\label{e.symrankE6}
    \SymRank(W(\eps),X(T);3) \geq 12.
\end{equation}
\end{proposition}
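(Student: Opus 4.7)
The plan is to follow the template of the previous sections: construct an explicit $\bF_3^2$-grading $\eps$ on $X(T) = Q(E_6)$, verify conditions~\eqref{e.31} and~\eqref{e.32}, and then bound orbit sizes via Lemma~\ref{lem.symrank_fancy}. I will exploit the maximal-rank index-$3$ subsystem $A_2 \times A_2 \times A_2 \subset E_6$ obtained by deleting the branch vertex from the extended Dynkin diagram. Writing $Q(E_6) = Q(A_2^3) + \bZ g$ with glue vector $g = \omega_1^{(1)} + \omega_1^{(2)} + \omega_1^{(3)}$, and letting $\alpha_1^{(k)}, \alpha_2^{(k)}$ be the simple roots of the $k$-th $A_2$, I take $\eps = (\pi_1, \pi_2)$ where $\pi_1: Q(E_6) \twoheadrightarrow Q(E_6)/Q(A_2^3) \cong \bF_3$ is the natural projection and $\pi_2: Q(E_6) \to \bF_3$ is the unique homomorphism satisfying $\pi_2(\alpha_i^{(k)}) = 1$ for all $i, k$ and $\pi_2(g) = 0$ (well-defined since $3g = \sum_k (2\alpha_1^{(k)} + \alpha_2^{(k)})$ has $\pi_2(3g) = 0$). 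Condition~\eqref{e.31} follows from a short direct check: $\pi_1$ is nonzero on the $54$ roots outside $A_2^3$, while on the $18$ roots of $A_2^3$ one computes $\pi_2(\pm \alpha_i^{(k)}) = \pm 1$ and $\pi_2(\pm(\alpha_1^{(k)}+\alpha_2^{(k)})) = \pm 2$, all nonzero in $\bF_3$.

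Next I will identify $W(\eps)$. Within each $W(A_2^{(k)}) = S_3$, the subgroup preserving $\pi_2$ is precisely the cyclic subgroup $\langle c_k\rangle$ generated by the Coxeter element $c_k = s_1 s_2$, since $c_k$ cyclically permutes $\alpha_1^{(k)}, \alpha_2^{(k)}, -(\alpha_1^{(k)}+\alpha_2^{(k)})$ on which $\pi_2$ is constant with value $1$. A direct calculation gives $c_k(g) = g - \alpha_1^{(k)}$, so a tuple $c_1^{a_1}c_2^{a_2}c_3^{a_3} \in (\bZ/3)^3$ preserves $\pi_2$ if and only if $a_1 + a_2 + a_3 \equiv 0 \pmod 3$. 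The outer $S_3$ cyclically permuting the three $A_2$ factors fixes $g$ and preserves both $\pi_1$ and $\pi_2$. Moreover, any $w \in W(\eps)$ must preserve $\Phi(A_2^3) = \Phi(E_6) \cap \pi_1^{-1}(0)$ and hence lie in $S_3 \wr S_3$. Combining these observations yields
\[
W(\eps) = H \rtimes S_3, \qquad H = \{(a_1,a_2,a_3) \in (\bZ/3)^3 : a_1+a_2+a_3 = 0\} \cong (\bZ/3)^2,
\]
with Sylow $3$-subgroup $W(\eps)_3 = H \rtimes \bZ/3$ of order $27$.

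The main step, and the principal technical obstacle, is the orbit-size analysis, which I plan to handle via a Coxeter-eigenvalue argument applied twice. The Coxeter element of $A_2$ has characteristic polynomial $x^2 + x + 1$, hence no eigenvalue $1$ on $Q(A_2) \otimes \bQ$. Applied to $H$ acting coordinatewise on $Q(E_6) \otimes \bQ = Q(A_2)^{\oplus 3} \otimes \bQ$: any nonzero $(a_1, a_2, a_3) \in H$ has some $a_k \neq 0$ and kills every vector with nonzero $k$-th component, so intersecting over a generating set of $H$ gives $(Q(E_6) \otimes \bQ)^H = 0$, yielding~\eqref{e.32}. For the orbit bound, a commutator calculation (using $[\sigma, (a_1, a_2, a_3)] = (a_3-a_1, a_1-a_2, a_2-a_3)$ for a cyclic generator $\sigma$ of the outer $\bZ/3$) gives $[W(\eps)_3, W(\eps)_3] = \langle c_1 c_2 c_3\rangle$, and every normal subgroup of $W(\eps)_3$ of index $3$ contains this commutator. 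Since $c_1 c_2 c_3$ acts as the diagonal Coxeter on $Q(A_2)^{\oplus 3} \otimes \bQ$ and thus has no eigenvalue $1$, no order-$9$ subgroup of $W(\eps)_3$ stabilizes any nonzero vector of $Q(E_6)$. Therefore, for every $x \in Q(E_6)$ with $\eps(x) \neq 0$ the stabilizer $\Stab_{W(\eps)_3}(x)$ has order at most $3$, forcing $|W(\eps)_3 \cdot x| \geq 9$. Applying Lemma~\ref{lem.symrank_fancy} with $L_1 = \bF_3^2$, $L_2 = 0$, and $C_1 = 6$ then yields $\SymRank(W(\eps), Q(E_6); 3) \geq 6 \cdot 2 = 12$, as required.
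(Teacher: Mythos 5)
Your proposal is correct, and it takes a genuinely different (and stronger) route than the paper at the key step. The grading $\eps = (\pi_1,\pi_2)$ you construct from the $A_2^3$ gluing is the same grading as the paper's up to an automorphism of $\bF_3^2$ (both have kernel $\{x\in Q(A_2^3) : \sum a_{ij}(x)\equiv 0\}$), so the two gradings give the same $W(\eps)$. Where the arguments diverge is in the choice of $3$-subgroup and the orbit bound. The paper works inside the order-$9$ subgroup $\langle \sigma,\tau\rangle$ (with $\sigma = c_1c_2c_3$), verifies by hand that $\sigma(x)=x\Rightarrow x=0$ and that $\tau$-fixed or $\sigma$-twisted-by-$\tau$ vectors have $u=0$, and obtains mixed orbit bounds ($\geq 9$ when $u\neq0$, only $\geq 3$ when $v\neq0$), giving $9+3=12$ in Lemma~\ref{lem.symrank_fancy}. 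You instead identify the full order-$27$ subgroup $H\rtimes\bZ/3\subset W(\eps)$ — note $\langle\sigma,\tau\rangle$ is a \emph{proper} subgroup of it, since e.g.\ $c_1c_2^{-1}\in H$ — and you replace the paper's element-by-element checks with a structural argument: any stabilizer of order $\geq 9$ has index $\leq 3$, hence is normal and contains the derived subgroup $\langle c_1c_2c_3\rangle$, whose generator has no eigenvalue $1$; therefore $|\Stab(x)|\leq 3$ and $|W(\eps)_3\,x|\geq 9$ for \emph{every} $x\neq 0$. That buys you a uniform orbit bound instead of the paper's two-case bound. In fact this is strictly stronger than what you wrote: you may take $C_1 = 9$ in Lemma~\ref{lem.symrank_fancy}, giving $\SymRank(W(\eps),X(T);3)\geq 18$ and hence $\ed(E_6^{\ad};3)\geq 12$, improving the paper's $\geq 6$ (and still consistent with the known upper bound $21$). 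You wrote $C_1=6$, which is sufficient for the stated $\geq 12$ but wastes what you actually proved. One small clarity issue: the sentence asserting that $\langle c_k\rangle$ is ``the subgroup of $W(A_2^{(k)})$ preserving $\pi_2$'' is misleading as written, since $c_k$ alone does \emph{not} preserve $\pi_2$ on $Q(E_6)$ (it moves $g$); you correct this immediately afterward with the $\sum a_k\equiv 0$ condition, but the first sentence should say ``preserving $\pi_2$ restricted to $Q(A_2^{(k)})$.'' Also, the exact identification $W(\eps)=H\rtimes S_3$ is plausible but not needed: the inequality \eqref{e.symrank_monotone} lets you work directly with the $3$-subgroup $H\rtimes\bZ/3\subset W(\eps)$ without pinning down $W(\eps)$ or its Sylow precisely.
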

\subsection*{Definition of \texorpdfstring{$\eps$}{epsilon}}
We start by introducing the notation needed in order to define $\eps$.
We identify the character lattice $X(T)$ of a split maximal torus $T\subset E_6^{\ad}$ with the root lattice $Q(E_6)$. We will use the following labeling of the extended Dynkin diagram of $E_6$:

\begin{center}
\scalebox{0.7}{
\begin{tikzpicture}
    \tikzstyle{point}=[circle,fill=black,inner sep=1.5pt]

    \node[point, label=right:$\alpha$] (alpha) at (0,0) {};

    \node[point, label=above:$\beta_{11}$] (beta11) at (0,3) {};
    \node[point, label=right:$\beta_{12}$] (beta12) at (0,1.5) {};
    \draw (alpha) -- (beta12);
    \draw[dashed] (beta12) -- (beta11);

    \node[point, label=left:$\beta_{21}$] (beta21) at (-2.6,-1.5) {};
    \node[point, label=left:$\beta_{22}$] (beta22) at (-1.3,-0.75) {};
    \draw (alpha) -- (beta22) -- (beta21);

    \node[point, label=right:$\beta_{31}$] (beta31) at (2.6,-1.5) {};
    \node[point, label=right:$\beta_{32}$] (beta32) at (1.3,-0.75) {};
    \draw (alpha) -- (beta32) -- (beta31);

\end{tikzpicture}}
\end{center}
Here $-\beta_{11}$ is the highest root given by $$-\beta_{11} = 3\alpha +\beta_{12}+ 2\beta_{21} + \beta_{22}+ 2\beta_{31}+\beta_{32}.$$
Since the simple roots form a basis for $Q(E_6)$, any element in $x\in Q(E_6)$ can be written uniquely as
\begin{equation}\label{e.E6v}
    x = d \alpha + \sum_{i,j} a_{ij}\beta_{ij},
\end{equation}
for some $d\in \{0,1,2\}$ and integers $a_{ij}\in \bZ$. We define a grading $\eps: Q(E_6) \to \bF_3^2$ by:
$$\eps(x) = \eps(d \alpha + \sum_{i,j} a_{ij}\beta_{ij}) = (\sum_{i,j} a_{ij}, d).$$
Note that $\eps$ is surjective. 
\subsection*{Verification of Condition \eqref{e.31}}
Let $\beta\in \Phi$ be a root and express it as above:
$$ \beta =  d \alpha + \sum_{i,j} a_{ij}\beta_{ij},$$
for some $d\in \{0,1,2\}$ and integers $a_{ij}\in \bZ$. If $d \neq 0$, then clearly $$\eps(\beta) = (\sum_{i,j}a_{ij},d) \neq 0.$$
If $d =0$, then $\beta$ lies in the closed subsystem $A_2^{\times 3} \subset E_6$ generated by the $\beta_{ij}$'s; see \cite[Section 4]{chernousov2006another}. Therefore there exists $1\leq i\leq 3$ such that: 
$$\pm \beta \in \{ \beta_{i1}, \beta_{i2}, \beta_{i1}+\beta_{i2}\}_{\tiny i = 1,2,3}.$$
This implies $\eps(\beta) \neq 0$ and so $\eps$ satisfies \eqref{e.31}. 

\subsection*{Description of the Weyl group}
For any root $\beta\in \Phi\subset Q(E_6)$, let $r_\beta\in W$ be the corresponding reflection. Set $\sigma_i = r_{\beta_{i1}}r_{\beta_{i2}}$ for $1\leq i\leq 3$ and define:
$$\sigma := \sigma_1 \sigma_2 \sigma_3.$$
By \cite[Section 4]{chernousov2006another}, there exists an element $\tau\in W$ whose action on the set of roots $\Phi$ is determined by a $2\pi/3$-rotation of the extended Dynkin diagram above. Explicitly, we have $\tau(\alpha) = \alpha$ and for any $1\leq i\leq 3, 1\leq j\leq 2$ we have:
\begin{equation}\label{e.E6_def_tau}
    \tau(\beta_{ij}) = \beta_{i+1,j},
\end{equation}
where we set $\beta_{4,j} := \beta_{1,j}$. 
\begin{lemma}
    The elements $\sigma,\tau$ generate a subgroup $\langle \sigma,\tau\rangle \subset W(\eps)$ isomorphic to $\bZ/3\bZ \times \bZ/3\bZ$. 
\end{lemma}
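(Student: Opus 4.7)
The goal is to establish three things: (a) both $\sigma$ and $\tau$ lie in $W(\eps)$, (b) each has order exactly $3$, and (c) $\sigma\tau=\tau\sigma$ while $\sigma\notin\langle\tau\rangle$. Once (a)--(c) are in hand, $\langle\sigma,\tau\rangle$ is abelian of exponent dividing $3$ and generated by two elements, so it is a quotient of $\bZ/3\bZ\times\bZ/3\bZ$; since $\sigma$ lies outside the order-$3$ subgroup $\langle\tau\rangle$, the quotient must be trivial and $\langle\sigma,\tau\rangle\cong \bZ/3\bZ\times\bZ/3\bZ$.

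For $\tau$, the defining formula \eqref{e.E6_def_tau} shows it fixes $\alpha$ and permutes the twelve elements $\{\beta_{ij}\}$ cyclically in the first index, hence preserves both the $\alpha$-coefficient $d$ and the sum $\sum_{i,j}a_{ij}$; thus $\tau\in W(\eps)$, and the induced $3$-cycle on the first index makes $\tau$ of order exactly $3$.

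For $\sigma$ I would first compute each $\sigma_i$ on the basis $\{\alpha,\beta_{jk}\}$ using the Cartan integers coming from the extended diagram (the nontrivial ones being $\langle \alpha,\beta_{i2}^{\vee}\rangle=\langle \beta_{i1},\beta_{i2}^{\vee}\rangle=-1$). A direct application of the reflection formula gives
\[
\sigma_i(\alpha)=\alpha+\beta_{i1}+\beta_{i2},\qquad \sigma_i(\beta_{i1})=\beta_{i2},\qquad \sigma_i(\beta_{i2})=-\beta_{i1}-\beta_{i2},
\]
while $\sigma_i$ fixes every $\beta_{jk}$ with $j\neq i$ because those lie in an $A_2$ summand orthogonal to $\beta_{i1},\beta_{i2}$. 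Note that no individual $\sigma_i$ belongs to $W(\eps)$ (it sends the class of $\alpha$ to $(2,1)$), but orthogonality of the three $A_2$ subsystems makes the $\sigma_i$ pairwise commute, so composing all three gives $\sigma(\alpha)=\alpha+\sum_{i,j}\beta_{ij}$, $\sigma(\beta_{i1})=\beta_{i2}$, and $\sigma(\beta_{i2})=-\beta_{i1}-\beta_{i2}$. Checking $\eps$ on each basis image: $\eps(\sigma(\alpha))=(6,1)\equiv(0,1)=\eps(\alpha)$, $\eps(\sigma(\beta_{i1}))=(1,0)=\eps(\beta_{i1})$, and $\eps(\sigma(\beta_{i2}))=(-2,0)\equiv(1,0)=\eps(\beta_{i2})$, so $\sigma\in W(\eps)$. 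Since each $\sigma_i$ is a Coxeter element of its $A_2$ and they commute, $\sigma^3=1$; and $\sigma(\beta_{11})=\beta_{12}\neq\beta_{11}$ rules out $\sigma=1$.

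For (c), conjugation by $\tau$ transports a reflection by its action on the root: $\tau\, r_{\beta_{ij}}\,\tau^{-1}=r_{\tau(\beta_{ij})}=r_{\beta_{i+1,j}}$, giving $\tau\sigma_i\tau^{-1}=\sigma_{i+1}$ (indices mod $3$). Using commutativity of the $\sigma_i$, this yields $\tau\sigma\tau^{-1}=\sigma_2\sigma_3\sigma_1=\sigma$. Finally, $\tau$ preserves the set $\{\beta_{11},\beta_{21},\beta_{31}\}$ setwise, so $\tau^k(\beta_{11})\in\{\beta_{11},\beta_{21},\beta_{31}\}$ for all $k$, whereas $\sigma(\beta_{11})=\beta_{12}$ is outside this set; hence $\sigma\notin\langle\tau\rangle$. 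The only subtle point, and really the whole content, is the cancellation $\sum_{i,j}\beta_{ij}\equiv 0\pmod 3$ that makes the product $\sigma_1\sigma_2\sigma_3$—but none of its factors—preserve $\eps$. Everything else is routine verification with $A_2$ reflection formulas and orbit computations.
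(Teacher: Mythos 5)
Your proof is correct and takes essentially the same route as the paper: the key computation in both is $\sigma_i(\alpha)=\alpha+\beta_{i1}+\beta_{i2}$, $\sigma(\beta_{i1})=\beta_{i2}$, $\sigma(\beta_{i2})=-\beta_{i1}-\beta_{i2}$, from which membership in $W(\eps)$ follows by the mod-$3$ calculations $(6,1)\equiv(0,1)$ and $-2\equiv 1$. You are, however, more complete than the paper on the group-theoretic side: the paper's proof verifies $\sigma,\tau\in W(\eps)$ and that they commute (via $\tau\sigma_i\tau^{-1}=\sigma_{i+1}$ and symmetry of $\sigma$ in $i$), but leaves the facts that $\sigma$ and $\tau$ have order exactly $3$ and that $\sigma\notin\langle\tau\rangle$ implicit, so the isomorphism type $\bZ/3\bZ\times\bZ/3\bZ$ is asserted without detail. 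Your orbit-based observation that $\sigma(\beta_{11})=\beta_{12}\notin\{\beta_{11},\beta_{21},\beta_{31}\}$ while $\tau$ preserves that set is a clean way to close that gap. Your argument that $\tau\in W(\eps)$ by directly inspecting its action on the basis is also tighter than the paper's stated reasoning (which invokes commuting with $\sigma$, a fact that isn't actually what's needed there).
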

\begin{proof}
    Since $\sigma = \sigma_1\sigma_2\sigma_3$ is invariant under permuting $1,2,3$, we conclude from \eqref{e.E6_def_tau} that $\tau$ commutes with $\sigma$. Since $\tau$ fixes $\alpha$ and commutes with $\sigma$, $\tau$ preserves $\eps$. To check that $\sigma$ preserves $\eps$, one first computes:
    $$\sigma_i(\alpha) = r_{\beta_{i1}}r_{\beta_{i2}}(\alpha) = r_{\beta_{i1}}(\alpha +\beta_{i2}) =\alpha + \beta_{i1} + \beta_{i2}.$$
    Since $\sigma_i(\beta_{jk}) = \beta_{jk}$ for any $j\neq i$, this gives:
    \begin{equation}\label{e.E6sigmaalpha}
    \sigma(\alpha) = \sigma_1 \sigma_2\sigma_3(\alpha) = \alpha + \sum_{\tiny i,j} \beta_{ij}.
    \end{equation}
    Here we are summing over all $1\leq i\leq 3, 1\leq j \leq 2$. Therefore in $\bF^2_3$:
    \begin{equation}\label{e.E6alpha}
        \eps(\sigma(\alpha))=(6,1) = (0,1) = \eps(\alpha).
    \end{equation} 
    Any vector $x\in Q(E_6)$ is of the form:
    $$ x =  d \alpha + \sum_{i,j} a_{ij}\beta_{ij},$$
    as in \eqref{e.E6v}. By \eqref{e.E6alpha}, to prove $\eps(\sigma(x)) = \eps(x)$ it suffices to check:
    \begin{equation}\label{e.E6subgroupofW}
        \eps(\sigma(\beta_{ij})) = \eps( \beta_{ij})= (1,0).
    \end{equation}
    for all $1\leq i\leq 3$ and $1\leq j\leq 2$. A computation shows that for all $1\leq i\leq 3$: 
    \begin{equation}\label{e.E6sigmabeta}
    \sigma(\beta_{i1}) = \beta_{i2},\  \sigma(\beta_{i2}) = -\beta_{i1}-\beta_{i2}.
    \end{equation}
    Therefore \eqref{e.E6subgroupofW} follows from the fact that in $\bF_3$ we have $-1 -1=1$.
\end{proof}

\begin{remark}
    One can show that $\langle\sigma,\tau \rangle$ is a Sylow $3$-subgroup of $W(\eps)$. We do not include this computation because it is not required for the proof of Theorem~\ref{thm.concrete_bounds}.
\end{remark}

\subsection*{Verification of Condition \eqref{e.32}}
Using \eqref{e.E6sigmabeta} one sees that for any $1\leq i\leq 3, 1\leq j\leq 2$:
    $$\beta_{ij} + \sigma(\beta_{ij}) + \sigma^2(\beta_{ij}) = 0.$$
    Since the $\beta_{ij}$'s generate a subgroup of finite index in $Q(E_6)$, it follows that:
    \begin{equation}\label{e.E6_identity_sigma}
    \id_{Q(E_6)} + \sigma + \sigma^2 = 0.
    \end{equation}
    If $\sigma(x)= x$ for some $x\in Q(E_6)$, then \eqref{e.E6_identity_sigma} implies $x= 0$ because
    $$0 = (\id_{Q(E_6)} + \sigma + \sigma^2)x = 3x.$$
    Therefore $Q(E_6)^{W(\eps)_{3}} = \{0\}$ for any Sylow $3$-subgroup $W(\eps)_{3}\subset W(\eps)$ containing $\sigma$.

\subsection*{Proof of the lower bound on \texorpdfstring{$\SymRank(W(\eps),X(T);3)$}{the symmetric rank}}
We start with two lemmas which help us understand the $\langle \sigma,\tau\rangle$-stabilizers of elements of $Q(E_6)$.

\begin{lemma}\label{lem.E6rank}
    Let $x\in Q(E_6)$ be an element and set $\eps(x) =(u,v) \in \bF_3^2$.
    \begin{enumerate}
        \item If $\sigma(x) = x$ then $x =0$.
        \item If $\sigma(x)\in \{ x, \tau(x),\tau^2(x)\}$ then $u =0$.
        \item If $\tau(x) = x$ then $u =0$.
    \end{enumerate}
\end{lemma}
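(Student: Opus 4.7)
The proof is a direct computation using the explicit formulas for the $\sigma,\tau$-action on simple roots, together with the identity $\id+\sigma+\sigma^2=0$ established in \eqref{e.E6_identity_sigma}.

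For part (1), I would apply \eqref{e.E6_identity_sigma} to $x$: if $\sigma(x)=x$, then $3x = (\id+\sigma+\sigma^2)(x) = 0$, and since $Q(E_6)$ is torsion-free, $x=0$ (so $u=0$ in particular).

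For part (3), I would unwind the definitions. Writing $x = d\alpha + \sum_{i,j}a_{ij}\beta_{ij}$, the relation $\tau(\beta_{ij}) = \beta_{i+1,j}$ (with indices mod $3$) and $\tau(\alpha)=\alpha$ give $\tau(x) = d\alpha + \sum_{i,j}a_{ij}\beta_{i+1,j}$. The condition $\tau(x)=x$ thus forces $a_{i,j} = a_{i-1,j}$ for all $i,j$, so $a_{1,j}=a_{2,j}=a_{3,j}$ for $j=1,2$, giving
\[
u = \sum_{i,j}a_{ij} = 3(a_{1,1}+a_{1,2}) \equiv 0 \pmod 3.
\]

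The key step is part (2), which I would handle by analyzing each of the three cases $\sigma(x) = \tau^k(x)$ for $k\in\{0,1,2\}$. For $k=0$ we immediately invoke part (1). For $k\in\{1,2\}$, I would use \eqref{e.E6sigmaalpha} and \eqref{e.E6sigmabeta} to expand
\[
\sigma(x) = d\alpha + \sum_{i=1}^{3}\bigl((d-a_{i2})\beta_{i1} + (d+a_{i1}-a_{i2})\beta_{i2}\bigr),
\]
and compare with $\tau^k(x) = d\alpha + \sum_i (a_{i-k,1}\beta_{i1}+a_{i-k,2}\beta_{i2})$. Reading off the coefficients of $\beta_{i1}$ gives the system $d - a_{i2} = a_{i-k,1}$ for $i=1,2,3$. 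Summing these three identities (and using that summation over $i$ is insensitive to the index-shift $i\mapsto i-k$) produces
\[
3d - \sum_{i=1}^{3} a_{i2} = \sum_{i=1}^{3} a_{i-k,1} = \sum_{i=1}^{3}a_{i,1},
\]
so $\sum_{i,j}a_{ij} = 3d$, whence $u\equiv 0\pmod 3$.

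The main obstacle is really just organizing the bookkeeping in part (2): once the formulas for $\sigma$ on the simple-root basis are laid out and matched against the permutation action of $\tau^k$, summing the ``$\beta_{i1}$-row'' across $i$ collapses the unknowns $a_{i2}$ on one side into the same total as the unknowns $a_{i1}$ on the other, forcing $3d = \sum a_{ij}$. This is the only nontrivial observation; everything else is routine.
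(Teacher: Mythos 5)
Your proof is correct. Parts (1) and (3) are essentially identical to the paper's arguments. Part (2) is where you genuinely diverge: the paper notes that $\sigma$ and $\tau$ commute, so $\sigma(x)=\tau^k(x)$ forces $\sigma^2(x)=\tau^{2k}(x)$, and then applying the already-established identity $\id+\sigma+\sigma^2=0$ yields $x+\tau(x)+\tau^2(x)=0$ on the nose (for $k=1,2$, since $\tau^3=\id$ makes $\{x,\tau^k(x),\tau^{2k}(x)\}=\{x,\tau(x),\tau^2(x)\}$); summing $\beta_{ij}$-coefficients of this relation gives $a_{1j}+a_{2j}+a_{3j}=0$ for each $j$, hence $u=0$. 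You instead expand $\sigma(x)$ explicitly in the basis using \eqref{e.E6sigmaalpha}--\eqref{e.E6sigmabeta}, match against the cyclic shift $\tau^k(x)$, and sum the resulting $\beta_{i1}$-row equations $d-a_{i2}=a_{i-k,1}$ over $i$ to get $\sum a_{ij}=3d$. Both arguments are sound. The paper's version is slicker in that it recycles the minimal-polynomial relation and avoids re-expanding $\sigma(x)$; yours is more hands-on but has the small advantage of working uniformly in $k$ (including $k=0$) without needing to invoke part (1), since summing the row equations already gives $u\equiv 0$ regardless. One thing worth making explicit in your write-up: the comparison of coefficients relies on the fact that $\sigma$ and $\tau$ both preserve the $\alpha$-coefficient $d$ (hence preserve the normalization $d\in\{0,1,2\}$ in the expression \eqref{e.E6v}), which is what licenses reading off the $\beta_{ij}$-coefficients term by term. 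You use this implicitly but it deserves a sentence, since the seven vectors $\alpha,\beta_{ij}$ satisfy a relation and the decomposition is only unique after normalizing $d$.
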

\begin{proof}
    
    We proved Part (1) during the verification of Condition \eqref{e.32} above. To prove Part (2), we can assume $\sigma(x)= \tau^k(x)$ for $k=1$ or $k=2$ by Part (1). Apply \eqref{e.E6_identity_sigma} to get:
    \begin{equation}\label{e.E6ranklemma}
        0 = (\id_{Q(E_6)} + \sigma + \sigma^2)x = x + \tau^k(x) + \tau^{2k}(x)=  x+ \tau(x)+ \tau^2(x).
    \end{equation}
    Here in the last equality we used that $\tau^3 =1$. Express $x$ as a sum
    $$ x = d \alpha + \sum_{i,j} a_{ij}\beta_{ij}.$$
    for some $d\in \{0,1,2\}$ and integers $a_{ij}\in \bZ$. Comparing coefficients in \eqref{e.E6ranklemma} we see that for all $1 \leq j\leq 2$:
    $$a_{1j} + a_{2j} + a_{3j} = 0.$$
    Therefore:
    $$(u,v) = \eps(x) = (\sum_{j}a_{1j} + a_{2j} + a_{3j}, d) = (0,d).$$
    To prove Part (3), we assume:
    $$\tau(x) = x.$$
    Comparing coefficients of both sides we get for all $1\leq j\leq 2$:
    $$a_{1j} = a_{2j} = a_{3j}.$$
    This gives:
    \[(u,v) = \eps(x) = (\sum_{j}a_{1j} + a_{2j} + a_{3j}, d) = (\sum_j 3a_{1j},d)= (0,d).\qedhere\]
\end{proof}

\begin{corollary}\label{cor.E6stabxnotzero}
     Let $x\in Q(E_6)$ be an element and denote $\eps(x) = (u,v)$ for some $u,v \in \bF_3$. We note:
 \begin{enumerate}
     \item  If $u\neq 0$, then $|\langle \sigma, \tau\rangle x|  = 9$.
    \item If $v\neq 0$, then $ |\langle \sigma, \tau\rangle x| \geq 3$.
 \end{enumerate} 
\end{corollary}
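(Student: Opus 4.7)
My plan is to apply the orbit--stabilizer theorem to the action of $\langle\sigma,\tau\rangle \cong \bZ/3\bZ \times \bZ/3\bZ$ (a group of order $9$), combining it with the three parts of Lemma~\ref{lem.E6rank}. Since every non-trivial element of $(\bZ/3\bZ)^2$ has order $3$, the possible sizes of $\Stab_{\langle\sigma,\tau\rangle}(x)$ are $1$, $3$, or $9$, and correspondingly $|\langle\sigma,\tau\rangle x|\in\{9,3,1\}$. Both claims then amount to ruling out the stabilizer being too large.

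For Part (1), I will show that if $u\neq 0$ then $\Stab_{\langle\sigma,\tau\rangle}(x) = \{1\}$. Suppose some $\sigma^a\tau^b$ with $(a,b)\neq(0,0)$ fixes $x$. If $a=0$, then $b\in\{1,2\}$ is invertible mod $3$, so $\tau^b(x)=x$ forces $\tau(x)=x$, and Lemma~\ref{lem.E6rank}(3) gives $u=0$, a contradiction. If $a\neq 0$, then pick $k$ with $ka\equiv 1\pmod 3$; since $\sigma$ and $\tau$ commute, $(\sigma^a\tau^b)^k = \sigma\,\tau^{kb}$ still fixes $x$, so $\sigma(x)=\tau^{-kb}(x)\in\{x,\tau(x),\tau^2(x)\}$, and Lemma~\ref{lem.E6rank}(2) again gives $u=0$, a contradiction. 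Hence the stabilizer is trivial and $|\langle\sigma,\tau\rangle x|=9$.

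For Part (2), I will argue that if $v\neq 0$ then $\Stab_{\langle\sigma,\tau\rangle}(x) \neq \langle\sigma,\tau\rangle$. Indeed, if the whole group fixed $x$, then in particular $\sigma(x)=x$; by Lemma~\ref{lem.E6rank}(1) this forces $x=0$, so $\eps(x)=(0,0)$, contradicting $v\neq 0$. Thus $\Stab_{\langle\sigma,\tau\rangle}(x)$ is a proper subgroup of $(\bZ/3\bZ)^2$, hence of order at most $3$, and orbit--stabilizer gives $|\langle\sigma,\tau\rangle x|\geq 9/3 = 3$.

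The only mildly delicate point is the case analysis in Part (1) where I must reduce an arbitrary non-trivial stabilizing element to one of the two shapes handled by Lemma~\ref{lem.E6rank}; this is immediate from the commutativity of $\sigma$ and $\tau$ together with the fact that every non-zero residue mod $3$ is invertible. No further computation in the root lattice is needed, since all the lattice-level work is already encapsulated in Lemma~\ref{lem.E6rank}.
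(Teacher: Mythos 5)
Your proposal is correct and takes essentially the same approach as the paper: both rely on the three parts of Lemma~\ref{lem.E6rank} together with orbit--stabilizer for the order-$9$ group $\langle\sigma,\tau\rangle$. The only cosmetic difference is in Part~(1), where you rule out a nontrivial stabilizing element $\sigma^a\tau^b$ directly, whereas the paper observes that the orbit contains the four distinct elements $x,\tau(x),\tau^2(x),\sigma(x)$ and must therefore have size $9$ since it divides $9$; these are equivalent via orbit--stabilizer.
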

\begin{proof}
If $u\neq 0$, then $\tau(x) \neq x $ and $\sigma(x) \not \in  \{x,\tau(x),\tau^2(x)\}$ by Lemma~\ref{lem.E6rank}. Therefore $|\langle \sigma,\tau \rangle x|\geq 4$. Since the size of the orbit $|\langle \sigma,\tau \rangle x|$ divides $|\langle \sigma,\tau \rangle | = 9$, we conclude that 
    $$|\langle \sigma, \tau\rangle x|  = 9.$$
If $v\neq 0$, then $\sigma(x) \neq x$ by Lemma~\ref{lem.E6rank}(1). Therefore 
\[|\langle \sigma, \tau\rangle x| \geq |\langle \sigma \rangle x| = 3.\qedhere\]
\end{proof}

Combining Corollary~\ref{cor.E6stabxnotzero} and Lemma~\ref{lem.symrank_fancy} gives
$$\SymRank(\langle \sigma,\tau \rangle, X(T);3) \geq 9+3= 12.$$
This proves \eqref{e.symrankE6} because by \eqref{e.symrank_monotone} we have
    $$\SymRank(W(\eps),X(T);3) \geq \SymRank(\langle \sigma,\tau \rangle, X(T);3).$$

\bibliographystyle{plain}
\bibliography{references2}

\end{document}